\def\definetac{\newif\iftac}    % Can't define a \newif inside another \if!
\else\usepackage{amsthm}\fi
\definecolor{darkgreen}{rgb}{0,0.45,0} 
\let\ea\expandafter
\def\mdef#1#2{\ea\ea\ea\gdef\ea\ea\noexpand#1\ea{\ea\ensuremath\ea{#2}\xspace}}
\def\alwaysmath#1{\ea\ea\ea\global\ea\ea\ea\let\ea\ea\csname your@#1\endcsname\csname #1\endcsname
  \ea\def\csname #1\endcsname{\ensuremath{\csname your@#1\endcsname}\xspace}}
\DeclareRobustCommand\widecheck[1]{{\mathpalette\@widecheck{#1}}}
\def\@widecheck#1#2{%
    \setbox\z@\hbox{\m@th$#1#2$}%
    \setbox\tw@\hbox{\m@th$#1%
       \widehat{%
          \vrule\@width\z@\@height\ht\z@
          \vrule\@height\z@\@width\wd\z@}$}%
    \dp\tw@-\ht\z@
    \@tempdima\ht\z@ \advance\@tempdima2\ht\tw@ \divide\@tempdima\thr@@
    \setbox\tw@\hbox{%
       \raise\@tempdima\hbox{\scalebox{1}[-1]{\lower\@tempdima\box
\tw@}}}%
    {\ooalign{\box\tw@ \cr \box\z@}}}
\def\foreachletter#1#2#3{\foreachcount=#1
  \ea\loop\ea\ea\ea#3\@alph\foreachcount
  \advance\foreachcount by 1
  \ifnum\foreachcount<#2\repeat}
\def\foreachLetter#1#2#3{\foreachcount=#1
  \ea\loop\ea\ea\ea#3\@Alph\foreachcount
  \advance\foreachcount by 1
  \ifnum\foreachcount<#2\repeat}
\def\definescr#1{\ea\gdef\csname s#1\endcsname{\ensuremath{\mathscr{#1}}\xspace}}
\def\definecal#1{\ea\gdef\csname c#1\endcsname{\ensuremath{\mathcal{#1}}\xspace}}
\def\definebold#1{\ea\gdef\csname b#1\endcsname{\ensuremath{\mathbf{#1}}\xspace}}
\def\definebb#1{\ea\gdef\csname l#1\endcsname{\ensuremath{\mathbb{#1}}\xspace}}
\def\definefrak#1{\ea\gdef\csname f#1\endcsname{\ensuremath{\mathfrak{#1}}\xspace}}
\def\definebar#1{\ea\gdef\csname #1bar\endcsname{\ensuremath{\overline{#1}}\xspace}}
\def\definetil#1{\ea\gdef\csname #1til\endcsname{\ensuremath{\widetilde{#1}}\xspace}}
\def\definehat#1{\ea\gdef\csname #1hat\endcsname{\ensuremath{\widehat{#1}}\xspace}}
\def\definechk#1{\ea\gdef\csname #1chk\endcsname{\ensuremath{\widecheck{#1}}\xspace}}
\def\defineul#1{\ea\gdef\csname u#1\endcsname{\ensuremath{\underline{#1}}\xspace}}
\def\autofmt@n#1\autofmt@end{\mathrm{#1}}
\def\autofmt@b#1\autofmt@end{\mathbf{#1}}
\def\autofmt@l#1#2\autofmt@end{\mathbb{#1}\mathsf{#2}}
\def\autofmt@c#1#2\autofmt@end{\mathcal{#1}\mathit{#2}}
\def\autofmt@s#1#2\autofmt@end{\mathscr{#1}\mathit{#2}}
\def\autofmt@f#1\autofmt@end{\mathsf{#1}}
\def\autofmt@u#1\autofmt@end{\underline{\smash{\mathsf{#1}}}}
\def\autofmt@U#1\autofmt@end{\underline{\underline{\smash{\mathsf{#1}}}}}
\def\autofmt@h#1\autofmt@end{\widehat{#1}}
\def\autofmt@r#1\autofmt@end{\overline{#1}}
\def\autofmt@t#1\autofmt@end{\widetilde{#1}}
\def\autofmt@k#1\autofmt@end{\check{#1}}
\def\auto@drop#1{}
\def\autodef#1{\ea\ea\ea\@autodef\ea\ea\ea#1\ea\auto@drop\string#1\autodef@end}
\def\@autodef#1#2#3\autodef@end{%
  \ea\def\ea#1\ea{\ea\ensuremath\ea{\csname autofmt@#2\endcsname#3\autofmt@end}\xspace}}
\def\autodefs@end{blarg!}
\def\autodefs#1{\@autodefs#1\autodefs@end}
\def\@autodefs#1{\ifx#1\autodefs@end%
  \def\autodefs@next{}%
  \else%
  \def\autodefs@next{\autodef#1\@autodefs}%
  \fi\autodefs@next}
\DeclareSymbolFont{bbold}{U}{bbold}{m}{n}
\DeclareSymbolFontAlphabet{\mathbbb}{bbold}
\newcommand{\bbone}{\ensuremath{\mathbbb{1}}\xspace}
\mdef\delbar{\overline{\partial}}
\newcommand{\inv}{^{-1}}
\mdef\hf{\textstyle\frac12 }
\mdef\thrd{\textstyle\frac13 }
\mdef\qtr{\textstyle\frac14 }
\newcommand{\op}{^{\mathrm{op}}}
\mdef\Id{\mathrm{Id}}
\mdef\id{\mathrm{id}}
\def\frc#1/#2.{\frac{#1}{#2}}   % \frc x^2+1 / x^2-1 .
\mdef\ten{\mathrel{\otimes}}
\mdef\sqten{\mathrel{\boxtimes}}
\DeclareRobustCommand\widecheck[1]{{\mathpalette\@widecheck{#1}}}
\def\@widecheck#1#2{%
    \setbox\z@\hbox{\m@th$#1#2$}%
    \setbox\tw@\hbox{\m@th$#1%
       \widehat{%
          \vrule\@width\z@\@height\ht\z@
          \vrule\@height\z@\@width\wd\z@}$}%
    \dp\tw@-\ht\z@
    \@tempdima\ht\z@ \advance\@tempdima2\ht\tw@ \divide\@tempdima\thr@@
    \setbox\tw@\hbox{%
       \raise\@tempdima\hbox{\scalebox{1}[-1]{\lower\@tempdima\box
\tw@}}}%
    {\ooalign{\box\tw@ \cr \box\z@}}}
\DeclareMathOperator\colim{colim}
\DeclareMathOperator\Ho{Ho}
\DeclareMathOperator\Hom{Hom}
\DeclareMathOperator\homn{hom}
\newcommand{\Rhom}{\mathbf{R}\!\homn}
\newcommand{\Lotimes}{\otimes^\mathbf{L}}
\newcommand{\too}[1][]{\ensuremath{\overset{#1}{\longrightarrow}}}
\newcommand{\ot}{\ensuremath{\leftarrow}}
\mdef\we{\overset{\sim}{\longrightarrow}}
\mdef\leftwe{\overset{\sim}{\longleftarrow}}
\let\xto\xrightarrow
\def\rightarrowtailfill@{\arrowfill@{\Yright\joinrel\relbar}\relbar\rightarrow}
\newcommand\xrightarrowtail[2][]{\ext@arrow 0055{\rightarrowtailfill@}{#1}{#2}}
\def\twoheadrightarrowfill@{\arrowfill@{\relbar\joinrel\relbar}\relbar\twoheadrightarrow}
\newcommand\xtwoheadrightarrow[2][]{\ext@arrow 0055{\twoheadrightarrowfill@}{#1}{#2}}
\def\slashedarrowfill@#1#2#3#4#5{%
  $\m@th\thickmuskip0mu\medmuskip\thickmuskip\thinmuskip\thickmuskip
   \relax#5#1\mkern-7mu%
   \cleaders\hbox{$#5\mkern-2mu#2\mkern-2mu$}\hfill
   \mathclap{#3}\mathclap{#2}%
   \cleaders\hbox{$#5\mkern-2mu#2\mkern-2mu$}\hfill
   \mkern-7mu#4$%
}
\def\rightslashedarrowfill@{%
  \slashedarrowfill@\relbar\relbar\mapstochar\rightarrow}
\newcommand\xslashedrightarrow[2][]{%
  \ext@arrow 0055{\rightslashedarrowfill@}{#1}{#2}}
\mdef\hto{\xslashedrightarrow{}}
\mdef\htoo{\xslashedrightarrow{\quad}}
\long\def\my@drawfill#1#2;{%
\@skipfalse
\fill[#1,draw=none] #2;
\@skiptrue
\draw[#1,fill=none] #2;
}
\newif\if@skip
\newcommand{\skipit}[1]{\if@skip\else#1\fi}
\newcommand{\drawfill}[1][]{\my@drawfill{#1}}
\newif\ifhyperref
  \let\your@state\state
  \def\state#1{\gdef\currthmtype{#1}\your@state{#1}}
  \let\your@staterm\staterm
  \def\staterm#1{\gdef\currthmtype{#1}\your@staterm{#1}}
  \let\defthm\newtheorem
  \def\currthmtype{}
    \def\autoref#1{\ref*{label@name@#1}~\ref{#1}}
    \def\autoref#1{\ref{label@name@#1}~\ref{#1}}
    \let\old@label\label%
    \def\label#1{%
      {\let\your@currentlabel\@currentlabel%
        \edef\@currentlabel{\currthmtype}%
        \old@label{label@name@#1}}%
      \old@label{#1}}
    \def\defthm#1#2{%
      %% All types of theorems are number inside sections
      \newtheorem{#1}{#2}[section]%
      %% This command tells hyperref's \autoref what to call things
      \expandafter\def\csname #1autorefname\endcsname{#2}%
      %% This makes all the theorem counters actually the same counter
      \expandafter\let\csname c@#1\endcsname\c@thm}
    \def\defthm#1#2{\newtheorem{#1}[thm]{#2}}
\let\SK@label\label\fi
    \let\old@label\label
    \let\your@thm\@thm
    \def\@thm#1#2#3{\gdef\currthmtype{#3}\your@thm{#1}{#2}{#3}}
    \def\currthmtype{}
    \def\label#1{{\let\your@currentlabel\@currentlabel\def\@currentlabel%
        {\currthmtype~\your@currentlabel}%
        \SK@label{#1@}}\old@label{#1}}
    \def\autoref#1{\ref{#1@}}
\newtheorem{thm}{Theorem}[section]
\iftac\theoremstyle{plain}\else\theoremstyle{definition}\fi
\iftac\theoremstyle{plain}\else\theoremstyle{remark}\fi
\def\thmqedhere{\expandafter\csname\csname @currenvir\endcsname @qed\endcsname}
  \let\c@equation\c@subsection
  \let\c@equation\c@thm
\numberwithin{equation}{section}
\mdef\ep{\varepsilon}
\mdef\ph{\varphi}
\tikzset{lab/.style={auto,font=\scriptsize}} % arrow labels
\definecolor{fxnote}{rgb}{1.0000,0.0000,0.0000}
\colorlet{fxnotebg}{yellow}
\renewcommand*\FXLayoutInline[3]{%
  \@fxdocolon {#3}{%
    \@fxuseface {inline}%
    \colorbox{fx#1bg}{\color {fx#1}\ignorespaces #3\@fxcolon #2}}}
\newcommand{\homr}[3]{#2\mathrel{\rhd_{[#1]}} #3}
\newcommand{\homl}[3]{#3 \mathrel{\lhd_{[#1]}} #2}
\newcommand{\tw}{\ensuremath{\operatorname{tw}}}
\newcommand{\D}{\sD}
\newcommand{\E}{\sE}
\newcommand{\V}{\sV}
\newcommand{\W}{\sW}
\newcommand{\Sp}{\mathcal{S}p}
\def\cPDER{\ensuremath{\mathcal{PD}\mathit{ER}}\xspace}
\def\ho{\mathscr{H}\!\mathit{o}\xspace}
\let\oldboxtimes\boxtimes
\def\boxtimes{\mathrel{\oldboxtimes}}
\newcommand{\fib}{\mathsf{fib}}
\newcommand{\cof}{\mathsf{cof}}
\def\cSp{\ensuremath{\mathcal{S}\!\mathit{p}}\xspace}
\newcommand{\sse}{\stackrel{\mathrm{s}}{\sim}}
\newcommand{\exx}{\mathrm{ex}}
\def\ccsub{_{\mathrm{cc}}}
\def\pdh(#1,#2){\llbracket #1,#2\rrbracket}
\def\ldh(#1,#2){\llbracket #1,#2\rrbracket\ccsub}
\def\pend(#1){\pdh(#1,#1)}
\def\lend(#1){\ldh(#1,#1)}
\def\shift#1#2{{#1}^{#2}}
\def\DTl#1#2#3#4#5#6#7{%
  \xymatrix@C=3pc{{#1} \ar[r]^-{#2} &
    {#3} \ar[r]^-{#4} &
    {#5} \ar[r]^-{#6} &
    {#7}
  }}
\newsavebox{\tvabox}
\savebox\tvabox{\hspace{1mm}\begin{tikzpicture}[>=latex',baseline={(0,-.18)}]
  \draw[->] (0,.1) -- +(1,0);
  \node at (.5,0) {$\scriptscriptstyle\bot$};
  \draw[->] (1,-.1) -- +(-1,0);
  \draw[->] (1,-.2) -- +(-1,0);
\end{tikzpicture}\hspace{1mm}}
\renewcommand{\ex}{\mathrm{ex}}
\newcommand{\Ch}{\mathrm{Ch}}
\newcommand{\Mod}{\mathrm{Mod}}
\newcommand{\ARQ}{\mathrm{AR}_Q}
\newcommand{\ARQinv}{\mathrm{AR}\inv_Q}
\DeclareMathOperator{\Pic}{Pic}
\newcommand{\ind}{\mathsf{ind}}
\newcommand{\A}[1]{{\Vec{A}_{#1}}}
\title{Abstract representation theory of Dynkin quivers of type~$A$}
\author{Moritz Groth}
\address{Radboud University, Heyendaalseweg 135, 6525 AJ Nijmegen, Netherlands}
\email{mgroth@math.ru.nl}
\author{Jan \v{S}\v{t}ov\'{\i}\v{c}ek}
\address{Department of Algebra, Charles University in Prague, Sokolovska 83, 186 75 Praha~8, Czech Republic}
\email{stovicek@karlin.mff.cuni.cz}
\subjclass[2010]{Primary: 55U35. Secondary: 16E35, 18E30, 55U40.}
\keywords{Stable derivators, spectral Picard groupoids, coherent Auslander--Reiten quivers, Coxeter, Serre and Nakayama functors, universal tilting modules, higher triangulations}
\date{\today}
\begin{document}

\begin{abstract}
We study the representation theory of Dynkin quivers of type~$A$ in abstract stable homotopy theories, including those associated to fields, rings, schemes, differential-graded algebras, and ring spectra. Reflection functors, (partial) Coxeter functors, and Serre functors are defined in this generality and these equivalences are shown to be induced by universal tilting modules, certain explicitly constructed spectral bimodules. In fact, these universal tilting modules are spectral refinements of classical tilting complexes. As a consequence we obtain split epimorphisms from the spectral Picard groupoid to derived Picard groupoids over arbitrary fields.

These results are consequences of a more general calculus of spectral bimodules and admissible morphisms of stable derivators. As further applications of this calculus we obtain examples of universal tilting modules which are new even in the context of representations over a field. This includes Yoneda bimodules on mesh categories which encode all the other universal tilting modules and which lead to a spectral Serre duality result. 

Finally, using abstract representation theory of linearly oriented $A_n$-quivers, we construct canonical higher triangulations in stable derivators and hence, a posteriori, in stable model categories and stable $\infty$-categories.
\end{abstract}

\maketitle

\setcounter{tocdepth}{1}

\tableofcontents

\section{Introduction}
\label{sec:intro}

The representation theory of Dynkin quivers of type $A$ (or of finite zigzags in a more topological parlance) over a field is well-understood. This includes a fairly detailed understanding of the derived categories of the corresponding path algebras and also of important related functors. For example, it is well known that two Dynkin quivers of type $A$ are derived equivalent over a field if and only if they have the same number of vertices, and that reflection functors in the sense of Bern{\v{s}}te{\u\i}n, Gel$'$fand, and Ponomarev \cite{bernstein-gelfand-ponomarev:Coxeter} induce such derived equivalences. 
Happel showed that these derived equivalences exist and are given by derived tensor product and derived hom functors associated to certain chain complexes of bimodules, so-called \emph{tilting complexes} (see~e.g.~\cite{happel:fd-algebra}). Further interesting equivalences between associated derived categories are given by (partial) Coxeter functors, Serre functors, and derived Nakayama functors. And also these derived equivalences are standard equivalences induced by tilting complexes.

The current understanding of the representation theory of Dynkin quivers of type~$A$ over the integers or over arbitrary commutative rings is less deep. And things get worse if we pass to more complicated contexts, like representations in quasi-coherent $\mathcal{O}_X$-modules over a scheme~$X$, over a differential-graded algebra or over a ring spectrum. The main aim of this paper is to contribute to the understanding of this and, more generally, to the understanding of the representation theory of $A_n$-quivers in arbitrary \emph{abstract stable homotopy theories}. In particular, we extend the definitions of the above-mentioned functors to these more general contexts and show that they are realized by certain spectral bimodules which we refer to as \emph{universal tilting modules}. The terminology alludes to the fact that these spectral bimodules realize these functors in arbitrary abstract stable homotopy theories. Moreover, these bimodules are spectral refinements of the classical tilting complexes which are recovered if we induce up from spectra to chain complexes over a field.

In this paper by an abstract stable homotopy theory we mean a \emph{stable derivator}. Recall for example from \cite{groth:ptstab} that a derivator is some kind of a minimal extension of a classical derived category or homotopy category to a framework with a well-behaved calculus of homotopy limits and homotopy colimits. Slightly more formally, a derivator consists of homotopy categories of diagram categories which are related by various types of constructions. A derivator is \emph{stable} if it admits a zero object and if homotopy pushouts and homotopy pullbacks coincide. Typical examples of stable derivators are the derivator $\D_R$ of a ring~$R$, the derivator $\D_X$ of a scheme~$X$, the derivator $\D_A$ of a differential-graded algebra~$A$, and the derivator $\D_E$ of a symmetric ring spectrum~$E$ (see \cite[\S 5]{gst:basic} for many additional examples). 

Stable derivators provide an enhancement of triangulated categories; for example the underlying category of $\D_R$ is the more classical derived category $D(R)$ and the classical triangulation on it can be constructed from properties of the stable derivator $\D_R$. The following simple observation allows us to apply this enhancement in representation theory: given a stable derivator \D and a small category~$A$ there is the stable derivator $\D^A$ of coherent $A$-shaped diagrams in \D. This shifting operation can be applied to quivers by passing to the corresponding free categories. For example, for a ring $R$ and a quiver $Q$ there is an equivalence of stable derivators
\[
\D^Q_R\simeq\D_{RQ}
\]
where $RQ$ denotes the associated path algebra.

Here we restrict attention to Dynkin quivers of type~$A$. A good deal of information about the corresponding derived categories over a field is classically encoded by Auslander--Reiten quivers. We show that stable derivators of abstract representations of $A_n$-quivers are equivalent to derivators of coherent Auslander--Reiten quivers, i.e., derivators of certain representations of mesh categories (\autoref{thm:AR-independent}). These latter derivators allow us to conveniently encode reflection functors (see also \cite{gst:basic} and \cite{gst:tree}), (partial) Coxeter functors, and Serre functors. Along the way we establish an abstract fractionally Calabi--Yau property and give an explanation of it in down-to-earth terms.

The universal tilting modules realizing these functors are obtained as special cases of a more general theorem. A non-trivial result says that every stable derivator is a closed module over the derivator of spectra \cite{cisinski-tabuada:non-connective}. Consequently, given a spectral bimodule we obtain induced tensor product and hom functors in arbitrary stable derivators. We introduce left admissible, right admissible, and admissible morphisms of stable derivators and the more general theorem guarantees that such morphisms are induced by \emph{explicitly constructed} spectral bimodules (\autoref{thm:kernel}). This theorem is part of a calculus of spectral bimodules, which is formally very similar to the usual calculus of bimodules in algebra and the associated derived tensor product and derived hom functors.

As special cases of such spectral bimodules we obtain in \S\ref{sec:universal-tilting} universal tilting modules for reflection functors, (partial) Coxeter functors, and Serre functors yielding the desired spectral refinements of classical tilting complexes. Generalizing a classical fact, we show that Serre functors are naturally isomorphic to canonical Nakayama functors. One way of summarizing some of these results is by saying that for Dynkin quivers~$Q$ of type~$A$ and for any field~$k$, the group homomorphism $\Pic_{\cSp}(Q)\to\Pic_{\D_k}(Q)$ from the spectral Picard group (\autoref{defn:picard}) to the derived Picard group introduced in \cite{miyachi-yekutieli} is a split epimorphism (\autoref{thm:picard}). And there is a similar such statement for Picard groupoids if one considers all $A_n$-quiver, $n\in\lN,$ at once (\autoref{rmk:picard}). The fact that these epimorphisms are split is a consequence of the abstract fractionally Calabi--Yau property. To illustrate how explicit the spectral bimodules are, we also revisit an example of \cite{gst:basic} and compute the spectral bimodule realizing a strong stable equivalence between the commutative square and $D_4$-quivers. 

While the spectral bimodules described so far are refinements of classical tilting complexes, in \S\ref{sec:yoneda} we also obtain examples of universal tilting modules which are new even in the context of representations over a field. For example, there are universal constructors for coherent Auslander--Reiten quivers. Moreover, for Dynkin quivers of type~$A$  it turns out that for each $n\geq 1$ there is one general universal tilting module~$U_n$ which encodes all the remaining ones for arbitrarily oriented~$A_n$-quivers. This \emph{Yoneda bimodule}~$U_n$ is self-dual up to a twist by the Serre functor --- a result which can be read as a spectral Serre duality theorem for the mesh category (\autoref{thm:U_n-Serre}). 

From a more conceptual perspective, stable derivators are derivators enriched over the monoidal derivator of spectra. In the language of enriched derivator theory, left admissible, right admissible, and admissible morphisms turn out to be instances of weighted homotopy (co)limit constructions and the universal tilting modules are the corresponding weights. The weights themselves are organized in a bicategory which --- together with the basic theory of monoidal derivators --- was established in \cite{gps:additivity}, yielding a convenient calculus which we apply in this paper. Enriched derivators and the calculus of weighted homotopy (co)limits will be studied more systematically in~\cite{gs:enriched}.

We conclude this paper by a section on \emph{higher triangulations} (which can be read immediately after \S\ref{sec:coxeter-reflection}). Although triangulated categories are very useful in a broad variety of situations, from the very beginning on it was obvious that the axioms come with certain defects (see for example already the introduction to \cite{heller:shc}). Besides the enhancements in the sense of stable model categories, stable $\infty$-categories or differential-graded categories, there are also more traditional attempts to fix some of the deficiencies of triangulated categories. Among these are the higher triangulations in the sense of Maltsiniotis \cite{maltsiniotis:higher} (and the closely related notion of Balmer \cite{balmer:separability}). We show as \autoref{thm:higher} that strong stable derivators give rise to \emph{canonical higher triangulations} in the sense of Maltsiniotis (see also \cite{maltsiniotis:higher}). Hence, a posteriori, this also applies to stable model categories and stable $\infty$-categories, illustrating the slogan that these enhancements encode `all the triangulated information'. Considered from this perspective, the universal constructors for coherent Auslander--Reiten quivers specialize to universal constructors for higher triangles.

This paper is part of a larger program aiming for the development of a formal stable calculus or abstract representation theory which applies to the typical contexts arising in algebra, geometry, and topology. First steps of this program are carried out in \cite{groth:ptstab} and \cite{gps:mayer}. While \cite{gps:additivity} and \cite[\S\S 9-10]{gst:basic} are formal studies of the interaction of stability and monoidal structure, the papers \cite{gst:basic} and \cite{gst:tree} mainly provide first examples of strong stable equivalences which is to say abstract tilting results. In this paper the techniques of those papers are combined in order to develop aspects of what we think of as abstract representation theory of $A_n$-quivers. We plan to continue the development of this abstract representation theory elsewhere, including abstract tilting results for posets \cite{gst:poset}. In \cite{gst:acyclic} we consider acyclic quivers and, in particular, show that the reflection morphisms for trees from \cite{gst:tree} are induced by spectral bimodules. We also aim for some abstract representation theory of more complicated algebras.

The content of the respective sections is as follows. In \S\S\ref{sec:review}-\ref{sec:stable} we recall some basics about derivators with particular focus on stable derivators. In \S\S\ref{sec:An}-\ref{sec:coxeter-reflection} we define derivators of coherent Auslander--Reiten quivers as well as (partial) Coxeter functors and Serre functors associated to $A_n$-quivers. Here we also establish the abstract fractionally Calabi--Yau property. In \S\ref{sec:monoidal} we recall some basics concerning monoidal derivators and the calculus of tensor products of functors. We prove in \S\ref{sec:nakayama} that Serre functors are naturally isomorphic to canonical Nakayama functors and are hence instances of tensor products with spectral bimodules. In \S\ref{sec:admissible} we define left admissible, right admissible, and admissible morphisms and construct associated spectral bimodules, yielding large classes of morphisms induced by spectral bimodules. In \S\ref{sec:kernel} we establish a few results related to this calculus of bimodules and the associated tensor and hom functors. These techniques are illustrated in \S\ref{sec:universal-tilting} where we construct universal tilting modules realizing reflection functors, (partial) Coxeter functors, and Serre functors. In \S\ref{sec:yoneda} we obtain further such bimodules, namely universal constructors for Auslander--Reiten quivers and the Yoneda bimodules on the mesh category, leading to a spectral Serre duality result. In \S\ref{sec:field} we relate our results to the more classical context of representations over a field and reformulate some of our results in terms of Picard groupoids. Finally, in \S\ref{sec:higher} we construct canonical higher triangulations on strong stable derivators and conclude with some philosophical comments in \S\ref{sec:cone}.

\section{Review of derivators}
\label{sec:review}

In this section we briefly review some basics of the theory of derivators. This is mainly to fix some notation and to recall a few facts of constant use in later sections. Derivators were introduced independently by Heller \cite{heller:htpythies}, Grothendieck~\cite{grothendieck:derivators}, and  Franke~\cite{franke:adams}. They were recently studied further by Maltsiniotis \cite{maltsiniotis:intro}, Cisinski \cite{cisinski:direct} and others. For more details on the content of this and the following section we refer to \cite{groth:ptstab,gps:mayer} and the many references therein.

We denote by \cCat the 2-category of small categories and by \cCAT the 2-category of not necessarily small categories. The terminal category $\bbone$ consists of one object and its identity morphism only, and for $A\in\cCat$ there is a natural isomorphism between $A$ and the functor category $A^\bbone$. We abuse notation and also write $a\colon\bbone\to A$ for the functor corresponding to an object $a\in A$.

A \textbf{prederivator} is simply a 2-functor $\D\colon\cCat\op\to\cCAT.$\footnote{We follow Heller~\cite{heller:htpythies} and Franke~\cite{franke:adams} and base the notion of a derivator on \emph{diagrams}. Note that there is the alternative, but isomorphic approach using \emph{presheaves} instead; see \cite{grothendieck:derivators,cisinski:direct}.} We define \textbf{morphisms} of prederivators to be pseudo-natural transformations while \textbf{transformations} of prederivators are modifications, yielding the 2-category $ \cPDER$ of prederivators (see \cite{borceux:1} for this 2-categorical language).

Given a prederivator \D, the category $\D(A)$ is the category of \textbf{coherent $A$-shaped diagrams} in \D. If $u\colon A\to B$ is a functor, then we denote the \textbf{restriction functor} by $u^*\colon\D(B) \to \D(A)$. In the case of a functor $a\colon\bbone\to A$ classifying an object, $a^\ast\colon\D(A)\to\D(\bbone)$ is an \textbf{evaluation functor}, taking values in the \textbf{underlying category} $\D(\bbone)$. If $f\colon X\to Y$ is a morphism in $\D(A)$, then its image under $a^\ast$ is denoted by $f_a\colon X_a\to Y_a$. 

These evaluation functors taken together allow us to assign to any coherent diagram $X\in\D(A)$ an \textbf{underlying (incoherent) diagram} $\mathrm{dia}_A(X)\colon A\to\D(\bbone)$. The resulting functor $\mathrm{dia}_A\colon\D(A)\to\D(\bbone)^A$ however, in general, is far from being an equivalence, and coherent diagrams are hence not determined by their underlying diagrams, even not up to isomorphism. Despite the importance of this distinction, frequently we draw coherent diagrams as usual and say that such a diagram has the form of or looks like its underlying diagram. There are similar \textbf{partial underlying diagram functors} $\D(A\times B)\to\D(B)^A$. 

A \emph{derivator} is a prederivator which `allows for a well-behaved calculus of Kan extensions', satisfying key properties of the calculus available in typical approaches to homotopical algebra like model categories and $\infty$-categories as well as in ordinary categories. This seemingly abstract calculus turns out to capture many typical constructions showing up in different areas of algebra, geometry, and topology; see for example the recent \cite{groth:ptstab,gps:mayer,gps:additivity,gst:basic,gst:tree,ps:linearity,ps:linearity-fp}. 

To make precise the definition of a derivator we need the following terminology. If a restriction functor $u^\ast\colon\D(B)\to\D(A)$ admits a left adjoint $u_!\colon \D(A)\to\D(B)$, then $u_!$ is a \textbf{left Kan extension functor}. A right adjoint $u_\ast\colon\D(A)\to\D(B)$ is referred to as a \textbf{right Kan extension functor}. In the examples of interest these are really \emph{homotopy Kan extension functors} but we follow the established terminology from~$\infty$-category theory and simply speak of \emph{Kan extensions}. In the special case that $B=\bbone$ is the terminal category and we hence consider the unique functor $\pi=\pi_A\colon A\to \bbone$, the functor $\pi_!=\mathrm{colim}_A$ is a \textbf{colimit functor} and $\pi_\ast=\mathrm{lim}_A$ a \textbf{limit functor}. 

To actually work with these Kan extensions, one axiomatizes the fact from classical category theory that Kan extensions in (co)complete categories exist and can be calculated pointwise (see \cite[X.3.1]{maclane}). For this purpose, let us consider the \textbf{slice squares}
\begin{equation}
\vcenter{
\xymatrix{
(u/b)\ar[r]^-p\ar[d]_-{\pi_{(u/b)}}\drtwocell\omit{}&A\ar[d]^-u&&(b/u)\ar[r]^-q\ar[d]_-{\pi_{(b/u)}}&A\ar[d]^-u\\
\bbone\ar[r]_-b&B,&&\bbone\ar[r]_-b&B\ultwocell\omit{},
}
}
\label{eq:Der4}
\end{equation}
which come with canonical transformations $u\circ p\to b\circ\pi$ and $b\circ\pi\to u\circ q.$ Here, an object in the \textbf{slice category} $(u/b)$ is a pair $(a,f)$ consisting of an object $a\in A$ and a morphism $f\colon u(a)\to b$ in $B$ while morphisms are morphisms in $A$ making the obvious triangles in $B$ commute. The functor $p\colon (u/b)\to A$ projects onto the first component, and $(b/u)$ and $q$ are defined dually.

Now, if $\bC$ is a complete and cocomplete category, $u\colon A\to B$ a functor between small categories, and $X\colon A\to \bC$ a diagram, then the left and right Kan extensions $\mathrm{LKan}_u(X),\mathrm{RKan}_u(X)\colon B\to\bC$ both exist. Moreover, for every object $b\in B$ certain canonical maps
\[
\colim_{(u/b)} X\circ p \stackrel{\cong}{\to} \mathrm{LKan}_u(X)_b\qquad\text{and}\qquad
\mathrm{RKan}_u(X)_b\stackrel{\cong}{\to}\mathrm{lim}_{(b/u)} X\circ q
\]
are isomorphisms. The existence of Kan extensions and similar pointwise formulas are axiomatized by axioms (Der3) and (Der4) in \autoref{defn:derivator}. The formalism behind (Der4) is the \emph{calculus of mates}; see the discussion of \eqref{eq:hoexactsq'} for more details. In this paper adjunction units and adjunction counits are generically denoted by $\eta$ and $\epsilon$, respectively.

\begin{defn}\label{defn:derivator}
  A prederivator $\D\colon\cCat\op\to\cCAT$ is a \textbf{derivator} if the following properties are satisfied.
  \begin{itemize}[leftmargin=4em]
  \item[(Der1)] $\D\colon \cCat\op\to\cCAT$ takes coproducts to products, i.e., the canonical map $\D(\coprod A_i)\to\prod\D(A_i)$ is an equivalence.  In particular, $\D(\emptyset)$ is equivalent to the terminal category.
  \item[(Der2)] For any $A\in\cCat$, a morphism $f\colon X\to Y$ in $\D(A)$ is an isomorphism if and only if the morphisms $f_a\colon X_a\to Y_a, a\in A,$ are isomorphisms in $\D(\bbone).$
  \item[(Der3)] Each functor $u^*\colon \D(B) \to\D(A)$ has both a left adjoint $u_!$ and a right adjoint $u_*$.
  \item[(Der4)] For any functor $u\colon A\to B$ and any $b\in B$ the canonical transformations
\begin{gather}
  \pi_! p^* \stackrel{\eta}{\to} \pi_! p^* u^* u_! \to \pi_! \pi^* b^* u_! \stackrel{\epsilon}{\to} b^* u_!  \mathrlap{\qquad\text{and}}\label{eq:Der4!}\\
  b^* u_* \stackrel{\eta}{\to} \pi_* \pi^* b^* u_* \to \pi_* q^* u^* u_* \stackrel{\epsilon}{\to} p_* q^*\label{eq:Der4*}
\end{gather}
associated to the slice squares \eqref{eq:Der4} are isomorphisms.
  \end{itemize}
\end{defn}

Axioms (Der1) and (Der3) together imply that $\D(A)$ has small categorical coproducts and products, hence, in particular, initial objects and final objects. In general, an arbitrary derivator does not admit any other 1-categorical (co)limits. Note that all axioms of a derivator ask for \emph{properties} of the underlying prederivator as opposed to asking for more \emph{structure}.

Before we list a few examples, let us introduce the following terminology. A \textbf{morphism} of derivators is simply a morphism of underlying prederivators, i.e., a pseudo-natural transformation $F\colon \D\to\E$. Similarly, given two such morphisms $F,G\colon\D\to\E$, a \textbf{natural transformation} $F\to G$ is a modification. Thus, we define the $2$-category $\cDER$ of derivators as a full sub-2-category of $\cPDER$.

\begin{egs}\label{egs:prederivators}
\begin{enumerate}
\item Associated to $\bC\in\cCAT$ there is the \textbf{represented prederivator}~$y(\bC)$ defined by $y(\bC)(A) \coloneqq \bC^A$. One notes that $y(\bC)$ is a derivator if and only if \bC is complete and cocomplete, and $u_!,u_\ast$ are then ordinary Kan extension functors. The underlying category of $y(\bC)$ is isomorphic to \bC itself. This example can be refined to a 2-functor $y\colon\cCAT\to\cPDER$.
\item Associated to a \emph{Quillen model category} \bC (see e.g.~\cite{quillen:ha,hovey:model}) with \emph{weak equivalences} \bW there is the underlying \textbf{homotopy derivator} $\ho(\bC)$ defined by formally inverting the pointwise weak equivalences $\ho(\bC)(A) \coloneqq (\bC^A)[(\bW^A)^{-1}]$. The underlying category of $\ho(\bC)$ is the homotopy category $\Ho(\bC)=\bC[\bW^{-1}]$ of \bC, and the functors $u_!,u_*$ are derived versions of the functors of $y(\bC)$ (see~\cite{cisinski:direct} for the general case and \cite{groth:ptstab} for an easy proof in the case of combinatorial model categories). 
\item Associated to an $\infty$-category $\cC$ in the sense of Joyal \cite{joyal:barca} and Lurie \cite{HTT} (see \cite{groth:scinfinity} for an introduction) there is the prederivator $\ho(\cC)$ defined by mapping $A$ to the homotopy category of $\cC^{N(A)}$ (here $N(A)$ is the nerve of $A$). A sketch proof that for complete and cocomplete $\infty$-categories this yields a derivator, the \textbf{homotopy derivator} of $\cC$, can be found in~\cite{gps:mayer}.  
\item An important construction at the level of derivators is given by \emph{shifting}. If \D is a derivator and $B\in\cCat$, then the \textbf{shifted derivator} $\shift\D B$ is defined by $\shift\D B(A) \coloneqq \D(B\times A)$ (see~\cite[Theorem~1.25]{groth:ptstab}). This operation amounts to passing to the homotopy theory of coherent diagrams of shape~$B$ in \D. The \textbf{opposite derivator} $\D\op$ is defined by $\D\op(A) \coloneqq \D(A\op)\op$. Finally, given two derivators $\D,\E$, the \textbf{product} $(\D\times\E)(A)\coloneqq\D(A)\times\E(A)$ is again a derivator. These constructions are suitably compatible with each other and with the previous three examples in that there are various natural isomorphisms like $(\shift\D B)\op \cong \shift{(\D\op)}{B\op}$.
\end{enumerate}
\end{egs}

Thus, derivators encode key formal properties of the calculus of (homotopy) Kan extensions in model categories and $\infty$-categories. For specific examples of derivators we refer the reader to \cite[Examples~5.5]{gst:basic}, but see also \autoref{egs:stable}. 

While working with derivators, one frequently shows that certain canonical maps are isomorphisms. This calculus is governed by the notion of \emph{homotopy exact squares} of small categories, a main tool in the study of derivators. Let us consider a derivator~\D and a natural transformation $\alpha\colon up\to vq$ living in a square
\begin{equation}
  \vcenter{\xymatrix{
      D\ar[r]^p\ar[d]_q \drtwocell\omit{\alpha} &
      A\ar[d]^u\\
      B\ar[r]_v &
      C
    }}\label{eq:hoexactsq'}
\end{equation}
of small categories. Using again $\eta$ and $\epsilon$ as generic notation for adjunction units and counits, respectively, we obtain \textbf{canonical mate-transformations}
\begin{gather}
  q_! p^* \stackrel{\eta}{\to} q_! p^* u^* u_! \xto{\alpha^*} q_! q^* v^* v_! \stackrel{\epsilon}{\to} v^* u_!  \mathrlap{\qquad\text{and}}\label{eq:hoexmate1'}\\
  u^* v_* \stackrel{\eta}{\to} p_* p^* u^* v_* \xto{\alpha^*} p_* q^* v^* v_* \stackrel{\epsilon}{\to} p_* q^*,\label{eq:hoexmate2'}
\end{gather}
and one shows that \eqref{eq:hoexmate1'} is an isomorphism if and only if this is the case for \eqref{eq:hoexmate2'}. 

The square \eqref{eq:hoexactsq'} is \textbf{homotopy exact} if the canonical mates \eqref{eq:hoexmate1'} and \eqref{eq:hoexmate2'} are isomorphisms for all derivators \D. (Der4) thus axiomatically asks that slice squares \eqref{eq:Der4} are homotopy exact, and many further examples of homotopy exact squares can be established (see for example \cite{ayoub:I-II,maltsiniotis:htpy-exact} and \cite{groth:ptstab,gps:mayer,gst:basic}). With the exception of \S\ref{sec:admissible}, details about this formalism are not essential to the understanding of this paper, and it instead suffices to apply the first three of the following examples.

\begin{egs}\label{egs:htpy}
\begin{enumerate}
\item If $u\colon A\to B$ is fully faithful, then the square $\id\circ u=\id \circ u$ is homotopy exact, which is to say that the unit $\eta\colon\id\to u^\ast u_!$ and the counit $\epsilon\colon u^\ast u_\ast\to\id$ are isomorphisms  (\cite[Proposition~1.20]{groth:ptstab}). \emph{Kan extensions along fully faithful functors are fully faithful.}
\item Given functors $u\colon A\to B$ and $v\colon C\to D$ then the commutative square
\begin{equation}
\vcenter{
\xymatrix{
A\times C\ar[r]^-{u\times\id}\ar[d]_-{\id\times v}&B\times C\ar[d]^-{\id\times v}\\
A\times D\ar[r]_-{u\times\id}&B\times D
}}
\end{equation}
is homotopy exact (\cite[Proposition~2.5]{groth:ptstab}). \emph{Kan extensions and restrictions in unrelated variables commute.}
\item If $u\colon A\to B$ is a right adjoint, then the square
\[
\xymatrix{
A\ar[r]^-u\ar[d]_-{\pi_A}&B\ar[d]^-{\pi_B}\\
\bbone\ar[r]_-\id&\bbone
}
\]
is homotopy exact, i.e., the canonical mate $\mathrm{colim}_Au^\ast\to\mathrm{colim}_B$ is an isomorphism (\cite[Proposition~1.18]{groth:ptstab}). In particular, if $b\in B$ is a terminal object, then there is a canonical isomorphism $b^\ast\cong\colim_B$. \emph{Right adjoint functors are \textbf{homotopy final}.}
\item The passage to canonical mates \eqref{eq:hoexmate1'} and \eqref{eq:hoexmate2'} is functorial with respect to horizontal and vertical pasting. Consequently, horizontal and vertical pastings of homotopy exact squares are homotopy exact (\cite[Lemma~1.14]{groth:ptstab}). \emph{Homotopy exact squares are compatible with pasting.}
\end{enumerate}
\end{egs}

Using the fact that Kan extensions and restrictions in unrelated variables commute (\autoref{egs:htpy}(ii)), one shows that there are \emph{parametrized versions} of restriction and Kan extension functors. In fact, given a derivator~\D and a functor $u\colon A\to B$, there are adjunctions of derivators
\begin{equation}
(u_!,u^\ast)\colon\D^A\rightleftarrows\D^B\qquad\text{and}\qquad
(u^\ast,u_\ast)\colon\D^B\rightleftarrows\D^A.\label{eq:Kan-adjunction}
\end{equation}
(Adjunctions of derivators are defined internally to the 2-category $\cDER$; see \cite[\S2]{groth:ptstab} for details including convenient reformulations.) If $u$ is fully faithful, then $u_!,u_\ast\colon\D^A\to\D^B$ are fully faithful and hence induce equivalences of derivators onto their respective essential images.

Finally, given a (pre)derivator \D, we write $X\in\D$ to indicate that there is a small category~$A$ such that $X\in\D(A)$.

\section{Stable derivators and exact morphisms}
\label{sec:stable}

In this section we recall some basics about stable derivators, mostly to fix some notation (see \cite{groth:ptstab,gps:mayer} for more details). Homotopy derivators of stable $\infty$-categories and stable model categories are stable, and stable derivators hence describe aspects of the calculus of homotopy Kan extensions available in such examples arising in algebra, geometry, and topology.

\begin{defn} 
A derivator \D is \textbf{pointed} if $\D(\bbone)$ has a zero object. 
\end{defn}

If $\D$ is pointed then so are the shifted derivators $\D^B$ and its opposite $\D\op$. It follows that all $\D(A)$ have zero objects which are preserved by restriction and Kan extension functors. 

For pointed derivators, Kan extensions along inclusions of \emph{cosieves} and \emph{sieves} `extend diagrams by zero objects'. Recall that a functor $u\colon A\to B$ is a \textbf{sieve} if it is fully faithful and if for any morphism $b\to u(a)$ in $B$ there exists an $a'\in A$ with $u(a')=b$. There is the dual notion of a \textbf{cosieve}. Kan extensions along (co)sieves are fully faithful (see \autoref{egs:htpy}).

\begin{lem}[{\cite[Prop.~1.23]{groth:ptstab}}]\label{lem:extbyzero}
Let \D be a pointed derivator and let $u\colon A\to B$ be a sieve. Then $u_\ast\colon\D^A\to\D^B$ induces an equivalence onto the full subderivator of $\D^B$ spanned by all diagrams $X\in\D^B$ such that $X_b$ is zero for all $b\notin u(A)$.
\end{lem}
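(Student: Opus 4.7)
The plan is to identify the essential image of $u_\ast\colon\D^A\to\D^B$ as precisely the claimed full subprederivator, using the pointwise formula of (Der4) and standard properties of Kan extensions along fully faithful functors.

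First, since $u$ is a sieve, it is in particular fully faithful, so by \autoref{egs:htpy}(i) the counit $\epsilon\colon u^\ast u_\ast\to\id$ of the adjunction $u^\ast\dashv u_\ast$ is an isomorphism; equivalently, $u_\ast$ is fully faithful and induces an equivalence onto its essential image. Since $u_\ast$ is a morphism of derivators (as a right adjoint in $\cDER$), this essential image is automatically a subderivator of $\D^B$, so all that remains is to characterize it.

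Next I would show that $u_\ast X$ vanishes outside $u(A)$ for every $X\in\D^A$. For $b\in B\setminus u(A)$, consider the slice square as in \eqref{eq:Der4} with projections $q\colon(b/u)\to A$ and $\pi\colon(b/u)\to\bbone$. The sieve condition states that any morphism $b\to u(a)$ forces $b\in u(A)$; hence for $b\notin u(A)$ the slice $(b/u)$ is empty. By (Der4) we have $(u_\ast X)_b\cong \pi_\ast q^\ast X$ computed over the empty category, and (Der1) identifies $\D(\emptyset)$ with the terminal category, so $\pi_\ast$ lands in the terminal object of $\D(\bbone)$, which is zero since \D is pointed. Thus the essential image is contained in the claimed subderivator.

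For the converse, suppose $Y\in\D^B$ satisfies $Y_b=0$ for all $b\notin u(A)$; I would show the unit $\eta_Y\colon Y\to u_\ast u^\ast Y$ is an isomorphism, which by (Der2) reduces to checking pointwise on $B$. For $b=u(a)\in u(A)$, the triangle identity $\epsilon_{u^\ast Y}\circ u^\ast\eta_Y=\id$ combined with the fact that $\epsilon$ is invertible forces $u^\ast\eta_Y$ to be an isomorphism, hence so is $(\eta_Y)_{u(a)}=a^\ast u^\ast\eta_Y$. For $b\notin u(A)$, both $Y_b$ and $(u_\ast u^\ast Y)_b$ are zero (the former by hypothesis, the latter by the previous paragraph applied to $u^\ast Y$), so $(\eta_Y)_b$ is automatically an isomorphism.

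Finally, for the subderivator (as opposed to underlying-category) statement, I would observe that all of the above applies verbatim in the shifted derivator $\D^C$ for arbitrary $C\in\cCat$: the slice $(b/u)$ is unchanged and the vanishing condition on partial underlying diagrams is preserved, using that restrictions and Kan extensions in unrelated variables commute (\autoref{egs:htpy}(ii)). The main obstacle is really just unpacking (Der4) and (Der1) to conclude that $\pi_\ast$ over the empty slice produces a zero object in a pointed derivator, but once that is in hand the rest is bookkeeping with the unit–counit identities.
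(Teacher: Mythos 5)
Your proof is correct, and since the paper only quotes this result from \cite[Prop.~1.23]{groth:ptstab} without reproving it, there is no in-text argument to diverge from; your route (fully faithfulness of $u_\ast$ from \autoref{egs:htpy}(i), emptiness of the slices $(b/u)$ for $b\notin u(A)$ combined with (Der1), (Der4) and pointedness to get vanishing, and the unit check via (Der2) for the converse, all done in shifted derivators $\D^C$) is exactly the standard argument of the cited reference.
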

\noindent
The functor $u_\ast$ is referred to as \textbf{right extension by zero}, and left Kan extension functors along cosieves are referred to as \textbf{left extension by zero}.

Let $[1]$ be the poset $(0<1)$ considered as a category. The commutative square $\square=[1]^2=[1]\times[1],$
\begin{equation}
\vcenter{
\xymatrix@-.5pc{
    (0,0)\ar[r]\ar[d] &
    (1,0)\ar[d]\\
    (0,1)\ar[r] &
    (1,1),
  }
}
\label{eq:square}
\end{equation}
comes with full subcategories $i_\ulcorner\colon\ulcorner\to\square$ and $i_\lrcorner\colon\lrcorner\to\square$ obtained by removing the terminal object and the initial object, respectively. Since both inclusions are fully faithful, so are $(i_\ulcorner)_!\colon\D^\ulcorner\to\D^\square$ and $(i_\lrcorner)_\ast\colon\D^\lrcorner\to\D^\square$. A square $Q\in\D^\square$ is \textbf{cocartesian} if it lies in the essential image of $(i_\ulcorner)_!$ and it is \textbf{cartesian} if it lies in the essential image of $(i_\lrcorner)_\ast$.

For a pointed derivator \D one can define \textbf{suspensions} and \textbf{loops}, \textbf{cofibers} and \textbf{fibers}, and similar constructions. In particular, this gives rise to adjunctions of derivators
\[
(\Sigma,\Omega)\colon\D\rightleftarrows\D\quad\text{and}\quad
(\cof,\fib)\colon\D^{[1]}\rightleftarrows\D^{[1]}.
\]
As a preparation for later sections, we sketch the constructions of $\Sigma$ and $\cof$ while the constructions of the morphisms $\Omega$ and $\fib$ are dual. Associated to a coherent morphism $(f\colon x\to y)\in\D^{[1]}$ we obtain a cocartesian square in \D looking like
\[
\xymatrix{
x\ar[r]^-f\ar[r]\ar[d]&y\ar[d]^-{\cof(f)}\\
0\ar[r]&z
}
\]
by first right extending the morphism by zero to an object of $\D^\ulcorner$ and then applying $(i_\ulcorner)_!\colon\D^\ulcorner\to\D^\square$. A final restriction of this square along the functor $[1]\to\square$ which classifies the right vertical morphism defines the cofiber $\cof(f)\in\D^{[1]}$. If $(f\colon x\to 0)$ itself is already obtained by right extension by zero, then the target of the cofiber is the suspension~$\Sigma x$. Thus there is a defining cocartesian square looking like
\[
\xymatrix{
x\ar[r]\ar[r]\ar[d]&0\ar[d]\\
0\ar[r]&\Sigma x.
}
\]
A \textbf{(\emph{coherent}) cofiber sequence} is a coherent diagram $Q\in\D^{[2]\times[1]}$ of the form
\[
\xymatrix{
x\ar[r]\ar[d]&y\ar[r]\ar[d]&0\ar[d]\\
0\ar[r]&z\ar[r]&w
}
\]
such that both squares are cocartesian and the two corners vanish as indicated. Given a coherent morphism $(x\to y)\in\D^{[1]}(A)$, we obtain an associated cofiber sequence by a right extension by zero followed by a left Kan extension. Note that the cofiber sequence functor $\D^{[1]}\to\D^{[2]\times[1]}$ is an equivalence onto its essential image; for a variant of this see \autoref{thm:AR} and \autoref{rmk:AR}(iii). Since the compound square in such a cofiber sequence is also cocartesian (\cite[Corollary~4.10]{gps:mayer}), the object $w$ is canonically isomorphic to $\Sigma x$. In particular, using this isomorphism we obtain an underlying \textbf{(\emph{incoherent}) cofiber sequence}
\begin{equation}\label{eq:triangles}
x\to y\to z\to \Sigma x,
\end{equation}
which is an ordinary diagram in $\D(A)$.

\begin{defn}
A pointed derivator is \textbf{stable} if the classes of cartesian squares and cocartesian squares coincide. These squares are then called \textbf{bicartesian}.
\end{defn}
\noindent

Homotopy derivators of stable model categories and stable $\infty$-categories are stable (\cite[Examples~5.17-5.18]{gps:mayer}). Different characterizations of stable derivators are given in \cite[Theorem~7.1]{gps:mayer} and \cite[Corollary~8.13]{gst:basic}. For convenience, we mention the following more specific examples of stable derivators, but refer the reader to \cite[\S5]{gst:basic} for many additional examples and references. 

\begin{egs}\label{egs:stable}
\begin{enumerate}
\item Let $k$ be a field. The homotopy derivator $\D_k$ of the projective model structure on unbounded chain complexes over $k$ is stable. Similarly, there is the stable derivator $\D_R$ of a ring~$R$ and the stable derivator $\D_X$ of a (quasi-compact, quasi-separated) scheme~$X$.
\item There are many Quillen equivalent stable model categories of spectra such that the homotopy category is the stable homotopy category $\mathcal{SHC}$. We denote by \cSp the homotopy derivator associated to any of these model categories. This stable derivator enjoys a universal property as we recall in \S\ref{sec:monoidal}.
\item If \D is stable then so is the shifted derivator $\D^B$ (\cite[Proposition~4.3]{groth:ptstab}). If \D and \E are stable derivators, then so are $\D\op$ and $\D\times\E$.
\end{enumerate}
\end{egs}

Recall that a derivator is \textbf{strong} if it satisfies the following axiom.
  \begin{itemize}[leftmargin=4em]
  \item[(Der5)] For any $A$, the partial underlying diagram functor $\D(A\times [1]) \to \D(A)^{[1]}$ is full and essentially surjective.
  \end{itemize}

Homotopy derivators of model categories and $\infty$-categories are strong as are represented derivators. The property of being strong does not play an essential role in the basic \emph{theory} of derivators. However, it is useful if one wants to relate \emph{properties} of stable derivators to the existence of \emph{structure} on its values. This is illustrated by the following theorem, but see also \S\ref{sec:higher}. (We want to warn the reader that, depending on the reference, the precise form of axiom (Der5) varies a bit; see \autoref{rmk:strong}.) 

\begin{thm}[{\cite[Theorem~4.16]{groth:ptstab}}]\label{thm:triang}
Let \D be a strong, stable derivator. The incoherent cofiber sequences \eqref{eq:triangles} define a triangulation on $\D(A),A\in\cCat.$ 
\end{thm}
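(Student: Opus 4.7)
The plan is to verify the four Verdier axioms for the suspension functor $\Sigma$ and the class of triangles arising as in \eqref{eq:triangles}. First I would reduce to the case $A=\bbone$: by \autoref{egs:stable}(iii) the shifted derivator $\D^A$ is again a strong stable derivator (strongness of $\D^A$ follows from strongness of $\D$ together with the compatibility of shifting with product categories), and the cofiber sequence construction is compatible with shifting. Stability immediately gives that $\Sigma\colon\D(\bbone)\to\D(\bbone)$ is an equivalence, with quasi-inverse $\Omega$, since suspension and loop objects are built from bicartesian squares.

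The class of distinguished triangles consists of those incoherent diagrams in $\D(\bbone)$ that are isomorphic (as diagrams $[3]\to\D(\bbone)$) to the underlying diagram of a coherent cofiber sequence $Q\in\D^{[2]\times[1]}$, where the chosen isomorphism $w\cong\Sigma x$ comes from the fact that the outer compound square in $Q$ is also bicartesian. For (TR1), the trivial triangle on $x$ is realized by applying the cofiber construction to the identity morphism, extending by zero, and left Kan extending; any morphism $f\colon x\to y$ in $\D(\bbone)$ lifts to a coherent morphism in $\D^{[1]}$ by (Der5), after which the construction preceding \eqref{eq:triangles} produces a distinguished triangle on $f$; closure under isomorphism is built into the definition. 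For (TR2), the key observation is that in a coherent cofiber sequence the outer and right-hand squares are bicartesian, so their composite (or rather the right-hand square together with a further extension by zero and a further pushout) identifies the cofiber of $y\to z$ with $\Sigma x$; iterating this gives that the rotated triangle $y\to z\to\Sigma x\to\Sigma y$ is again distinguished, and the inverse rotation follows formally since $\Sigma$ is an equivalence.

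For (TR3), given a commutative square relating the first two terms of two distinguished triangles, first lift both triangles to coherent cofiber sequences and then lift the given incoherent square to a coherent morphism in $\D^{[1]}$ between the respective coherent morphisms $(x\to y)$, again using (Der5) on the shifted derivator $\D^{[1]}$. Right extension by zero followed by left Kan extension is functorial, so we obtain a coherent morphism between the two cofiber sequences in $\D^{[2]\times[1]}$, whose underlying incoherent morphism then completes the square to a morphism of triangles (the fact that the morphism is induced on $\Sigma x$ by $\Sigma$ applied to the first component comes from the compound-square argument used for rotation). For (TR4), given $x\to y\to z$, lift it to a coherent diagram on the poset $[2]$ and build a $3\times 2$ coherent diagram whose three rows and whose induced $2\times 3$ grid of squares are all bicartesian (this is the standard construction of the octahedron from pushouts of a composable pair, and it is available here because pushouts in a stable derivator are in particular pullbacks so the middle square comes out bicartesian too); extracting the underlying incoherent diagram yields the octahedron.

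The main obstacle is (TR3): the step from an incoherent commutative square to a coherent morphism of cofiber sequences is exactly the place where (Der5) is indispensable, and one must be careful that (Der5) only guarantees a \emph{full} and essentially surjective partial underlying diagram functor, not an equivalence, so the lift of the commutative square is not canonical. Nonetheless any such lift suffices to produce a fill-in, which is all that (TR3) demands. The octahedral axiom is then essentially a bookkeeping exercise with bicartesian squares, relying on the pasting lemma for pushouts (\cite[Corollary~4.10]{gps:mayer}) and the coincidence of pushouts and pullbacks in the stable case.
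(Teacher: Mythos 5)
The paper does not prove this statement itself: it is imported verbatim from \cite[Theorem~4.16]{groth:ptstab}, and your overall strategy --- take distinguished triangles to be the incoherent shadows \eqref{eq:triangles} of coherent cofiber sequences, use strongness (Der5) to lift objects and fill-in morphisms, and verify Verdier's axioms via the calculus of bicartesian squares (pasting, coincidence of cartesian and cocartesian squares, a coherent $3\times 2$ diagram for the octahedron) --- is essentially the strategy of that cited proof, including the reduction to $A=\bbone$ by shifting and the use of fullness of the partial diagram functor for (TR3).

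The one genuine omission is the sign in the rotation axiom. Extending a cofiber sequence one more step to the right produces a distinguished triangle $(g,h,u)$ whose connecting map $u\colon\Sigma x\to\Sigma y$ is the canonical one, and verifying (TR2) requires identifying $u$ with $-\Sigma f$, not $\Sigma f$: the two identifications of the successive cones with $\Sigma x$ and $\Sigma y$ differ by the flip symmetry of the square, which induces the \emph{coinversion} of the cogroup structure on suspensions. This is exactly the point the present paper has to confront in its own proof of \autoref{thm:higher} (see the discussion of flipped $n$-triangles around \autoref{fig:signs}), and it is where the real work in the cited proof of the rotation axiom lies; as written, your claim that ``the rotated triangle $y\to z\to\Sigma x\to\Sigma y$ is again distinguished'' is either false (if the last map is $\Sigma f$) or incomplete (if it is the canonical map, which you still must compute). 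Relatedly, ``the inverse rotation follows formally since $\Sigma$ is an equivalence'' is too quick: you need either the standard argument combining (TR1), (TR3), one-sided rotation and the fact that a fill-in between triangles with two isomorphisms is an isomorphism, or, more in the spirit of the stable setting, the observation that cofiber sequences coincide with fiber sequences (cartesian $=$ cocartesian), so the dual construction provides the inverse rotation directly. Finally, a complete verification should also record that each $\D(A)$ is additive (part of the definition of a triangulation), which holds for stable derivators but is not addressed in your sketch.
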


We refer to the triangulations of the theorem as \textbf{canonical triangulations}. Recall that a morphism of derivators is \textbf{right exact} if it preserves initial objects and pushouts. There is also the dual notion of \textbf{left exact} morphisms and the combined notion of \textbf{exact} morphisms; see for example \cite{groth:ptstab} for some basics on these notions. For stable derivators these three notions clearly coincide.

\begin{prop}[{\cite[Proposition~4.18]{groth:ptstab}}]\label{thm:triangcan}
Let $F\colon\D\to\E$ be an exact morphism of strong, stable derivators. The components $F_A\colon\D(A)\to\E(A)$ can be endowed with the structure of an exact functor with respect to the canonical triangulations.
\end{prop}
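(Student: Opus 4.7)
The plan is to verify the three defining properties of an exact functor of triangulated categories for each component $F_A \colon \D(A) \to \E(A)$: additivity, a natural isomorphism $F_A \circ \Sigma \cong \Sigma \circ F_A$, and preservation of distinguished triangles. The essential inputs are that, as a morphism of derivators, $F$ commutes with every restriction functor $u^\ast$, and that as an exact morphism between stable derivators it preserves zero objects and cocartesian squares (hence, by stability, cartesian and bicartesian squares) as well as finite coproducts and products.

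For additivity, I would exploit that in a stable derivator the biproduct $x \oplus y$ is simultaneously a coproduct and a product, computable by Kan extension along $\emptyset \to \{0,1\}$, and is therefore preserved by $F$ together with its injections and projections. The usual argument in additive categories, expressing addition of parallel morphisms via biproduct diagonals and codiagonals, then yields additivity of $F_A$ on hom sets.

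For compatibility with $\Sigma$, recall that $\Sigma x$ is obtained by right extension by zero of $x$ to an object of $\D^\ulcorner(A)$, followed by left Kan extension along $i_\ulcorner \colon \ulcorner \to \square$ and restriction at the terminal vertex $(1,1)$. Since $F$ preserves zero objects, commutes with the restriction $(1,1)^\ast$, and preserves cocartesian squares, an elementary pasting argument produces a natural isomorphism $F_A(\Sigma x) \cong \Sigma F_A(x)$. Running the analogous argument on the diagram used to define $\cof$ shows that $F$ also preserves cofibers of coherent morphisms in a compatible way.

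For preservation of distinguished triangles, a distinguished triangle in $\D(A)$ is by construction the underlying incoherent diagram of a coherent cofiber sequence $Q \in \D(A \times [2] \times [1])$, i.e.\ a diagram whose two squares are cocartesian and whose upper-right and lower-left corners are zero. Applying the $(A \times [2] \times [1])$-component of $F$ and using preservation of zero objects and of cocartesian squares shows that $F(Q)$ is again a coherent cofiber sequence in $\E$. Reading off the underlying incoherent diagram and applying the natural isomorphism $F \Sigma \cong \Sigma F$ from the previous step identifies this with the image triangle in $\E(A)$, which is therefore distinguished. The main obstacle I anticipate is purely bookkeeping: checking that the isomorphism $F \Sigma \cong \Sigma F$ is compatible with the identification of the third term of a cofiber sequence with $\Sigma x$ built into the canonical triangulation, so that the connecting map $F(z) \to F(\Sigma x) \cong \Sigma F(x)$ coincides with the one prescribed on $\E(A)$. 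This is a routine application of the homotopy exactness formalism and the functoriality of the canonical mates from \eqref{eq:hoexmate1'}, and requires no new input beyond the machinery already recalled in the excerpt.
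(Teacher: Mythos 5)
Your proposal is correct and follows essentially the same route as the proof the paper imports from \cite[Proposition~4.18]{groth:ptstab}: exactness gives preservation of zero objects, biproducts, and (co)cartesian squares, the canonical mate transformations then make $F\Sigma\cong\Sigma F$ (and $F\circ\cof\cong\cof\circ F$) invertible since the Kan extensions involved only add bicartesian squares or extend by zero, and coherent cofiber sequences are sent to coherent cofiber sequences, so distinguished triangles are preserved up to the bookkeeping you describe. The only small slip is your description of binary (co)products: they are computed as Kan extensions along $\bbone\sqcup\bbone\to\bbone$ (equivalently as pushouts/pullbacks over the zero object), not along $\emptyset\to\{0,1\}$, but this does not affect the argument.
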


Left adjoint morphisms of derivators preserve colimits and dually. Hence, the proposition applied to \eqref{eq:Kan-adjunction} yields the following.

\begin{cor}[{\cite[Corollary~4.19]{groth:ptstab}}]
Let \D be a strong, stable derivator and let $u\colon A\to B$ be a functor between small categories. The functors
\[
u^\ast\colon\D(B)\to\D(A),\quad u_!\colon\D(A)\to\D(B),\quad\text{and}\quad u_\ast\colon\D(A)\to\D(B)
\]
can be canonically turned into exact functors with respect to canonical triangulations.
\end{cor}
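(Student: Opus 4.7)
The plan is to realize each of $u^\ast$, $u_!$, and $u_\ast$ as the $\bbone$-component of an exact morphism between strong, stable derivators, and then invoke the previous proposition (\autoref{thm:triangcan}).

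First I would recall from \eqref{eq:Kan-adjunction} that there are parametrized adjunctions of derivators
\[
(u_!,u^\ast)\colon\D^A\rightleftarrows\D^B\qquad\text{and}\qquad (u^\ast,u_\ast)\colon\D^B\rightleftarrows\D^A,
\]
whose $\bbone$-components recover the three functors in question, since $\D^A(\bbone)=\D(A)$ and $\D^B(\bbone)=\D(B)$. Both $\D^A$ and $\D^B$ are again strong and stable: stability of shifted derivators is \autoref{egs:stable}(iii), and (Der5) transfers to shifted derivators via the natural isomorphism $(\D^B)^{[1]}(C)\cong \D(B\times C\times[1])$.

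Next I would argue that each of $u_!$, $u^\ast$, $u_\ast$, viewed as a morphism of derivators, is exact. A morphism which is a left adjoint preserves all left Kan extensions, in particular initial objects and pushouts, hence is right exact; dually, a right adjoint is left exact. In the stable setting, bicartesianness identifies cocartesian and cartesian squares and zero objects are simultaneously initial and terminal, so the three notions right exact, left exact, and exact coincide (as stated in the paragraph preceding the proposition). Therefore $u_!$ is exact (as a left adjoint), $u_\ast$ is exact (as a right adjoint), and $u^\ast$ is exact (being both adjoints at once).

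Finally, \autoref{thm:triangcan} applied to each of these exact morphisms $\D^A\to\D^B$ or $\D^B\to\D^A$ endows the components at every $C\in\cCat$, and in particular at $C=\bbone$, with the structure of an exact functor relative to the canonical triangulations of \autoref{thm:triang}. Since there is essentially nothing beyond unwinding definitions and bookkeeping, I do not expect a genuine obstacle; the only point requiring a small remark is verifying that strongness is inherited by shifted derivators, which follows at once from the definition.
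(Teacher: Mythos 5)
Your proposal follows essentially the same route as the paper: the paper likewise invokes the adjunctions \eqref{eq:Kan-adjunction}, notes that left (resp.\ right) adjoint morphisms of derivators preserve colimits (resp.\ limits) and hence are exact in the stable setting, and concludes by applying \autoref{thm:triangcan}. Your additional remarks on strongness and stability of the shifted derivators are correct and merely make explicit what the paper leaves implicit.
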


Thus, like stable model categories and stable $\infty$-categories, stable derivators provide an enhancement of triangulated categories. In \S\ref{sec:higher} we shall see that stable derivators also provide an enhancement of higher triangulations. Hence, a posteriori, this result also applies to stable model categories and stable $\infty$-categories.

The following theorem reinforces the correctness of the definition of left exact morphisms. Let us recall that a small category $A$ is \textbf{strongly homotopy finite} if the nerve $NA$ has only finitely many non-degenerate simplices, i.e., if $A$ is finite, skeletal, and has no nontrivial endomorphisms. We say that $A$ is \textbf{homotopy finite} if it is equivalent to a strongly homotopy finite category. 

\begin{thm}[{\cite[Theorem~7.1]{ps:linearity}}]\label{thm:exact}
Let $A$ be a homotopy finite category. Any right exact morphism of derivators preserves colimits of shape~$A$. 
\end{thm}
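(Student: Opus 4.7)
The plan is to exhibit $\colim_A$ as a finite iteration of operations that a right exact morphism preserves by assumption, namely pushouts and passages to initial objects, and to conclude by induction. First I would use invariance of colimit functors under equivalences of shape to reduce to $A$ being strongly homotopy finite: finite, skeletal, and with no nontrivial endomorphisms. I then induct on the number of objects of $A$. The base cases $A = \emptyset$ and $A = \bbone$ are immediate since $\colim_\emptyset$ is the initial object functor and $\colim_\bbone = \id$, both preserved by hypothesis.

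For the inductive step, the absence of nontrivial endomorphisms together with finiteness makes the relation $a \prec b :\iff A(a,b)\neq\emptyset$ and $a\neq b$ a strict partial order on the objects of $A$, so I can pick a maximal element $a$. The full subcategory inclusion $i\colon A' \hookrightarrow A$ on the remaining objects is a sieve by maximality of $a$. I would then decompose $A$ as a categorical gluing of $A'$ and the slice $A/a$ (whose terminal object is $\id_a$) along the boundary $S := A/a \setminus \{\id_a\}$, obtaining a commutative square
\[
\xymatrix{
S \ar[r] \ar[d] & A' \ar[d]^i \\
A/a \ar[r] & A
}
\]
in $\cCat$. The goal is that for every $X \in \D^A$ this induces a cocartesian square
\[
\xymatrix{
\colim_S X|_S \ar[r] \ar[d] & \colim_{A'} X|_{A'} \ar[d] \\
X_a \ar[r] & \colim_A X
}
\]
in $\D(\bbone)$, where I have identified the lower-left corner with $X_a$ via homotopy finality (\autoref{egs:htpy}(iii)), since $\id_a$ is terminal in $A/a$.

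The step I expect to be the main obstacle is establishing homotopy exactness of this square. It asserts a general form of gluing along sieves in $\cCat$, and I would prove it by the standard calculus of homotopy exact squares, pasting slice squares (exact by (Der4)) and exploiting that $S \hookrightarrow A/a$ is itself a sieve (namely the complement of a terminal object), so that the categorical pushout presentation can be verified pointwise via a comma-category calculation. Granted this, the induction closes: the categories $S$ and $A'$ are homotopy finite with strictly fewer objects than $A$, so $F$ preserves their colimits by the inductive hypothesis; the evaluation $X_a$ is preserved by any morphism of derivators; and since $F$ is right exact it preserves the cocartesian square. Hence $F$ preserves $\colim_A X$, completing the induction.
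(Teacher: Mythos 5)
This paper does not actually prove the statement itself --- it imports it from \cite[Theorem~7.1]{ps:linearity} --- and your outline follows the same general strategy as the proof in that reference: reduce to a strongly homotopy finite $A$, remove an object $a$ admitting no nonidentity morphisms out of it, and exhibit $\colim_A X$ as a pushout of $\colim_{A'}X|_{A'}$ and $X_a$ over $\colim_S X|_S$ with $S=A'/a$. However, as written your induction does not close. You induct on the number of objects and assert that $S$ has strictly fewer objects than $A$; this is false in general. If $A$ has two objects $c,a$ and three parallel arrows $c\to a$, then $a$ is maximal, $A'=\{c\}$, and $S=A'/a$ is the discrete category on the three arrows, which has \emph{more} objects than $A$. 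The standard fix is to induct on the number of nondegenerate simplices of the nerve $NA$ (finite exactly because $A$ is strongly homotopy finite): nondegenerate $k$-simplices of $N(A'/a)$ correspond to nondegenerate $(k+1)$-simplices of $NA$ ending at $a$, so both $A'$ and $A'/a$ have strictly fewer nondegenerate simplices than $A$, and both are again finite, skeletal and without nontrivial endomorphisms. With that measure the inductive scheme is sound.

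The second point is that the step you call the main obstacle is in fact the entire mathematical content of the theorem, and your sketch does not yet amount to a proof. Note first a formal issue: a ``cocartesian square in $\D(\bbone)$'' is not meaningful as stated --- cocartesian squares live in $\D(\square)$ --- and to say that a right exact morphism $F$ ``preserves'' it you must produce the span $\colim_{A'}X|_{A'}\leftarrow\colim_S X|_S\to X_a$ coherently (as an object of $\D(\ulcorner)$, naturally in $X$), so that the comparison with $F$ is the canonical mate; an incoherent square of objects is not something $F$ can be said to preserve. Beyond that, the homotopy exactness/gluing statement itself requires a genuine argument: one standard implementation replaces $A$ by a mapping-cylinder category $A'\cup_{S\times\{0\}}(S\times[1])\cup_{S\times\{1\}}\bbone$, shows the collapse functor to $A$ is homotopy final, maps the cylinder to $\square$, and then identifies the relevant left Kan extension as a cocartesian square using (Der4), pasting, and the detection lemma (\autoref{lem:detection}); merely ``pasting slice squares and checking pointwise'' does not by itself verify the required square. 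Until this lemma is carried out (or cited), your argument is an outline of the proof in \cite{ps:linearity} rather than a proof.
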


The canonical triangulations of \autoref{thm:triang} reduce to the classical triangulations in \autoref{egs:stable}. With this in mind, the following notion was introduced in \cite{gst:basic}.

\begin{defn}\label{defn:sse}
Two small categories $A$ and $A'$ are \textbf{strongly stably equivalent}, in notation $A\sse A',$ if for every stable derivator \D there is an equivalence of derivators
\[
\D^A\simeq\D^{A'}
\]
which is natural with respect to exact morphisms of stable derivators. Such a natural equivalence is a \textbf{strong stable equivalence}.
\end{defn}

Note that for a quiver $Q$ and a ring $R$ there is an equivalence $\D_R^Q\simeq \D_{RQ}$. Hence, if two quivers $Q$ and $Q'$ are strongly stably equivalent, then we obtain natural equivalences
\[
\D_{RQ}\simeq\D_R^Q\simeq \D_R^{Q'}\simeq\D_{RQ'}.
\]
In particular, $Q$ and $Q'$ are derived equivalent over arbitrary rings. But being strongly stably equivalent is a much stronger property since similar equivalences are also assumed to exist in many other contexts (see \cite[\S5]{gst:basic}). For examples of strong stable equivalences we refer to \cite{gst:basic,gst:tree}.

The following seemingly technical result will be used a lot in later sections. It often allows us to `detect' (co)cartesian squares in larger diagrams.

\begin{lem}[{\cite[Prop.~3.10]{groth:ptstab}}]\label{lem:detection}
  Suppose $u\colon A\to B$ and $k\colon \square \to B$ are functors, with $k$ injective on objects, and let $b = k(1,1)\in B$.
  Suppose furthermore that $b\notin u(A)$, and that the functor $\mathord\ulcorner \to (B-\{b\}/b)$ induced by $k$ has a left adjoint.
  Then for any derivator \D and any $X\in \D(A)$, the square $k^* u_! X$ is cocartesian.
\end{lem}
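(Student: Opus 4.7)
The plan is to verify that the counit $\epsilon \colon (i_\ulcorner)_! (i_\ulcorner)^* k^* u_! X \to k^* u_! X$ is an isomorphism in $\D^\square$, which characterizes $k^* u_! X$ as cocartesian. By (Der2) it suffices to check $\epsilon$ pointwise on $\square$; on $\ulcorner$ this is automatic since $i_\ulcorner$ is fully faithful (\autoref{egs:htpy}(i)), leaving only the point $(1,1)$. Applying (Der4) to the slice square for $(1,1) \colon \bbone \to \square$ along $i_\ulcorner$, and noting that the projection $(i_\ulcorner/(1,1)) \to \ulcorner$ is an isomorphism (each corner admits a unique morphism to $(1,1)$ in $\square$), identifies $\epsilon_{(1,1)}$ with the canonical mate at $u_! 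X$ of the square
\[
\xymatrix@C=3pc{
\ulcorner \ar[r]^-{k \circ i_\ulcorner} \ar[d]_-\pi \drtwocell\omit{\alpha} & B \ar@{=}[d] \\
\bbone \ar[r]_-b & B,
}
\]
where the $2$-cell $\alpha$ is given by applying $k$ to the unique morphisms $c \to (1,1)$ in $\square$ for $c \in \ulcorner$.

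Since $b \notin u(A)$, I factor $u$ as $A \xrightarrow{v} C \xrightarrow{j} B$, where $C = B - \{b\}$ and $j$ is the fully faithful inclusion of the full subcategory; then $u_! \cong j_! v_!$ and $j^* u_! X \cong v_! X$ canonically. Because $k$ is injective on objects, $k \circ i_\ulcorner$ factors through $C$ as $j \circ k'$ for a unique $k' \colon \ulcorner \to C$, and the hypothesized functor $\tilde k \colon \ulcorner \to (C/b)$ (which carries a left adjoint) satisfies $q_b \circ \tilde k = k'$, where $q_b \colon (C/b) \to C$ is the slice projection and $(C/b)$ coincides with $(j/b)$.

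The key step is the horizontal pasting
\[
\xymatrix@C=3pc{
\ulcorner \ar[r]^-{\tilde k} \ar[d]_-\pi & (C/b) \ar[r]^-{j \circ q_b} \ar[d]^-\pi \drtwocell\omit{} & B \ar@{=}[d] \\
\bbone \ar@{=}[r] & \bbone \ar[r]_-b & B,
}
\]
whose composite upper row is $k \circ i_\ulcorner$ and whose composite $2$-cell reproduces $\alpha$. The left sub-square commutes strictly and is homotopy exact since $\tilde k$ is a right adjoint, hence homotopy final (\autoref{egs:htpy}(iii)). The right sub-square is not homotopy exact in general, but its mate $\pi_! (j \circ q_b)^* Y \to b^* Y$ is an isomorphism whenever $Y = j_! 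Z$: using $(j \circ q_b)^* j_! Z = q_b^* j^* j_! Z \cong q_b^* Z$ (by $j$ fully faithful), the mate becomes the (Der4) mate of the slice square for $b$ and $j$ applied to $Z$, which is an isomorphism. This applies at $Y = u_! X = j_! v_! X$. By functoriality of mates under horizontal pasting (\autoref{egs:htpy}(iv)), the mate of the pasted square at $u_! X$ is the composite of these two isomorphisms, hence is itself an isomorphism, completing the verification.

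The main obstacle is the mate-calculus bookkeeping: first, identifying $\epsilon_{(1,1)}$ with the mate of the displayed single square via (Der4), and second, observing that the right sub-square in the pasting is homotopy exact only on the essential image of $j_!$, which nevertheless contains the specific object $u_! X$ thanks to the factorization $u = j \circ v$. Once these identifications are in place, the proof reduces to a clean pasting of (effectively) homotopy exact sub-squares, with the hypothesized left adjoint providing the homotopy finality of $\tilde k$ and the fully faithful $j$ providing everything else.
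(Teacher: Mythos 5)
Your argument is correct, and since this lemma is only quoted here with a citation to \cite[Prop.~3.10]{groth:ptstab} rather than proved, the relevant comparison is with that reference: your proof follows essentially the same route as the original one. Detecting cocartesianness at the corner $(1,1)$ via $\colim_\ulcorner$, factoring $u$ through $B-\{b\}$ (possible since $b\notin u(A)$), exploiting fully faithfulness of the inclusion $j\colon B-\{b\}\to B$, the pointwise formula (Der4) at $b$, and the homotopy finality of the right adjoint $\ulcorner\to(B-\{b\}/b)$ is exactly the standard argument; your packaging of the bookkeeping as a horizontal pasting of (one genuinely, one objectwise-on-the-image-of-$j_!$) homotopy exact squares is a clean and accurate way to organize it.
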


\section{Coherent Auslander--Reiten quivers}
\label{sec:An}

In this section we construct coherent diagrams of the shape of the Auslander--Reiten quiver for Dynkin quivers of type~$A$. Derivators of such representations will be used in \S\ref{sec:coxeter-reflection} to define reflection functors, Coxeter functors, and Serre functors. The results of this section have variants for pointed derivators, but for simplicity they are only stated for stable derivators.

Let us begin by recalling that associated to every quiver $Q$ there is the repetitive quiver $\widehat{Q}$ described as follows. Vertices in $\widehat{Q}$ are pairs $(k,q)$ with $k\in\lZ$ and $q\in Q$ while for every edge $\alpha\colon q_1\to q_2$ in $Q$ there are edges $\alpha\colon (k,q_1)\to (k,q_2)$ and $\alpha^\ast\colon (k,q_2)\to (k+1,q_1)$ in $\widehat{Q}$. For example if $\A{n}=(1<\ldots < n)$ is the $A_{n}$-quiver with a linear orientation, then the repetitive quiver of $\A{3}$ looks like:
\begin{equation} \label{eq:ar-quiver}
\vcenter{
\xymatrix@R=0.8em@C=0.5em{
\ar[dr] && (-1,3) \ar[dr]^{\beta^*} && (0,3) \ar[dr]^{\beta^*} && (1,3) \ar[dr]^{\beta^*} && (2,3) \ar[dr]\\
\cdots & (-1,2) \ar[ur]^{\beta} \ar[dr]_-{\alpha^\ast} &&
(0,2) \ar[ur]^{\beta}  \ar[dr]_-{\alpha^\ast} && (1,2) \ar[ur]^{\beta}  \ar[dr]_-{\alpha^\ast} &&
(2,2) \ar[ur]^{\beta}  \ar[dr]_-{\alpha^\ast} && \cdots\\
\ar[ur] && (0,1) \ar[ur]_-\alpha && (1,1) \ar[ur]_-\alpha && (2,1) \ar[ur]_-\alpha && (3,1) \ar[ur]
}
}
\end{equation}
We denote by $M_{\A{n}}$ the category obtained from the repetitive quiver of $\A{n}$ by forcing all squares to commute, i.e., the free category generated by the graph modulo these commutativity relations. Committing a minor abuse of terminology, we refer to $M_{\A{n}}$ as the \textbf{mesh category}.

\begin{rmk}
There is the typical source of confusion resulting from the different indexing conventions used in topology and algebra. The topological convention is to denote the ordinal $(0<\ldots<n)$ by $[n]$ while algebraists refer to it as an $A_{n+1}$-quiver, emphasizing the number of vertices. Of course there are obvious variants of the mesh category starting with the quiver $[n]$ instead, the only difference being the decoration of the objects.
\end{rmk}

Let now \D be a stable derivator and $X\in\D^{\A{n}}$. We want to construct a coherent diagram of shape $M_n=M_{[n+1]}$ for $[n+1]=(0<\ldots <n+1)$ satisfying certain exactness and vanishing conditions. Note that there is the fully faithful functor
\begin{equation}
i\colon \A{n}\to M_n\colon l\mapsto (0,l) 
\label{eq:i}
\end{equation}
which we consider as an inclusion. This embedding factors as a composition of inclusions of full subcategories
\[
i\colon \A{n}\stackrel{i_1}{\to} K_1\stackrel{i_2}{\to} K_2\stackrel{i_3}{\to} K_3\stackrel{i_4}{\to} M_n
\]
where
\begin{enumerate}
\item $K_1$ is obtained from $\A{n}$ by adding the objects $(k,n+1)$ for $k\geq 0$ and $(k,0)$ for $k>0$,
\item $K_2$ contains all objects from $K_1$ and the objects $(k,l), k>0$, and
\item $K_3$ is obtained from $K_2$ by adding the objects $(k,n+1)$ for $k<0$ and $(k,0)$ for $k\leq 0$.
\end{enumerate}
The inclusion $i_4$ thus adds the remaining objects in the negative $k$-direction. By \autoref{egs:htpy}, associated to these fully faithful functors there are fully faithful Kan extension functors
\begin{equation}
\vcenter{
\xymatrix{
\D^{\A{n}}\ar[r]^-{(i_1)_\ast}&\D^{K_1}\ar[r]^-{(i_2)_!}&\D^{K_2}\ar[r]^-{(i_3)_!}&
\D^{K_3}\ar[r]^-{(i_4)_\ast}&\D^{M_n}.
}
}
\label{eq:AR}
\end{equation}

Let us denote by $\D^{M_n,\exx}\subseteq\D^{M_n}$ the full subderivator spanned by all coherent diagrams which vanish at $(k,0),(k,n+1)$ for all $k\in\lZ$ and which make all squares bicartesian. The following theorem justifies that we refer to $\D^{M_n,\exx}$ as a derivator.

\begin{thm}\label{thm:AR}
Let \D be a stable derivator. Then \eqref{eq:AR} induces an equivalence of stable derivators $(F,i^\ast)\colon\D^{\A{n}}\rightleftarrows\D^{M_n,\exx}$ which is natural with respect to exact morphisms. Moreover, the inclusion $\D^{M_n,\exx}\to\D^{M_n}$ is exact.
\end{thm}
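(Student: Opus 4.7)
The plan is to analyze the composite $F \coloneqq (i_4)_*(i_3)_!(i_2)_!(i_1)_*$ factor-by-factor, describing the essential image of each factor explicitly; full faithfulness of $F$ and the identity $i^* F \cong \id$ will then follow formally from the fact that $i = i_4 i_3 i_2 i_1$ and that restriction along each $i_j$ is a retraction of the corresponding Kan extension (\autoref{egs:htpy}(i)). So $i^*$ will automatically be a quasi-inverse to $F$ once the joint essential image is identified with $\D^{M_n,\exx}$.

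First I would verify that $i_1$ is a sieve and $i_3$ is a cosieve. Arrows in $M_n$ terminating at an object $(0,l) \in \A{n}$ originate at $(0,l-1)$ or at $(-1,l+1)$, neither of which lies in $K_1 \setminus \A{n}$; the check for $i_3$ is dual. Hence \autoref{lem:extbyzero} and its dual identify $(i_1)_*$ and $(i_3)_!$ as right and left extension by zero, with essential images the diagrams vanishing on the complement. Next I would describe $(i_2)_!$ by factoring $i_2$ as a countable composition of one-object enlargements, ordered (e.g.~lexicographically in $(k,l)$) so that when $(k,l)$ with $k>0, 1 \le l \le n$ is adjoined, its three mesh neighbours $(k-1,l), (k-1,l+1), (k,l-1)$ are already present. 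At each stage the diamond at $(k,l)$ provides a functor $\square \to B$ satisfying the hypotheses of \autoref{lem:detection}---the required left adjoint to $\ulcorner \to (B - \{b\}/b)$ comes from the unique nearest-neighbour factorization of any arrow into $(k,l)$ in the mesh category---so the square becomes cocartesian, hence bicartesian by stability. The functor $(i_4)_*$ is treated dually, producing cartesian (hence bicartesian) diamonds in the region $k<0$. Altogether this shows $F(X) \in \D^{M_n,\exx}$.

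For essential surjectivity onto $\D^{M_n,\exx}$ I would peel off the four factors from a given $Y \in \D^{M_n,\exx}$: the bicartesian squares in the $k<0$ region together with the characterization of the essential image of $(i_4)_*$ just obtained yield $Y \simeq (i_4)_* i_4^* Y$; the vanishing of $Y$ on the boundary objects $(k,0)$ with $k \le 0$ and $(k,n+1)$ with $k<0$ then places $i_4^* Y$ in the image of $(i_3)_!$; the bicartesian squares in the $k>0$ region place $i_3^* i_4^* Y$ in the image of $(i_2)_!$; and the remaining vanishing places $i_2^* i_3^* i_4^* Y$ in the image of $(i_1)_*$. Thus $Y \simeq F(i^* Y)$, as required.

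Naturality with respect to exact morphisms of stable derivators is then automatic: the whole construction is assembled from restrictions, zero objects, and finite pushouts and pullbacks (via the iterative descriptions of $(i_2)_!$ and $(i_4)_*$), and all of these are preserved by any exact morphism by definition. Exactness of the inclusion $\D^{M_n,\exx} \hookrightarrow \D^{M_n}$ finally follows from the pointwise computation of finite (co)limits in shifted derivators together with the observations that the zero object is preserved under (co)limits and that in a stable derivator finite (co)limits of bicartesian squares are bicartesian. The main technical obstacle is the orderly iterative description of $(i_2)_!$ and $(i_4)_*$ and the verification of the detection-lemma hypothesis at each step; everything else is standard bookkeeping with fully faithful Kan extensions.
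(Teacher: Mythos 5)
Your proposal is correct and follows essentially the same route as the paper's proof: the same four-step factorization with extension by zero for the sieve/cosieve steps (\autoref{lem:extbyzero}), the same one-object-at-a-time applications of \autoref{lem:detection} for $(i_2)_!$ and $(i_4)_\ast$, the same identification of the essential image with $\D^{M_n,\exx}$, and the same naturality argument. The only difference is that your treatment of the exactness of the inclusion $\D^{M_n,\exx}\to\D^{M_n}$ is more of a sketch --- the paper verifies the vanishing and bicartesianness conditions for $(i_\lrcorner)_\ast$ explicitly by commuting Kan extensions in unrelated variables --- but the underlying idea (limits computed pointwise in the $M_n$-direction preserve the defining conditions) is the same.
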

\begin{proof}
Since the functors in \eqref{eq:AR} are fully faithful, their composition induces an equivalence onto the essential image. Let us analyze the individual steps. The functor $i_1\colon\A{n}\to K_1$ is the inclusion of a sieve. Hence it follows from \autoref{lem:extbyzero} that $(i_1)_\ast$ is right extension by zero. Thus, $(i_1)_\ast\colon\D^{\A{n}}\to\D^{K_1}$ induces an equivalence onto the full subderivator of $\D^{K_1}$ spanned by all diagrams which vanish at $(k,n+1),k\geq 0,$ and $(k,0),k>0$.

The second step $(i_2)_!$ is also fully faithful, and hence induces an equivalence onto its essential image. We claim that this functor amounts to adding cocartesian squares everywhere in the positive $k$-direction. In fact, the functor $i_2\colon K_1\to K_2$ can be factored into countably many intermediate steps adding one object $(k,l)$ with $k>0,0<l<n+1$ at a time using a nested induction over $k>0$ and $l,0<l<n+1,$ in the increasing $l$-direction. In each of these steps, an application of \autoref{lem:detection} together with \autoref{egs:htpy} implies that the corresponding left Kan extension functor induces an equivalence onto the full subderivator consisting of all diagrams making the new square cocartesian. Thus, $(i_2)_!\colon\D^{K_1}\to\D^{K_2}$ induces an equivalence onto the full subderivator of $\D^{K_2}$ spanned by the objects making all squares cocartesian. 

The remaining steps are now symmetric. The functor $i_3$ is the inclusion of a cosieve, hence $(i_3)_!$ is left extension by zero (\autoref{lem:extbyzero}). And using similar arguments as in the case of $(i_2)_!$, we see that $(i_4)_\ast\colon\D^{K_3}\to\D^{M_n}$ induces an equivalence onto the full subderivator spanned by the diagrams making all squares in the negative $k$-direction cartesian (again, by a repeated application of \autoref{lem:detection}). Taking these steps together, we deduce that, for a stable derivator \D, the construction \eqref{eq:AR} induces an equivalence of derivators $\D^{\A{n}}\to\D^{M_n,\exx}$. Since we only extended by zeros and added (co)cartesian squares, one checks that these equivalences are natural with respect to exact morphisms.

It remains to show that the inclusion $\D^{M_n,\exx} \to \D^{M_n}$ preserves zero objects and (co)cartesian squares. We begin by noting that each $\D^{M_n,\exx}(A)$ contains the zero object of $\D^{M_n}(A)$ which is simply the constant $M_n$-shaped diagram taking value $0$. Since it clearly satisfies the defining vanishing conditions of $\D^{M_n,\exx}$ it suffices to observe that constant squares, i.e., coherent diagrams in the image of $\pi^\ast\colon\D\to\D^\square$, are bicartesian (see \cite[Proposition~3.12]{groth:ptstab}). Thus, $\D^{M_n,\exx} \to \D^{M_n}$ preserves zero objects. 

As for the preservation of cartesian squares, let $X \in \D^{M_n,\exx}(\lrcorner)$ and consider the right Kan extension $(i_\lrcorner)_\ast(X) \in \D^{M_n}(\square)$ taken in $\D^{M_n}$. We have to show that $(i_\lrcorner)_\ast(X)$ belongs to $\D^{M_n,\exx}(\square)$, i.e., we have to check certain vanishing conditions and exactness conditions. For $a=(k,0),(k,n+1),k\in\lZ,$ by \autoref{egs:htpy}, we have $(i_\lrcorner)_\ast(X)_a\cong(i_\lrcorner)_\ast(X_a)=(i_\lrcorner)_\ast(0)\cong 0$ so that the vanishing conditions hold. To verify the exactness conditions, let us consider one of the squares $q\colon\square\to M_n$ which has to be made bicartesian. Considering $X\in\D^{M_n,\exx}(\lrcorner)$ as an object of $\D(M_n\times\lrcorner)$, we have to show that $(q\times\id)^\ast(\id\times i_\lrcorner)_\ast(X)$ lies in the essential image of $(i_\lrcorner\times\id)_\ast$. Note that by \autoref{egs:htpy} this object is isomorphic to $(\id\times i_\lrcorner)_\ast(q\times\id)^\ast(X)$. Since $X$ lies in $\D^{M_n,\exx}(\lrcorner)$ it follows that $(q\times\id)^\ast(X)$ lies in the essential image of $(i_\lrcorner\times\id)_\ast$, and we can conclude by observing that $(\id\times i_\lrcorner)_\ast$ and $(i_\lrcorner\times\id)_\ast$ commute up to canonical isomorphism.
\end{proof}

\begin{rmk}\label{rmk:AR}
\begin{enumerate}
\item There is also a variant of this result for pointed derivators. In that case, we would consider the full subderivator of $\D^{M_n}$ spanned by all diagrams satisfying the above vanishing conditions, making all squares in the positive $k$-direction cocartesian and the ones in the negative $k$-direction cartesian. In particular, the following three special cases have variants in the pointed context.
\item For $n=1$ this result tells us that a pointed derivator is equivalent to the derivator of $\lZ$-graded spectrum objects which are suspension spectra in the positive direction and $\Omega$-spectra in the negative direction (see also \cite{heller:stable} and \cite{HA}).
\item For $n=2$ this result specializes to the statement that the derivator of morphisms is equivalent to the derivator of Barratt--Puppe sequences.
\item For larger values of $n$ this result gives us a coherent and doubly-infinite version of the Waldhausen $S_\bullet$-construction \cite{waldhausen:k-theory}.
\end{enumerate}
\end{rmk}

If we consider representations of an $A_n$-quiver over a field, then there are $\frac {n(n+1)}{2}$ isomorphism classes of indecomposable such representations by~\cite[\S2.2]{gabriel:unzerlegbare}. This is reflected by the fact that there are precisely $\frac{n(n+1)}{2}$ different suspension orbits in the classical Auslander--Reiten quiver of the bounded derived category of finite dimensional representations; see~\cite[\S4]{happel:fd-algebra}. Given an arbitrary stable derivator, there is the same number of suspension orbits in the coherent Auslander--Reiten quivers of \autoref{thm:AR}. This is a consequence of the following proposition in which we consider the fully faithful functor

\begin{equation}
j\colon\A{n}\to M_n\colon l\mapsto (l,n+1-l). 
\label{eq:j}
\end{equation}

\begin{prop}\label{prop:susp}
Let \D be a stable derivator. For every $n\geq 1$ there is a natural isomorphism $\Sigma \cong j^\ast F\colon\D^{\A{n}}\to\D^{\A{n}}$.
\end{prop}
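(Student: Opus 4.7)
The plan is to exhibit, for each $X \in \D^{\A{n}}$, a cocartesian square in $\D^{\A{n}}$ of the form
\[
\xymatrix@-.5pc{
X \ar[r] \ar[d] & 0 \ar[d] \\
0 \ar[r] & j^\ast F(X)
}
\]
whence the universal property of the suspension furnishes a natural isomorphism $\Sigma X \cong j^\ast F(X)$. Geometrically, this cocartesian square is to be realized as the total outer square of the ``parallelogram'' in $M_n$ with corners $(0,l)$, $(0,n+1)$, $(l,0)$, and $(l,n+1-l)$, obtained by pasting together the mesh squares it contains.

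To realize this uniformly in $l$, I would introduce the auxiliary functor $\Lambda\colon \A{n} \times \square \to M_n$ defined on objects by $(l,0,0) \mapsto (0,l)$, $(l,0,1) \mapsto (0,n+1)$, $(l,1,0) \mapsto (l,0)$, and $(l,1,1) \mapsto (l,n+1-l)$, with morphisms sent to the evident composites of $\alpha$ and $\alpha^\ast$ arrows. The restriction $\Lambda^\ast F(X) \in \D^{\A{n}}(\square)$ then has corners $i^\ast F(X) = X$, the constant zero object (at $(-,0,1)$ and $(-,1,0)$, by the boundary vanishing built into $\D^{M_n,\ex}$), and $j^\ast F(X)$ at $(-,1,1)$, as required.

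To check that $\Lambda^\ast F(X)$ is cocartesian, I would use that $(i_\ulcorner)_!$ commutes with restrictions in unrelated variables (\autoref{egs:htpy}(ii)), reducing the problem to pointwise cocartesianness in $l$. For each fixed $l$, $\Lambda(l,-)\colon \square \to M_n$ factors through a rectangular subcategory $P_l \subseteq M_n$ isomorphic to $[l] \times [n+1-l]$ (via $(a,b) \mapsto (a, l-a+b)$), whose unit squares correspond to mesh squares of $M_n$. Each such unit square is bicartesian in $F(X)$ by the defining property of $\D^{M_n,\ex}$, so iterated application of pasting for cocartesian squares (\cite[Corollary~4.10]{gps:mayer}) gives that the outer square of $P_l$ --- which is exactly $\Lambda(l,-)^\ast F(X)$ --- is cocartesian. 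Naturality of the resulting isomorphism $\Sigma \cong j^\ast F$ and its assembly into a natural isomorphism of morphisms of derivators is automatic, since every ingredient is built from restrictions and Kan extensions.

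I expect the main technical obstacle to be confirming that $\Lambda$ is well defined on morphisms: one must verify that the two naturally occurring paths from $(l,0)$ to $(l+1, n-l)$ in $M_n$, one routed through $(l+1,0)$ and the other through $(l, n+1-l)$, agree. This amounts to a repeated application of the mesh relations, or equivalently uses the standard fact that the mesh category of a translation quiver of type~$A$ has at most one-dimensional hom-sets. Once this check is in hand, the rest reduces to routine stable-derivator calculus.
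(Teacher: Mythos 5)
Your proposal is correct and takes essentially the same route as the paper: the paper's proof introduces exactly your functor $\Lambda$ (there called $q\colon\A{n}\times\square\to M_n$), reduces cocartesianness to the pointwise statement in $l$, verifies it by factoring $q(l,-)$ through a rectangle $\A{n+2-l}\times\A{l+1}\to M_n$ whose mesh squares are bicartesian and pasting, and then uses the vanishing corners to identify $j^\ast F X$ with $\Sigma X$. Your flagged well-definedness issue is dispatched in the paper simply by observing that $M_n$ is a poset, so there is at most one such functor and its existence is a routine check.
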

\begin{proof}
In order to obtain such a natural isomorphism, let us consider the functor 
\[
q\colon \A{n}\times\square\to M_n
\]
which we define coordinatewise by
\begin{enumerate}
\item $q(l,00)=i(l)=(0,l)$ for each $l\in\A{n}$,
\item $q(l,10)=(0,n+1)$ for each $l\in\A{n}$,
\item $q(l,01)=(l,0)$ for each $l\in\A{n}$, and 
\item $q(l,11)=j(l)=(l,n+1-l)$ for each $l\in\A{n}$.
\end{enumerate}
Since $M_n$ is a poset there is at most one such functor and we leave it to the reader to check that this functor actually exists. For $X\in\D^{\A{n}}$ and $l\in\A{n}$ one observes that $(q^\ast FX)_l\in\D^\square$ is cocartesian. In fact, the functor $q(l,-)\colon\square\to M_n$ can be obtained as a restriction of a unique embedding $p_l\colon \A{n+2-l}\times \A{l+1}\to M_n$ to the boundary such that each square in $p_l^\ast FX$ is cocartesian. Since horizontal and vertical compositions of cocartesian squares are again cocartesian it follows that $(q^\ast FX)_l$ is cocartesian for every $l$. It is then a consequence of \cite[Corollary~3.14]{groth:ptstab} that $q^\ast FX$ considered as an object of $(\D^{\A{n}})^\square$ is also cocartesian. Since the restrictions of $q^\ast FX$ along $(1,0),(0,1)\colon\bbone\to\square$ vanish, the diagram $q^\ast FX$ can be used to calculate the suspension morphism $\Sigma\colon\D^{\A{n}}\to\D^{\A{n}}$. Thus, we obtain the desired natural isomorphism
\[
j^\ast FX= (q^\ast FX)_{(1,1)}\cong\Sigma\big((q^\ast FX)_{(0,0)}\big)=\Sigma i^\ast FX\cong \Sigma X,
\]
concluding the proof.
\end{proof}

Let us consider a pair of composable arrows $X=(x\to y\to z)\in\D^{\A{3}}$ . It is a consequence of \autoref{thm:AR} and \autoref{prop:susp} that $F(X)\in\D^{M_3,\exx}$ looks like

\begin{equation}\label{eq:A3}
\vcenter{
\xymatrix@=1em{
\cdots & 0 \ar[dr] && 0 \ar[dr] && 0 \ar[dr] && 0  \ar[dr] && \cdots\\
\ar[ur]\ar[dr] && \Omega w \ar[ur]\ar[dr] && z\ar[ur] \ar[dr] && \Sigma x\ar[ur] \ar[dr] && \Sigma u\ar[ur] \ar[dr]\\
\cdots & \Omega v \ar[ur] \ar[dr] &&
y \ar[ur] \ar[dr] && v \ar[ur] \ar[dr] &&
\Sigma y \ar[ur] \ar[dr] && \cdots\\
\ar[ur]\ar[dr] && x \ar[ur]\ar[dr] && u \ar[ur] \ar[dr] && w \ar[ur] \ar[dr] && \Sigma z \ar[ur] \ar[dr]\\
\cdots & 0 \ar[ur] && 0 \ar[ur] && 0 \ar[ur] && 0  \ar[ur] && \cdots\\
}
}
\end{equation}
for suitable cone objects $u,v,w\in\D$. So far we know that all objects in $FX$ look as claimed and it will follow from \autoref{cor:susp} that the coherent `triangular fundamental domains' are suspensions of each other. In particular, we obtain the good number of suspension orbits.

The mesh category $M_n$ has a few obvious symmetries, and the induced symmetries on $\D^{M_n}$ actually restrict to $\D^{M_n,\exx}$. We say a bit more about this in \S\ref{sec:coxeter-reflection}. Here, we only observe the following. Let $f=f_n\colon M_n\to M_n$ be the `flip symmetry' which identifies each of the `triangular fundamental domains' with the next one in the positive $k$-direction. More precisely, we set
\begin{equation}
f\colon M_n\to M_n\colon (k,l)\mapsto (k+l,n+1-l).
\label{eq:f}
\end{equation}
It follows immediately from \eqref{eq:i} and \eqref{eq:j} that $f$ satisfies $j=fi\colon \A{n}\to M_n$. 

\begin{cor}\label{cor:susp}
Let \D be a stable derivator. The isomorphism $f^\ast\colon\D^{M_n}\to\D^{M_n}$ restricts to an isomorphism $f^\ast\colon\D^{M_n,\exx}\to\D^{M_n,\exx}$ and there is a natural isomorphism $\Sigma\cong f^\ast\colon\D^{M_n,\exx}\to\D^{M_n,\exx}$.
\end{cor}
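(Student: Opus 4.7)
The plan is to verify the two claims of the corollary in turn. For the first claim --- that $f^\ast$ restricts to $\D^{M_n,\exx}$ --- I would begin by checking that $f\colon M_n\to M_n$ is in fact an automorphism of the mesh category. On objects it is manifestly a bijection with inverse $(k,l)\mapsto(k+l-n-1,n+1-l)$. On morphisms one checks that $f$ sends an up-right generator $(k,l)\to(k,l+1)$ to the down-right generator $(k+l,n+1-l)\to(k+l+1,n-l)$, and a down-right generator $(k,l+1)\to(k+1,l)$ to the up-right generator $(k+l+1,n-l)\to(k+l+1,n+1-l)$. This swap is consistent with the mesh relations because $f$ bijects every interior mesh square $\{(k,l),(k,l+1),(k+1,l-1),(k+1,l)\}$ with $1\leq l\leq n$ onto another mesh square of the same shape; so $f^\ast\colon\D^{M_n}\to\D^{M_n}$ is an isomorphism of derivators.

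Next I would verify that $f^\ast$ preserves the defining conditions of the full subderivator $\D^{M_n,\exx}\subseteq\D^{M_n}$. From $f(k,0)=(k,n+1)$ and $f(k,n+1)=(k+n+1,0)$ the vanishing conditions transfer: $(f^\ast Y)_{(k,0)}=Y_{(k,n+1)}=0$ and $(f^\ast Y)_{(k,n+1)}=Y_{(k+n+1,0)}=0$. For the bicartesianness condition, if $q\colon\square\to M_n$ picks out an arbitrary mesh square, then so does $f\circ q$, and the identification $q^\ast f^\ast Y=(fq)^\ast Y$ shows that this square is bicartesian for every $Y\in\D^{M_n,\exx}$.

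Finally, for the natural isomorphism $\Sigma\cong f^\ast$ I would combine the relation $j=fi$, visible from the definitions \eqref{eq:i}, \eqref{eq:j}, \eqref{eq:f}, with \autoref{prop:susp}. The latter yields a natural isomorphism $\Sigma\cong j^\ast F=i^\ast f^\ast F$ of endofunctors of $\D^{\A{n}}$. Since $(F,i^\ast)\colon\D^{\A{n}}\rightleftarrows\D^{M_n,\exx}$ is an equivalence of stable derivators from \autoref{thm:AR}, the morphism $F$ is exact and so commutes with $\Sigma$ up to natural isomorphism, while $Fi^\ast\cong\id$ on $\D^{M_n,\exx}$. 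Conjugating through the equivalence, for $Y\in\D^{M_n,\exx}$ with $X=i^\ast Y$ one obtains
\[
\Sigma Y\cong F\Sigma X\cong Fi^\ast f^\ast FX\cong f^\ast FX\cong f^\ast Y,
\]
naturally in $Y$, as required.

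The only place that demands any care is the first step: confirming that $f$ is well-defined as a functor and respects the mesh relations. Once this combinatorial bookkeeping is in hand, the rest is a direct unwinding of \autoref{thm:AR} and \autoref{prop:susp}; no further derivator-theoretic input is needed.
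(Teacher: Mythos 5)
Your proof is correct and follows essentially the same route as the paper: check that the vanishing and exactness conditions are invariant under the flip $f$ so that $f^\ast$ restricts, then use $j=fi$, \autoref{prop:susp}, and conjugation through the equivalence $(F,i^\ast)$ of \autoref{thm:AR} together with exactness of $F$ to obtain $\Sigma\cong f^\ast$. The only difference is that you verify explicitly that $f$ is an automorphism of $M_n$ (which the paper takes as immediate, since $M_n$ is a poset and $f$ visibly permutes the generators and mesh squares), so there is nothing to add.
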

\begin{proof}
It is immediate that the defining exactness and vanishing conditions of $\D^{M_n,\exx}$ are invariant under the flip $f\colon M_n\to M_n$, and we hence obtain an induced isomorphism $f^\ast\colon\D^{M_n,\exx}\to\D^{M_n,\exx}$. Since the inclusion $j\colon \A{n}\to M_n$ factors as 
$j=fi\colon \A{n}\to M_n$, it follows from \autoref{prop:susp} that there is a natural isomorphism $\Sigma\cong j^\ast F= i^\ast f^\ast F\colon \D^{\A{n}}\to\D^{\A{n}}$. As equivalences are exact, \autoref{thm:AR} allows us to conclude by $f^\ast\cong F\Sigma i^\ast\cong \Sigma F i^\ast\cong \Sigma\colon\D^{M_n,\exx}\to\D^{M_n,\exx}$.
\end{proof}

It turns out that the construction of \autoref{thm:AR} does not depend in an essential way on the original orientation of the $A_n$-quiver. Given an $A_n$-quiver~$Q$ with arbitrary orientation, observe first that the repetitive quiver $\widehat{Q}$ of $Q$ is isomorphic to the repetitive quiver of $\A{n}$, \cite{riedtmann:back}. By definition, $Q$ has exactly two points $e_1,e_2$ incident with one arrow only (ignoring the degenerated case $n=1$ here). Let $Q_\mathrm{ext}$ be the quiver of type $A_{n+2}$ obtained from $Q$ by attaching new arrows $\alpha_i$ starting at $e_i$ for $i=1,2$. Using the terminology of~\cite[\S8]{gst:tree}, $Q_\mathrm{ext}$ is obtained from $Q$ by applying one-point extension twice, and there is an obvious embedding $j\colon Q \to Q_\mathrm{ext}$.
By the above, there exists an isomorphism $\varphi\colon \widehat{Q_\mathrm{ext}} \to \widehat{Q_n}$ of the repetitive quivers, where $Q_n = [n+1] = (0<\ldots <n+1)$, and viewing $\varphi$ also as an isomorphism between the corresponding free categories, we obtain a functor
\begin{equation}\label{eq:iQ}
i_Q\colon
\xymatrix@1{ Q \ar[r]^-j & Q_\mathrm{ext} \ar[r] & \widehat{Q_\mathrm{ext}} \ar[r]^-\varphi_-\cong & \widehat{Q_n} \ar[r] & M_n }.
\end{equation}
It is easily checked that $i_Q$ is fully faithful and injective on objects. Strictly speaking, $i_Q$ also depends on $\varphi$ rather than just on $Q$, but we suppress this from notation. Such an embedding $i_Q$ always has the property that for each $1 \le l \le n$, there is precisely one integer $k \in \lZ$ such that $(k,l)$ is in the image of $i_Q$.

\begin{eg}\label{eg:embedding}
If we choose the $A_3$-quiver $Q=(1\ot 2\to 3)$, then the embedding $i_Q\colon Q\to M_3$ (under the appropriate choice of $\varphi$) is determined by the fact that the restriction of~\eqref{eq:A3} along $i_Q$ looks like $(u\ot y\to z)$.
\end{eg}

The mesh category $M_n$ hence contains the free categories of all $A_n$-quivers as full subcategories. The strategy for proving \autoref{thm:AR} can be easily adapted to establishing the corresponding result for arbitrary $A_n$-quivers.

\begin{thm} \label{thm:AR-independent}
Let \D be a stable derivator, let $Q$ be an $A_n$-quiver for $n\geq 1$, and let $i_Q\colon Q\to M_{n}$ be as in \eqref{eq:iQ}. Then there is an equivalence of stable derivators $(F_Q,i_Q^\ast)\colon \D^Q\rightleftarrows \D^{M_n,\exx}$ which is natural with respect to exact morphisms.
\end{thm}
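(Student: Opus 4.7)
The plan is to adapt the proof of \autoref{thm:AR} to this more general setting. The image $S = i_Q(Q) \subseteq M_n$ is a fully faithful subcategory containing exactly one object $(k_l,l)$ at each level $l \in \{1,\ldots,n\}$, forming a `zig-zag' whose shape is dictated by the orientation of $Q$. As in the linearly oriented case, I would factor $i_Q$ as a composition of fully faithful inclusions
$$ Q \to L_1 \to L_2 \to L_3 \to M_n $$
in which the first inclusion is a sieve adjoining the boundary objects at levels $0$ and $n+1$ lying on one side of the zig-zag, the second iteratively adjoins the interior objects on that side so as to fill in cocartesian squares, the third is a cosieve adjoining the remaining boundary zeros, and the fourth iteratively adjoins the remaining interior objects making squares cartesian. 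The morphism $F_Q$ is then defined as the composite of the corresponding Kan extensions $(j_1)_\ast$, $(j_2)_!$, $(j_3)_!$, $(j_4)_\ast$.

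Each of the four functors is fully faithful by \autoref{egs:htpy}, so their composition induces an equivalence onto its essential image. By \autoref{lem:extbyzero}, $(j_1)_\ast$ and $(j_3)_!$ impose the required vanishing at the boundary levels. For the two interior Kan extensions, I would factor each through a countable sequence of one-object extensions and apply \autoref{lem:detection} at each step exactly as in the proof of \autoref{thm:AR}, arranging the order of the extensions via a suitable double induction emanating outward from $S$. Each such step induces an equivalence onto the full subderivator spanned by diagrams making the newly added square bicartesian, so cumulatively the essential image of $F_Q$ is precisely $\D^{M_n,\exx}$. The equivalence $(F_Q, i_Q^\ast)$ then follows from the adjunctions underlying each Kan extension: the counit $i_Q^\ast F_Q \to \id_{\D^Q}$ is an isomorphism because the individual Kan extensions are fully faithful, hence restrict back along $i_Q$ to the identity on $\D^Q$, and the unit $\id_{\D^{M_n,\exx}} \to F_Q i_Q^\ast$ is an isomorphism by essential surjectivity of $F_Q$ onto $\D^{M_n,\exx}$.

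The main obstacle is combinatorial rather than conceptual: unlike the straight row in \autoref{thm:AR}, the zig-zag shape of $S$ means the intermediate subcategories $L_1, L_2, L_3$ have irregular boundaries, and the order in which objects are adjoined must be chosen so that each new object has exactly the `corner' configuration required by \autoref{lem:detection}. This amounts to a careful case analysis at each vertex of $Q$ based on the local orientation, but once the bookkeeping is set up the verification is routine and essentially identical to the linearly oriented case. Naturality with respect to exact morphisms is then automatic: the construction uses only Kan extensions along sieves, cosieves, and individual point-extensions producing bicartesian squares, all of which commute with exact morphisms since the latter preserve zero objects and (co)cartesian squares.
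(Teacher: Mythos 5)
Your proposal is correct and follows essentially the same route as the paper, which itself proves this theorem only by asserting that the strategy of \autoref{thm:AR} adapts: you carry out exactly that adaptation, factoring $i_Q$ through a sieve (right extension by zero), an iterated cocartesian filling via \autoref{lem:detection}, a cosieve (left extension by zero), and an iterated cartesian filling, with the same fully-faithfulness and naturality arguments. The only caution is that the split of boundary objects between the sieve and cosieve steps is governed by whether they receive morphisms from, or map into, $i_Q(Q)$ (rather than literally by ``side''), but this is precisely the routine orientation-dependent bookkeeping you acknowledge, and stability identifies the resulting mixed (co)cartesian conditions with the bicartesian ones defining $\D^{M_n,\exx}$.
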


Thus, $\D^{M_n,\exx}$ is the stable derivator of coherent Auslander--Reiten quivers for arbitrary Dynkin quivers of type~$A_n$ in \D.

\section{Coxeter, Serre, and the fractionally Calabi--Yau property}
\label{sec:coxeter-reflection}

Let $Q$ be an $A_n$-quiver and let \D be a stable derivator. In this section we introduce some symmetries on the stable derivators $\D^Q\simeq\D^{M_n,\exx}$, namely \emph{reflection functors}, \emph{(partial) Coxeter functors}, \emph{Auslander--Reiten translations}, and \emph{Serre functors}. We also establish the fractionally Calabi--Yau property (which will prove useful in the comparison of spectral and derived Picard groupoids) and give an elementary explanation of it. The justification for the terminology employed in this section will be given in \S\ref{sec:field} where we see that for the derivator of a field these functors induce the usual classical counterparts, defined at the level of derived categories.

\subsection{Coxeter functors}
\label{subsec:Coxeter}

Let $Q$ be a quiver and let $a\in Q$ be a sink, i.e., a vertex such that all edges adjacent to it end at it. The reflected quiver $Q'=\sigma_a Q$ is obtained from $Q$ by changing the orientations of all edges adjacent to $a$. In particular, $a\in Q'$ is now a source. In \cite[Theorem~9.11]{gst:tree} it was shown that if the quiver under consideration is a tree, then $Q$ and $Q'$ are strongly stably equivalent in the sense of \autoref{defn:sse}.

Elementary aspects in the case of Dynkin quivers of type~$A$ were already discussed in \cite[\S6]{gst:basic}, but this situation will be studied more systematically here. Given a sink $a\in Q$ in an $A_n$-quiver, let $Q'=\sigma_aQ$ be the reflected $A_n$-quiver. Implicitly, \textbf{reflection functors}
\begin{equation}\label{eq:reflection}
(s^-_a,s^+_a)\colon\D^{Q'}\rightleftarrows \D^Q
\end{equation}
are constructed in \emph{loc.~cit.} and shown to define strong stable equivalences. Depending on the valence of the sink the morphism $s_a^+$ is obtained simply by passing to the fibre of the unique morphism adjacent to the sink or by extending the two morphisms adjacent to the sink to a cartesian square and then restricting appropriately. (A naturally isomorphic description is obtained by specializing the constructions in~\cite{gst:tree}.) By induction these reflection functors show that two Dynkin quivers of type~$A$ are strongly stably equivalent if and only if they have the same length (\cite[Theorem~6.5]{gst:basic}). Strictly speaking the notation of these reflection functors should come with additional decorations given by the quivers but we prefer to drop them from notation.

Recall that the mesh category $M_{n}$ comes with embeddings $i_Q\colon Q\to M_{n}$ for all $A_n$-quivers~$Q$ and that there are natural equivalences $(F_Q,i_Q^\ast)\colon\D^Q\rightleftarrows\D^{M_n,\exx}$ (\autoref{thm:AR} and \autoref{thm:AR-independent}). There is some choice involved regarding the allowable functors $i_Q$. In fact, if $\varphi\colon M_n\to M_n$ is an automorphism, then the automorphism $\varphi^\ast\colon\D^{M_n}\to\D^{M_n}$ clearly restricts to an automorphism of $\D^{M_n,\exx}$. Thus, if we compose the embedding $i_Q$ with such an automorphism $\varphi\colon M_{n} \to M_{n}$ to obtain a new embedding $i'_Q = \varphi \circ i_Q$, then there is a further equivalence
\[
(F'_Q,(i'_Q)^\ast)=((\varphi^{-1})^\ast,\varphi^\ast)\circ(F_Q,i_Q^\ast)\colon\D^Q\rightleftarrows\D^{M_n,\exx},
\]
which typically is different from $(F_Q,i_Q^\ast)$. Given two $A_n$-quivers which are related by a reflection, we can always assume that the corresponding embeddings are compatible in the following sense.

\begin{hyp} \label{hyp:rel-embed}
Let $Q$ be a Dynkin quiver of type $A_n$, let $a\in Q$ be a sink, and let $Q'=\sigma_a Q$ be the reflected quiver. Then we assume that the embeddings $i_Q\colon Q \to M_{n}$ and $i_{Q'}\colon Q' \to M_{n}$ are chosen so that
\begin{itemize}
\item $i_Q(b) = i_{Q'}(b)$ for all vertices different from $a$, and
\item if $i_Q(a) = (k,l) \in M_{n}$, then $i_{Q'}(a) = (k-1,l)$.
\end{itemize}
\end{hyp}

\begin{eg}
Taking up again \autoref{eg:embedding}, let us reflect $Q$ at the sink $a=1$. Then $Q'=\sigma_1Q=\A{3}$ is the linearly oriented $A_3$-quiver and the resulting $i_{Q'}$ is the embedding $i$ as in \eqref{eq:i}. 
\end{eg}

This hypothesis allows us for example to give the following description of the reflection functors.

\begin{prop}\label{prop:sa}
Let \D be a stable derivator, let $Q$ be an $A_n$-quiver, let $a\in Q$ be a sink, let $Q'=\sigma_a Q$ be the reflected quiver, and let $i_Q,i_{Q'}$ be as in \autoref{hyp:rel-embed}. There are natural isomorphisms
\[
s^+_a\cong i_{Q'}^\ast F_Q\colon\D^Q\to \D^{Q'} \qquad\text{and}\qquad
s^-_a\cong i_Q^\ast F_{Q'}\colon\D^{Q'}\to \D^Q.
\]
\end{prop}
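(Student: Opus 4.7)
The plan is to establish $s_a^+ \cong i_{Q'}^\ast F_Q$ by an explicit local calculation using the bicartesian mesh squares of $\D^{M_n,\mathrm{ex}}$, and then to deduce $s_a^- \cong i_Q^\ast F_{Q'}$ formally by uniqueness of adjoints. Fix $X \in \D^Q$, set $Y \coloneqq F_Q(X) \in \D^{M_n,\mathrm{ex}}$, and write $(k,l) = i_Q(a)$, so that by \autoref{hyp:rel-embed} we have $i_{Q'}(a) = (k-1,l)$ while $i_Q(b) = i_{Q'}(b)$ for every $b \ne a$.

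The first step is to observe that $i_{Q'}^\ast Y$ and $i_Q^\ast Y \cong X$ have identical restrictions to the full subquiver on $Q \setminus \{a\} = Q' \setminus \{a\}$, and that the construction of $s_a^+$ recalled from \cite{gst:basic,gst:tree} also leaves this subquiver untouched. Consequently the whole comparison reduces to the vertex $a$ together with the arrows of $Q'$ adjacent to it.

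For this local comparison I single out the mesh square $S$ in $M_n$ with corners $(k-1,l)$, $(k-1,l+1)$, $(k,l-1)$, $(k,l)$; by the defining exactness condition of $\D^{M_n,\mathrm{ex}}$, the square $S$ evaluated in $Y$ is bicartesian. The two corners $(k,l-1)$ and $(k-1,l+1)$ are exactly the positions from which arrows of $M_n$ end at $(k,l)$, so under $i_Q$ they either carry the values $X_b, X_c$ at the two neighbours $b,c$ of the sink $a$ in $Q$ (when $a$ has valence $2$), or one of them is forced to be a zero object because it lies on a vanishing row $l=0$ or $l=n+1$ (when $a$ has valence $1$). The cartesian property of $S$ then identifies $Y_{(k-1,l)} = (i_{Q'}^\ast Y)_a$ with the pullback, respectively fibre, that by construction defines the value of $s_a^+ X$ at $a$, and it simultaneously identifies the two structural maps out of $Y_{(k-1,l)}$ with the canonical projections appearing in that construction. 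Naturality in $X$ is inherited from \autoref{thm:AR-independent} and from the universal property of cartesian squares.

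For the second statement, \autoref{thm:AR-independent} guarantees that $F_Q$ and $F_{Q'}$ are equivalences of derivators, so $i_Q^\ast$ and $i_{Q'}^\ast$ are simultaneously left and right adjoint to them. Computing left adjoints of a composition, the left adjoint of $i_{Q'}^\ast F_Q$ is $i_Q^\ast F_{Q'}$; combining this with $s_a^- \dashv s_a^+$ and the first isomorphism via uniqueness of left adjoints produces $s_a^- \cong i_Q^\ast F_{Q'}$. I expect the main difficulty to lie purely in the bookkeeping of the local step: one must verify that \autoref{hyp:rel-embed} really does place the neighbours of $a$ at the two in-coming corners of $S$, and match the resulting bicartesian square with the (co)cartesian square used in the definition of $s_a^+$, including in the degenerate valence-$1$ boundary cases where some corner degenerates to a zero object.
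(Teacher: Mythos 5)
Your proposal is correct and takes essentially the same route as the paper, whose proof is just a terse assertion that the claim follows from the construction of the reflection functors, \autoref{thm:AR-independent}, and the exactness properties of $\D^{M_n,\exx}$: like the paper, you implement the fibre/cartesian-square construction of $s_a^+$ via the bicartesian mesh square at $i_{Q'}(a)=(k-1,l)$, with \autoref{hyp:rel-embed} aligning the neighbours of $a$ with the two incoming corners (one of which degenerates to a zero object in the valence-one boundary case). Deducing $s_a^-\cong i_Q^\ast F_{Q'}$ from uniqueness of inverses/adjoints, rather than running the dual local argument, is a harmless variant; just note that a fully formal version of the local step should exhibit the comparison map coherently (e.g.\ as a mate of a Kan-extension unit, checked to be invertible pointwise) rather than only matching values and structure maps objectwise.
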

\begin{proof}
This follows immediately from the construction of the reflection functors \eqref{eq:reflection}, \autoref{thm:AR-independent}, and the defining exactness properties of $\D^{M_n,\exx}$ given prior to \autoref{thm:AR}.
\end{proof}

By iteration one obtains the following proposition which reproduces \cite[Theorem~6.5]{gst:basic}.

\begin{prop}\label{prop:tiltAn}
Let \D be a stable derivator and let $Q,Q'$ be $A_n$-quivers. There are strong stable equivalences 
\[
(i_Q^\ast F_{Q'},i_{Q'}^\ast F_Q)\colon\D^{Q'}\rightleftarrows \D^Q.
\]
\end{prop}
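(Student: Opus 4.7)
The plan is to deduce this immediately from \autoref{thm:AR-independent} by composing the two equivalences with the common middle object $\D^{M_n,\exx}$. No iteration is actually needed for the statement itself (although, as the author's preamble suggests, connecting the result to the older \cite[Theorem~6.5]{gst:basic} retrospectively justifies the phrase ``by iteration'').

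More concretely, I would argue as follows. First, \autoref{thm:AR-independent} applied to both $Q$ and $Q'$ produces equivalences of stable derivators
\[
(F_Q, i_Q^\ast)\colon \D^Q \rightleftarrows \D^{M_n,\exx}
\qquad\text{and}\qquad
(F_{Q'}, i_{Q'}^\ast)\colon \D^{Q'} \rightleftarrows \D^{M_n,\exx},
\]
each natural with respect to exact morphisms of stable derivators. Since $i_Q^\ast$ is a (quasi-)inverse to $F_Q$, we have natural isomorphisms $F_Q\, i_Q^\ast \cong \id_{\D^{M_n,\exx}}$ and $i_Q^\ast F_Q \cong \id_{\D^Q}$, and similarly for $Q'$. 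Composing the two equivalences in the evident way therefore yields the pair
\[
(i_Q^\ast F_{Q'},\, i_{Q'}^\ast F_Q)\colon \D^{Q'} \rightleftarrows \D^Q,
\]
with mutual inverseness
\[
(i_{Q'}^\ast F_Q)(i_Q^\ast F_{Q'}) \cong i_{Q'}^\ast (F_Q\, i_Q^\ast) F_{Q'} \cong i_{Q'}^\ast F_{Q'} \cong \id_{\D^{Q'}},
\]
and symmetrically for the other composition.

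For the strong stable equivalence part I would note that naturality with respect to exact morphisms is preserved under composition of pseudo-natural transformations, so from the naturality of each of the two equivalences furnished by \autoref{thm:AR-independent} one concludes that $(i_Q^\ast F_{Q'}, i_{Q'}^\ast F_Q)$ is natural with respect to exact morphisms of stable derivators as well. This is precisely the condition defining a strong stable equivalence (\autoref{defn:sse}).

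I do not anticipate a main obstacle: the nontrivial input has already been absorbed into \autoref{thm:AR-independent}, which supplies both the equivalence with $\D^{M_n,\exx}$ and its naturality, and the present statement is just a book-keeping composition. The only point one has to be mindful of is that $i_Q$ and $i_{Q'}$ may be chosen independently (they need not be related by \autoref{hyp:rel-embed}, which was only required to identify the composite with the single reflection functor in \autoref{prop:sa}); the argument above works for any admissible choices of these fully faithful embeddings.
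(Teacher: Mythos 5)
Your proposal is correct: composing the two equivalences $(F_{Q'},i_{Q'}^\ast)$ and $(F_Q,i_Q^\ast)$ from \autoref{thm:AR-independent} through the common target $\D^{M_n,\exx}$, and noting that naturality with respect to exact morphisms passes to composites, does prove the statement, and your observation that the embeddings $i_Q,i_{Q'}$ may be chosen independently of \autoref{hyp:rel-embed} is also right. The paper, however, takes a slightly different route: it gives no written proof but introduces the proposition with ``by iteration,'' meaning one iterates \autoref{prop:sa} along a sequence of reflections connecting $Q'$ to $Q$ (with embeddings chosen step by step as in \autoref{hyp:rel-embed}), so that the composite of reflection functors telescopes, via $F_{Q''}\,i_{Q''}^\ast\cong\id$ at each intermediate quiver $Q''$, to $i_Q^\ast F_{Q'}$; the strong stable equivalence property is then inherited from the reflection functors themselves. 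Your argument is the more economical one, using only \autoref{thm:AR-independent} and no combinatorics of admissible reflection sequences; what the paper's iteration buys is the explicit identification of the equivalence as a composite of reflection functors, which is exactly what makes the proposition ``reproduce'' the earlier result of \cite[Theorem~6.5]{gst:basic} and is reused later (e.g.\ for Coxeter functors in \autoref{cor:phi-tau} and for the universal tilting bimodules of \autoref{defn:iter-tilt}, which are defined in terms of iterated reflections). So both arguments are valid; yours is a clean shortcut for the statement as such, while the paper's phrasing carries extra information exploited downstream.
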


Let us assume that $a_1,a_2\in Q$ are distinct sinks in an $A_n$-quiver. Note that $a_2$ is also a sink in $\sigma_{a_1}Q$ and we can hence pass to $\sigma_{a_2}\sigma_{a_1} Q$. It is easy to observe that at the level of quivers the order of these reflections does not matter and we can hence define
\[
\sigma_{\{a_1,a_2\}}Q=\sigma_{a_1}\sigma_{a_2} Q=\sigma_{a_2}\sigma_{a_1} Q.
\]
Also the associated reflection functors commute up to natural isomorphisms.

\begin{cor}\label{cor:sasb-commute}
Let \D be a stable derivator and let $Q$ be an $A_n$-quiver.
\begin{enumerate}
\item If $a_1,a_2\in Q$ are sinks and if $Q'=\sigma_{\{a_1,a_2\}}Q$ is the reflected quiver, then there is a natural isomorphism $s^+_{a_1}s^+_{a_2}\cong s^+_{a_2}s^+_{a_1}\colon\D^Q\to\D^{Q'}$. 
\item If $b_1,b_2\in Q$ are sources and if $Q'=\sigma_{\{b_1,b_2\}}Q$ is the reflected quiver, then there is a natural isomorphism $s^-_{b_1}s^-_{b_2}\cong s^-_{b_2}s^-_{b_1}\colon\D^Q\to\D^{Q'}$. 
\end{enumerate}
\end{cor}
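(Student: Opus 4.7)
The plan is to reduce both iterated compositions to a single canonical functor $\D^Q \to \D^{Q'}$ obtained by going through the derivator of coherent Auslander--Reiten quivers. More precisely, I will show that both $s^+_{a_1}s^+_{a_2}$ and $s^+_{a_2}s^+_{a_1}$ are naturally isomorphic to $i_{Q'}^\ast F_Q$ for a consistent choice of embedding, and hence to each other.

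Set $Q_1 = \sigma_{a_1}Q$ and $Q_2 = \sigma_{a_2}Q$. First I would observe that two distinct sinks in an $A_n$-quiver cannot be adjacent, since an edge between them would have to be oriented from one to the other, contradicting both being sinks. Hence reflecting at $a_1$ leaves a neighborhood of $a_2$ untouched, so $a_2$ is still a sink in $Q_1$, and symmetrically for $a_1$ in $Q_2$; in particular the reflections make sense and $\sigma_{a_2}\sigma_{a_1}Q = \sigma_{a_1}\sigma_{a_2}Q = Q'$ at the level of quivers.

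Next I would fix an embedding $i_Q \colon Q \to M_n$ and use \autoref{hyp:rel-embed} to produce compatible embeddings $i_{Q_1}$ and $i_{Q_2}$, each differing from $i_Q$ only by a shift by $(-1,0)$ on the respective sink. Applying \autoref{hyp:rel-embed} once more to the reflection $Q_1 \to Q'$ at $a_2$, and independently to $Q_2 \to Q'$ at $a_1$, produces two a priori distinct embeddings of $Q'$ into $M_n$. The key combinatorial check is that these two embeddings agree: in both cases $i_{Q'}(b) = i_Q(b)$ for $b \neq a_1,a_2$, while $i_{Q'}(a_j) = i_Q(a_j) - (1,0)$ for $j=1,2$, because the two shifts happen in disjoint columns of $M_n$. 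With this single $i_{Q'}$ fixed, I can invoke \autoref{prop:sa} along each route to identify
\[
s^+_{a_2}s^+_{a_1} \cong i_{Q'}^\ast F_{Q_1}\circ i_{Q_1}^\ast F_Q, \qquad s^+_{a_1}s^+_{a_2} \cong i_{Q'}^\ast F_{Q_2}\circ i_{Q_2}^\ast F_Q.
\]
Since by \autoref{thm:AR-independent} the pairs $(F_{Q_j}, i_{Q_j}^\ast)$ are mutually inverse equivalences, the middle composites $F_{Q_j}i_{Q_j}^\ast$ are naturally isomorphic to the identity on $\D^{M_n,\exx}$, and both sides collapse to $i_{Q'}^\ast F_Q$.

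Part (ii) I would handle by a dual argument: either by recording the analogue of \autoref{hyp:rel-embed} for a source (where the shift goes in the $+k$-direction in $M_n$) and repeating the same bookkeeping, or more formally by applying (i) in the opposite derivator $\D\op$ where sources become sinks and $s^-$ becomes $s^+$. I expect the main obstacle to be precisely the combinatorial verification that the two routes through $Q_1$ and $Q_2$ really land on the same embedding $i_{Q'}$; once that is secured, the rest is a formal cancellation of equivalences. This is where non-adjacency of the two sinks is essential, as otherwise the local picture in $M_n$ would interact and the two shifts could interfere.
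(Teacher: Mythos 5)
Your proposal is correct and matches the paper's own argument: the paper likewise invokes \autoref{hyp:rel-embed} for the relevant pairs (noting, as you verify combinatorially, that compatibility for the fourth pair comes for free), applies \autoref{prop:sa} along both routes, and collapses both composites to $i^\ast_{\sigma_{\{a_1,a_2\}}Q}F_Q$, with part (ii) handled by duality. Your explicit non-adjacency check and the disjoint-shift bookkeeping simply spell out what the paper asserts in one line.
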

\begin{proof}
By duality it is enough to prove the first statement. Assuming \autoref{hyp:rel-embed} for the three pairs $(Q, \sigma_{a_1} Q)$, $(Q, \sigma_{a_2} Q)$, and $(\sigma_{a_1} Q, \sigma_{\{a_1,a_2\}}Q)$, the hypothesis is automatically satisfied also for the pair $(\sigma_{a_2} Q, \sigma_{\{a_1,a_2\}}Q)$. Hence by combining the natural isomorphisms from \autoref{prop:sa} for both sinks we obtain a natural isomorphism
\[
s^+_{a_2}s^+_{a_1}\cong i^\ast_{\sigma_{\{a_1,a_2\}}Q}F_{\sigma_{a_1} Q}i^\ast_{\sigma_{a_1} Q} F_Q\cong i^\ast_{\sigma_{\{a_1,a_2\}}Q} F_Q.
\]
By symmetry we also obtain $s^+_{a_1}s^+_{a_2}\cong i^\ast_{\sigma_{\{a_1,a_2\}}Q} F_Q$, concluding the proof.
\end{proof}

Let $Q$ be a finite quiver. Recall that an \textbf{admissible sequence of sinks} in $Q$ is a total ordering $(a_1,\ldots,a_n)$ of all its vertices such that 
\begin{enumerate}
\item $a_1$ is a sink in $Q$ and
\item $a_i$ is a sink in $\sigma_{a_{i-1}}\ldots\sigma_{a_1}Q$ for all $2\leq i\leq n$.
\end{enumerate}
There is the dual notion of an \textbf{admissible sequence of sources}. By a combinatorial argument~\cite[Lemma 1.2(1)]{bernstein-gelfand-ponomarev:Coxeter}, every finite, acyclic quiver admits an admissible sequence of sources and sinks. Moreover, one observes that $\sigma_{a_n}\ldots\sigma_{a_1}Q=Q$ and similarly in the case of sources.

In general, a quiver admits many such admissible sequences. Let $Q$ be an $A_n$-quiver and let $(a_1,\ldots,a_n)$ be an admissible sequence of sinks. Associated to this sequence, for every stable derivator \D there is the composition of reflection functors
\[
\Phi^+_{(a_1,\ldots,a_n)}=s^+_{a_n}\circ\ldots\circ s^+_{a_1}\colon\D^Q\to\D^Q.
\]
Dually, we define $\Phi^-_{(b_1,\ldots,b_n)}$ for every admissible sequence of sources.

\begin{prop} \label{prop:indep-admis}
Let \D be a stable derivator and let $Q$ be an $A_n$-quiver.
\begin{enumerate}
\item If $(a_1,\ldots,a_n)$,  $(a'_1,\ldots,a'_n)$ are admissible sequences of sinks in $Q$, then there is a natural isomorphism $\Phi^+_{(a_1,\ldots,a_n)}\cong \Phi^+_{(a'_1,\ldots,a'_n)}\colon\D^Q\to\D^Q.$
\item If $(b_1,\ldots,b_n)$,  $(b'_1,\ldots,b'_n)$ are admissible sequences of sources in $Q$, then there is a natural isomorphism $\Phi^-_{(b_1,\ldots,b_n)}\cong \Phi^-_{(b'_1,\ldots,b'_n)}\colon\D^Q\to\D^Q.$
\end{enumerate}
\end{prop}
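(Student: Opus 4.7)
The plan is to reduce every composite $\Phi^+_{(a_1,\ldots,a_n)}$ to one and the same canonical functor of the form $i_Q^\ast\tau^\ast F_Q$, where $\tau\colon M_n\to M_n$ is the translation automorphism $(k,l)\mapsto(k-1,l)$; since this description does not see the admissible sequence, (i) will follow immediately. The central input is \autoref{prop:sa}. Setting $Q^{(0)}=Q$ and $Q^{(k)}=\sigma_{a_k}Q^{(k-1)}$ for $1\le k\le n$, so that $Q^{(n)}=Q$, I would first fix compatible embeddings $i_{Q^{(k)}}\colon Q^{(k)}\to M_n$ inductively, arranging for each adjacent pair $(Q^{(k-1)},Q^{(k)})$ to satisfy \autoref{hyp:rel-embed}. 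By \autoref{prop:sa} this yields $s^+_{a_k}\cong i_{Q^{(k)}}^\ast F_{Q^{(k-1)}}$ for every $k$.

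Next, the equivalence $(F_{Q^{(k)}},i_{Q^{(k)}}^\ast)\colon\D^{Q^{(k)}}\rightleftarrows\D^{M_n,\exx}$ of \autoref{thm:AR-independent} supplies $F_{Q^{(k)}}\circ i_{Q^{(k)}}^\ast\cong\id_{\D^{M_n,\exx}}$, and since $F_Q$ already takes values in $\D^{M_n,\exx}$, a short induction on $k$ telescopes the composite to
\[
s^+_{a_k}\circ\cdots\circ s^+_{a_1}\cong i_{Q^{(k)}}^\ast F_Q,
\]
and in particular $\Phi^+_{(a_1,\ldots,a_n)}\cong i_{Q^{(n)}}^\ast F_Q$. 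It then remains to identify $i_{Q^{(n)}}$ with $\tau\circ i_Q$. By \autoref{hyp:rel-embed} each reflection leaves the image of every vertex untouched except that of the vertex being reflected, whose image shifts by $(-1,0)$. Since every vertex of $Q$ occurs exactly once in an admissible sequence of sinks, the cumulative effect is a uniform shift of $i_Q$ by $(-1,0)$, yielding $i_{Q^{(n)}}=\tau\circ i_Q$. Hence $\Phi^+_{(a_1,\ldots,a_n)}\cong i_Q^\ast\tau^\ast F_Q$ regardless of the sequence, proving (i). Part (ii) is strictly dual: reflection at a source shifts the image by $+(1,0)$ in $M_n$, so the same argument identifies every $\Phi^-_{(b_1,\ldots,b_n)}$ with $i_Q^\ast(\tau^{-1})^\ast F_Q$.

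The main point requiring care is the consistent inductive choice of embeddings $i_{Q^{(k)}}$, together with the verification that $\tau$ genuinely is an automorphism of $M_n$ restricting to one of $\D^{M_n,\exx}$. I expect both points to be essentially unobstructed: at each stage \autoref{hyp:rel-embed} only prescribes the image of the newly reflected sink $a_k$, and the ``one vertex per row'' property of the embeddings (noted just before \autoref{eg:embedding}) guarantees that the required target position $(k_0-1,l_0)$ is free; and $\tau$ is manifestly compatible with both types of generating arrows of the mesh category and preserves the boundary rows whose values vanish. Beyond this bookkeeping the argument is purely formal and does not re-engage with the combinatorics of $A_n$-quivers.
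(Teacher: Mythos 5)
Your proof is correct, but it takes a genuinely different route from the paper. The paper proves this proposition by reducing it to \autoref{cor:sasb-commute} (reflections at two distinct sinks commute) together with the purely combinatorial fact that any two admissible sequences of sinks can be related by finitely many such transpositions, which it outsources to the classical argument of Bern{\v{s}}te{\u\i}n--Gel$'$fand--Ponomarev. You instead bypass the combinatorics entirely: you iterate \autoref{prop:sa} under inductively chosen embeddings satisfying \autoref{hyp:rel-embed}, use $F_{Q^{(k)}}\circ i_{Q^{(k)}}^\ast\cong\id$ on $\D^{M_n,\exx}$ to telescope the composite to $i_{Q^{(n)}}^\ast F_Q$, and then observe that since every vertex is reflected exactly once, the cumulative embedding is $t\circ i_Q$ independently of the ordering, so $\Phi^+_{(a_1,\ldots,a_n)}\cong i_Q^\ast t^\ast F_Q$ for any admissible sequence. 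This is in effect a sequence-independent version of the argument the paper only gives later, in the proof of \autoref{cor:phi-tau} (and, for two steps, in \autoref{cor:sasb-commute}), so there is no circularity; your route proves the independence statement and the identification $F_Q\Phi^+_Q\cong\tau F_Q$ in one stroke, at the price of front-loading the bookkeeping about the existence of the compatible embeddings at each stage --- a point the paper also treats as unproblematic. The duality argument for (ii), with the reflected source moving by $(+1,0)$ and the composite identified with $i_Q^\ast(t^{-1})^\ast F_Q$, is likewise fine.
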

\begin{proof}
By means of \autoref{cor:sasb-commute} the task is reduced to a purely combinatorial problem. In fact, it suffices to show that any two admissible sequences of sinks can be related by finitely many `transpositions' as in \autoref{cor:sasb-commute}. But this is well-known, see the proof of~\cite[Lemma 1.2(3)]{bernstein-gelfand-ponomarev:Coxeter}. 
\end{proof}

By this proposition the following functors are well-defined up to natural isomorphisms. 

\begin{defn} \label{defn:Coxeter}
Let \D be a stable derivator and let $Q$ be an $A_n$-quiver.
\begin{enumerate}
\item The \textbf{Coxeter functor} $\Phi^+=\Phi^+_Q\colon\D^Q\to\D^Q$ is $\Phi^+=\Phi^+_{(a_1,\ldots,a_n)}$ for some admissible sequence of sinks $(a_1,\ldots,a_n)$ in $Q$.
\item The \textbf{Coxeter functor} $\Phi^-=\Phi^-_Q\colon\D^Q\to\D^Q$ is $\Phi^-=\Phi^-_{(b_1,\ldots,b_n)}$ for some admissible sequence of sources $(b_1,\ldots,b_n)$ in $Q$.
\end{enumerate}
\end{defn}

These Coxeter functors correspond to the following symmetry of $\D^{M_n,\exx}$. The functor 
\begin{equation}
t\colon M_{n}\to M_{n}\colon (k,l)\mapsto (k-1,l)
\label{eq:t}
\end{equation}
induces a restriction functor $\tau=t^\ast\colon \D^{M_{n}}\to\D^{M_{n}}$. For a stable derivator \D, it is obvious that the defining exactness and vanishing conditions of $\D^{M_n,\exx}$ are invariant under $t$ and we hence obtain an induced functor $\tau\colon\D^{M_n,\exx}\to\D^{M_n,\exx}$ which shifts a diagram one step in the positive $k$-direction. We write $\tau^-$ for its inverse $\tau^-=\tau^{-1}=(t^\ast)^{-1}=(t^{-1})^\ast$.

\begin{cor}\label{cor:phi-tau}
Let \D be a stable derivator and let $Q$ be an $A_n$-quiver. The diagrams 
\[
\xymatrix{
\D^Q\ar[d]_-{F_Q}\ar[r]^-{\Phi^+_Q}&\D^Q\ar[d]^-{F_Q}&&
\D^Q\ar[d]_-{F_Q}\ar[r]^-{\Phi^-_Q}&\D^Q\ar[d]^-{F_Q}\\
\D^{M_n,\exx}\ar[r]_-{\tau}&\D^{M_n,\exx},&&
\D^{M_n,\exx}\ar[r]_-{\tau^-}&\D^{M_n,\exx}
}
\]
commute up to natural isomorphisms.
\end{cor}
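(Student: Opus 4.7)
The plan is to reduce to a convenient admissible sequence and iterate \autoref{prop:sa}. Let $(a_1,\ldots,a_n)$ be an admissible sequence of sinks in $Q$, and set $Q_0=Q$ and $Q_i=\sigma_{a_i}Q_{i-1}$ for $1\leq i\leq n$. Starting from $i_{Q_0}=i_Q$ and invoking \autoref{hyp:rel-embed} for each consecutive pair $(Q_{i-1},Q_i)$, I would fix compatible embeddings $i_{Q_i}\colon Q_i\to M_n$ such that the vertex $a_i$ gets shifted by $(-1,0)$ at the $i$-th step while all other vertices remain fixed. Since an admissible sequence of sinks is a total ordering of the vertices of $Q$ in which each vertex appears (and becomes a sink) exactly once, and since $\sigma_{a_n}\cdots\sigma_{a_1}Q=Q$ as quivers, we conclude that $i_{Q_n}=t\circ i_Q$, where $t\colon M_n\to M_n$ is the translation \eqref{eq:t}.

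Next, applying \autoref{prop:sa} iteratively gives $s^+_{a_i}\cong i_{Q_i}^\ast F_{Q_{i-1}}$. Composing these and telescoping via the equivalences $F_{Q_i}i_{Q_i}^\ast\cong\id_{\D^{M_n,\exx}}$ from \autoref{thm:AR-independent} collapses all intermediate $F_{Q_i}i_{Q_i}^\ast$ pairs, yielding
\[
\Phi^+_Q=s^+_{a_n}\circ\cdots\circ s^+_{a_1}\cong i_{Q_n}^\ast F_{Q_0}=(t\circ i_Q)^\ast F_Q=i_Q^\ast\,\tau\, F_Q.
\]
Composing on the left with $F_Q$ and using $F_Q i_Q^\ast\cong\id_{\D^{M_n,\exx}}$ gives $F_Q\Phi^+_Q\cong\tau F_Q$, which is the commutativity of the first square; by \autoref{prop:indep-admis} this is independent of the chosen admissible sequence.

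The second diagram is entirely analogous using an admissible sequence of sources $(b_1,\ldots,b_n)$. Applying \autoref{hyp:rel-embed} to the pair $(\sigma_bQ,Q)$, where $b\in Q$ is a source and hence a sink of $\sigma_bQ$, one sees that passing from $Q$ to $\sigma_bQ$ shifts the position of $b$ by $(+1,0)$. Iterating along the full admissible sequence of sources and using \autoref{prop:sa} for $s^-$ produces the final embedding $t^{-1}\circ i_Q$, and the same telescoping argument gives $F_Q\Phi^-_Q\cong\tau^- F_Q$.

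The main obstacle is the combinatorial verification that \autoref{hyp:rel-embed} can be imposed consistently along an entire admissible sequence, and that the resulting cumulative effect on the embedding is exactly the translation by $t^{\pm1}$ uniformly on all vertices. This boils down to the basic observations that admissible sequences of sinks (resp.\ sources) are precisely total orderings in which every vertex is reflected exactly once, that the hypothesis only constrains the position of the reflected vertex at each step, and that the positions of the non-reflected vertices propagate unchanged through the sequence.
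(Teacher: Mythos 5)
Your proposal is correct and follows essentially the same route as the paper: fix an admissible sequence of sinks, propagate \autoref{hyp:rel-embed} along it so that the final embedding is $t\circ i_Q$, and collapse the iterated isomorphisms of \autoref{prop:sa} via the equivalences of \autoref{thm:AR-independent} to get $F_Q\Phi^+_Q\cong\tau F_Q$. The paper only writes out the $\Phi^+$ half, with the $\Phi^-$ case handled dually exactly as you indicate, so there is no substantive difference.
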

\begin{proof}
Let $i_Q\colon Q\to M_{n}$ be an embedding of the $A_n$-quiver as discussed above. Given a sink $q_1\in Q$ we denote by $Q^{(1)}=\sigma_{q_1}Q$ the reflected quiver and we choose the embedding $i_{Q^{(1)}}\colon Q^{(1)} \to M_{n}$ according to \autoref{hyp:rel-embed}. That is, the image of $i_{Q^{(1)}}$ agrees with the image of $i_Q$ with the exception of the point~$q_1$, and if $i_Q(q_1)=(k,l)$, then $i_{Q^{(1)}}(q_1)=(k-1,l)$. Let now $(q_1,\ldots,q_n)$ be an admissible sequence of sinks and define $Q^{(i)}$ inductively as $Q^{(i)}=\sigma_{q_i}Q^{(i-1)}$. Then the quiver $Q^{(n)}$ agrees with $Q$ but the embedding obtained by induction is $t\circ i_Q$. Thus, $i_Q^\ast t^\ast F_Q$ is naturally isomorphic to $\Phi^+_Q$ and we obtain $\tau F_Q\cong F_Q\Phi^+_Q$.
\end{proof}

In a similar way one defines \textbf{partial Coxeter functors} as compositions of some reflection functors. In contrast to Coxeter functors, these are strong stable equivalences between different $A_n$-quivers. 

The following definition will be justified in \S\ref{sec:field}.

\begin{defn}\label{defn:ar-transl}
Let \D be a stable derivator. The functor $\tau\colon\D^{M_n,\exx}\to\D^{M_n,\exx}$ induced by \eqref{eq:t} is the \textbf{Auslander--Reiten translation}.
\end{defn}

Thus, the Auslander--Reiten translation $\tau$ and the Coxeter functors $\Phi^+$ correspond to each other under the above equivalences.

\subsection{The fractionally Calabi--Yau property}
\label{subsec:fCY}

We show that Dynkin quivers of type $A$ enjoy an abstract fractionally Calabi--Yau property. Some of our results in this paper imply that there is an epimorphism from the \emph{spectral Picard groupoid} spanned by Dynkin quivers of type~$A$ onto the corresponding \emph{derived Picard groupoid} over arbitrary fields. The abstract fractionally Calabi--Yau property allows us to conclude that this epimorphism is actually split (see \autoref{thm:picard} and \autoref{rmk:picard}).

For convenience, we recall some related notions. Let $k$ be a field and let $\cT$ be a $k$-linear triangulated category with finite dimensional mapping vector spaces. Denoting vector space duals by $(-)^\ast$, there is the following definition (see~\cite[\S I.1]{reiten-bergh:serre} and~\cite{bondal-kapranov:serre}). An exact autoequivalence $S\colon\cT\to\cT$ is a \textbf{Serre functor}  if it comes with an isomorphism
\begin{equation} \label{eq:Serre}
\hom_\cT(x,y) \stackrel{\cong}\longrightarrow \big(\hom_\cT(y, S x)\big)^*
\end{equation}
which is natural in $x$ and $y$. The Yoneda lemma implies that Serre functors are essentially unique if they exist.%, so the existence of a Serre functor is really a property of the triangulated category.

Examples of triangulated categories with a Serre functor (see \cite[Examples 3.2]{bondal-kapranov:serre}) which are of interest to us here are bounded derived categories of finitely generated modules over finite dimensional algebras of finite global dimension (\cite[\S3.6]{happel:fd-algebra}, \cite{krause-le:AR}). In that case, it can be shown that Serre functors are given by \emph{derived Nakayama functors}, and we shall see in \S\ref{sec:nakayama} and \S\ref{sec:universal-tilting} that for Dynkin quivers of type~$A$ this extends to arbitrary stable derivators if we use \emph{canonical Nakayama functors} instead. In the classical context, Serre functors and Auslander--Reiten translations agree up to a suspension; see~\cite[\S I.2]{reiten-bergh:serre}. This motivates the following definition.

\begin{defn}\label{defn:Serre}
Let \D be a stable derivator and let $Q$ be an $A_n$-quiver. The functors
 \[
S=\Sigma\Phi^+\cong\Phi^+\Sigma\colon\D^Q\to\D^Q\quad\text{and}\quad
S=\Sigma\tau\cong\tau\Sigma\colon\D^{M_n,\exx}\to\D^{M_n,\exx}
\]
are called \textbf{Serre functors}. 
\end{defn}

This terminology will be justified in \S\ref{sec:field} but also by \autoref{thm:U_n-Serre} which can be considered as a spectrum level version of Serre duality~\eqref{eq:Serre}. Here we only collect a few immediate facts about Serre functors.

\begin{lem}\label{lem:Serre-match}
Let \D be a stable derivator and let $Q$ be an~$A_n$-quiver. There are natural isomorphisms
$S F_Q\cong F_Q S$ and $S i_Q^\ast \cong i_Q^\ast S$.
\end{lem}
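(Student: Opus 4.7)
The plan is to derive both natural isomorphisms formally from \autoref{cor:phi-tau} together with the observation that $\Sigma$ commutes with both $F_Q$ and $i_Q^\ast$. The key inputs are already in place: $(F_Q, i_Q^\ast)\colon \D^Q \rightleftarrows \D^{M_n,\ex}$ is an equivalence of stable derivators (\autoref{thm:AR-independent}), so both $F_Q$ and $i_Q^\ast$ are exact and hence, by stability, commute up to natural isomorphism with the suspension functor; thus $\Sigma F_Q \cong F_Q \Sigma$ and $\Sigma i_Q^\ast \cong i_Q^\ast \Sigma$.

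First I would record a compatibility between $i_Q^\ast$ and $\tau$. By \autoref{cor:phi-tau} there is a natural isomorphism $\tau F_Q \cong F_Q \Phi^+$. Composing on the left with $i_Q^\ast$ and using $i_Q^\ast F_Q \cong \id_{\D^Q}$ yields $i_Q^\ast \tau F_Q \cong \Phi^+$. Composing further on the right with $i_Q^\ast$ and using $F_Q i_Q^\ast \cong \id_{\D^{M_n,\ex}}$ gives the desired $i_Q^\ast \tau \cong \Phi^+ i_Q^\ast$.

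It then remains to unwind the definition $S = \Sigma \tau \cong \tau \Sigma$ on $\D^{M_n,\ex}$ and $S = \Sigma \Phi^+ \cong \Phi^+\Sigma$ on $\D^Q$ (\autoref{defn:Serre}). For the first statement we compute
\[
S F_Q \;=\; \Sigma \tau F_Q \;\cong\; \Sigma F_Q \Phi^+ \;\cong\; F_Q \Sigma \Phi^+ \;=\; F_Q S,
\]
where the first isomorphism uses \autoref{cor:phi-tau} and the second uses that $F_Q$ is exact. For the second statement we compute analogously
\[
S i_Q^\ast \;=\; \Sigma \Phi^+ i_Q^\ast \;\cong\; \Sigma i_Q^\ast \tau \;\cong\; i_Q^\ast \Sigma \tau \;=\; i_Q^\ast S,
\]
using the compatibility $\Phi^+ i_Q^\ast \cong i_Q^\ast \tau$ established above and the exactness of $i_Q^\ast$.

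There is no serious obstacle to overcome: the proof is a formal concatenation of \autoref{cor:phi-tau} with the fact that equivalences of stable derivators are exact. If anything, the only point worth stating carefully is that $F_Q$ and $i_Q^\ast$ genuinely commute with $\Sigma$ up to natural isomorphism, which follows because they are components of an adjoint equivalence of stable derivators and so preserve the defining (co)cartesian squares and zero objects used to construct $\Sigma$.
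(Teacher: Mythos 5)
Your proof is correct and uses exactly the same ingredients as the paper's: \autoref{cor:phi-tau} together with the fact that the equivalences $(F_Q,i_Q^\ast)$ are exact and hence commute with $\Sigma$. The only cosmetic difference is that the paper reduces both isomorphisms to the single statement $i_Q^\ast S F_Q\cong S$ via the equivalence, while you verify each one directly; this is the same argument.
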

\begin{proof}
Using the equivalences $(F_Q,i_Q^\ast)\colon\D^Q\rightleftarrows\D^{M_n,\exx}$ of \autoref{thm:AR-independent}, it suffices to construct a natural isomorphism $i_Q^\ast S F_Q\cong S$. By \autoref{defn:Serre} and \autoref{cor:phi-tau} we deduce $i_Q^\ast S F_Q= i_Q^\ast \Sigma\tau F_Q\cong \Sigma i_Q^\ast \tau F_Q\cong \Sigma \Phi^+= S$.
\end{proof}

The following facts about the Serre functors will be useful in~\S\ref{sec:nakayama} and~\S\ref{sec:universal-tilting}. At the level of the mesh category the Serre functors correspond to the functors
\begin{equation}\label{eq:s}
s=ft=tf\colon M_n\to M_n\colon (k,l)\mapsto (k+l-1,n+1-l).
\end{equation}
Given an $A_n$-quiver~$Q$, we consider the composition
\begin{equation}\label{eq:s_Q}
s_Q=si_Q=fti_Q\colon Q\to M_n.
\end{equation}

\begin{lem}\label{lem:Serre-shift}
Let \D be a stable derivator and let $Q$ be an $A_n$-quiver. There are natural isomorphisms 
\[
S\cong s^\ast\colon\D^{M_n,\exx}\to\D^{M_n,\exx}\qquad \text{and}\qquad S\cong s_Q^\ast F_Q\colon\D^Q\to\D^Q.
\]
\end{lem}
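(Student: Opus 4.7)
The plan is to unwind both parts of the definition of the Serre functor on $\D^{M_n,\exx}$ into restrictions along concretely identifiable functors on $M_n$, and then transport everything along the equivalence of \autoref{thm:AR-independent} to obtain the version on $\D^Q$.

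First, I would establish the isomorphism $S\cong s^\ast$ on $\D^{M_n,\exx}$. By \autoref{defn:Serre}, $S=\Sigma\tau$, and the two factors have very clean descriptions at the level of the mesh category: by \autoref{defn:ar-transl} we have $\tau=t^\ast$, and by \autoref{cor:susp} there is a natural isomorphism $\Sigma\cong f^\ast$ on $\D^{M_n,\exx}$. Pasting these gives $S\cong f^\ast t^\ast=(tf)^\ast=s^\ast$, using the definition $s=tf$ from \eqref{eq:s}. The only minor point to verify is that $s\colon M_n\to M_n$ indeed preserves $\D^{M_n,\exx}$, but this is immediate since $s=tf$ is a composition of two such maps (the $t$-invariance was observed just above \autoref{defn:ar-transl}, and the $f$-invariance was proved in \autoref{cor:susp}).

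Second, I would transfer this along the equivalence $(F_Q,i_Q^\ast)\colon\D^Q\rightleftarrows\D^{M_n,\exx}$. \autoref{lem:Serre-match} gives natural isomorphisms $SF_Q\cong F_QS$ and $Si_Q^\ast\cong i_Q^\ast S$; combined with the unit $\id\cong i_Q^\ast F_Q$ of the equivalence, this yields $S\cong i_Q^\ast S F_Q$ on $\D^Q$. Plugging in the first part of the lemma, $S\cong s^\ast$ on $\D^{M_n,\exx}$, and using functoriality of restriction, we get $S\cong i_Q^\ast s^\ast F_Q=(si_Q)^\ast F_Q=s_Q^\ast F_Q$, which is the desired second isomorphism in view of \eqref{eq:s_Q}.

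There is no serious obstacle: everything is a formal consequence of identifications already proven, and the only thing to be careful about is bookkeeping of the direction of the natural isomorphism $\Sigma\cong f^\ast$ (the slight subtlety being that \autoref{cor:susp} gives $f^\ast\cong\Sigma$, which of course implies $\Sigma\cong f^\ast$, and in particular $\Sigma\tau\cong f^\ast t^\ast$ since restriction functors compose strictly). Thus the whole proof reduces to the chain of identifications $S=\Sigma\tau\cong f^\ast t^\ast=s^\ast$ followed by an application of \autoref{lem:Serre-match}.
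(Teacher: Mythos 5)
Your proof is correct and follows exactly the paper's route: the paper likewise deduces $S\cong s^\ast$ directly from $S=\Sigma\tau$, $\tau=t^\ast$, $\Sigma\cong f^\ast$ (\autoref{cor:susp}) and $s=tf$, and then obtains $S\cong s_Q^\ast F_Q$ from the first isomorphism together with \autoref{lem:Serre-match}. You have merely spelled out the contravariance bookkeeping and the invariance of $\D^{M_n,\exx}$ under $s$, which the paper leaves implicit.
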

\begin{proof}
The first statement is immediate from the definitions and \autoref{cor:susp} while the second statement follows from the first one together with \autoref{lem:Serre-match}.
\end{proof}

In the special case of the linear $A_n$-quiver $\A{n}$ the formulas \eqref{eq:i}, \eqref{eq:f}, and~\eqref{eq:t} immediately imply that we have
\begin{equation}
s_{\A{n}}\colon\A{n}\to M_n\colon l\mapsto (l-1,n+1-l).
\label{eq:sAn}
\end{equation}
We illustrate these definitions by an example which we take up again in \S\ref{sec:nakayama}. This example is a direct consequence of \autoref{lem:Serre-shift}.

\begin{eg}\label{eg:Serre}
Let \D be a stable derivator and let us consider the linearly oriented $A_3$-quiver $\A{3}$. The Serre functor $S\colon\D^{\A{3}}\to\D^{\A{3}}$ sends $X=(x\to y\to z)$ to the restriction of \eqref{eq:A3} with underlying diagram $(z\to v\to w)$. 
\end{eg}

Let us recall that a $k$-linear triangulated category~$\cT$ with a Serre functor $S$ is \textbf{fractionally Calabi--Yau} if $\Sigma^{m_1}\cong S^{m_2}$ for certain integers $m_1 \in \lZ, m_2 > 0$ (\cite[\S8.2]{keller:orbit} or~\cite{roosmalen:CY}). This is also referred to by saying that $\cT$ has \textbf{Calabi--Yau dimension}~$\frac {m_1}{m_2}$. It is possible that $m_1$ and $m_2$ are not coprime, but the common factor cannot be canceled~\cite[Example 5.3]{roosmalen:CY}. In fact, we see this phenomenon also in our computations for quivers of type $A_3$. \autoref{cor:susp} and \autoref{lem:Serre-shift} imply that $\Sigma^2 \cong S^4$ since one directly checks the equality $f^2 = (tf)^4$ for the underlying symmetries of $M_3$ defined in \eqref{eq:f} and \eqref{eq:t}. However, $f \ne (tf)^2$, and in general $\Sigma \not\cong S^2$ as autoequivalences of $\D^{M_3,\exx}$ for a stable derivator $\D$ (see \autoref{rmk:picard-faithful} and \S\ref{sec:field}). This underlines that the fraction $\frac {m_1}{m_2}$ is to be considered as a formal expression.

Generalizing the above considerations for Dynkin type $A_3$, we obtain the following abstract versions of the fractionally Calabi--Yau property.

\begin{thm}\label{thm:frac-CY}
If \D is a stable derivator, then $\Sigma^{n-1}\colon\D^{M_n,\exx}\to\D^{M_n,\exx}$ and $S^{n+1}\colon\D^{M_n,\exx}\to\D^{M_n,\exx}$ are naturally isomorphic.
\end{thm}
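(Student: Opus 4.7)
The plan is to reduce the statement to a combinatorial identity of endofunctors of the mesh category $M_n$ and then transport it to $\D^{M_n,\exx}$ via restriction. The key inputs are already available: \autoref{cor:susp} identifies $\Sigma$ with $f^\ast$ on $\D^{M_n,\exx}$, where $f(k,l)=(k+l,n+1-l)$, and \autoref{lem:Serre-shift} identifies $S$ with $s^\ast$, where $s(k,l)=(k+l-1,n+1-l)$. Writing $t(k,l)=(k-1,l)$ for the translation of \eqref{eq:t}, recall from~\eqref{eq:s} that $s=ft=tf$, so in particular $f$ and $t$ commute as functors $M_n\to M_n$.

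Next I would do the crucial little computation at the level of $M_n$. A direct calculation gives
\[
f^2(k,l) = f(k+l,n+1-l) = (k+n+1,l),
\]
so $f^2 = t^{-(n+1)}$. Since $f$ and $t$ commute, one now computes
\[
s^{n+1} = (tf)^{n+1} = t^{n+1} f^{n+1} = t^{n+1} f^{n-1} f^2 = t^{n+1} f^{n-1} t^{-(n+1)} = f^{n-1},
\]
so $s^{n+1}=f^{n-1}$ as endofunctors of $M_n$.

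Applying $(-)^\ast$ and using the pseudofunctoriality of $\D$ yields a natural isomorphism $(s^{n+1})^\ast \cong (s^\ast)^{n+1}$ of endofunctors of $\D^{M_n}$, and similarly for $f$. Combining these with the natural isomorphisms of the first paragraph, one obtains on $\D^{M_n,\exx}$
\[
S^{n+1} \;\cong\; (s^\ast)^{n+1} \;\cong\; (s^{n+1})^\ast \;=\; (f^{n-1})^\ast \;\cong\; (f^\ast)^{n-1} \;\cong\; \Sigma^{n-1}.
\]

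The real content is the combinatorial identity $s^{n+1}=f^{n-1}$ on $M_n$, and this is essentially forced by the two-variable structure of the mesh: $f$ is an involution up to the horizontal shift $t^{-(n+1)}$, and multiplying by a further power of $t$ converts $f^{n+1}$ into $f^{n-1}$. I do not expect any genuine obstacle; the only care needed is to check that $t^{n+1}$ really does cancel $t^{-(n+1)}$ through $f^{n-1}$, which is immediate from $ft=tf$.
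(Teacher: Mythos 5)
Your proof is correct and is essentially the paper's own argument: both rest on the identifications $\Sigma\cong f^\ast$, $S\cong(tf)^\ast$ (equivalently $S=\Sigma\tau$ with $\tau=t^\ast$) and the single combinatorial relation $f^2=t^{-(n+1)}$ in the automorphism group of $M_n$. The only difference is cosmetic: you carry out the cancellation inside $\Aut(M_n)$ (obtaining $s^{n+1}=f^{n-1}$ before restricting), whereas the paper cancels at the level of the endofunctors $\Sigma,\tau$ of $\D^{M_n,\mathrm{ex}}$ and reduces to $\Sigma^2\cong\tau^{-(n+1)}$.
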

\begin{proof}
The morphism $S^{n+1}=(\Sigma\tau)^{n+1}\cong\Sigma^{n+1}\tau^{n+1}$ is naturally isomorphic to $\Sigma ^{n-1}$ if and only if $\Sigma^2\cong \tau^{-(n+1)}$. By \autoref{cor:susp} there is a natural isomorphism $\Sigma\cong f^\ast$ with $f$ defined in \eqref{eq:f}. Moreover, $\tau$ is defined as $t^\ast$ with $t$ given by \eqref{eq:t}. It follows immediately from these two defining equations that $t^{-(n+1)}$ and $f^2$ both send $(k,l)$ to $(k+n+1,l)$ which concludes the proof.
\end{proof}

\begin{cor}\label{cor:frac-CY}
Let \D be a stable derivator and let $Q$ be an $A_n$-quiver. There is a natural isomorphism
\[
\Sigma^{n-1}\cong S^{n+1}\colon\D^Q\to\D^Q.
\]
\end{cor}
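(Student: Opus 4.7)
The plan is to transport the abstract fractionally Calabi--Yau isomorphism already established on $\D^{M_n,\exx}$ in \autoref{thm:frac-CY} across the equivalence $(F_Q,i_Q^\ast)\colon\D^Q\rightleftarrows\D^{M_n,\exx}$ of \autoref{thm:AR-independent}. Since $F_Q$ is an equivalence and hence exact, it commutes with the suspension functor up to canonical natural isomorphism, i.e., $\Sigma F_Q\cong F_Q\Sigma$; dually $i_Q^\ast\Sigma\cong\Sigma i_Q^\ast$. Combined with the natural isomorphisms $SF_Q\cong F_QS$ and $Si_Q^\ast\cong i_Q^\ast S$ from \autoref{lem:Serre-match}, both $\Sigma$ and $S$ are intertwined by the equivalence.

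First I would iterate these intertwinings to obtain $\Sigma^{n-1}F_Q\cong F_Q\Sigma^{n-1}$ and $S^{n+1}F_Q\cong F_Q S^{n+1}$. Then applying \autoref{thm:frac-CY} on $\D^{M_n,\exx}$ gives a natural isomorphism $\Sigma^{n-1}\cong S^{n+1}$ there, and precomposing with $F_Q$ and postcomposing with $i_Q^\ast$ yields
\[
\Sigma^{n-1}\cong i_Q^\ast F_Q\Sigma^{n-1}\cong i_Q^\ast\Sigma^{n-1}F_Q\cong i_Q^\ast S^{n+1}F_Q\cong S^{n+1}i_Q^\ast F_Q\cong S^{n+1},
\]
where the first and last isomorphisms use that $i_Q^\ast F_Q\cong\id_{\D^Q}$ is part of the equivalence data.

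There is no serious obstacle here: the result is essentially a formal consequence of transferring a natural isomorphism along an equivalence, using that both $\Sigma$ and $S$ are preserved by this equivalence up to coherent natural isomorphism. The only thing to be mildly careful about is bookkeeping of the natural isomorphisms so that the composition above is indeed the intended one, but this is routine.
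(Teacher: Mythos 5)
Your argument is correct and is essentially the paper's own proof: both transport the isomorphism $\Sigma^{n-1}\cong S^{n+1}$ on $\D^{M_n,\exx}$ from \autoref{thm:frac-CY} along the equivalence $(F_Q,i_Q^\ast)$ of \autoref{thm:AR-independent}, using exactness of equivalences to commute with $\Sigma$ and \autoref{lem:Serre-match} to commute with $S$. The bookkeeping of natural isomorphisms is exactly as routine as you say.
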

\begin{proof}
By \autoref{thm:AR-independent} there is an equivalence $(F_Q,i_Q^\ast)\colon \D^Q\rightleftarrows\D^{M_n,\exx}$ which together with \autoref{thm:frac-CY} gives us a natural isomorphism
\[
i_Q^\ast \Sigma^{n-1} F_Q\cong i_Q^\ast S^{n+1} F_Q\colon \D^Q\to\D^Q.
\]
Since equivalences are exact there is an isomorphism $i_Q^\ast\Sigma^{n-1} F_Q\cong \Sigma^{n-1}$ so that \autoref{lem:Serre-match} concludes the proof. 
\end{proof}

Thus, this result tells us that $A_n$-quivers enjoy a fairly abstract fractionally Calabi--Yau property and that their Calabi--Yau dimension is $\frac{n-1}{n+1}$.

\begin{rmk}\label{rmk:rel-CY}
Keller--Scherotzke introduced a relative version of the Calabi--Yau property~\cite[\S2]{keller-scherotzke:integral} in the context of $k$-linear triangulated categories where $k$ is a commutative ring (as opposed to a field). It follows from our \autoref{thm:Serre-Nakayama} and \autoref{thm:Serre-Nakayama-indep} that $\Sigma^{n-1}\cong S^{n+1}$ holds also in that context, since the relative Serre functor is defined in \cite{keller-scherotzke:integral} precisely as the underlying functor of the Nakayama functor in the monoidal derivator $\D_k$ (see \autoref{defn:Nakayama} and \autoref{rmk:Nakayama}). In the context of derived categories of diagrams in abelian categories, related functors and relations between them have been also studied by Ladkani in \cite{ladkani:posets}.
\end{rmk}

\subsection{The group of symmetries of the mesh category}
\label{subsec:symm-mesh}

The above results offer a more down-to-earth interpretation of the fractional Calabi--Yau property: it is the defining relation of the group of symmetries of the mesh category $M_n$. Note that the group of automorphisms of $M_n$ (as depicted in~\eqref{eq:A3}), which is the same as the group of automorphisms of the repetitive quiver of $[n+1]=(0<\dots<n+1)$, has two generators, namely
\begin{enumerate}
\item the horizontal translation $t\colon M_n \to M_n$ given by~\eqref{eq:t} and
\item the reflection with respect to the horizontal axis, possibly combined with a horizontal translation. Here we can take $f\colon M_n\to M_n$ defined in~\eqref{eq:f}.
\end{enumerate}

As we have seen in the proof of \autoref{thm:frac-CY}, the automorphisms $f$ and $t$ commute and satisfy the relation
\begin{equation} \label{eq:Aut(M_n)-def}
f^2 = t^{-(n+1)}.
\end{equation}
It is straightforward to check that these are the only relations between $f$ and $t$. Hence the (necessarily abelian) group $G_n$ of automorphisms of $M_n$ is given by $G_n = \langle f,t \mid ft=tf, f^2 = t^{-(n+1)} \rangle$ and as an abstract group by
\begin{itemize}
\item $G_n \cong \lZ \oplus \lZ/2\lZ$ if $n$ is odd and by
\item $G_n \cong \lZ$ if $n$ is even. In this case, $G_n$ is generated by $ft^{\frac{n}{2}}$.
\end{itemize}

To formalize the induced symmetries on $\D^{M_n,\exx}$ and $\D^Q$ for an $A_n$-quiver $Q$ we make the following definitions. Let $G$ be a discrete group and let $BG$ be the $2$-category obtained by considering the groupoid with one object associated to $G$ as a $2$-category with identity $2$-cells only. A \textbf{strict (right) action} of $G$ on a derivator~\D is a $2$-functor $\alpha\colon BG\op\to\cDER$ sending the unique object to \D. If we have a pseudo-functor instead then we speak of an \textbf{action} of $G$.

Now, given a stable derivator $\D$, there is a strict action of $G_n$ on $\D^{M_n,\exx}$ which sends $f$ to $f^\ast$ and $t$ to $t^\ast$. We know from \autoref{cor:susp} and \autoref{lem:Serre-shift} that $f^*\cong\Sigma$ and $(ft)^*\cong S$. The defining relation~\eqref{eq:Aut(M_n)-def} precisely translates to the fractional Calabi--Yau property $\Sigma^{n-1}\cong S^{n+1}$. 

A conjugation of this strict action with the equivalences of \autoref{thm:AR-independent} yields the following proposition, where we use a presentation of $G_n$ in terms of $f$ and $s=ft$ rather than $f$ and $t$ as above.

\begin{prop} \label{prop:CY-action}
Let $\D$ be a stable derivator, let $Q$ be an $A_n$-quiver, and let $G_n$ be the group given by the presentation
\[ G_n = \langle f,s \mid fs=sf, f^{n-1} = s^{n+1} \rangle. \]
There is an action $\alpha$ of $G_n$ on $\D^Q$ such that $\alpha(f)\cong\Sigma$ and $\alpha(s)\cong S$.
\end{prop}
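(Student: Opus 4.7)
The plan is to define $\alpha$ by transporting across the equivalence $(F_Q, i_Q^\ast)\colon \D^Q \rightleftarrows \D^{M_n,\exx}$ of Theorem~\ref{thm:AR-independent} the strict action of $G_n$ on $\D^{M_n,\exx}$ constructed in the paragraph preceding the proposition.

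First, I would reconcile the two presentations of $G_n$. The excerpt established that $\mathrm{Aut}(M_n) = \langle f, t \mid ft = tf,\; f^2 = t^{-(n+1)}\rangle$. Setting $s := ft$ (in agreement with \eqref{eq:s}) the relation $fs = sf$ is automatic, and substituting $t = f^{-1}s$ into $f^2 = t^{-(n+1)}$ gives
\[ f^2 = (f^{-1}s)^{-(n+1)} = s^{-(n+1)} f^{n+1}, \]
which rearranges to $s^{n+1} = f^{n-1}$. Hence $G_n$ admits the equivalent presentation $\langle f, s \mid fs = sf,\; f^{n-1} = s^{n+1}\rangle$ used in the statement.

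Next, I would define the pseudo-functor $\alpha\colon BG_n\op \to \cDER$ by the conjugation formula
\[ \alpha(g) := i_Q^\ast \circ g^\ast \circ F_Q, \]
where for $g\in G_n$ we write $g^\ast$ for its image under the strict action on $\D^{M_n,\exx}$. The adjunction isomorphisms $i_Q^\ast F_Q \cong \id_{\D^Q}$ and $F_Q\, i_Q^\ast \cong \id_{\D^{M_n,\exx}}$ supply the compositor 2-cells $\alpha(g)\alpha(h) \cong \alpha(gh)$ and the unitor $\alpha(e)\cong \id$ needed to make $\alpha$ a pseudo-functor; this is the standard recipe for transporting a strict 2-functor across an equivalence. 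To verify the claimed identifications: Corollary~\ref{cor:susp} gives $f^\ast \cong \Sigma$ on $\D^{M_n,\exx}$, and since $F_Q$ and $i_Q^\ast$ are equivalences and hence exact they commute with $\Sigma$ up to canonical isomorphism, yielding
\[ \alpha(f) = i_Q^\ast f^\ast F_Q \cong i_Q^\ast \Sigma F_Q \cong \Sigma\, i_Q^\ast F_Q \cong \Sigma. \]
For $\alpha(s)$, Lemma~\ref{lem:Serre-shift} gives $s^\ast \cong S$ on $\D^{M_n,\exx}$, and Lemma~\ref{lem:Serre-match} ensures that $S$ commutes with both $F_Q$ and $i_Q^\ast$, whence $\alpha(s) \cong S$.

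The main obstacle I anticipate is purely 2-categorical bookkeeping: the defining group relations $fs = sf$ and $f^{n-1} = s^{n+1}$ are respected automatically, since they hold strictly in the source action on $\D^{M_n,\exx}$ and get transported to $\D^Q$ through the equivalence; what needs care is the verification of the pseudo-functor coherence axioms (associator and unitor pentagons and triangles) for $\alpha$, which reduce to diagram chases involving the triangle identities of the adjoint equivalence $(F_Q, i_Q^\ast)$. This is also precisely the reason why the resulting action is genuinely a pseudo-action rather than a strict one, even though the action we started with on $\D^{M_n,\exx}$ was strict.
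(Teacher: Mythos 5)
Your proposal is correct and is essentially the paper's own argument: the proposition is obtained precisely by conjugating the strict $G_n$-action on $\D^{M_n,\exx}$ (with $f\mapsto f^\ast$, $t\mapsto t^\ast$) through the equivalence $(F_Q,i_Q^\ast)$ of \autoref{thm:AR-independent}, using \autoref{cor:susp}, \autoref{lem:Serre-shift}, and \autoref{lem:Serre-match} to identify $\alpha(f)\cong\Sigma$ and $\alpha(s)\cong S$, and rewriting the presentation of $G_n$ in terms of $f$ and $s=ft$ exactly as you do.
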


\begin{rmk} \label{rmk:picard-faithful}
For $n\geq 2$ this group action is generally faithful in the sense that there exists a stable derivator \D such that $\alpha(g)\colon \D \to \D$ is not naturally isomorphic to the identity morphism of $\D$ unless $g$ is the neutral element. In fact, we can choose $\D$ to be the derivator $\D_k$ of any field $k$ as we shall see in \S\ref{sec:field}.
\end{rmk}

\section{Monoidal derivators and tensor products of functors}
\label{sec:monoidal}

In this section we review some facts about monoidal derivators and the calculus of tensor products of functors. These notions were studied in quite some detail in \cite{gps:additivity} and we follow that reference in our notational conventions. We also recall that the derivator of spectra \cSp enjoys a certain universal property, with the important consequence that \emph{every stable derivator} is a closed module over \cSp. The content of this section is central to \S\S\ref{sec:nakayama}-\ref{sec:field}.

To begin with let us recall that the 2-category $\cDER$ of derivators is cartesian with the product $\D_1\times \D_2$ defined by $A\mapsto \D_1(A)\times\D_2(A)$. In particular, this allows us to consider pseudo-monoid objects in $\cDER$, i.e., derivators \V endowed with coherently associative and unital multiplications $\otimes\colon\V\times\V\to\V$. The unit is specified by a morphism $\lS\colon y(\bbone)\to\V$ and there are hence unit objects $\lS_A\in\V(A)$ which are compatible with restrictions up to coherent isomorphisms. Such a pseudo-monoid structure defines (and is equivalent to) a lift of $\V\colon\cCat\op\to\cCAT$ against the forgetful $2$-functor $\cMonCAT\to\cCAT$ from monoidal categories to categories.

In order to obtain a meaningful definition of monoidal structures which commute with colimits in both variables separately, one has to use \emph{external versions} of morphisms of two variables. More precisely, a morphism of two variables in the internal version is a pseudo-natural transformation with components 
\begin{equation}\label{eq:int}
\otimes_A\colon \D_1(A)\times\D_2(A)\to \D_3(A),\qquad A\in\cCat.
\end{equation}
 Using restrictions along projections and diagonal functors, one observes that such a morphism is equivalently specified by a pseudo-natural transformation with components 
\begin{equation}\label{eq:ext}
\otimes\colon\D_1(A)\times\D_2(B)\to\D_3(A\times B),\qquad A,B\in\cCat
\end{equation}
(see \cite[Theorem~3.11]{gps:additivity}). This external variant is distinguished notationally from the internal one by dropping the decoration. As pointed out in \cite[Warning~3.6]{gps:additivity}, it is this external version one has to use to express a compatibility with colimits.

Given a morphism $\otimes$ of two variables and a functor $u\colon A_1\to A_2$, then, using a variant of \eqref{eq:hoexmate1'}, one constructs canonical natural transformations
{\small
\begin{equation}\label{eq:first}
(u\times 1)_! (X \otimes Y) \to (u\times 1)_! (u^* u_! (X) \otimes Y) \stackrel{\cong}{\to}
(u\times 1)_! (u\times 1)^* (u_! (X) \otimes Y) \to u_! (X) \otimes Y
\end{equation}}
\!for $X\in\D_1(A_1), B\in\cCat,Y\in\D_2(B).$ If \eqref{eq:first} is an isomorphism for all $X,B,$ and $Y$, then we say that $\otimes$ preserves left Kan extensions along $u$ in the first variable. There is an obvious variant for the second variable and a combined notion of morphisms of two variables which preserve left Kan extensions in both variables separately. As in the case of morphisms of one variable (\cite[Prop.~2.3]{groth:ptstab}), it is enough to check that colimits are preserved.

\begin{defn}
A \textbf{monoidal derivator} is a pseudo-monoid object $(\V,\otimes,\lS)$ in \cDER such that the monoidal structure $\otimes\colon\V\times\V\to\V$ preserves left Kan extensions in both variables separately.
\end{defn}

Thus, the definition of a monoidal derivator is obtained by categorifying the notion of a monoid \emph{and} imposing a cocontinuity condition. There are obvious variants of braided or symmetric monoidal derivators and, using the extension of the notion of adjunctions of two variables to the framework of derivators given in \cite[\S8]{gps:additivity}, also of suitably closed monoidal derivators. 

A similar combination of a categorification and cocontinuity conditions leads to the notion of a \textbf{module over a monoidal derivator}. If \V is a monoidal derivator, then a $\V$-module is a derivator \D endowed with coherently associative and unital action maps $\otimes\colon\V(A)\times\D(B)\to\D(A\times B)$ which preserve left Kan extensions in both variables separately. There is also the notion of a \textbf{closed \V-module} which is to say \emph{enriched derivator} and this notion will be studied in more detail in \cite{gs:enriched}.

\begin{egs}\label{egs:monoidal}
\begin{enumerate}
\item Let $\bV$ be a complete and cocomplete category. If \bV is endowed with a monoidal structure $\otimes\colon\bV\times\bV\to\bV$ which preserves colimits in both variables separately, then the represented derivator $y(\bV)$ inherits a monoidal structure. There is a variant for complete and cocomplete categories~$\bD$ coming with colimit-preserving actions $\otimes\colon\bV\times\bD\to\bD$ (see \cite[Example~3.22]{gps:additivity}).
\item If \bV is a monoidal model category, then $\ho(\bV)$ is a monoidal derivator. There is a more general statement for Quillen adjunctions of two variables and hence also for \bV-model categories (see \cite[Example~3.23]{gps:additivity}). 
\item Let $R$ be a commutative ring and let $\Ch(R)$ be the category of unbounded chain complexes over~$R$. This category can be endowed with the projective model structure \cite[\S2.3]{hovey:model}. Together with the tensor product $\otimes_R$ it is a stable, closed symmetric monoidal model category. Thus, the derivator $\D_R$ together with the derived tensor product is a stable, closed symmetric monoidal derivator. In particular, this applies to the derivator $\D_k$ of a field~$k$.
\item The derivator \cSp of spectra together with the derived smash product is a stable, closed symmetric monoidal derivator. It can be obtained by passing to the homotopy derivator associated to any of the Quillen equivalent stable, symmetric monoidal closed model categories of spectra discussed for example in \cite{hss:symmetric,ekmm:rings,mmss:diagram}.
\end{enumerate}
\end{egs}

Besides the internal and the external variant of morphisms of two variables, there also is the \emph{canceling variant} which we discuss next. These can be thought of as a categorification of the usual tensor products of bimodules over rings and play a key role in \S\S\ref{sec:nakayama}-\ref{sec:field}. To make this variant precise, we have to recall some basics about \emph{coends} in derivators. There are various ways of defining coends in classical category theory and all of them can be extended to the framework of derivators and shown to be equivalent (see \cite[\S5]{gps:additivity} and \cite[Appendix~A]{gps:additivity}). 

As a definition we choose the approach based on twisted arrow categories which seems to be particularly convenient. The \textbf{twisted arrow category} $\tw(A)$ of $A\in\cCat$ is the category of elements of $\hom_A\colon A\op\times A\to\mathrm{Set}$. Thus, objects are morphisms in $A$ and a morphism $f_1\to f_2$ is a commutative diagram
\[
\xymatrix{
a_1\ar[r]^-{f_1}&b_1\ar[d]\\
a_2\ar[r]_-{f_2}\ar[u]&b_2,
}
\]
which is to say a $2$-sided factorization of $f_2$ through $f_1$. The category $\tw(A)$ comes with a functor 
\begin{equation}\label{eq:(s,t)}
(s,t)\colon\tw(A)\to A\op\times A
\end{equation}
given by the source and target functors. As in the case of ordinary category theory, in order to define the coend construction we need essentially the opposite of this functor, namely 
\begin{equation}\label{eq:tw-op}
(t\op,s\op)\colon\tw(A)\op\stackrel{(s,t)\op}{\to}(A\op\times A)\op\cong A\op\times A.
\end{equation}

\begin{defn}\label{defn:coend}
Let $\D$ be a derivator and $A\in\cCat$. The \textbf{coend} $\int^A\colon\D(A\op\times A)\to \D(\bbone)$ is given by
\begin{equation}\label{eq:coend}
\int^A\colon\D(A\op\times A)\stackrel{(t\op,s\op)^\ast}{\to}\D(\tw(A)\op)\stackrel{(\pi_{\tw(A)\op})_!}{\to}\D(\bbone).
\end{equation}
\end{defn}

In the case of a represented derivator $y(\bD)$, this reduces to a formula in \cite{maclane} and hence reproduces the usual notion of a coend in a (complete and) cocomplete category. There is also a different classical description of coends as certain coequalizers. In homotopical frameworks, coequalizers have to be replaced by geometric realizations of simplicial bar constructions and for an extension of this reformulation to derivators we refer to \cite[Appendix~A]{gps:additivity}.

Now, if we are given a morphism of two variables $\otimes\colon\D_1\times\D_2\to\D_3$ of derivators, then the \emph{canceling version} is $X\otimes_{[A]} Y=\int^AX\otimes Y$ for $X\in\D_1(A\op)$ and $Y\in\D_2(A)$. Thus, as a functor $\otimes_{[A]}$ is defined as
\begin{equation}\label{eq:cancel}
\otimes_{[A]}\colon\D_1(A\op)\times \D_2(A)\stackrel{\otimes}{\to}\D_3(A\op\times A)\stackrel{\int^A}{\to}\D_3(\bbone),
\end{equation}
and we refer to it as the \textbf{(canceling) tensor product of functors}. Note that the subscript $[A]$ is added in order to indicate notationally which `category is canceled'. 

\begin{rmk}\label{rmk:variants}
Given a morphism of two variables $\otimes\colon\D_1\times\D_2\to\D_3$, its internal \eqref{eq:int}, external \eqref{eq:ext}, and canceling versions \eqref{eq:cancel} can be suitably combined. In particular, one obtains parametrized versions of these and, similarly, also of coends \eqref{eq:coend}, thus leading to morphisms of derivators, namely
\begin{enumerate}
\item parametrized internal tensor products $\otimes_A\colon\D_1^A\times\D_2^A\to\D_3^A$,
\item parametrized external tensor products $\otimes\colon\D_1^A\times\D_2^B\to\D_3^{A\times B}$,
\item parametrized coends $\int^A\colon\D^{A\op}\times\D^A\to\D$, and 
\item parametrized canceling tensor products $\otimes_{[A]}\colon\D_1^{A\op}\times\D_2^A\to\D_3$.
\end{enumerate}
It turns out that if $\otimes\colon\D_1\times\D_2\to\D_3$ is a left adjoint of two variables, then the same is true for its internal, external, and canceling variants (see \cite[\S8]{gps:additivity}). 
\end{rmk}

\begin{lem} \label{lem:tensor-opp}
Let $\V$ be a symmetric monoidal derivator, let $X\in\V(A\op)$, and let $Y\in\V(A)=\V((A\op)\op)$. The symmetry constraint of \V induces a canonical isomorphism
\[
X\otimes_{[A]} Y\cong Y \otimes_{[A\op]} X.
\]
\end{lem}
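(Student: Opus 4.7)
The plan is to derive the isomorphism from two pieces of data: the symmetry constraint of the external tensor product on \V, and a canonical isomorphism between the twisted arrow categories $\tw(A)$ and $\tw(A\op)$. The underlying intuition is simply the classical fact that for a cocomplete symmetric monoidal category, the formula $\int^a X(a)\otimes Y(a)\cong \int^a Y(a)\otimes X(a)$ holds formally; our task is just to transport this across the pieces of the definition of $\otimes_{[A]}$ in the derivator setting.

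First, let $\sigma\colon A\times A\op\to A\op\times A$ denote the swap functor. Because \V is symmetric monoidal, the symmetry constraint of its external tensor product yields a natural isomorphism $Y\otimes X\cong \sigma^\ast(X\otimes Y)$ in $\V(A\times A\op)$. Second, there is a canonical isomorphism of small categories $\rho\colon \tw(A\op)\to\tw(A)$ that is the identity on underlying data, since a morphism of $A\op$ is literally a morphism of $A$ with source and target swapped. A direct inspection of the definition of morphisms in the twisted arrow category shows that $\rho$ fits into a commutative square
\[
\xymatrix{
\tw(A\op)\ar[r]^-{\rho}\ar[d]_-{(s,t)_{A\op}} & \tw(A)\ar[d]^-{(s,t)_A}\\
A\times A\op\ar[r]_-{\sigma} & A\op\times A
}
\]
and the analogous square for $(t\op,s\op)$ obtained by passing to opposites. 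Because $\rho$ is an isomorphism of categories, so is $\rho\op$, and hence $(\rho\op)^\ast$ is an equivalence; in particular there is a canonical isomorphism $(\pi_{\tw(A)\op})_!\cong (\pi_{\tw(A\op)\op})_!\,(\rho\op)^\ast$, coming from the factorization $\pi_{\tw(A)\op}\circ\rho\op=\pi_{\tw(A\op)\op}$ up to canonical iso.

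Putting these ingredients together yields a chain of natural isomorphisms
\[
\begin{aligned}
X\otimes_{[A]} Y
&= (\pi_{\tw(A)\op})_!\,(t\op,s\op)_A^\ast(X\otimes Y)\\
&\cong (\pi_{\tw(A\op)\op})_!\,(\rho\op)^\ast (t\op,s\op)_A^\ast(X\otimes Y)\\
&\cong (\pi_{\tw(A\op)\op})_!\,(t\op,s\op)_{A\op}^\ast\, \sigma^\ast(X\otimes Y)\\
&\cong (\pi_{\tw(A\op)\op})_!\,(t\op,s\op)_{A\op}^\ast(Y\otimes X)\\
&= Y\otimes_{[A\op]} X,
\end{aligned}
\]
where the second iso is the compatibility of $\rho$ with the source-target functors (taken after applying $(-)\op$), and the third is the symmetry constraint.

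The main obstacle is not conceptual but rather the bookkeeping of the various swap and opposite functors: one has to keep track of which category lives over which, and remember that $(A\op\times A)\op$ is canonically $A\times A\op$, which already builds a swap into the definition of $(t\op,s\op)$. Once the commutativity of the square involving $\rho$ and $\sigma$ is verified from unwinding the definition of $\tw$, the rest is a routine chain of substitutions, and naturality in $X$ and $Y$ is clear from naturality of each of the three ingredients used.
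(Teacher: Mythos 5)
Your argument is correct and is essentially the paper's own proof: both rest on the symmetry constraint $Y\otimes X\cong\sigma^\ast(X\otimes Y)$ together with the canonical isomorphism between $\tw(A)$ and $\tw(A\op)$ (you use $\rho\colon\tw(A\op)\to\tw(A)$, the paper its inverse $\mathrm{op}\colon\tw(A)\to\tw(A\op)$), the compatibility square with the source–target functors, and the fact that restriction along an isomorphism of categories does not change the colimit. The resulting chain of isomorphisms is the same up to the direction in which the substitutions are performed.
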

\begin{proof}
We recall that the symmetry constraint yields, in particular, coherent isomorphisms $s\colon c^\ast(X\otimes Y)\cong Y\otimes X$ where $c$ denotes the canonical symmetry isomorphism $c\colon A\times A\op\cong A\op\times A$ in $\cCat$. Note next that the passage to formal opposite morphisms in $A$ induces an isomorphism of categories $\mathrm{op}\colon\tw(A)\to\tw(A\op)$. This isomorphism interchanges sources and targets and hence makes the diagram
\[
\xymatrix{
\tw(A)\op \ar[r]^-{\mathrm{op}}_-\cong \ar[d]_{(t_A\op,s_A\op)} & \tw(A\op)\op \ar[d]^{(t_{A\op}\op,s_{A\op}\op)}\\
A\op \times A \ar@{<-}[r]_-c & A \times A\op 
}
\]
commutative. Since isomorphisms are homotopy final (\autoref{egs:htpy}), we obtain natural isomorphisms
\begin{align}
X\otimes_{[A]} Y &= (\pi_{\tw(A)\op})_!(t_A\op,s_A\op)^\ast(X \otimes Y) \\
&= (\pi_{\tw(A)\op})_!\mathrm{op}^\ast(t_{A\op}\op,s_{A\op}\op)^\ast c^\ast (X \otimes Y) \\
&\cong (\pi_{\tw(A)\op})_!\mathrm{op}^\ast(t_{A\op}\op,s_{A\op}\op)^\ast(Y \otimes X) \\
&\cong (\pi_{\tw(A\op)\op})_!(t_{A\op}\op,s_{A\op}\op)^\ast(Y \otimes X) \\
&= Y \otimes_{[A\op]} X,
\end{align}
concluding the proof.
\end{proof}

The canceling tensor product is also associative and unital in the following precise sense (see \cite[Theorem~5.9]{gps:additivity}).

\begin{thm}\label{thm:bicategory}
  If \V is a monoidal derivator, then there is a bicategory $\cProf(\V)$ described as follows:
  \begin{itemize}
  \item Its objects are small categories.
  \item Its hom-category from $A$ to $B$ is $\V(A\times B\op)$.
  \item Its composition functors are the external-canceling tensor products
    \[ \otimes_{[B]} \colon \V(A\times B\op) \times \V(B\times C\op) \too \V(A\times C\op). \]
  \item The identity 1-cell of a small category $B$ is
    \begin{equation}
      \lI_B\;=\;(t,s)_! \lS_{\tw(B)} \;\cong\; (t,s)_! \pi_{\tw(B)}^* \lS_{\bbone} \; \in \V(B\times B\op).\label{eq:unit}
    \end{equation}
  \end{itemize}
\end{thm}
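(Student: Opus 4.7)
The plan is to assemble the bicategory by first checking that the proposed composition is a well-behaved $2$-functor, then constructing an associator and unitors, and finally verifying the pentagon and triangle coherence axioms. Since the hom-categories are specified as honest categories $\V(A\times B\op)$, there is no further homotopy-coherence issue at the level of $1$- and $2$-cells; all the work is in producing coherent structure maps for composition.

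First, for each triple $(A,B,C)$ I would assemble the composition functor from the parametrized external tensor product $\otimes \colon \V(A\times B\op)\times \V(B\times C\op)\to \V(A\times B\op\times B\times C\op)$ together with the parametrized coend $\int^{B}\colon \V(A\times B\op\times B\times C\op)\to \V(A\times C\op)$ from Remark~6.6. Both pieces are pseudo-natural in $A$ and $C$ by construction, so their composite is too, and functoriality on $2$-cells is automatic.

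Second, for the associator, given $X\in\V(A\times B\op)$, $Y\in\V(B\times C\op)$, $Z\in\V(C\times D\op)$, I would rewrite both $(X\otimes_{[B]}Y)\otimes_{[C]}Z$ and $X\otimes_{[B]}(Y\otimes_{[C]}Z)$ as a single triple-coend $\int^{B\times C}(X\otimes Y\otimes Z)$, where the triple external tensor is formed with the monoidal associator of $\V$. The two identifications go via a Fubini isomorphism for coends, i.e.\ a natural comparison $\int^{B}\int^{C}\simeq \int^{B\times C}\simeq \int^{C}\int^{B}$, coming from the equivalence $\tw(B\times C)\simeq \tw(B)\times \tw(C)$ combined with the fact that Kan extensions and restrictions in unrelated variables commute (Examples~2.8(ii)) and that $\otimes$ preserves left Kan extensions in each variable separately.

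Third, for the unitors, I would expand $X\otimes_{[B]}\lI_B$ by substituting $\lI_B=(t,s)_!\pi_{\tw(B)}^{*}\lS_{\bbone}$ and commuting $(t,s)_!$ through the external tensor product using the preservation of left Kan extensions by $\otimes$. This reduces the coend defining $X\otimes_{[B]}\lI_B$ to a parametrized left Kan extension along a composite of functors involving $(t,s)$ and the twisted-arrow projections; a homotopy-exactness argument, using that the inclusion of identity arrows $B\to \tw(B)$ is homotopy cofinal (the relevant slice squares being homotopy exact as in Examples~2.8), then identifies the result with $X$ itself. This is the derivator incarnation of the co-Yoneda lemma. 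The left unitor is constructed symmetrically (and when $\V$ is symmetric monoidal is related to the right unitor through Lemma~6.5).

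Fourth, the pentagon and triangle axioms would be verified by expanding all iterated compositions as single parametrized multi-coends, where the associator is visibly assembled from the pentagon-coherent associator of $\otimes$ on $\V$ together with canonical Fubini comparisons that themselves satisfy a pentagon by uniqueness of parametrized Kan extensions; the triangle reduces to compatibility between the Fubini isomorphism and the co-Yoneda identification on the middle factor. The main obstacle is entirely technical and lives at the foundational level: one must establish the derivator-level Fubini theorem for parametrized coends and the derivator co-Yoneda lemma, both of which require a careful analysis of the twisted arrow categories via homotopy exact squares and depend critically on the fact that the monoidal product preserves left Kan extensions in each variable. Once these two lemmas are in hand, the remaining verifications are formal manipulations of mates, and the theorem follows --- as in fact carried out in \cite[\S5]{gps:additivity}.
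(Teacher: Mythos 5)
The paper does not prove this statement itself: it is quoted from \cite[Theorem~5.9]{gps:additivity}, as you acknowledge, and your outline --- composition as the external tensor followed by a parametrized coend, associativity via a Fubini isomorphism using $\tw(B\times C)\cong\tw(B)\times\tw(C)$ together with cocontinuity of $\otimes$, and unitality via a derivator-level co-Yoneda argument carried out through homotopy exact squares on twisted arrow categories --- is essentially the strategy carried out in that reference. The one point to state with care is your finality claim: it is $B\op\to\tw(B)\op$, $b\mapsto\id_b$, that is homotopy final (the relevant comma categories are categories of two-sided factorizations, which contain the trivial factorizations as initial and terminal objects and are hence weakly contractible), and this finality alone does not produce the unit isomorphism --- the interaction with the left Kan extension $(t,s)_!$ defining $\lI_B$ is where the genuine homotopy-exactness work lies, which you correctly defer to the technical lemmas of \cite[\S 5]{gps:additivity}.
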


We refer to $\cProf(\V)$ as the \textbf{bicategory of profunctors} in \V. Let us emphasize that the identity 1-cells $\lI_B\in\V(B\times B\op)$, also called \textbf{identity profunctors}, are obtained from the monoidal unit $\lS$ as follows. Associated to the twisted arrow category $\tw(B)$, there is the functor \eqref{eq:(s,t)}
%\begin{equation}\label{eq:ts}
%(t,s)\colon \tw(B)\to B\times B\op\colon\quad (f\colon b_1\to b_2)\mapsto (b_2,b_1)
%\end{equation}
and $\lI_B$ is defined as $(t,s)_!\lS_{\tw(B)}\cong(t,s)_!\pi_{\tw(B)}^\ast\lS_\bbone$. Note that while in the definition of the coend we use the opposite of the twisted arrow category \eqref{eq:tw-op}, here we are using the twisted arrow category itself.

In the represented case, $\cProf(y(\bV))$ reduces to the ordinary bicategory of profunctors and $\lI_B$ specializes to the usual identity profunctors (see \cite[Remark~5.10]{gps:additivity}). We will say a bit more about the profunctors $\lI_B$ in \S\ref{sec:nakayama} where they show up prominently in the study of canonical Nakayama functors (in particular, for $A_n$-quivers). In \S\S\ref{sec:admissible}-\ref{sec:field} we will see how starting from these identity profunctors we obtain our universal tilting modules and universal constructors.

We conclude this review section by recalling the universal property of the derivator of spectra and the resulting canonical actions on stable derivators; related to this see \cite{heller:htpythies,heller:stable,cisinski:derived-kan,tabuada:universal-invariants,cisinski-tabuada:non-connective,cisinski-tabuada:non-commutative}. We begin by recalling that the homotopy derivator of spaces~\cS is freely generated under colimits by the singleton~$\Delta^0$ \cite[Theorem~3.24]{cisinski:derived-kan}. In more detail, let $F\colon\cS\to\D$ be a colimit preserving morphism of derivators. The evaluation of the underlying functor $\cS(\bbone)\to\D(\bbone)$ at $\Delta^0$ induces an equivalence of categories
\[
\Hom_!(\cS,\D)\stackrel{\sim}{\to}\D(\bbone)\colon F\mapsto F_\bbone(\Delta^0).
\]
Here, $\Hom_!(-,-)$ denotes the categories of colimit preserving morphisms and all natural transformations. The derivator $\cSp$ of spectra enjoys a similar universal property in the framework of stable derivators; it is freely generated under colimits by the sphere spectrum~$\lS$ \cite[Theorem~A.11]{cisinski-tabuada:non-connective}. This can be used to show that every stable derivator is canonically a closed module over the derivator of spectra \cite[Appendix~A.3]{cisinski-tabuada:non-connective}. In fact, this action is characterized by the fact that it preserves colimits in both variables separately and $\lS \otimes - \cong \id_\D$.

Thus, if \D is a stable derivator, then there is a canonical action $\otimes\colon\cSp\times\D\to\D$. Moreover, this action is a left adjoint of two variables, i.e., every component $\otimes\colon\Sp(A)\times\D(B)\to\D(A\times B)$ is a left adjoint of two variables and the external version $\otimes\colon\cSp\times\D\to\D$ preserves left Kan extensions in both variables separately. Thus, following the notational conventions of \cite{gps:additivity}, for $X\in\Sp(A),Y\in\D(B),Z\in\D(A\times B)$ there are natural isomorphisms
\begin{equation}
  \D(A\times B)\big(X\otimes Y,Z\big) \;\cong\;
  \cSp(A)\big(X, \homr{B}{Y}{Z}\big) \;\cong\;
  \D(B)\big(Y,\homl{A}{X}{Z}\big)\label{eq:canonical}
\end{equation}
for certain functors 
\[
\homr{B}{}{}\colon\D(B)\op\times\D(A\times B)\to\cSp(A)\quad\text{and}\quad
\homl{A}{}{}\colon\D(A\times B)\times\cSp(A)\op\to\D(B).
\]
As indicated by the notation, it turns out that there are morphisms of derivators of two variables
\begin{equation} \label{eq:Sp-cotensor}
\rhd\colon\D\op\times\D\to\cSp\quad\text{and}\quad\lhd\colon\D\times\cSp\op\to\D
\end{equation}
such that the above functors $\homr{B}{}{}$ and $\homl{A}{}{}$ are obtained by passing to suitable canceling versions. In these cases however, as in the case of ordinary category theory, one has to use \emph{ends} instead of coends. More generally, if $\otimes\colon\D_1\times\D_2\to\D_3$ is a left adjoint of two variables, then in the background there are three morphisms of derivators
\begin{equation}\label{eq:two-variables}
\otimes\colon\D_1\times\D_2\to\D_3,\quad \rhd\colon\D_2\op\times\D_3\to\D_1,\quad\text{and}\quad
\lhd\colon\D_3\times\D_1\op\to\D_2.
\end{equation}
For more details see \cite[\S\S 8-9]{gps:additivity}.

There is a subtle point here which will be useful in later computations. Considering the opposite morphism of the cotensors in \eqref{eq:Sp-cotensor}, we obtain a morphism
\begin{equation} \label{eq:opp-action}
\cSp\times\D\op\cong\D\op\times\cSp\stackrel{\lhd\op}{\to}\D\op ,
\end{equation}
which is again a left adjoint in two variables. This is in fact precisely the cycled adjunction of $\otimes$ in the sense of \cite[\S11]{gps:additivity}. Now $\D\op$ has two potentially different closed $\cSp$-module structures: the canonical one since $\D\op$ is again stable and the one from~\eqref{eq:opp-action} induced by the canonical action on $\D$. As a matter of fact, these two actions turn out to be isomorphic.

\begin{prop}\label{prop:stable-frames-opp}
Let $\D$ be a stable derivator. The canonical action of $\cSp$ on~$\D\op$ is naturally isomorphic to the action \eqref{eq:opp-action} induced by the canonical action on \D.
\end{prop}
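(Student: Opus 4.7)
The plan is to invoke the universal characterization of the canonical $\cSp$-module structure on a stable derivator recalled just before the proposition: up to natural isomorphism, the canonical action $\otimes\colon\cSp\times\E\to\E$ is the unique left adjoint of two variables on the stable derivator $\E$ for which the restriction along $\lS\colon y(\bbone)\to\cSp$ is naturally isomorphic to $\id_\E$. Applied to $\E=\D\op$, it suffices to verify these two properties for the action~\eqref{eq:opp-action}.

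First, \eqref{eq:opp-action} is a left adjoint of two variables. This has already been observed in the paragraph preceding the proposition: the morphism $\lhd$ is the cycled adjunction of the two-variable left adjoint $\otimes\colon\cSp\times\D\to\D$, and cycled adjunctions of left adjoints of two variables are again left adjoints of two variables by \cite[\S11]{gps:additivity}. Passing to opposite derivators and pre-composing with the symmetry $\cSp\times\D\op\cong\D\op\times\cSp$ preserves this structure.

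Second, one must check that $\lS$ acts as the identity on $\D\op$ under~\eqref{eq:opp-action}. Unwinding the definitions, the action sends $(\lS,X)$, with $X\in\D(B)$ viewed as an object of $\D\op(B)$, to the object of $\D\op(B)$ underlying $\lS\lhd X\in\D(B)$. It therefore suffices to produce a natural isomorphism $\lS\lhd X\cong X$ in $\D$. This follows from the Yoneda lemma and the chain of natural bijections
\[
\D(B)(Y,\lS\lhd X)\cong\D(B)(\lS\otimes Y,X)\cong\D(B)(Y,X),
\]
in which the first isomorphism is the defining adjunction~\eqref{eq:canonical} of the cotensor and the second uses $\lS\otimes Y\cong Y$, the characterizing property of the canonical action on~$\D$.

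Combining these two verifications with the universal characterization yields the desired natural isomorphism between~\eqref{eq:opp-action} and the canonical action on $\D\op$. No delicate calculation is involved beyond tracking the adjunction bijections; the most technical input, the well-behavedness of cycled adjunctions of two-variable left adjoints, has already been developed in~\cite{gps:additivity} and is imported off the shelf in the discussion preceding the proposition.
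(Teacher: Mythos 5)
Your proposal is correct and follows essentially the same route as the paper: invoke the characterization of the canonical $\cSp$-action (colimit preservation in both variables separately, plus $\lS$ acting as the identity) and verify both conditions for \eqref{eq:opp-action}, with your Yoneda argument merely making explicit the isomorphism $-\lhd\lS\cong\id$ that the paper states without proof. The only quibble is notational: with the paper's conventions the cotensor should be written $X\lhd\lS$ rather than $\lS\lhd X$, but the adjunction chain you give is the right one.
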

\begin{proof}
This follows from the above characterization of the canonical action, given that
\begin{enumerate}
\item the external versions of both $\otimes$ and $\lhd\op$ preserve left Kan extensions in both variables separately and
\item both $\lS \otimes -\colon \D\op \to \D\op$ and $(- \lhd \lS)\op\colon \D\op \to \D\op$ are (canonically) isomorphic to the identity morphism on $\D\op$. \qedhere
\end{enumerate}
\end{proof}

\section{Serre functors as canonical Nakayama functors}
\label{sec:nakayama}

As mentioned in \S\ref{subsec:fCY}, one class of examples of triangulated categories with a Serre functor arises from representation theory. In fact, let $k$ be a field and let $A$ be a finite dimensional $k$-algebra of finite global dimension. The $k$-linear dual $A^*$ is canonically an $A$-$A$-bimodule yielding the Nakayama functor $\nu=A^\ast\otimes_A-$. The Serre functor on the bounded derived category $D^b(A)$ can be constructed as the derived tensor product $S = A^* \Lotimes_A -$; see~\cite[\S3.6]{happel:fd-algebra}. In this section we show that for every stable derivator \D the Serre functor $S\colon\D^{\A{n}}\to\D^{\A{n}}$ can be obtained as a \emph{canonical Nakayama functor} (\autoref{thm:Serre-Nakayama}) which we define as the canceling tensor product with a suitable spectral bimodule. It is rather formal to extend this to arbitrary $A_n$-quivers as we will see in \S\ref{subsec:univ-Coxeter}.

We begin by mimicking the definition of the $k$-linear dual of an algebra. Let \V be a monoidal derivator and let $A\in\cCat$. The identity profunctor 
\[
\lI_A\in\V(A\times A\op)\op\cong\V\op(A\op\times A)
\]
is the analogue of the regular bimodule associated to an algebra. The analogue of the $k$-linear dual is obtained as follows where we use the notation as in \eqref{eq:two-variables}.

\begin{defn} \label{defn:dualizing}
Let $\V$ be a closed monoidal derivator and let $A\in\cCat$. The \textbf{canonical duality bimodule} $D_A\in\V(A\times A\op)$ is the image of the identity profunctor~$\lI_A$ under
\[
\V\op(A\op\times A)\stackrel{-\rhd\lS}{\to}\V(A\op\times A)\stackrel{\cong}{\to}\V(A\times A\op).
\]
\end{defn}

Still using the notation of \eqref{eq:(s,t)} there is the following lemma.

\begin{lem}\label{lem:dual}
Let \V be a closed monoidal derivator and let $A\in\cCat$. There is a canonical isomorphism $D_A\cong(s\op,t\op)_\ast\lS_{\tw(A)\op}$.
\end{lem}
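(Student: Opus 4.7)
The strategy is to re-express $\lI_A$ as a right Kan extension in the opposite derivator $\V\op$, then exploit that $-\rhd\lS\colon\V\op\to\V$ is a right adjoint morphism of derivators fixing the monoidal unit, and finally account for the swap in the definition of $D_A$.

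First, I will rewrite the formula $\lI_A=(t,s)_!\lS_{\tw(A)}\in\V(A\times A\op)$ on the opposite derivator. Using $\V\op(A\op\times A)=\V(A\times A\op)\op$ and the general principle that left Kan extension in $\V$ along a functor $\phi$ corresponds to right Kan extension in $\V\op$ along $\phi\op$, one obtains
\[
\lI_A\;\cong\;((t,s)\op)_*^{\V\op}\lS_{\tw(A)\op}\qquad\text{in }\V\op(A\op\times A),
\]
where $(t,s)\op\colon\tw(A)\op\to(A\times A\op)\op=A\op\times A$.

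Next, I claim that $D=-\rhd\lS\colon\V\op\to\V$ is a right adjoint morphism of derivators. Indeed, the symmetry of $\otimes$ together with the $\otimes\dashv\rhd$ adjunction produces a natural isomorphism $\V(Y,X\rhd\lS)\cong\V(X,Y\rhd\lS)$, exhibiting $D$ as right adjoint to the ``same'' assignment viewed as a morphism $\V\to\V\op$. Right adjoint morphisms of derivators commute with right Kan extensions, so
\[
D(\lI_A)\;\cong\;((t,s)\op)_*^{\V}\,D(\lS_{\tw(A)\op})\;\cong\;((t,s)\op)_*\lS_{\tw(A)\op}\qquad\text{in }\V(A\op\times A),
\]
where I also used the elementary fact $D(\lS_P)\cong\lS_P$, verified by Yoneda and unitality of $\otimes$.

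Finally, the canonical swap $\sigma\colon A\op\times A\cong A\times A\op$ acts on right Kan extensions via pseudofunctoriality, yielding $\sigma_*((t,s)\op)_*\lS_{\tw(A)\op}\cong(\sigma\circ(t,s)\op)_*\lS_{\tw(A)\op}$. An immediate unpacking of components shows that $\sigma\circ(t,s)\op$ sends the opposite twisted arrow of $f\colon a\to b$ in $A$ to $(a,b)\in A\times A\op$ with source in $A$ and target in $A\op$; this is exactly $(s\op,t\op)$. Hence $D_A\cong(s\op,t\op)_*\lS_{\tw(A)\op}$ as claimed.

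The main technical hurdle is the bookkeeping around the opposite derivator: translating the left Kan extension description of $\lI_A$ in $\V$ into a right Kan extension in $\V\op$ along the opposite functor, verifying that $-\rhd\lS$ really is a right adjoint morphism of derivators (not just a right adjoint at each level), and checking that the composite of this right Kan extension with the swap of factors produces precisely $(s\op,t\op)$ rather than a different variant. Once these conventions are pinned down, the computation reduces to two applications of the commutativity of right adjoints with right Kan extensions.
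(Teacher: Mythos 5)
Your argument is essentially the paper's own proof: the paper likewise starts from $\lI_A=(t,s)_!\lS_{\tw(A)}$, uses that $-\rhd\lS\colon\V\op\to\V$ sends left Kan extensions in $\V$ to right Kan extensions (via the calculation of Kan extensions in opposite derivators), uses preservation of the unit, and then swaps the two factors to land on $(s\op,t\op)_\ast\lS_{\tw(A)\op}$, so both the route and the bookkeeping agree. One caveat: the lemma only assumes $\V$ closed monoidal, not symmetric, so you should not justify the right-adjointness of $-\rhd\lS$ through the symmetry isomorphism $\V(Y,X\rhd\lS)\cong\V(X,Y\rhd\lS)$; instead use the two-variable adjunction $\V(X\otimes Y,\lS)\cong\V(X,Y\rhd\lS)\cong\V(Y,\lS\lhd X)$, which exhibits $-\rhd\lS$ as (contravariantly) adjoint to the other internal hom $\lS\lhd-$ without any symmetry, after which the rest of your argument goes through unchanged.
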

\begin{proof}
The morphism $-\rhd\lS\colon\V\op\to\V$ sends left Kan extensions in \V to right Kan extensions in \V, up to canonical isomorphisms. The definition of $\lI_A=(t,s)_!\lS_{\tw(A)}$, the calculation of right Kan extensions in opposite derivators, and the natural isomorphism $\lS\rhd-\cong\id$ imply that there are canonical isomorphisms
\[
\lI_A\rhd\lS\cong (t\op,s\op)_\ast(\lS_{\tw(A)}\rhd\lS)\cong (t\op,s\op)_\ast\lS_{\tw(A)\op}.
\]
Swapping $A$ and $A\op$ as in \autoref{defn:dualizing} then yields $D_A\cong (s\op,t\op)_\ast\lS_{\tw(A)\op}$.
\end{proof}

\begin{defn}\label{defn:Nakayama}
Let $\V$ be a closed monoidal derivator and let $\D$ be a \V-module. The \textbf{Nakayama functor} associated to $A\in\cCat$ is  
\[
D_A\otimes_{[A]}-\colon \D^A\to \D^A.
\]
\end{defn}

As recalled in \S\ref{sec:monoidal}, every stable derivator comes with a canonical action of the derivator of spectra. We refer to the associated Nakayama functors as \textbf{canonical Nakayama functors}. The goal of this section is to show that for linear $A_n$-quivers canonical Nakayama functors are naturally isomorphic to Serre functors. In order to obtain a better understanding of 
\[
D_n=D_{\A{n}}\in\V(\A{n}\times \A{n}\op)
\]
we begin by spelling out in more detail the bimodules $\lI_P,D_P$ for posets~$P$. 

\begin{lem}\label{lem:poset}
Let $P$ be a poset and let $\Delta_P=\{(p,p)\mid p\in P\}$ be the diagonal considered as a full subcategory of $P\times P\op$.
\begin{enumerate}
\item The functor $(t,s)\colon\tw(P)\to P\times P\op$ induces an isomorphism onto the cosieve generated by $\Delta_P\subseteq P\times P\op$. In particular, $\lI_P\in\V(P\times P\op)$ is obtained from $\lS_{\tw(P)}$ by left extension by zero along $(t,s)$.
\item The functor $(s\op,t\op)\colon\tw(P)\op\to P\times P\op$ induces an isomorphism onto the sieve generated by $\Delta_P\subseteq P\times P\op$. In particular, $D_P\in\V(P\times P\op)$ is obtained from $\lS_{\tw(P)\op}$ by right extension by zero along $(s\op,t\op)$.
\end{enumerate}
\end{lem}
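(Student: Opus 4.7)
The plan is to make $\tw(P)$ explicit and then verify directly that the two functors in question are isomorphisms onto the claimed (co)sieves; the statements about $\lI_P$ and $D_P$ will then follow by factoring through these isomorphisms and invoking \autoref{lem:extbyzero} (and its dual). Since $P$ has at most one morphism between any two objects, $\tw(P)$ is again a poset whose objects are the pairs $(a, b)$ with $a \le b$ in $P$ and in which $(a_1, b_1) \le (a_2, b_2)$ iff $a_2 \le a_1$ and $b_1 \le b_2$.

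For (i), the functor $(t,s)\colon \tw(P) \to P \times P\op$ sends $(a \le b)$ to $(b, a)$ and is clearly injective on objects. I would identify its image with the claimed cosieve: by definition the cosieve generated by $\Delta_P$ consists of those $(x, y) \in P \times P\op$ admitting a morphism from some $(p, p) \in \Delta_P$, and unwinding, such a morphism requires $p \le x$ in $P$ and $y \le p$ in $P$; such a $p$ exists iff $y \le x$, matching the image of $(t,s)$ exactly. That $(t,s)$ is full onto this cosieve is a direct comparison: a morphism $(b_1, a_1) \to (b_2, a_2)$ in $P \times P\op$ amounts to $b_1 \le b_2$ and $a_2 \le a_1$ in $P$, which is exactly the morphism condition in $\tw(P)$ between $(a_1 \le b_1)$ and $(a_2 \le b_2)$.

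Writing $(t,s) = j \circ \varphi$ with $\varphi\colon \tw(P) \stackrel{\cong}{\to} C$ the resulting isomorphism onto the cosieve $C$ and $j\colon C \hookrightarrow P \times P\op$ the inclusion, functoriality of Kan extensions together with the fact that $\varphi_!$ is inverse to $\varphi^*$ yields
\[
\lI_P \;=\; (t,s)_! \lS_{\tw(P)} \;\cong\; j_! \varphi_! \lS_{\tw(P)} \;\cong\; j_! \lS_C,
\]
and by the pointed-case dual of \autoref{lem:extbyzero}, left Kan extension along the cosieve inclusion $j$ is precisely left extension by zero. Part (ii) is entirely parallel: the functor $(s\op, t\op)\colon \tw(P)\op \to P \times P\op$ sends $(a \le b)$ to $(a, b)$, has image the sieve $\{(x,y) \mid x \le y \text{ in } P\}$ generated by $\Delta_P$, and is full onto this sieve by the same type of comparison (taking into account that the order is reversed simultaneously in $\tw(P)\op$ and in the $P\op$-factor of the codomain). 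Combining this isomorphism with the formula $D_P \cong (s\op, t\op)_* \lS_{\tw(P)\op}$ from \autoref{lem:dual} and applying \autoref{lem:extbyzero} to the sieve inclusion then gives the desired description of $D_P$.

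The only point requiring care is the bookkeeping of variances---tracking source versus target and keeping straight the opposites involved in $\tw(P)\op$ and $P\op$---so that the image identifications come out correctly. Once those are settled, no derivator machinery beyond (co)sieve extension by zero is needed.
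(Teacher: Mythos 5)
Your proof is correct and follows essentially the same route as the paper's: identify the image of $(t,s)$ (resp.\ $(s\op,t\op)$) as the cosieve (resp.\ sieve) generated by the diagonal via the explicit poset description of $\tw(P)$, and then conclude with \autoref{lem:extbyzero} together with the definition of $\lI_P$ and \autoref{lem:dual}. The paper's argument is just terser, asserting the fully faithfulness and image identification that you verify explicitly.
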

\begin{proof}
The functors $(t,s),(s\op,t\op)$ are clearly injective on objects, fully faithful, and the images are as stated. By \autoref{lem:extbyzero} we know that $(t,s)_!$ is left extension by zero while $(s\op,t\op)_\ast$ is right extension by zero. The remaining statements follow from the definition of $\lI_P$ and from \autoref{lem:dual}, respectively.
\end{proof}

Thus $\lI_P\in\V(P\times P\op)$ is constant with value the monoidal unit $\lS\in\V(\bbone)$ on the cosieve generated by the diagonal $\Delta_P$ and it vanishes on the complement. Similarly, the canonical dual $D_P\in\V(P\times P\op)$ is constantly $\lS$ on the sieve generated by the diagonal $\Delta_P$ and vanishes elsewhere. 

\begin{eg}\label{eg:D2}
Let us consider the poset $P=\A{3}$. By the above discussion, the identity profunctor $\lI_3=\lI_{\A{3}}\in\V(\A{3}\times \A{3}\op)$ looks as shown in the diagram on the left, while the canonical dual $D_3\in\V(\A{3}\times \A{3}\op)$ is indicated in the diagram on the right. The drawing convention is that the horizontal direction is the first coordinate, while the vertical one is given by the second coordinate.
\begin{equation}\label{eq:D2}
\vcenter{
\xymatrix@=1.0em{
\lS\ar[r]&\lS\ar[r]&\lS&&& 
\lS\ar[r]&0\ar[r]&0\\
0\ar[u]\ar[r]&\lS\ar[u]\ar[r]&\lS\ar[u] && &
\lS\ar[u]\ar[r]&\lS\ar[u]\ar[r]&0\ar[u]\\
0\ar[u]\ar[r]&0\ar[r]\ar[u]&\lS\ar[u] && &
\lS\ar[u]\ar[r]&\lS\ar[r]\ar[u]&\lS\ar[u]
}
}
\end{equation}
\end{eg}

Before we attack the proof of \autoref{thm:Serre-Nakayama} we sketch what happens in the case of $n=3$. This particular case motivates certain constructions in the proof of \autoref{thm:Serre-Nakayama}.

\begin{eg}\label{eq:Serre-Nakayama}
Let \D be a stable derivator considered with the canonical action of $\cSp$, and let $X=(x\to y\to z)\in\D(\A{3})$. We want to show that there are isomorphisms $(D_3\otimes_{[\A{3}]} X)_k\cong S(X)_k$ in the underlying category $\D(\bbone)$ for all $k\in\A{3}$. Here, $S$ of course denotes the Serre functor (\autoref{defn:Serre}).

By definition of the canceling tensor product we have to calculate $D_3\otimes X$ which is an object of $\D(\A{3}\times\A{3}\op\times\A{3})$. Let us write $(k,l,m)$ for a typical object in $\A{3}\times\A{3}\op\times\A{3}$. Using \autoref{eg:D2} we know that the underlying diagrams of $(D_3\otimes X)_{(k,-,-)}\in\D(\A{3}\op\times\A{3}), k\in\A{3},$ look as shown in \autoref{fig:external} where we draw the $\A{3}\op$-coordinate horizontally and the remaining $\A{3}$-coordinate vertically.

\begin{figure}[h]
 \centering
\[
\xymatrix@=1.0em{
x\ar[d]&x\ar[l]\ar[d]&x\ar[l]\ar[d] && 
0\ar[d]&x\ar[l]\ar[d]&x\ar[l]\ar[d] && 
0\ar[d]&0\ar[l]\ar[d]&x\ar[l]\ar[d]\\
y\ar[d]&y\ar[l]\ar[d]&y\ar[l]\ar[d] && 
0\ar[d]&y\ar[l]\ar[d]&y\ar[l]\ar[d] && 
0\ar[d]&0\ar[l]\ar[d]&y\ar[l]\ar[d]\\
z&z\ar[l]&z\ar[l] && 
0&z\ar[l]&z\ar[l] && 
0&0\ar[l]&z\ar[l]
}
\]
\caption{$(D_3\otimes X)_{(k,-,-)}$ for $k=1,2,$ and $3$.}
\label{fig:external}
\end{figure}

In order to pass to the canceling tensor products $(D_3\otimes_{[\A{3}]}X)_k$, it is a consequence of \autoref{lem:poset} that for each $k$ we have to restrict $(D_3\otimes X)_{(k,-,-)}$ to the sieves generated by the diagonal and then pass to the colimit. Obviously, these colimits can be calculated by first left Kan extending the respective restrictions to $\A{3}\op\times\A{3}$ and then evaluating at the terminal object $(1,3)$ (see \autoref{egs:htpy}). Sticking again to the decoration of the objects as in \eqref{eq:A3}, a repeated application of \autoref{lem:detection} implies that the respective left Kan extensions look as in \autoref{fig:left}. 

\begin{figure}[h]
\centering
\[
\xymatrix@=1.0em{
x\ar[d]&x\ar[l]\ar[d]&x\ar[l]\ar[d] && 
0\ar[d]&x\ar[l]\ar[d]&x\ar[l]\ar[d] && 
0\ar[d]&0\ar[l]\ar[d]&x\ar[l]\ar[d]\\
y\ar[d]&y\ar[l]\ar[d]&y\ar[l]\ar[d] && 
u\ar[d]&y\ar[l]\ar[d]&y\ar[l]\ar[d] && 
0\ar[d]&0\ar[l]\ar[d]&y\ar[l]\ar[d]\\
z&z\ar[l]&z\ar[l] && 
v&z\ar[l]&z\ar[l] && 
w&w\ar[l]&z\ar[l]
}
\]
\caption{Towards $(D_3\otimes_{[\A{3}]} X)_k$ for $k=1,2,$ and $3$.}
\label{fig:left}
\end{figure}

If we now evaluate at the terminal object $(1,3)\in\A{3}\op\times\A{3}$ for $k=1,2,3$, then we deduce that the underlying incoherent diagram of $D_3\otimes_{[\A{3}]} X$ is of the form
\[
z\to v\to w.
\]
Comparing this to \autoref{eg:Serre}, we see that it matches at least objectwise with our earlier description of the Serre functor.
\end{eg}

\begin{thm}\label{thm:Serre-Nakayama}
Let \D be a stable derivator and let $n\geq 1$. The Serre functor and the canonical Nakayama functor on $\D^{\A{n}}$ are naturally isomorphic,
\[
S\cong D_n\otimes_{[\A{n}]}-\colon\D^{\A{n}}\to\D^{\A{n}}.
\]
\end{thm}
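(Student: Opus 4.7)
The plan is to reduce to a pointwise calculation, exploiting the mesh-theoretic description of the Serre functor. By \autoref{lem:Serre-shift} together with \eqref{eq:sAn}, the Serre functor on $\D^{\A{n}}$ is identified with $s_{\A{n}}^\ast F_{\A{n}}$, where $s_{\A{n}}(l) = (l-1, n+1-l)$. It therefore suffices to construct, naturally in $X \in \D^{\A{n}}$ and $k \in \A{n}$, an isomorphism
\[
(D_n \otimes_{[\A{n}]} X)_k \;\cong\; F_{\A{n}}(X)_{(k-1,\, n+1-k)}.
\]

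First, I would unpack the left-hand side. By \autoref{defn:coend} and the parametrized version of the canceling tensor product we have
\[
(D_n \otimes_{[\A{n}]} X)_k \;=\; (\pi_{\tw(\A{n})\op})_! \,(t\op,s\op)^\ast \bigl((D_n)_{(k,-)} \otimes X\bigr).
\]
An application of \autoref{lem:poset} to $P = \A{n}$ identifies $D_n$ as the right extension by zero of the constant spectrum $\lS$ along the sieve $\{(a,b) \in \A{n} \times \A{n}\op : a \le b\}$; in particular $(D_n)_{(k,b)}$ equals $\lS$ for $b \ge k$ and vanishes otherwise. Since the canonical action of $\cSp$ on $\D$ satisfies $\lS \otimes - \cong \id$, the pullback along $(t\op,s\op)$ is the diagram on $\tw(\A{n})\op$ sending $(u,v)$ to $X_u$ when $v \ge k$ and to $0$ otherwise, and the left-hand side is the colimit of this diagram in $\D$.

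Next, I would identify this colimit with the mesh value. The value $F_{\A{n}}(X)_{(k-1,n+1-k)}$ is produced by the chain \eqref{eq:AR}: first $X$ is extended by zero along the sieve $i_1$, and then $(i_2)_!$ fills in cocartesian squares one cell at a time by repeated applications of \autoref{lem:detection}. This iterative procedure presents $F_{\A{n}}(X)_{(k-1,n+1-k)}$ as the colimit over the triangular subposet of $M_n$ of cells bounded by the source row $\{(0,l)\}$, the boundary zeros, and the target cell $(k-1,n+1-k)$. The combinatorial heart of the proof is to match this triangular subposet (together with its decoration by $X_u$'s and zeros) with the subcategory of $\tw(\A{n})\op$ obtained in the previous paragraph. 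The pattern worked out for $n=3$ in \autoref{eq:Serre-Nakayama}, \autoref{fig:external} and \autoref{fig:left} is characteristic of the general argument and generalises to arbitrary $n$: both diagrams are finite posets indexed by the pairs $(u,v)$ with $u \le v$ and $v \ge k$, and their colimit cones are both seen to satisfy the same universal property against the iterated cofiber sequences produced by \autoref{lem:detection}.

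Once the two colimit presentations are matched pointwise, the isomorphism is evidently natural in $X$ and, using the coherence of the restriction functors, in $k$ as well, yielding the desired natural isomorphism of morphisms of derivators. The main obstacle is this combinatorial matching of the twisted arrow colimit shape with the triangular pushout diagram in $M_n$; a slicker argument would presumably proceed by transporting the whole statement across the equivalence $(F_{\A{n}}, i_{\A{n}}^\ast)$ to $\D^{M_n,\exx}$ and characterising $D_n \otimes_{[\A{n}]} -$ by a universal property there, but the direct pointwise verification outlined above is the most straightforward route.
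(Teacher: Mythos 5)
Your reduction to a pointwise statement is where the argument breaks down. Objectwise isomorphisms $(D_n\otimes_{[\A{n}]}X)_k\cong F_{\A{n}}(X)_{(k-1,n+1-k)}$, even if natural in $X$ and compatible with the maps of $\A{n}$, only identify the \emph{underlying incoherent diagrams} of $D_n\otimes_{[\A{n}]}X$ and $SX$. Since the underlying diagram functor $\D(\A{n})\to\D(\bbone)^{\A{n}}$ neither reflects the existence of isomorphisms nor lets you lift one, this does not yield an isomorphism in $\D(\A{n})$, let alone a natural isomorphism of morphisms of derivators; to invoke (Der2) you would first need a \emph{coherent} comparison morphism $D_n\otimes_{[\A{n}]}X\to SX$, and your colimit-matching never constructs one. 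This is precisely why the paper treats its $n=3$ computation (\autoref{eq:Serre-Nakayama}, Figures \ref{fig:external} and \ref{fig:left}) only as motivation, saying the outcome ``matches at least objectwise'', and then gives a proof that stays coherent throughout: it rewrites $D_n\otimes_{[\A{n}]}X$ as $(\id\times(1,n))^\ast(\id\times j)_!(\id\times j)^\ast(D_n\otimes X)$ using the sieve $C\cong\tw(\A{n})\op$ and the terminal object $(1,n)$, identifies $(\id\times j)^\ast(D_n\otimes X)\cong u^\ast FX$ for an explicit functor $u\colon\A{n}\times C\to M_n$ (via $D_n\cong\chi^\ast 0_\ast\lS$ and exactness of the action), shows that $v^\ast FX$ lies in the essential image of $(\id\times j)_!$ for an explicit extension $v$ of $u$, and only at the very end observes that $v\circ(\id\times(1,n))=s_{\A{n}}$, so that \autoref{lem:Serre-shift} finishes the proof with an isomorphism of coherent diagrams, natural in $X$.

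Relatedly, the step you yourself call the combinatorial heart is not actually carried out. You assert that the colimit of the restricted twisted-arrow diagram and the value $F_{\A{n}}(X)_{(k-1,n+1-k)}$ ``satisfy the same universal property against the iterated cofiber sequences produced by \autoref{lem:detection}'', but these are colimits over genuinely different shapes (the portion of $\tw(\A{n})\op$ where the diagram is supported, versus the slice categories arising from the pointwise formula for the Kan extensions in \eqref{eq:AR}), and even restricting the coend colimit to the support of the diagram needs justification. Matching them requires either a cofinality argument or the paper's device of realizing both sides as restrictions of the single coherent diagram $FX\in\D^{M_n,\exx}$, whose bicartesian squares are exactly what makes the counit $(\id\times j)_!(\id\times j)^\ast v^\ast FX\to v^\ast FX$ invertible (this is where the factorization of $j\colon C\to\A{n}\op\times\A{n}$ into steps adding one square at a time and the repeated use of \autoref{lem:detection} enter). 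Extrapolating from the $n=3$ pictures is not a substitute for this verification, so as it stands the proposal has a genuine gap both in its key step and in its overall strategy for producing a natural isomorphism of morphisms of derivators.
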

\begin{proof}
Unraveling the definition of the canonical Nakayama functor, we see that it is given by
\[
\D^{\A{n}}\stackrel{D_n\otimes-}{\to}\D^{\A{n}\times\A{n}\op\times\A{n}}\stackrel{(\id\times (t\op,s\op))^\ast}{\longrightarrow} \D^{\A{n}\times\tw(\A{n})\op}\stackrel{(\pi_{\tw(\A{n})\op})_!}{\to}\D^{\A{n}}. 
\]
Here $(t\op,s\op)\colon\tw(\A{n})\op\to \A{n}\op\times\A{n}$ is essentially the opposite of \eqref{eq:(s,t)} and the functor $\pi_{\tw(\A{n})\op}\colon\A{n}\times\tw(\A{n})\op\to\A{n}$ is the projection away from $\tw(\A{n})\op$. We are aiming for a different description of $D_n\otimes_{[\A{n}]} X$ for $X\in\D^{\A{n}}$ which shows the desired relation to the Serre functor. The final step uses \eqref{eq:sAn} and \autoref{lem:Serre-shift}. 

Since $\A{n}$ is a poset, we can, by \autoref{lem:poset}, consider $(\id\times(t\op,s\op))^\ast (D_n\otimes X)$ as the restriction of $D_n\otimes X\in\D^{\A{n}\times\A{n}\op\times\A{n}}$ to the full subcategory of $\A{n}\times\A{n}\op\times\A{n}$ given by
\[
\{(k,l,m)\in\A{n}\times\A{n}\op\times\A{n}\mid l\geq m\}.
\]
In fact, $j\colon C=\{(l,m)\mid l\geq m\}\subseteq\A{n}\op\times\A{n}$ is the sieve generated by the diagonal of $\A{n}$. The functor $\id\times(t\op,s\op)$ can thus be written as the horizontal composition in the following commutative diagram:
\[
\xymatrix{
\A{n}\times\tw(\A{n})\op\ar[r]^-\cong\ar[rd]_-{\pi_{\tw(\A{n})\op}}&\A{n}\times C\ar[d]^-{\pi_C}\ar[r]^-{\id\times j}&\A{n}\times\A{n}\op\times\A{n}\ar[dl]^-{\;\;\pi_{\A{n}\op\times\A{n}}}\\
&\A{n}&
}
\]
Since $(1,n)\in\A{n}\op\times\A{n}$ is a terminal object, the left Kan extension functor along $\A{n}\op\times\A{n}\to\bbone$ is naturally isomorphic to the evaluation functor $(1,n)^\ast$ (this is an instance of the cofinality of right adjoints given by \autoref{egs:htpy}). Thus we obtain canonical isomorphisms
\begin{align}
D_n\otimes_{[\A{n}]} X&= (\pi_{\tw(\A{n})\op})_!(\id\times(t\op,s\op))^\ast (D_n\otimes X)\\
&\cong (\pi_C)_!(\id\times j)^\ast (D_n\otimes X)\\
&\cong (\pi_{\A{n}\op\times\A{n}})_!(\id\times j)_!(\id\times j)^\ast (D_n\otimes X)\\
&\cong (\id\times(1,n))^\ast(\id\times j)_!(\id\times j)^\ast (D_n\otimes X).
\end{align}

The next step consists of describing $(\id\times j)^\ast(D_n\otimes X)$ as the restriction of a diagram in $\D^{M_n,\exx}$. By \autoref{lem:poset} we know that $D_n\in\Sp(\A{n}\times\A{n}\op)$ is obtained by right extension by zero from $\lS_{\tw(\A{n})\op}$ along the inclusion $\tw(\A{n})\op\to\A{n}\times\A{n}\op$. Let $\chi\colon \A{n}\times\A{n}\op\to[1]$ be the unique functor which sends $\tw(\A{n})\op\subseteq\A{n}\times\A{n}\op$ constantly to $0$ and the remaining objects to $1$. Then \autoref{lem:poset} implies that there is an isomorphism $D_n\cong\chi^\ast 0_\ast(\lS_{\bbone})$ where $0\colon\bbone\to[1]$ picks the object $0\in[1]$. Since the action $\otimes\colon\Sp\times\D\to\D$ is exact, we obtain natural isomorphisms 
\[
D_n\otimes X\cong (\chi^\ast 0_\ast\lS_\bbone)\otimes X\cong (\chi\times\id_{\A{n}})^\ast(0\times\id_{\A{n}})_\ast (X).
\]
Using implicitly the obvious isomorphism between $[1]\times\A{n}$ and the full subcategory $K\subseteq M_n$ spanned by the objects $(0,m),(m,0),1\leq m\leq n$, we obtain a natural isomorphism between $(0\times\id_{\A{n}})_\ast (X)$ and the restriction of $FX$ to $K$ ($F$ is again the equivalence of \autoref{thm:AR}). We define the functor $u\colon \A{n}\times C \to K\subseteq M_n$ by
\[
u(k,l,m) =
\begin{cases}
(0,m) & \textrm{if }l \geq k \\
(m,0) & \textrm{if } l < k. \\
\end{cases}
\]
The reader checks that this functor is well-defined. Summarizing this part, there is a natural isomorphism $(\id\times j)^\ast(D_n\otimes X)\cong u^\ast FX$, and, combined with the first part, this yields a natural isomorphism
\begin{align}
D_n\otimes_{[\A{n}]} X&\cong (\id\times(1,n))^\ast(\id\times j)_!(\id\times j)^\ast (D_n\otimes X)\\
&\cong (\id\times(1,n))^\ast(\id\times j)_!u^\ast FX.
\end{align}

Next, we show that $(\id\times j)_!u^\ast FX$ can be obtained from $FX$ by restriction. Since Kan extensions are pointwise (\cite[Corollary~3.14]{groth:ptstab}) a coherent diagram $Y\in\D^{\A{n}\times\A{n}\op\times\A{n}}$ lies in the essential image of $(\id\times j)_!$ if and only if $Y_k=Y_{(k,-,-)}$ lies in the essential image of $j_!\colon\D^C\to\D^{\A{n}\op\times\A{n}}$ for all $1\leq k\leq n$. Now, the inclusion $j\colon C\to\A{n}\op\times\A{n}$ factors over intermediate categories in a way that each step consists of adding one new square only. More formally, this is achieved by adding the objects $(l,m)$ with $l<m$ by a nested induction in the increasing $m$-direction and in the decreasing $l$-direction. A repeated application of \autoref{lem:detection} together with \autoref{egs:htpy} shows that $Y_k$ lies in the essential image of $j_!$ if and only if $Y_k$ makes all these new squares cocartesian. As an upshot, $Y\in\D^{\A{n}\times\A{n}\op\times\A{n}}$ lies in the essential image of $(\id\times j)_!$ if and only if $Y_k,1\leq k\leq n,$ makes all the new squares cocartesian, and, using the fully faithfulness of $\id\times j$, this is the case if and only if the counit $(\id\times j)_!(\id\times j)^\ast Y\to Y$ is an isomorphism (again by \autoref{egs:htpy}).

One observes that the above functor $u\colon\A{n}\times C\to M_n$ can be extended to the functor $v\colon\A{n}\times\A{n}\op\times\A{n}\to M_n$ which is defined by
\[
v(k,l,m) =
\begin{cases}
(0,m) & \textrm{if } l \ge k \\
(m,0) & \textrm{if } l, m < k. \\
(k-1,m+1-k) & \textrm{if } l < k \textrm{ and } m \ge k.
\end{cases}
\]
We leave it to the reader to check that this functor is well-defined and that it extends~$u$, i.e., that we have $u=v\circ (\id\times j)\colon \A{n}\times C\to M_n$. Moreover, the exactness properties of $FX$ (see \autoref{thm:AR}) imply that $Y=v^\ast FX$ lies in the essential image of $(\id\times j)_!$ and the counit $(\id\times j)_!(\id\times j)^\ast v^\ast FX\to v^\ast FX$ is hence an isomorphism. Combining this with the previous steps, we obtain natural isomorphisms
\begin{align}
D_n\otimes_{[\A{n}]} X&\cong (\id\times(1,n))^\ast(\id\times j)_!u^\ast FX\\
&= (\id\times(1,n))^\ast(\id\times j)_!(\id\times j)^\ast v^\ast FX\\
&\cong (\id\times(1,n))^\ast v^\ast FX\\
&= \big(v\circ (\id\times(1,n))\big)^\ast FX.
\end{align}

Finally, it suffices to observe that the functor $(v\circ (\id\times(1,n))\colon \A{n}\to M_n$ sends $k$ to $(k-1,n+1-k)$ and hence agrees with $s_{\A{n}}\colon \A{n}\to M_n$ defined in \eqref{eq:sAn}. Thus, by \autoref{lem:Serre-shift} we obtain natural isomorphisms
\[
D_n\otimes_{[\A{n}]} X \cong s_{\A{n}}^\ast FX\cong S X
\]
which concludes the proof.
\end{proof}

\begin{rmk}\label{rmk:Nakayama}
The proof of \autoref{thm:Serre-Nakayama} actually shows that the result holds more generally. Let \V be a closed monoidal, stable derivator and let \D be a stable derivator which is also a $\V$-module. Then the Serre functor $S\colon\D^{\A{n}}\to\D^{\A{n}}$ is naturally isomorphic to the Nakayama functor associated to the given $\V$-action. This remark applies, in particular, to stable derivators which are closed modules over the derivator of a field or a commutative ring.
\end{rmk}

\section{Admissible morphisms of stable derivators}
\label{sec:admissible}

In the previous section we saw that Serre functors for linearly oriented $A_n$-quivers are canonical Nakayama functors. Generalizing this, we now define three large classes of morphisms of stable derivators, namely left admissible, right admissible, and admissible morphisms, and show that they are also induced by explicitly constructed spectral bimodules (\autoref{thm:kernel}). The representing spectral bimodules give rise to interesting constructions in arbitrary stable homotopy theories as will be illustrated by specific examples in \S\S\ref{sec:universal-tilting}-\ref{sec:higher}.

\subsection{Spectral bimodules induced by admissible morphisms}

Let us recall that every stable derivator \D is canonically a closed $\cSp$-module. In particular, there is an action $\otimes\colon\cSp\times\D\to\D$ which is a left adjoint of two variables. As such, the action commutes with restrictions and left Kan extensions, but also, since we are in the stable context, with homotopy finite limits (\autoref{thm:exact}). The idea behind the definition of a \emph{left admissible morphism} is to encode such compatibilities (but see also \autoref{rmk:kernel-admissible}(iv)). This notion and related ones encompass essentially all previously constructed morphisms of derivators in this paper as well as those in \cite{gst:basic,gst:tree}.

\begin{defn}\label{defn:admissible}
Let \D be a stable derivator and let $A,B\in\cCat$. A morphism $\D^A\to\D^B$ is \textbf{left admissible} if it can be written as a composition of 
\begin{itemize}[leftmargin=4em]
\item[(LA1)] restriction morphisms $u^\ast\colon\D^{B'}\to\D^{A'}$,
\item[(LA2)] left Kan extensions $u_!\colon\D^{A'}\to\D^{B'}$, 
\item[(LA3)] right Kan extensions $u_\ast\colon\D^{A'}\to\D^{B'}$ along fully faithful functors which amount precisely to adding a cartesian square or right Kan extensions along countable compositions of such functors (see below for details), and
\item[(LA4)] right extensions by zero $u_\ast\colon\D^{A'}\to\D^{B'}$ for sieves $u\colon A'\to B'$.
\end{itemize}
\end{defn}

Let us be more precise about the class (LA3) which makes perfectly well sense for arbitrary derivators~\D. By definition the basic building blocks are fully faithful functors $u\colon A\to B$ between small categories such that $B-u(A)$ consists of precisely one object $b_0$ and such that there is a homotopy exact square
\begin{equation}\label{eq:add-bicartesian}
\vcenter{
\xymatrix{
\lrcorner\ar[d]_-{i_\lrcorner}\ar[r]^-j&A\ar[d]^-u\\
\square\ar[r]_-k&B
}
}
\end{equation} 
with $k$ satisfying $k(0,0)=b_0$. Using the compatibility of canonical mates with respect to pasting (\autoref{egs:htpy}), the following two factorizations
\[
\xymatrix{
\lrcorner\ar[r]^-j\ar[d]_-{i_\lrcorner}&A\ar[r]^-u\ar[d]_-u&B\ar[d]^-= \ar@{}[rrd]|{=}&& 
\lrcorner\ar[r]^-{i_\lrcorner}\ar[d]_-{i_\lrcorner}&\square\ar[r]^-k\ar[d]_-=&B\ar[d]^-=\\
\square\ar[r]_-k&B\ar[r]_-=&B && 
\square\ar[r]_-=&\square\ar[r]_-k&B
}
\]
together with \cite[Lemma~1.21]{groth:ptstab} imply that $X\in\D^B$ lies in the essential image of $u_\ast$ if and only if $k^\ast(X)\in\D^\square$ is cartesian. Thus, $u_\ast\colon\D^A\to\D^B$ induces an equivalence onto the full subderivator of $\D^B$ spanned by all $X$ such that $k^\ast(X)$ is cartesian, justifying the above terminology. In (LA3) we consider more generally countable compositions of such functors. Relevant examples of such morphisms are right Kan extensions which are taken care of by the `detection lemma' (\autoref{lem:detection}); see \autoref{lem:LA3-detect} for a precise statement.

Obviously, there are dual versions (RA1),(RA2),(RA3), and (RA4) of the defining classes in \autoref{defn:admissible} leading to \textbf{right admissible} morphisms $\D^A\to\D^B$ for stable derivators~\D. Finally, assuming still that \D is stable, a morphism $\D^A\to\D^B$ is \textbf{admissible} if it is left admissible and right admissible.

The goal of this section is to show that left admissible morphisms are canceling tensor products with spectral bimodules and similarly in the other two cases (\autoref{thm:kernel}). This result builds on the following rather obvious lemmas. Since careful proofs of these lemmas are a bit lengthy and since the details of the proofs are not relevant to the understanding of this paper, they are given in the separate subsection~\S\ref{subsec:lemmas}.

\begin{lem}\label{lem:LA3}
A morphism of derivators which preserves pullbacks also preserves right Kan extensions along functors of type (LA3).
\end{lem}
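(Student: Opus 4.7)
The strategy is to verify the claim pointwise via (Der2), reducing everything to the basic building block of type (LA3): a fully faithful $u\colon A\to B$ with $B-u(A)=\{b_0\}$ fitting into the homotopy exact square \eqref{eq:add-bicartesian} with $k(0,0)=b_0$. Recall from the discussion following \eqref{eq:add-bicartesian} that $u_\ast\colon\D^A\to\D^B$ is an equivalence onto the full subderivator of $\D^B$ consisting of those $Y$ for which $k^\ast Y\in\D^\square$ is cartesian; equivalently, by the homotopy exactness of \eqref{eq:add-bicartesian}, $k^\ast u_\ast\cong (i_\lrcorner)_\ast j^\ast$ canonically as morphisms $\D^A\to\D^\square$. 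Likewise, $F\colon\D\to\E$ preserves pullbacks precisely when the canonical mate $(i_\lrcorner)_\ast F\to F(i_\lrcorner)_\ast$ is an isomorphism.

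For the basic case, given $X\in\D^A$ the intrinsic commutation of $F$ with restrictions produces by adjunction a canonical mate $\gamma\colon u_\ast FX\to Fu_\ast X$, and by (Der2) it suffices to show $\gamma_b$ is an isomorphism for each $b\in B$. For $b=u(a)$ this is immediate from the counit isomorphism $u^\ast u_\ast\cong\id$ along fully faithful functors (\autoref{egs:htpy}). For $b=b_0$, the above canonical isomorphism combined with the hypothesis on $F$ yields
\[
k^\ast Fu_\ast X\cong Fk^\ast u_\ast X\cong F(i_\lrcorner)_\ast j^\ast X\cong (i_\lrcorner)_\ast Fj^\ast X\cong (i_\lrcorner)_\ast j^\ast FX\cong k^\ast u_\ast FX,
\]
and evaluation at $(0,0)\in\square$ identifies $(Fu_\ast X)_{b_0}$ with $(u_\ast FX)_{b_0}$ through $\gamma_{b_0}$.

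For a countable composition $u\colon A\to B$ presenting $B$ as the union of $A=B_0\xinto{u_0}B_1\xinto{u_1}\cdots$, let $v_i\colon B_i\into B$ and $u^{(i)}=u_{i-1}\circ\cdots\circ u_0\colon A\to B_i$. Every $b\in B$ lies in some $B_i$; since all the inclusions in sight are fully faithful, the slices $(b/u)$ and $(b/u^{(i)})$ coincide, so by the pointwise formula \eqref{eq:Der4*} we obtain canonical isomorphisms $v_i^\ast u_\ast X\cong (u^{(i)})_\ast X$ in $\D^{B_i}$ and similarly after applying $F$. Iterating the basic case at each of the finitely many steps $u_0,\ldots,u_{i-1}$ gives $F(u^{(i)})_\ast X\cong (u^{(i)})_\ast FX$, and assembling these pointwise isomorphisms via (Der2) yields the desired $Fu_\ast X\cong u_\ast FX$.

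The main obstacle is the basic case, specifically the identification of $(u_\ast X)_{b_0}$ as a pullback that $F$ then preserves; once this is set up by interpreting the hypothesis on $F$ as compatibility with $(i_\lrcorner)_\ast$, the countable composition case is a clean bootstrap by pointwise detection and the pasting of finitely many basic building blocks.
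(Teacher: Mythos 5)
Your proof is correct and takes essentially the same route as the paper's: reduce to the basic building block of one added square, check the canonical comparison pointwise (at objects of $u(A)$ via full faithfulness, at $b_0$ by combining the homotopy exactness of \eqref{eq:add-bicartesian} with the hypothesis that $F$ is compatible with $(i_\lrcorner)_\ast$), and then handle countable compositions by a pointwise inductive bootstrap. One small correction: the canonical mates run $Fu_\ast\to u_\ast F$ and $F(i_\lrcorner)_\ast\to(i_\lrcorner)_\ast F$, not in the directions you wrote, though this mislabeling does not affect the substance of the argument.
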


\begin{lem}\label{lem:sieves}
A morphism of derivators which preserves terminal objects also preserves right Kan extensions along sieves.
\end{lem}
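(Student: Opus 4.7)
The plan is to show that the canonical mate $\gamma\colon F u_\ast\to u_\ast F$ (constructed from the pseudo-naturality isomorphism $F u^\ast\cong u^\ast F$ together with the two adjunctions $(u^\ast,u_\ast)$ in $\D$ and $\E$) is a natural isomorphism whenever $u\colon A\to B$ is a sieve and $F\colon\D\to\E$ preserves terminal objects. By (Der2) it suffices to verify this after restriction along every $b\colon\bbone\to B$, and the strategy is to handle the two cases $b\in u(A)$ and $b\notin u(A)$ separately.

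First I would apply (Der4) in both $\D$ and $\E$ to reduce the pointwise check at $b$ to showing that the comparison $F(\pi_{(b/u)})_\ast q^\ast X \to (\pi_{(b/u)})_\ast F q^\ast X$ is an isomorphism, where $q\colon(b/u)\to A$ is the slice projection. The compatibility of canonical mates with pasting (\autoref{egs:htpy}(iv)), applied to the slice square and the pseudo-naturality square of $F$ along $q$, guarantees that this comparison is really the restriction of $\gamma_X$ to $b$. For $b=u(a)$, I would observe that $(a,\id_{u(a)})$ is an initial object of $(b/u)$, so the inclusion $\bbone\to(b/u)$ picking it out is left adjoint to $\pi_{(b/u)}$; by the dual of \autoref{egs:htpy}(iii) the limit collapses to evaluation at $(a,\id)$, and this is a restriction, which $F$ preserves up to canonical isomorphism by pseudo-naturality.

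The sieve hypothesis enters in the remaining case. For $b\notin u(A)$, any object $(a',f\colon b\to u(a'))$ of $(b/u)$ would, by the defining property of a sieve, force $b$ to lie in $u(A)$; hence $(b/u)=\emptyset$. By (Der1) the derivator takes the empty category to the terminal category, and so the limit functor $(\pi_\emptyset)_\ast$ picks out a terminal object. Thus the comparison at $b$ becomes the canonical map $F(t_\D)\to t_\E$ between terminal objects, which is an isomorphism by the hypothesis on~$F$.

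I do not anticipate any serious obstacle; the only point that requires care is the identification in the first step of the pointwise image of $\gamma$ with the comparison arising from the two applications of (Der4), a routine but mildly fiddly diagram chase in the pasting calculus of canonical mates. Once this identification is in hand the two pointwise computations are immediate.
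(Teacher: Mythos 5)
Your proof is correct and takes essentially the same route as the paper: reduce to a pointwise check, observe that at objects $b\in u(A)$ the mate is invertible by full faithfulness alone, and that at $b\notin u(A)$ both components are terminal objects (this is where the sieve condition and the hypothesis on $F$ enter), so the mate is an isomorphism by (Der2). The only difference is that you re-derive the two ingredients directly from (Der4) --- the initial object of the slice for $b\in u(A)$, and the empty slice together with (Der1) for $b\notin u(A)$ --- whereas the paper instead invokes \autoref{lem:ff-kan} and the fact that $u_\ast$ along a sieve is right extension by terminal objects \cite[Prop.~1.23]{groth:ptstab}.
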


We note that these two lemmas apply to exact morphisms of stable derivators.

\begin{thm}\label{thm:kernel}
Let \D be a stable derivator and let $F\colon\D^A\to\D^B$ be a morphism.
\begin{enumerate}
\item If $F$ is left admissible then there is a bimodule $M\in\cSp(B\times A\op)$ and a natural isomorphism $F\cong M\otimes_{[A]}-\colon\D^A\to\D^B$.
\item If $F$ is right admissible then there is a bimodule $N\in\cSp(A\times B\op)$ and a natural isomorphism $F\cong -\lhd_{[A]}N\colon\D^A\to\D^B$.
\end{enumerate}
\end{thm}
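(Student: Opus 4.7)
The approach is to proceed by induction on the number of basic steps (LA1)--(LA4) used to present the given left admissible morphism $F$, and then obtain part (ii) either dually or by applying (i) to $\D\op$ in combination with \autoref{prop:stable-frames-opp}. The base case $F=\id_{\D^A}$ is handled by the identity profunctor $\lI_A\in\cSp(A\times A\op)$: by \autoref{thm:bicategory} there is a natural isomorphism $\lI_A\otimes_{[A]} X\cong X$.

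For the inductive step, suppose $F\cong G\circ F_0$ where $F_0\colon\D^A\to\D^{A_0}$ satisfies $F_0\cong M_0\otimes_{[A]}-$ for some $M_0\in\cSp(A_0\times A\op)$, and $G$ is a single basic operation. The candidate representing bimodule for $F$ is produced by applying the same basic operation to $M_0$ in its outer (first) coordinate: for $G=u^*$ take $M=(u\times\id_{A\op})^*M_0$; for $G=u_!$ take $M=(u\times\id_{A\op})_!M_0$; and for $G=u_*$ of type (LA3) or (LA4) take $M=(u\times\id_{A\op})_*M_0$.

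What remains is to verify the natural isomorphism $M\otimes_{[A]}X\cong G(M_0\otimes_{[A]}X)$ in each of the four subcases. Cases (LA1) and (LA2) are immediate from the formal structure of the parametrized canceling tensor: viewed as a morphism of two variables, $\otimes_{[A]}$ is pointwise in its outer variable (the external tensor is so by construction, and the coend affects only the $A\op\times A$ coordinates), and as a left adjoint of two variables it commutes with $u_!$ in that variable. The subtle cases are (LA3) and (LA4). Here I would apply \autoref{lem:LA3} and \autoref{lem:sieves} to the morphism of derivators $-\otimes_{[A]} X\colon\cSp^{(-)\times A\op}\to\D^{(-)}$ obtained by fixing $X\in\D^A$. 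Being a left adjoint of two variables, this morphism is cocontinuous; since source and target are both stable, \autoref{thm:exact} guarantees that it preserves homotopy finite limits and in particular pullbacks and terminal objects. The two lemmas then yield the required compatibilities, for instance
\[
\big((u\times\id)_*M_0\big)\otimes_{[A]}X\;\cong\; u_*\big(M_0\otimes_{[A]}X\big),
\]
closing the induction.

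The main obstacle I anticipate is the careful bookkeeping of which coordinate is being canceled versus parametrized and the verification that each required compatibility is a formal consequence of $\otimes_{[A]}$ being a parametrized left adjoint of two variables between derivators. With that organization in place, part (ii) follows by dualizing the proof: the classes (RA1)--(RA4) are dual to (LA1)--(LA4), one uses the cotensor $\lhd$ and its parametrized canceling version $\lhd_{[A]}$, and the roles of \autoref{lem:LA3} and \autoref{lem:sieves} are played by their evident duals applied to $X\lhd_{[A]}-$ for fixed $X$; alternatively, one invokes part (i) for $\D\op$ and translates the resulting bimodule back to $\D$ via the canonical identification of the two $\cSp$-actions from \autoref{prop:stable-frames-opp}.
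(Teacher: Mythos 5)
Your argument is correct and is essentially the paper's own proof: the representing bimodule is obtained by applying the basic operations to the identity profunctor, cases (LA1) and (LA2) follow from pseudo-functoriality and cocontinuity of $-\otimes_{[A]}X$, cases (LA3) and (LA4) from the canonical-mate argument via \autoref{lem:LA3} and \autoref{lem:sieves}, and part (ii) by passing to opposite derivators using \autoref{prop:stable-frames-opp}. The only differences are cosmetic: the paper reduces to a single basic step by composing the representing bimodules (\autoref{thm:bicategory}) instead of running your induction with a general $M_0$, and your appeal to \autoref{thm:exact} is unnecessary and slightly misdirected --- since $-\otimes_{[A]}X$ is cocontinuous it is right exact, hence exact between stable derivators, which already yields preservation of pullbacks and terminal objects, exactly as in the paper's proof.
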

\begin{proof}
As we shall discuss in \autoref{lem:la-dual} right after the proof, it suffices to take care of the first statement only. Passing to canceling tensor products of the respective spectral bimodules, we can assume that~$F$ belongs to one of the classes (LA1),(LA2),(LA3), or (LA4) in \autoref{defn:admissible}. We begin with the case (LA3) and consider the diagram 
\[
 \xymatrix{
\cSp^{A\times A\op}\times\D^A\ar[d]_-{(u\times\id)_\ast\times\id}\ar[r]^-{\otimes_{[A]}}&\D^A\ar[d]^-{u_\ast}\\
\cSp^{B\times A\op}\times\D^A\ar[r]_-{\otimes_{[A]}}&\D^B,
}
\]
which is induced by the canonical action of $\cSp$ on \D. For $K\in\cSp^{A\times A\op}$ and $X\in \D^A$, forming a composition analogous to \eqref{eq:first}, there is a canonical mate transformation $\beta\colon((u\times\id)_\ast K)\otimes_{[A]}X\to u_\ast(K\otimes_{[A]}X)$. In fact, the partial canceling tensor product $-\otimes_{[A]}X$ defines a morphism of derivators and $\beta$ is the canonical mate describing the compatibility with right Kan extensions. Since $-\otimes_{[A]}X$ is a left adjoint morphism it preserves pushouts \cite[Proposition~2.9]{groth:ptstab}. In the stable context, this implies that $-\otimes_{[A]}X$ also preserves pullbacks and \autoref{lem:LA3} then guarantees that $\beta$ is an isomorphism. Specializing to $K=\lI_A\in\cSp(A\times A\op)$, we obtain natural isomorphisms
\[
((u\times\id)_\ast\lI_A)\otimes_{[A]} X\stackrel{\beta}{\to} u_\ast(\lI_A\otimes_{[A]} X)\cong u_\ast(X),
\]
which is to say that $u_\ast\colon\D^A\to\D^B$ is naturally isomorphic to a canceling tensor product. 

The class (LA4) works similarly using \autoref{lem:sieves} this time, and the class (LA1) follows immediately from the pseudo-functoriality of the action since there are isomorphisms
$((u\times\id)^\ast\lI_A)\otimes_{[A]} X\cong u^\ast(\lI_A\otimes_{[A]} X)\cong u^\ast(X).$ Finally, since the morphism~$-\otimes_{[A]}X$ is a left adjoint it commutes with left Kan extensions \cite[Proposition~2.9]{groth:ptstab} and we obtain natural isomorphisms
\[
u_!(X)\cong u_!(\lI_A\otimes_{[A]} X)\cong ((u\times\id)_!\lI_A)\otimes_{[A]} X.
\]
Thus, also morphisms of type (LA2) are canceling tensor products, concluding the proof. 
\end{proof}

Let us now discuss two technical aspects regarding the theorem. First of all, we have proved only part (i) since part (ii) follows by duality. In fact, if $F\colon \D \to \E$ is a morphism of derivators, we obtain the \emph{opposite morphism} $F\op\colon \D\op \to \E\op$, where $F\op_A\colon \D\op(A) \to \E\op(A)$ is defined as $(F_{A\op})\op$, the opposite of the $A\op$-component $F_{A\op}\colon \D(A\op) \to \E(A\op)$ of $F$. If $\D$ is a stable derivator, then so is $\D\op$ and as such it carries a canonical closed $\cSp$-module structure (see \autoref{prop:stable-frames-opp}). More generally, if $\D$ is a closed $\V$-module, so is canonically $\D\op$; details will appear in~\cite{gs:enriched}.
Suppose now that $F\colon \D^A \to \D^B$ is a left admissible morphism. Upon identifying $(\D^A)\op \cong (\D\op)^{A\op}$ and $(\D^B)\op \cong (\D\op)^{B\op}$, it follows directly from the definitions that $F\op\colon (\D\op)^{A\op} \to (\D\op)^{B\op}$ is right admissible, and dually.

\begin{lem} \label{lem:la-dual}
Let $\D$ be a stable derivator. If $F \cong M \otimes_{[A]} -\colon \D^A \to \D^B$ for $M \in \cSp(B \times A\op)$, then there is a natural isomorphism $F\op \cong - \lhd_{[A\op]} M\colon (\D\op)^{A\op} \to (\D\op)^{B\op}$, considering $M$ as an object of $\cSp(A\op \times (B\op)\op)$. Dually, if $F \cong - \lhd_{[A]} N$ for $N \in \cSp(A \times B\op)$, then $F\op \cong N \otimes_{[A\op]} -$.
\end{lem}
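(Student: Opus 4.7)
The plan is to exploit the fact that passage to opposites exchanges tensors and cotensors in a controlled way. We focus on the first assertion; the second is completely symmetric and is proved by the same argument with the roles of $\otimes$ and $\lhd$ swapped.

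The first key ingredient is \autoref{prop:stable-frames-opp}, which identifies the canonical $\cSp$-module structure on $\D\op$ with the one obtained from $\D$ by cycling adjunctions: explicitly, there is a natural isomorphism $X \otimes Z\op \cong (Z \lhd X)\op$ in $\D\op$ for $X \in \cSp$ and $Z \in \D$. From this, a routine two-variable adjunction computation (using the general theory of cycled adjunctions from \cite{gps:additivity}) identifies the cotensors for the canonical action on $\D\op$; in particular, one finds $W\op \lhd X \cong (X \otimes W)\op$ for $W \in \D$ and $X \in \cSp$. All these identifications are pseudo-natural and extend in an evident way to the parametrized external versions.

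The second key ingredient is that the opposite 2-functor $(-)\op\colon \cDER \to \cDER$ swaps left and right Kan extensions, and hence interchanges coends and ends. In view of \autoref{defn:coend}, this yields a natural isomorphism
\[
\int_{A\op}(W\op) \;\cong\; \left(\int^A W\right)\op
\]
for $W \in \D(A\op \times A)$ (where the end on the left is in $\D\op$ and the coend on the right is in $\D$), together with its parametrized analogues. Combining the two ingredients --- and invoking \autoref{lem:tensor-opp} together with the symmetry of $\cSp$ to reconcile the orderings of factors in $\cSp(A\op \times B)$ versus $\cSp(B \times A\op)$ --- we compute, for $Y \in \D(A)$,
\[
Y\op \lhd_{[A\op]} M \;=\; \int_{A\op}(Y\op \lhd M) \;\cong\; \left(\int^A M \otimes Y\right)\op \;=\; (M \otimes_{[A]} Y)\op \;=\; F(Y)\op \;=\; F\op(Y\op).
\]

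The main obstacle is purely bookkeeping: one must verify that the componentwise formulas above assemble into a natural isomorphism of morphisms of derivators, and that the various permutations among the factors $A$, $A\op$, $B$, $B\op$ are consistent throughout. This follows from the 2-functoriality of $(-)\op$ together with the systematic interaction between opposites and the parametrized versions of restrictions, Kan extensions, and the external $\cSp$-action, as developed in \cite{gps:additivity}.
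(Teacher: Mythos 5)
Your argument is correct and is essentially the paper's own proof: both rest on \autoref{prop:stable-frames-opp} (via the cycled-adjunction formalism of \cite{gps:additivity}) to identify the cotensor of the canonical $\cSp$-action on $\D\op$ with the opposite of the tensor on $\D$, and then on the fact that passing to opposites interchanges the parametrized ends and coends used in the canceling constructions. The only cosmetic difference is that you carry out the comparison objectwise, whereas the paper states it once as an isomorphism of morphisms $M\otimes-\cong(-\lhd M)\op\colon\D\to\D^{B\times A\op}$ and then composes with the parametrized coend.
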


\begin{proof}
Suppose that $F \cong M \otimes_{[A]} -$. We claim that there is a natural isomorphism
\[ M \otimes - \cong (- \lhd M)\op \colon \D \to \D^{B \times A\op}. \]
In order to unravel this formula, note that the external version of the canonical action of $\cSp$ on $\D$ yields a morphism $M \otimes -\colon \D \to \D^{B \times A\op}$. On the other hand, the cotensor by $M$ on $\D\op$ provides us with a morphism $- \lhd M\colon \D\op \to (\D\op)^{(B\times A\op)\op}$. Passing to the opposite morphism and identifying $(\D\op)^{(B\times A\op)\op} \cong (\D^{B \times A\op})\op$ gives the morphism $(- \lhd M)\op \colon \D \to \D^{B \times A\op}$. Having said that, the claim is an immediate consequence of \autoref{prop:stable-frames-opp}.

The morphism $F \cong M\otimes_{[A]}-\colon \D^A \to \D^B$ is by definition obtained by composing $M \otimes -\colon \D^A \to \D^{B \times A\op \times A}$ with the parametrized coend. Similarly, $- \lhd_{[A\op]} M\colon (\D\op)^{A\op} \to (\D\op)^{B\op}$ is defined as a composition of $-\lhd M$ with an end. Passing to the opposite functor, $(-\lhd_{[A\op]} M)\op\colon \D^A \to \D^B$ is isomorphic to the composition of $(- \lhd M)\op$ with a parametrized coend again. Combining this observation with the above isomorphism $M \otimes - \cong (- \lhd M)\op$, it follows that
\[ F \cong M \otimes_{[A]} - \cong (-\lhd_{[A\op]} M)\op. \]
Thus, $F\op \cong -\lhd_{[A\op]} M$, as stated.
\end{proof}

As for the second aspect, the following seemingly technical lemma guarantees that \autoref{thm:kernel} applies to our earlier constructions; see \S\S\ref{sec:universal-tilting}-\ref{sec:higher}. The lemma roughly says that those  right Kan extensions which amount to adding countably many cartesian squares all of which are detected by the `detection lemma' (\autoref{lem:detection}) belong to the class (LA3).

\begin{lem}\label{lem:LA3-detect}
Let $u\colon A=(B-\{b_0\})\to B$ be the fully faithful inclusion of the complement of $b_0\in B$ and let us consider a diagram~\eqref{eq:add-bicartesian}. If the induced functor $\tilde{j}\colon\lrcorner\to(b_0/A)$ is a left adjoint, then the square \eqref{eq:add-bicartesian} is homotopy exact, i.e., $u_\ast$ belongs to (LA3).
\end{lem}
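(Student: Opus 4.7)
The plan is to verify the homotopy exactness of~\eqref{eq:add-bicartesian} by checking pointwise on $\square$, using~(Der2), that the canonical mate $k^\ast u_\ast X \to (i_\lrcorner)_\ast j^\ast X$ is an isomorphism for every derivator $\D$ and every $X \in \D^A$. At each of the three objects $p = i_\lrcorner(p')$ of $\lrcorner \subseteq \square$, the fully faithfulness of both $u$ and $i_\lrcorner$ combined with \autoref{egs:htpy}(i) identifies both sides of the pointwise mate with $j(p')^\ast X$, so there is nothing to check at these three points.

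The essential case is therefore the remaining object $(0,0)\in\square$, which is mapped by $k$ to $b_0\notin u(A)$. Here the plan is to apply (Der4) on both sides. On the left, the slice square of $u$ at $b_0$ yields $b_0^\ast u_\ast X \cong (\pi_{(b_0/A)})_\ast q^\ast X$, with $q\colon (b_0/A)\to A$ the canonical projection. On the right, the slice square of $i_\lrcorner$ at $(0,0)$ yields $(0,0)^\ast (i_\lrcorner)_\ast j^\ast X \cong (\pi_{((0,0)/i_\lrcorner)})_\ast r^\ast j^\ast X$ with $r$ the canonical projection; but since $(0,0)$ is initial in $\square$, the functor $r\colon((0,0)/i_\lrcorner)\to\lrcorner$ is an isomorphism of categories, so this simplifies to $(\pi_\lrcorner)_\ast j^\ast X$. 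Invoking the pasting compatibility of canonical mates (\autoref{egs:htpy}(iv)) together with the identity $q\circ\tilde{j}=j$, one verifies that the pointwise mate at $(0,0)$ is identified with the canonical comparison
\[
(\pi_{(b_0/A)})_\ast q^\ast X \;\longrightarrow\; (\pi_\lrcorner)_\ast \tilde{j}^\ast q^\ast X
\]
induced by the unit of $\tilde{j}^\ast \dashv \tilde{j}_\ast$ together with the factorization $\pi_{(b_0/A)}\circ\tilde{j}=\pi_\lrcorner$.

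It then remains to show that this last comparison is always an isomorphism, i.e., that $\tilde{j}$ is homotopy initial. This is precisely where the hypothesis enters: since $\tilde{j}$ is a left adjoint by assumption, the opposite $\tilde{j}\op$ is a right adjoint between the opposite categories, and is therefore homotopy final by \autoref{egs:htpy}(iii); applying this fact in the opposite derivator $\D\op$ (which is itself a derivator by \autoref{egs:prederivators}) translates back to the statement that $\tilde{j}$ is homotopy initial in $\D$, so the comparison above is an isomorphism. The principal obstacle in this plan is the bookkeeping at the point $(0,0)$: one must paste the relevant slice squares correctly and propagate units and counits through the two applications of (Der4) in order to match the pointwise mate with the canonical comparison induced by $\tilde{j}$. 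Once this identification has been carried out, homotopy initiality of $\tilde{j}$ concludes the argument.
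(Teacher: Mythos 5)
Your proposal is correct and takes essentially the same route as the paper: it reduces to the component of the canonical mate at $(0,0)$, identifies that component via the (Der4) slice squares and pasting with the mate of the square over the point involving $\tilde{j}$ (this is exactly the paper's pasting identity \eqref{eq:IIIa}$=$\eqref{eq:IIIb}), and concludes because a left adjoint $\tilde{j}$ is homotopy initial (the dual of \autoref{egs:htpy}(iii)). The only spot where you are quicker than the paper is the claim that there is ``nothing to check'' at the three points in the image of $i_\lrcorner$: one still has to see that the mate itself, and not merely its source and target, is invertible there, which is precisely the (routine) content of \autoref{lem:ff-kan} and \autoref{lem:base}.
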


Also the proof of this lemma will be given in \S\ref{subsec:lemmas}.

\begin{rmk}\label{rmk:kernel-admissible}
\begin{enumerate}
\item The proof of \autoref{thm:kernel} gives explicit constructions of the resulting bimodules, which will be useful later. Imposing the following additional assumption it is immediate that these bimodules are unique up to isomorphism. Note that given a left admissible or right admissible morphism $F_\D\colon\D^A\to\D^B$, then there is also such a morphism $F_\E\colon\E^A\to\E^B$ for arbitrary stable derivators~$\E$. \emph{If we speak of such a morphism, then we usually have this family in mind.} The uniqueness of the bimodules follows if we insist that they realize all these $F_\E$. In fact, it suffices to assume this for $F_{\cSp}$ for left admissible morphisms as follows immediately from $F_{\cSp}(\lI_A)\cong M\otimes_{[A]}\lI_A\cong M$. Dually, it suffices to consider $F_{\cSp\op}$ for right admissible morphisms.
\item By the same proof there is a variant of \autoref{thm:kernel} for stable derivators which are closed modules over monoidal, stable derivators~\V. Obviously, left admissible, right admissible, and admissible morphisms make sense for such derivators and a variant of \autoref{thm:kernel} then gives rise to bimodules in $\V$, which are unique up to isomorphism if we impose a similar condition as above. In particular, if we consider such closed modules over the derivator $\D_k$, $k$ a commutative ring, then \autoref{thm:kernel} yields uniquely determined objects in
\[
\D_k(B\times A\op)\simeq D(kB\otimes_k kA\op),
\]
i.e., chain complexes of bimodules over category algebras.

\item Let us also mention the relation to the model situation from representation theory. One classically considers the situation where $\V = \D = \D_k$ is the derivator of a field $k$ and $T \in \D_k(A \times B\op) \simeq D(kA \otimes_k kB\op)$ is a complex of $kA$-$kB$-bimodules, which then corepresents a functor between derived categories
\[ \Rhom_{kA}(T,-)\colon D(kA) \longrightarrow D(kB). \]
But $\Rhom_{kA}(T,-)$ is precisely $(- \lhd_{[A]} T)_\bbone\colon \D_k^A(\bbone) \to \D_k^B(\bbone)$, i.e., the underlying functor of the morphism
\[
 - \lhd_{[A]} T\colon \D_k^A \longrightarrow \D_k^B. 
\]
Classically, the name \emph{tilting complex} or, in our terminology, \emph{tilting bimodule} is reserved for bimodules~$T$ such that $\Rhom_{kA}(T,-)$ is an equivalence; see~\cite{rickard:derived-fun,keller:deriving-dg}. We will encounter such bimodules in our general setting in \S\S\ref{sec:universal-tilting}-\ref{sec:higher}.

\item \autoref{thm:kernel} suffices for our applications in this paper but the following more general statement should be true. Let us say that a right Kan extension functor $u_\ast\colon\D^{A'}\to\D^{B'}$ is \textbf{cofinally homotopy finite} if for every $b'\in B'$ there is a cofinal morphism $C\to(b'/A')$ with a homotopy finite domain. This is for example the case if we can find homotopy finite subcategories in $(b'/A')$ such that the inclusion is a left adjoint. \autoref{thm:kernel} should still be true if in \autoref{defn:admissible} one uses cofinally homotopy finite right Kan extensions instead of the class (LA3). In fact, every left exact morphism should preserve cofinally homotopy finite right Kan extensions.
\end{enumerate}
\end{rmk}

\subsection{Some proofs concerning admissible morphisms}
\label{subsec:lemmas}

In this subsection we provide the missing proofs of \autoref{lem:LA3}, \autoref{lem:sieves}, and \autoref{lem:LA3-detect}. The reader less inclined into the formalism of homotopy exact squares is suggested to continue with \S\S\ref{sec:kernel}-\ref{sec:cone} and to only then get back to these proofs. 

To begin with we collect the following variant of \cite[Lemma~1.21]{groth:ptstab} which is also of independent interest.

\begin{lem}\label{lem:ff-kan}
Let $F\colon \D\to\E$ be a morphism of derivators, let $u\colon A\to B$ be fully faithful, and let $X\in\D(A)$. The canonical mate $\beta\colon (F\circ u_\ast)(X)\to (u_\ast\circ F)(X)$ is an isomorphism in $\E(B)$ if and only if $\beta_b\colon (F\circ u_\ast)(X)_b\to (u_\ast\circ F)(X)_b$ is an isomorphism for all $b\in B-u(A)$.
\end{lem}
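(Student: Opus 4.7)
The plan is to prove the ``if'' direction; the ``only if'' is immediate from axiom (Der2). By (Der2) applied to $\beta$ itself, I need only check that $\beta_b$ is automatically an isomorphism whenever $b\in u(A)$, say $b=u(a)$. Such a $b$ factors as $\bbone\xrightarrow{a}A\xrightarrow{u}B$, so $b^\ast=a^\ast u^\ast$; thus it would suffice to show that $u^\ast\beta\colon u^\ast F u_\ast\to u^\ast u_\ast F$ is an isomorphism, and then restrict along~$a$.

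For this I would unpack the definition of the canonical mate. Writing $\varphi\colon u^\ast F\cong F u^\ast$ for the pseudo-naturality constraint of~$F$ and $\eta,\epsilon$ for the unit and counit of $u^\ast\dashv u_\ast$, the mate $\beta$ is by construction the composite
\[ F u_\ast\xrightarrow{\eta F u_\ast} u_\ast u^\ast F u_\ast\xrightarrow{u_\ast \varphi u_\ast} u_\ast F u^\ast u_\ast\xrightarrow{u_\ast F\epsilon} u_\ast F. \]
Since $u$ is fully faithful, \autoref{egs:htpy}(i) guarantees that $\epsilon\colon u^\ast u_\ast\to\id$ is a natural isomorphism. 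The triangle identity $(\epsilon u^\ast)\circ(u^\ast\eta)=\id_{u^\ast}$ together with the invertibility of $\epsilon u^\ast$ forces $u^\ast\eta$ to be an isomorphism as well. Applying $u^\ast$ to the three arrows in the display would then yield, successively: an isomorphism by the previous sentence; an isomorphism because $\varphi$ already is; and an isomorphism because $F$ preserves isomorphisms (its components being ordinary functors) and $\epsilon$ is one.

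Consequently $u^\ast\beta$ is an isomorphism, hence so is $\beta_{u(a)}=a^\ast(u^\ast\beta_X)$ for every $a\in A$. Combined with the hypothesis that $\beta_b$ is invertible for $b\notin u(A)$, axiom (Der2) forces $\beta$ itself to be an isomorphism in $\E(B)$. I do not foresee any real obstacle: the argument is a purely formal manipulation with canonical mates and the triangle identities for the adjunction $u^\ast\dashv u_\ast$, the essential input being that full faithfulness of $u$ makes $\epsilon$ invertible.
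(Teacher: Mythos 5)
Your argument is correct and is essentially the paper's proof: both reduce via (Der2) to showing $u^\ast\beta$ is invertible and both exploit full faithfulness through the invertibility of the counit $\epsilon\colon u^\ast u_\ast\to\id$ together with a triangle identity. The only difference is presentational --- the paper packages the computation as a commuting square with three invertible sides, whereas you factor $u^\ast\beta$ explicitly as a composite of three isomorphisms (using $u^\ast\eta=(\epsilon u^\ast)^{-1}$), which amounts to the same calculation.
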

\begin{proof}
By (Der2) the canonical mate $\beta$ is an isomorphism if and only if $\beta_b$ is an isomorphism for all $b\in B$. Thus, it is enough to show that $u^\ast(\beta)$ is always an isomorphism. Since $u$ is fully faithful, the adjunction counit $\epsilon\colon u^\ast u_\ast\to\id$ is an isomorphism (\autoref{egs:htpy}). Recall that a morphism of derivators comes with coherent structure isomorphisms $\gamma=\gamma_u\colon u^\ast\circ F_B\cong F_A\circ u^\ast$ and that $\beta$ is the canonical mate of $\gamma$. One easily checks that
\begin{equation}\label{eq:ff-kan}
\vcenter{
\xymatrix{
u^\ast F_B u_\ast\ar[d]_-{\gamma u_\ast}\ar[r]^-{u^\ast\beta}&u^\ast u_\ast F_A\ar[d]^-{\epsilon {F_A}}\\
F_Au^\ast u_\ast\ar[r]_-{F_A\epsilon}&F_A
}
}
\end{equation}
commutes. Since $\epsilon$ and $\gamma$ are isomorphisms, the same is true for $u^\ast\beta.$
\end{proof}

Now we can take care of \autoref{lem:LA3}.

\begin{proof}[Proof of \autoref{lem:LA3}]
Let $F\colon\D\to\E$ be a morphism of derivators and let $u$ be a functor as in (LA3), adding one cartesian square. By assumption on $u$, there is a homotopy exact square \eqref{eq:add-bicartesian}. Moreover, the unique object in $B-u(A)$ lies in the image of $k$, so that \autoref{lem:ff-kan} implies that the canonical mate $\beta\colon F_B u_\ast\to u_\ast F_A$ is an isomorphism if and only if $k^\ast\beta$ is an isomorphism. The morphism $k^\ast\beta$ sits in the diagram
\begin{equation}\label{eq:exact}
\vcenter{
\xymatrix{
k^\ast F_B u_\ast\ar[r]^-{k^\ast\beta}\ar[d]_-{\gamma u_\ast}^-\cong&k^\ast u_\ast F_A\ar[r]^-\cong&(i_\lrcorner)_\ast j^\ast F_A\ar[d]^-{(i_\lrcorner)_\ast \gamma}_-\cong\\
F_\square k^\ast u_\ast\ar[r]_-\cong&F_\square(i_\lrcorner)_\ast j^\ast\ar[r]&(i_\lrcorner)_\ast F_\lrcorner j^\ast.
}
}
\end{equation}
In this diagram the vertical morphisms are the structure isomorphisms of $F$ and the horizontal morphisms labeled by the isomorphism symbol only are the canonical isomorphisms associated to the homotopy exact square \eqref{eq:add-bicartesian}. The remaining morphism in the bottom row is the canonical mate $F_\square(i_\lrcorner)_\ast\to(i_\lrcorner)_\ast F_\lrcorner$ and is by assumption on $F$ an isomorphism. Using the compatibility of mates with pasting one checks that this square commutes, and we conclude that $k^\ast\beta$ and hence $\beta$ is an isomorphism, which is to say that $F$ preserves right Kan extensions along~$u$.

The case of a countable composition~$v$ of such functors is taken care of by induction. In fact, the canonical morphism $\beta'\colon F v_\ast\to v_\ast F$ is an isomorphism if and only if this is true on the countably many objects not lying in the image of~$v$. But each such object is taken care of by the argument above.
\end{proof}

We now turn to \autoref{lem:sieves}.

\begin{proof} [Proof of \autoref{lem:sieves}]
Let $F\colon\D\to\E$ be a morphism of derivators which preserves terminal objects and let $u\colon A\to B$ be the inclusion of a sieve. We have to show that the canonical mate $\beta\colon Fu_\ast\to u_\ast F$ is an isomorphism. Let $v\colon A'\to B$ be the inclusion of the complement of $A$. Since $u$ is fully faithful, by \autoref{lem:ff-kan} it suffices to show that $v^\ast\beta\colon v^\ast F u_\ast\to v^\ast u_\ast F$ is an isomorphism. But by the unpointed version of \autoref{lem:extbyzero} (see \cite[Prop.~1.23]{groth:ptstab}), $u_\ast$ is right extension by terminal objects and, since $F$ preserves terminal objects, all components of $v^\ast F u_\ast$ and $v^\ast u_\ast F$ are terminal objects. Hence all components of $v^\ast\beta$ are necessarily isomorphisms and we can conclude by (Der2).
\end{proof}

It remains to take care of \autoref{lem:LA3-detect} providing many morphisms of type (LA3). The proof of this lemma relies on the following one which is also of independent interest and in which we consider a commutative square of small categories
\begin{equation}\label{eq:base}
\vcenter{
\xymatrix{
A'\ar[r]^-j\ar[d]_-{u'}&A\ar[d]^-u\\
B'\ar[r]_-k&B.
}
}
\end{equation}

\begin{lem}\label{lem:base}
Let \eqref{eq:base} be a commutative square in \cCat such that $u$ and $u'$ are fully faithful. The square is homotopy exact if and only if for all derivators \D the canonical mate $\beta_{b'}\colon (k^\ast u_\ast)_{b'}\to (u'_\ast j^\ast)_{b'}$ is an isomorphism for all $b'\in B'-u'(A')$.
\end{lem}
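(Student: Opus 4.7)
The forward direction is immediate from (Der2): if \eqref{eq:base} is homotopy exact, then $\beta\colon k^\ast u_\ast\to u'_\ast j^\ast$ is an isomorphism in $\D(B')$ for every derivator \D and every $X\in\D(A)$, hence in particular at each $b'\in B'-u'(A')$.

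For the converse, fix an arbitrary derivator \D and $X\in\D(A)$. By (Der2) it suffices to show that $\beta_{b'}$ is an isomorphism for every $b'\in B'$. The hypothesis handles $b'\in B'-u'(A')$, so only the case $b'=u'(a'),\, a'\in A',$ remains. My plan is to mimic the proof of \autoref{lem:ff-kan}: since $u$ and $u'$ are fully faithful, the counits $\epsilon_u\colon u^\ast u_\ast\cong\id$ and $\epsilon_{u'}\colon (u')^\ast u'_\ast\cong\id$ are isomorphisms, and I will verify commutativity of the diagram
\[
\xymatrix@C=3.5em{
(u')^\ast k^\ast u_\ast \ar[r]^-{(u')^\ast\beta}\ar[d]_-{\cong}&
(u')^\ast u'_\ast j^\ast \ar[d]^-{\epsilon_{u'}j^\ast}_-{\cong}\\
j^\ast u^\ast u_\ast \ar[r]_-{j^\ast\epsilon_u}&
j^\ast,
}
\]
in which the left vertical is the canonical isomorphism $(u')^\ast k^\ast\cong j^\ast u^\ast$ supplied by $uj=ku'$ and the lower horizontal is an isomorphism because $u$ is fully faithful. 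Once this is established, the right vertical and bottom horizontal are isomorphisms, forcing $(u')^\ast\beta$ to be one as well; evaluating at $a'\in A'$ yields that $\beta_{u'(a')}=(a')^\ast(u')^\ast\beta$ is an isomorphism.

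The commutativity check is a triangle-identity argument. Unfolding $\beta$ as
\[
k^\ast u_\ast\xto{\eta_{u'}}u'_\ast(u')^\ast k^\ast u_\ast\xto{\cong}u'_\ast j^\ast u^\ast u_\ast\xto{u'_\ast j^\ast\epsilon_u}u'_\ast j^\ast,
\]
apply $(u')^\ast$ and postcompose with $\epsilon_{u'}j^\ast$. Using naturality of $\epsilon_{u'}$, the outer counit $\epsilon_{u'}$ can be slid past both the canonical isomorphism and $j^\ast\epsilon_u$, whereupon the initial composite $\epsilon_{u'}(u')^\ast k^\ast u_\ast\circ (u')^\ast\eta_{u'}\,k^\ast u_\ast$ collapses to the identity by the triangle identity for $((u')^\ast, u'_\ast)$. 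What remains is exactly the down-then-right composite, proving commutativity.

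Combining both cases via (Der2), $\beta$ is an isomorphism for all \D, and invoking the equivalence between the two canonical mates \eqref{eq:hoexmate1'} and \eqref{eq:hoexmate2'} recalled after \eqref{eq:hoexactsq'}, the square \eqref{eq:base} is homotopy exact. There is no serious obstacle here; the only nontrivial point is the triangle-identity bookkeeping in the last paragraph, which is precisely the one used to prove \autoref{lem:ff-kan} and can in fact be factored through it by applying that lemma in a shifted derivator if one wishes.
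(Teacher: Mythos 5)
Your proof is correct, and for the nontrivial half it takes a genuinely different route from the paper. Both arguments reduce, via (Der2), to showing that the components $\beta_{u'(a')}$ are isomorphisms for every derivator and every coherent diagram, the hypothesis covering the objects outside $u'(A')$. The paper verifies this with the homotopy exact square calculus: it pastes the comma square over $u'(a')$ onto \eqref{eq:base}, uses (Der4) together with the initiality of $(a',\id)$ in $(u'(a')/A')$, and then rewrites the pasting through $(uj(a')/A)$, where fully faithfulness of $u$ again supplies an initial object, so the total pasting is homotopy exact. You instead run the counit-conjugation argument of \autoref{lem:ff-kan} directly on the mate: naturality of $\epsilon_{u'}$ and the triangle identity for the adjunction $((u')^\ast,u'_\ast)$ show that $\epsilon_{u'}j^\ast\circ (u')^\ast\beta$ agrees with $j^\ast\epsilon_u$ up to the canonical identification $(u')^\ast k^\ast\cong j^\ast u^\ast$, and since both counits are invertible by fully faithfulness of $u$ and $u'$ (\autoref{egs:htpy}(i)), $(u')^\ast\beta$ and hence every $\beta_{u'(a')}$ is invertible; I checked the chase and it is sound. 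Your version is more elementary in that it avoids comma squares and cofinality altogether, at the price of an explicit $2$-categorical computation, whereas the paper's pasting argument stays inside its standard toolbox of homotopy exact squares and initial objects. One small caveat: the closing aside that the argument ``can be factored through \autoref{lem:ff-kan} in a shifted derivator'' does not literally apply, since that lemma concerns a mate of the form $Fu_\ast\to u_\ast F$ for a single fully faithful $u$, while here two different fully faithful functors $u$ and $u'$ occur; but as you carried out the direct computation, nothing in your proof depends on that remark.
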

\begin{proof}
By (Der2) the square \eqref{eq:base} is homotopy exact if and only if the canonical mate $\beta\colon k^\ast u_\ast \to u'_\ast j^\ast$ is an isomorphism at all components, and it is hence enough to show that under our assumptions this is always true for objects of the form $u'(a')$. For this purpose, let us consider the following pasting on the left
\[
\xymatrix{
\bbone\ar[r]\ar[d]_-=&(u'(a')/A')\ar[r]^-p\ar[d]_-\pi&A'\ar[r]^-j\ar[d]^-{u'}&A\ar[d]^-u&&
\bbone\ar[r]\ar[d]_-=&(uj(a')/A)\ar[r]^-q\ar[d]_-\pi&A\ar[d]^-u\\
\bbone\ar[r]_-=&\bbone\ar[r]_-{u'(a')}&B'\ar[r]_-k\ultwocell\omit{}&B&&
\bbone\ar[r]_-=&\bbone\ar[r]_-{uj(a')}&B\ultwocell\omit{}
}
\]
in which the square in the middle is a comma square. The unlabeled morphism $\bbone\to(u'(a')/A')$ classifies the object $(a',\id\colon u'(a')\to u'(a'))$ which is an initial object since $u'$ is fully faithful. By (Der4) the comma square is homotopy exact as is the square on the left by \autoref{egs:htpy}. The compatibility of mates with respect to pasting implies that $\beta_{u'(a')}$ is an isomorphism if and only if the canonical mate associated to the pasting on the left is an isomorphism. Note that this pasting can also be rewritten as the pasting on the right. In that diagram the square on the right is again a comma square while the morphism $\bbone\to (uj(a')/A)$ classifies the object $(j(a'),\id\colon uj(a')\to uj(a'))$. The fully faithfulness of $u$ implies that this object is initial in $(uj(a')/A)$. Thus, using again that comma squares are homotopy exact together with \autoref{egs:htpy} it follows that the pasting on the right is homotopy exact, concluding the proof.
\end{proof}

Finally, we can take care of \autoref{lem:LA3-detect}.

\begin{proof}[Proof of \autoref{lem:LA3-detect}]
By \autoref{defn:admissible} it suffices to prove that the square \eqref{eq:add-bicartesian} is homotopy exact. Thus, we have to show that in any derivator the canonical mate $\beta\colon k^\ast u_\ast\to (i_\lrcorner)_\ast j^\ast$ is an isomorphism. By \autoref{lem:base} it is enough to check that this is the case for the component $\beta_{(0,0)}$. Let us consider the 
following pasting
\begin{equation}\label{eq:IIIa}
\vcenter{
\xymatrix{
\lrcorner\ar[r]^-f_-\cong\ar[d]_-\pi&((0,0)/\lrcorner)\ar[r]^-p\ar[d]_-\pi&\lrcorner\ar[r]^-j\ar[d]^-{i_\lrcorner}&A\ar[d]^-u\\
\bbone\ar[r]_-=&\bbone\ar[r]_-{(0,0)}&\square\ar[r]_-k\ultwocell\omit{}&B
}
}
\end{equation}
in which the square in the middle is a comma square. Using the compatibility of mates with respect to pasting, the canonical mate of this diagram factors as
\[
(k^\ast u_\ast)_{(0,0)}\stackrel{\beta_{(0,0)}}{\to} ((i_\lrcorner)_\ast j^\ast)_{(0,0)}\stackrel{\cong}{\to}
\pi_\ast p^\ast j^\ast\stackrel{\cong}{\to}\pi_\ast f^\ast p^\ast j^\ast.
\]
Thus, $\beta\colon k^\ast u_\ast\to (i_\lrcorner)_\ast j^\ast$ is an isomorphism if and only if \eqref{eq:IIIa} is homotopy exact. But the reader easily checks that \eqref{eq:IIIa} can also be obtained as the pasting
\begin{equation}\label{eq:IIIb}
\vcenter{
\xymatrix{
\lrcorner\ar[r]^-{\tilde{j}}\ar[d]_-\pi&(b_0/A)\ar[r]^-q\ar[d]_-\pi&A\ar[d]^-u\\
\bbone\ar[r]_-=&\bbone\ar[r]_-{b_0}&B\ultwocell\omit{}
}
}
\end{equation}
in which the square on the right is a comma square. By (Der4) the comma square is homotopy exact and, since $\tilde{j}$ is a left adjoint, also the square on the left is homotopy exact (\autoref{egs:htpy}). Thus, the pasting \eqref{eq:IIIb}=\eqref{eq:IIIa} is homotopy exact, showing that $\beta$ is an isomorphism.
\end{proof}

\section{Tensor and hom functors in stable derivators}
\label{sec:kernel}

In \S\ref{sec:admissible} we observed that left admissible, right admissible, and admissible morphisms are induced by spectral bimodules. In this section we collect a few properties of tensor and hom functors associated to bimodules in monoidal derivators which will be of use in later sections. These results are parallel to standard algebraic facts using derived tensor and derived Hom functors and yield a useful calculus of such bimodules. 

To start with, let $\V$ be a closed monoidal derivator and let \D be a closed module over \V. Using canceling-external versions of $\otimes$ and $\lhd$, each $M\in\V(B\times A\op)$ induces morphisms
\begin{equation}\label{eq:weighted-limits}
M\otimes_{[A]}-\colon\D^A\to\D^B\qquad\text{and}\qquad -\lhd_{[B]}M\colon\D^B\to\D^A.
\end{equation}
These morphisms yield an adjunction $(M\otimes_{[A]}-,-\lhd_{[B]}M)\colon\D^A\rightleftarrows\D^B$ (\cite[Examples~8.15-8.16]{gps:additivity}).
 
\begin{rmk}
In the language of \emph{\V-enriched derivators} \cite{gs:enriched}, a bimodule $M\in\V(B\times A\op)$ is a \emph{weight} and the morphisms \eqref{eq:weighted-limits} are the associated \emph{weighted (co)limit constructions}. As we saw in \S\ref{sec:admissible} and as will be studied more systematically in \cite{gs:enriched}, in the stable context some seemingly \emph{limit} constructions can be obtained as \emph{weighted colimit} constructions, and dually.
\end{rmk}

The main case of interest to us is given by stable derivators endowed with their canonical closed module structure over the derivator \cSp of spectra. However, among the stable derivators some are of particular interest in algebraic settings, where one typically studies homotopy categories of certain \emph{additive} stable model categories. To be more precise, given a complete and cocomplete abelian category, one can consider model structures compatible with the abelian structure in the sense of Hovey~\cite{hovey:cotorsion,hovey:cotorsion2} (see also \cite{stovicek:exact-model} and references there for more details). Under a mild additional condition ensuring stability, the category of cofibrant-fibrant objects is a Frobenius exact category by~\cite{gillespie:exact} or \cite[Proposition 1.1.14]{becker:models-singularity}. As stable categories of Frobenius exact categories are the classical source of triangulated categories in algebra (see~\cite[Chapter I.]{happel:triangulated}), they are nowadays called \emph{algebraic triangulated categories} (see~\cite[\S7]{krause:chicago} and \cite{schwede:alg-versus-top}). Derivators of Hovey's abelian model structures often carry a canonical closed $\D_\lZ$-module structure. Let us also recall that $\D_\lZ$ and, more generally, $\D_k$, $k$ a commutative ring, come with monoidal, colimit preserving morphisms from $\cSp$.

\begin{eg}\label{eg:EM-spectra}
Let $k$ be a commutative ring. Extension of scalars along the characteristic $\lZ\to k$ induces a colimit preserving, monoidal morphism of derivators $k\otimes-\colon\D_\lZ\to\D_k$. We denote by $H\lZ$ the symmetric integral Eilenberg--Maclane ring spectrum and by $\D_{H\lZ}$ the stable derivator of $H\lZ$-module spectra. Associated to the map of symmetric ring spectra $\lS\to H\lZ$ there is the colimit preserving, monoidal morphism of derivators $H\lZ\wedge-\colon\cSp\to\D_{H\lZ}$. By \cite{shipley:spectra-dga} the model categories $\Ch(\lZ)$ and $\Mod(H\lZ)$ are related by a zigzag of weakly monoidal Quillen equivalences passing through combinatorial model categories only. But such weakly monoidal Quillen equivalences induce monoidal equivalences of homotopy derivators, so that we end up with a monoidal equivalence of derivators $\D_{H\lZ}\simeq\D_\lZ$. As an upshot, there are colimit preserving, monoidal morphisms of derivators
\[
k\otimes-\colon\cSp\stackrel{H\lZ\wedge-}{\to}\D_{H\lZ}\simeq\D_\lZ\stackrel{k\otimes-}{\to}\D_k.
\]
\end{eg}

This motivates the following definitions.

\begin{defn} \label{defn:alg-der}
An \textbf{algebraic stable derivator} is a stable derivator which has the property that the canonical action of $\cSp$ is the restriction of a closed $\D_\lZ$-module structure along $\lZ\otimes-\colon\cSp\to\D_\lZ$.  More generally, if $k$ is a commutative ring, then a \textbf{$k$-linear, stable derivator} is a stable derivator such that the canonical $\cSp$-module structure is the restriction of a closed $\D_k$-module structure along $k\otimes-\colon\cSp\to\D_k$.
\end{defn} 

\emph{In the remainder of this section the derivators \V and \D are assumed to be stable}. The following lemma applies to admissible morphisms.

\begin{lem} \label{lem:adm-adj-triple}
Given bimodules $M\in\V(B\times A\op), N\in\V(A\times B\op)$ together with natural isomorphisms $F\cong M\otimes_{[A]}-\cong -\lhd_{[A]}N$ there is an adjoint triple of morphisms
\[
(N\otimes_{[B]}-,F, -\lhd_{[B]} M).
\]
\end{lem}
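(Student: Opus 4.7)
The plan is to observe that the lemma is essentially a formal consequence of the two standard canceling adjunctions already discussed in the paper, combined via the two given identifications of $F$. Specifically, for any bimodule $M \in \V(B\times A\op)$, the paragraph preceding \eqref{eq:weighted-limits} supplies an adjunction
\[
(M\otimes_{[A]}-,\; -\lhd_{[B]}M)\colon \D^A \rightleftarrows \D^B,
\]
and, applying the same fact to $N \in \V(A\times B\op)$ with the roles of $A$ and $B$ swapped, a second adjunction
\[
(N\otimes_{[B]}-,\; -\lhd_{[A]}N)\colon \D^B \rightleftarrows \D^A.
\]

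From the hypothesis $F \cong M\otimes_{[A]}-$, the uniqueness of adjoints transports the first adjunction to an adjunction $(F, -\lhd_{[B]}M)$. Similarly, from $F \cong -\lhd_{[A]}N$, the uniqueness of adjoints transports the second adjunction to an adjunction $(N\otimes_{[B]}-, F)$. Splicing these two together yields the adjoint triple
\[
(N\otimes_{[B]}-,\; F,\; -\lhd_{[B]}M),
\]
as claimed.

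There is no real obstacle here; the only things to double-check are that the canceling adjunctions referenced are indeed the ones established in \cite[Examples~8.15-8.16]{gps:additivity} (cited right after \eqref{eq:weighted-limits}) and that transporting an adjunction across a natural isomorphism of one of its members preserves the adjunction --- both of which are entirely formal. The substance of the lemma, therefore, lies not in the proof but in its later use: it says that whenever a morphism is simultaneously left admissible and right admissible (so that \autoref{thm:kernel} produces both kinds of bimodule), the resulting $F$ automatically has both adjoints, each of which is itself a canceling tensor or cotensor with one of the two bimodules.
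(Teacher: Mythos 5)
Your proof is correct and follows exactly the paper's argument: the paper also dismisses this as immediate from the canceling adjunctions of \eqref{eq:weighted-limits} (i.e.\ \cite[Examples~8.15--8.16]{gps:additivity}), applied once to $M$ and once to $N$, with the two isomorphisms identifying $F$ as the right adjoint of $N\otimes_{[B]}-$ and the left adjoint of $-\lhd_{[B]}M$. Your version simply spells out the transport-of-adjunctions step that the paper leaves implicit.
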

\begin{proof}
This is immediate since the morphisms in \eqref{eq:weighted-limits} are part of an adjunction.
\end{proof}

\begin{cor}
Admissible morphisms are continuous and cocontinuous.
\end{cor}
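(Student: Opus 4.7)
The plan is to derive the corollary directly from \autoref{lem:adm-adj-triple} together with the standard fact that morphisms of derivators which admit adjoints on one side preserve the corresponding class of (co)limits.

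More precisely, suppose $F\colon \D^A \to \D^B$ is admissible. Then $F$ is both left and right admissible, so \autoref{thm:kernel} produces bimodules $M \in \cSp(B\times A\op)$ and $N \in \cSp(A\times B\op)$ together with natural isomorphisms
\[
F \;\cong\; M\otimes_{[A]}-\;\cong\;-\lhd_{[A]}N.
\]
Applying \autoref{lem:adm-adj-triple}, we obtain an adjoint triple
\[
(N\otimes_{[B]}-,\;F,\;-\lhd_{[B]}M),
\]
so that $F$ has both a left adjoint and a right adjoint as a morphism of derivators. Since left adjoint morphisms of derivators preserve all left Kan extensions (and in particular colimits), the existence of the right adjoint $-\lhd_{[B]}M$ shows that $F$ is cocontinuous; dually, the existence of the left adjoint $N\otimes_{[B]}-$ shows that $F$ preserves all right Kan extensions and is therefore continuous. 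This part uses only the standard compatibility of adjunctions of derivators with (co)limits as recorded e.g.\ in \cite[Proposition~2.9]{groth:ptstab}.

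The argument is essentially formal, so there is no substantive obstacle; the only thing worth being careful about is that the adjunctions in \autoref{lem:adm-adj-triple} are genuine adjunctions of derivators (not merely of underlying categories), which is precisely what ensures that the mate calculus applies pointwise at every shape and gives preservation of arbitrary Kan extensions, not just of $1$-categorical (co)limits in the underlying category.
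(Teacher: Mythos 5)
Your argument is correct and matches the paper's intended reasoning: the corollary is stated immediately after \autoref{lem:adm-adj-triple} precisely because the adjoint triple $(N\otimes_{[B]}-,F,-\lhd_{[B]}M)$ exhibits $F$ as both a left and a right adjoint morphism of derivators, whence preservation of all Kan extensions. Your additional remark that these are adjunctions of derivators (not merely of underlying categories) is the right point of care, but there is no difference in substance from the paper's route.
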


In the following lemma we consider \textbf{admissible adjunctions}, i.e., adjunctions $(F,G)\colon\D^A\rightleftarrows\D^B$ such that $F,G$ both are admissible for every stable derivator \D. In such a situation there are isomorphisms
\begin{equation}\label{eq:admissible-adj}
F\cong M_F\otimes_{[A]}-\cong -\lhd_{[A]}N_F\qquad\text{and}\qquad G\cong M_G\otimes_{[B]}-\cong -\lhd_{[B]}N_G
\end{equation}
together with resulting adjoint triples
\[
(N_F\otimes_{[B]}-,F,-\lhd_{[B]}M_F)\qquad\text{and}\qquad (N_G\otimes_{[A]}-,G,-\lhd_{[A]}M_G).
\]

\begin{lem}
Let $(F,G)\colon\D^A\rightleftarrows\D^B$ be an admissible adjunction as in \eqref{eq:admissible-adj}. There is an isomorphism $M_F\cong N_G$ in $\V(B\times A\op)$ and an adjoint quadruple of morphisms
\[
(N_F\otimes_{[B]}-,F,G,-\lhd_{[A]} M_G).
\]
\end{lem}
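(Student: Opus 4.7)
The plan is to combine the adjoint triples produced by \autoref{lem:adm-adj-triple} with the given adjunction $F\dashv G$. Applied to $F$ the lemma yields the triple $(N_F\otimes_{[B]}-,\, F,\, -\lhd_{[B]}M_F)$, and applied to $G$ it yields $(N_G\otimes_{[A]}-,\, G,\, -\lhd_{[A]}M_G)$. Uniqueness of adjoints then forces natural isomorphisms
\[
G\;\cong\;-\lhd_{[B]}M_F\qquad\text{and}\qquad F\;\cong\;N_G\otimes_{[A]}-,
\]
the first identifying the right adjoint of $F$ that appears in its triple with the prescribed right adjoint $G$, the second doing the dual. Splicing the two triples along $F\dashv G$ immediately produces the asserted adjoint quadruple $(N_F\otimes_{[B]}-,\,F,\,G,\,-\lhd_{[A]}M_G)$.

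It remains to promote the second displayed isomorphism to an identification $M_F\cong N_G$ of objects of $\V(B\times A\op)$. Combining $F\cong N_G\otimes_{[A]}-$ with the given $F\cong M_F\otimes_{[A]}-$ furnishes a natural isomorphism
\[
M_F\otimes_{[A]}-\;\cong\;N_G\otimes_{[A]}-\colon\D^A\longrightarrow\D^B
\]
of morphisms of derivators; and since both constructions are built uniformly in the closed stable $\V$-module~$\D$ the isomorphism can be chosen natural in~$\D$ as well. The plan is then to specialize to $\D=\V$ acting on itself and evaluate at the identity profunctor $\lI_A\in\V(A\times A\op)$, invoking the formula $M\otimes_{[A]}\lI_A\cong M$ (which recovers any bimodule from its associated tensor morphism, see \autoref{rmk:kernel-admissible}(i) and the analogous $\V$-linear variant mentioned there). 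This yields $M_F\cong N_G$ as required.

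The only mildly subtle point I anticipate is verifying that the comparison isomorphism between the two tensor morphisms is genuinely natural in the ambient $\V$-module~$\D$, so that evaluation on the universal example is legitimate. This is, however, immediate once one unpacks that the isomorphism is built from the adjunction units and counits of $F\dashv G$ and of the two representations of $F$, all of which are given by morphisms of derivators and hence behave naturally with respect to change of~$\D$. Once this naturality is granted, the remainder of the argument is purely formal: chaining adjunctions for the quadruple, and invoking the uniqueness of representing bimodules for $M_F\cong N_G$.
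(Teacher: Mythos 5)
Your argument is correct and takes essentially the same route as the paper, whose proof likewise obtains the quadruple by splicing the two adjoint triples of \autoref{lem:adm-adj-triple} along $F\dashv G$ and deduces $M_F\cong N_G$ from $M_F\otimes_{[A]}-\cong F\cong N_G\otimes_{[A]}-$ (uniqueness of adjoints) together with the uniqueness of the representing bimodules, which is exactly your evaluation-at-$\lI_A$ step. The naturality-in-$\D$ worry you raise is not really needed: by the paper's convention (\autoref{rmk:kernel-admissible}(i)) an admissible adjunction is a family over all stable derivators, so the identification already holds at the universal example and evaluating there at the identity profunctor is immediate.
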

\begin{proof}
This follows immediately from $M_F\otimes_{[A]}-\cong N_G\otimes_{[A]}-$ and the uniqueness of the representing bimodules.
\end{proof}

Thus, up to isomorphism there are only three bimodules involved in such an adjunction. Finally, we consider \textbf{admissible equivalences} $(F,G)\colon\D^A\rightleftarrows\D^B$, i.e., equivalences such that $F,G$ both are admissible for every stable derivator $\D$. Still using the notation as in \eqref{eq:admissible-adj}, the previous lemma implies that there are isomorphisms $M_F\cong N_G$ and $M_G\cong N_F$. 

\begin{cor}\label{cor:inverse-modules}
Let $(F,G)\colon\D^A\rightleftarrows\D^B$ be an admissible equivalence. Then there are bimodules $M\in\V(B\times A\op)$ and $N\in\V(A\times B\op)$ together with natural isomorphisms
$F\cong M\otimes_{[A]}-\cong -\lhd_{[A]}N$ and $G\cong N\otimes_{[B]}-\cong -\lhd_{[B]} M$. Moreover, $M$ and $N$ are respectively inverses of each other, i.e., there are isomorphisms
\[
N\otimes_{[B]} M\cong \lI_A\qquad\text{and}\qquad M\otimes_{[A]} N\cong \lI_B.
\]
Using the notation $N=M^{-1}\in\V(A\times B\op)$ the admissible equivalence $(F,G)$ is naturally isomorphic to
\begin{enumerate}
\item the tensor-hom equivalence $(M\otimes_{[A]}-,-\lhd_{[B]}M)\colon\D^A\rightleftarrows\D^B$,
\item the tensor-tensor equivalence $(M\otimes_{[A]}-,M^{-1}\otimes_{[B]}-)\colon\D^A\rightleftarrows\D^B$,
\item the hom-hom equivalence $(-\lhd_{[A]}M^{-1},-\lhd_{[B]}M)\colon\D^A\rightleftarrows\D^B$,
\item the hom-tensor equivalence $(-\lhd_{[A]}M^{-1},M^{-1}\otimes_{[B]}-)\colon\D^A\rightleftarrows\D^B$.
\end{enumerate}
\end{cor}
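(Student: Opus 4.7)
The plan is to bootstrap from the preceding lemma on admissible adjunctions by exploiting the fact that an equivalence $(F,G)\colon \D^A \rightleftarrows \D^B$ is an adjunction in both directions. First I would apply the preceding lemma to $(F,G)$ to obtain $M_F \cong N_G$ in $\V(B\times A\op)$, and then apply it again to the adjunction $(G,F)$ (with $G$ now being the left adjoint) to obtain $M_G \cong N_F$ in $\V(A\times B\op)$. Setting $M := M_F$ and $N := M_G$, the four representations $F \cong M\otimes_{[A]} - \cong -\lhd_{[A]} N$ and $G \cong N\otimes_{[B]} - \cong -\lhd_{[B]} M$ then follow from \eqref{eq:admissible-adj} combined with these isomorphisms.

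Next I would establish the inverse relations using the unit and counit of the equivalence together with associativity of composition in $\cProf(\V)$ from \autoref{thm:bicategory} (extended to the action on $\D$). Unwinding $GF$ via the tensor representations yields
\[
GF(X) \;\cong\; N \otimes_{[B]} (M \otimes_{[A]} X) \;\cong\; (N \otimes_{[B]} M) \otimes_{[A]} X,
\]
so the composite $GF$ is represented by $N \otimes_{[B]} M \in \V(A\times A\op)$. On the other hand, the identity morphism $\id_{\D^A}$ is represented by $\lI_A$ via the unit axiom of the bicategory. Since $GF \cong \id_{\D^A}$, the uniqueness of the representing bimodule (see \autoref{rmk:kernel-admissible}(i)) forces $N \otimes_{[B]} M \cong \lI_A$. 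The isomorphism $M \otimes_{[A]} N \cong \lI_B$ follows by the symmetric argument applied to $FG \cong \id_{\D^B}$.

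Finally, writing $M^{-1} := N$, the listed descriptions (i)--(iv) are immediate repackagings of the representations already established: (i) combines the left-tensor description of $F$ with the right-hom description of $G$; (ii) uses the two tensor descriptions; (iii) uses the two hom descriptions; and (iv) combines the right-hom description of $F$ with the left-tensor description of $G$.

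The main obstacle I anticipate is making sure that the associativity coherence of $\cProf(\V)$ in \autoref{thm:bicategory} genuinely extends to the $\V$-action on $\D$, so that $(N \otimes_{[B]} M) \otimes_{[A]} X \cong N \otimes_{[B]} (M \otimes_{[A]} X)$ really identifies $GF$ as represented by $N \otimes_{[B]} M$. However, since both the external-canceling tensor products on $\V$ and the canceling action on $\D$ are defined uniformly via coends as in \eqref{eq:coend}, and since the action $\otimes\colon \V \times \D \to \D$ is itself coherently associative and unital, this associativity should be a formal extension of the bicategorical coherence and not a real obstacle.
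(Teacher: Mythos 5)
Your proposal matches the paper's argument: the four representations come from applying the admissible-adjunction lemma to both $(F,G)$ and $(G,F)$ (as the paper does in the discussion preceding the corollary), and the paper's proof of the inverse relations is exactly the uniqueness-of-representing-bimodules argument you spell out, with the composite $GF\cong\id_{\D^A}$ represented by $N\otimes_{[B]}M$ via the bicategorical composition of \autoref{thm:bicategory}. Your extra care about associativity of the $\V$-action is fine but not a real issue, just as you suspected.
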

\begin{proof}
It only remains to show that $M$ and $N$ are inverses of each other which is immediate from the uniqueness of the representing bimodules.
\end{proof}

In the special case of $A=B$ we obtain elements in the \emph{Picard group of $A$ relative to~\V} which is defined as follows.
Recall that associated to a monoidal category $(\bV,\otimes,\lS)$ there is the \emph{Picard group} $\Pic(\bV)$ of $\bV$. Its elements are isomorphism classes $[v]$ of $\otimes$-\emph{invertible} objects, i.e., of objects $v$ such that there is a $w$ together with isomorphisms $v\otimes w\cong \lS\cong w\otimes v$. The group structure is induced by the monoidal structure and the neutral element is $[\lS]$. Note that in a bicategory every object has a monoidal category of endomorphisms. Applied to the bicategories $\cProf(\V)$ of \autoref{thm:bicategory} this yields the following definition.

\begin{defn}\label{defn:picard}
Let  \V be a monoidal derivator and let $A\in\cCat$. The \textbf{Picard group $\Pic_\V(A)$ of $A$ relative to \V} is the Picard group of the monoidal category $\cProf(\V)(A,A)$.
\end{defn}

Thus, elements of $\Pic_\V(A)$ are isomorphism classes of $\otimes_{[A]}$-invertible profunctors $M\in\cProf(\V)(A,A)=\V(A\times A\op)$, the group structure is induced by the canceling tensor product $\otimes_{[A]}$, and the neutral element is represented by the identity profunctor $\lI_A$. In particular, for $A\in\cCat$ we obtain the \textbf{spectral Picard group} $\Pic_{\cSp}(A)$, the \textbf{integral Picard group} $\Pic_{\D_\lZ}(A)$, and also the \textbf{$k$-linear Picard group} $\Pic_{\D_k}(A)$ for $k$ an arbitrary commutative ring. If $k$ is a field, then for particular choices of $A$ the group $\Pic_{\D_k}(A)$ coincides with the \emph{derived Picard group} of Miyachi and Yekutieli~\cite{miyachi-yekutieli}.

The Picard group construction enjoys the following functoriality property.

\begin{lem} \label{lem:Picard}
Let $F\colon\V\to\W$ be a colimit preserving, monoidal morphism of derivators and let $A\in\cCat$. The assignment $[M]\mapsto [FM]$ defines a group homomorphism $\Pic_\V(A)\to\Pic_\W(A)$, and this construction is functorial in $F$.
\end{lem}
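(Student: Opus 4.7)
The plan is to verify the two structural compatibilities that make $F$ behave well with respect to the bicategory $\cProf$: namely that $F$ preserves identity profunctors and canceling tensor products up to canonical isomorphism. Given these, a $\otimes_{[A]}$-invertible profunctor in $\V(A\times A\op)$ will be sent by $F$ to a $\otimes_{[A]}$-invertible profunctor in $\W(A\times A\op)$, so the assignment $[M]\mapsto [FM]$ is well-defined and a group homomorphism.

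First I would check that $F\lI_A\cong\lI_A$ in $\W$. Using the formula $\lI_A=(t,s)_!\pi_{\tw(A)}^\ast\lS_\bbone$ from \eqref{eq:unit}, and the fact that $F$ is pseudo-natural (so it commutes with restriction functors up to canonical isomorphism), colimit preserving (so the canonical mate $F(t,s)_!\to(t,s)_!F$ is an isomorphism), and monoidal (so $F\lS_\bbone\cong\lS_\bbone$), one strings together
\[
F\lI_A \;=\; F(t,s)_!\pi_{\tw(A)}^\ast\lS_\bbone \;\cong\; (t,s)_!\pi_{\tw(A)}^\ast F\lS_\bbone \;\cong\; (t,s)_!\pi_{\tw(A)}^\ast\lS_\bbone \;=\; \lI_A.
\]

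Next I would verify that $F(M\otimes_{[A]}N)\cong FM\otimes_{[A]}FN$ for $M,N\in\V(A\times A\op)$. Unwinding \autoref{defn:coend} and \eqref{eq:cancel}, the canceling tensor product is the composite of an external tensor, a restriction along $(t\op,s\op)$, and a left Kan extension along $\pi_{\tw(A)\op}$. The monoidal structure transformation for $F$ supplies the compatibility with external tensor products in the parametrized sense (this is precisely the external analogue of the monoidal structure, obtained by restricting the internal monoidal structure along diagonals and projections), while pseudo-naturality and colimit preservation take care of restriction and left Kan extension. Pasting these three isomorphisms together yields the desired natural isomorphism.

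Combining the two compatibilities, if $M\otimes_{[A]}N\cong\lI_A\cong N\otimes_{[A]}M$, then applying $F$ gives $FM\otimes_{[A]}FN\cong\lI_A\cong FN\otimes_{[A]}FM$ in $\W$, so $FM$ is $\otimes_{[A]}$-invertible. The map $[M]\mapsto[FM]$ is well-defined (pseudo-naturality respects isomorphisms), preserves the neutral element (by the first compatibility), and respects products (by the second). For functoriality in $F$, given a further colimit preserving monoidal $G\colon\W\to\X$, the composite $GF$ is again colimit preserving and monoidal, and its action $[M]\mapsto[GFM]$ agrees with $[M]\mapsto[G(FM)]$ by pseudo-functoriality of the $2$-category of derivators; the identity morphism clearly acts as the identity.

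I expect no serious obstacle here: the argument is entirely a matter of bookkeeping with coherence isomorphisms. The only mildly delicate point is confirming that a monoidal morphism of derivators really is compatible with the \emph{external}, \emph{parametrized} tensor product (not only the internal one over a fixed base), but this follows from the standard equivalence between internal and external morphisms of two variables already invoked in \S\ref{sec:monoidal}.
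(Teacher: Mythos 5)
Your proposal is correct and follows essentially the same route as the paper: both verify that $F$ preserves the identity profunctor $\lI_A$ (via colimit preservation, pseudo-naturality, and $F\lS\cong\lS$) and the canceling tensor product $\otimes_{[A]}$ (via preservation of coends and compatibility with the external tensor), and then conclude that invertibility, the unit, and products are respected. The only cosmetic difference is that the paper packages the final step as the classical functoriality of Picard groups under monoidal functors, whereas you check the group-homomorphism conditions directly.
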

\begin{proof}
It is a classical fact that the usual Picard group construction is functorial with respect to monoidal functors. Thus, it suffices to show that $F\colon\V\to\W$ induces a monoidal functor $\cProf(\V)(A,A)\to\cProf(\W)(A,A)$. Since $F$ is colimit preserving, $F$ also preserves coends and for $M,N\in\cProf(\V)(A,A)$ we obtain canonical isomorphisms
\[
F(M\otimes_{[A]}N)= F(\int^A M\otimes N) \cong \int^A F(M\otimes N) \cong \int^A FM\otimes FN = FM\otimes_{[A]}FN.
\]
In a similar way we obtain canonical isomorphisms 
\[
F(\lI_A)=F((t,s)_!\lS_{\tw(A)})\cong(t,s)_!F(\lS_{\tw(A)})\cong (t,s)_!\lS_{\tw(A)}=\lI_A.
\] 
One can check that these isomorphisms satisfy the desired coherence properties and hence define a monoidal functor $\cProf(\V)(A,A)\to\cProf(\W)(A,A)$. For the conclusion of the lemma however we merely need the existence of such isomorphisms. Clearly, this construction is functorial in~$F$.
\end{proof}

\begin{eg}\label{eq:picard}
Let $k$ be a commutative ring. By \autoref{eg:EM-spectra} 
we obtain for every $A\in\cCat$ group homomorphisms
\[
\Pic_{\cSp}(A)\to\Pic_{\D_\lZ}(A)\to\Pic_{\D_k}(A).
\]
\end{eg}

We next indicate that for \emph{symmetric} monoidal derivators~\V there is a natural  monoidal structure on the bicategory $\cProf(\V)$. Since monoidal bicategories are rather complicated objects (they are tricategories with one object), we content ourselves by defining the monoidal pairing and establishing the only coherence property used in our applications. The monoidal structure on $\cProf(\V)$ is essentially given by external versions of $\otimes\colon\V\times\V\to\V$, twisted by suitable canonical isomorphisms in $\cCat$, namely
\[
\V(B_1\times A_1\op)\times\V(B_2\times A_2\op)\stackrel{\otimes}{\to}\V(B_1\times A_1\op\times B_2\times A_2\op)\cong\V((B_1\times B_2)\times(A_1\times A_2)\op).
\]
We commit a minor abuse of notation and denote this composition by
\[
\otimes\colon\cProf(\V)(B_1,A_1)\times\cProf(\V)(B_2,A_2)\to\cProf(\V)(B_1\times B_2,A_1\times A_2).
\]

\begin{lem}\label{lem:mon-bicat}
Let \V be a symmetric monoidal derivator. Then for profunctors $M_i\in\V(B_i\times A_i\op), N_i\in\V(C_i\times B_i\op)$, $i=1,2$, there is a canonical isomorphism
\[
(N_1\otimes_{[B_1]} M_1)\otimes (N_2\otimes_{[B_2]} M_2) \cong (N_1\otimes N_2) \otimes_{[B_1\times B_2]} (M_1\otimes M_2)
\]
in $\V((C_1\times C_2)\times (A_1\times A_2)\op)$.
\end{lem}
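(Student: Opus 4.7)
The plan is to unfold both sides of the claimed isomorphism as iterated coends of external tensor products, then chain three general facts about monoidal derivators: cocontinuity of the external tensor product, the symmetry constraint of $\V$, and a Fubini theorem for coends.

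First, I would unfold each canceling tensor product as a coend, $N_i\otimes_{[B_i]}M_i\cong\int^{B_i}(N_i\otimes M_i)$, using \autoref{defn:coend} together with the parametrized variant in \autoref{rmk:variants}. Since the external tensor $\otimes\colon\V\times\V\to\V$ preserves left Kan extensions in both variables separately by the very definition of a monoidal derivator, it commutes with coend formation in either slot. Pulling both inner coends past the outer external $\otimes$ gives a canonical isomorphism
\[
(N_1\otimes_{[B_1]} M_1)\otimes(N_2\otimes_{[B_2]} M_2)\;\cong\;\int^{B_1}\!\int^{B_2}\bigl((N_1\otimes M_1)\otimes(N_2\otimes M_2)\bigr).
\]

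Second, I would invoke the symmetry constraint of $\V$: after reordering the underlying categorical coordinates via canonical isomorphisms in $\cCat$, there is a coherent natural isomorphism
\[
(N_1\otimes M_1)\otimes(N_2\otimes M_2)\;\cong\;(N_1\otimes N_2)\otimes(M_1\otimes M_2).
\]
Third, I would apply a Fubini theorem for coends,
\[
\int^{B_1}\!\int^{B_2}\;\cong\;\int^{B_1\times B_2},
\]
which rests on the evident isomorphism of twisted arrow categories $\tw(B_1\times B_2)\cong\tw(B_1)\times\tw(B_2)$ together with the homotopy exactness of \autoref{egs:htpy}(ii), asserting that Kan extensions and restrictions in unrelated variables commute. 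Chaining these three isomorphisms produces the desired identity in $\V((C_1\times C_2)\times(A_1\times A_2)\op)$.

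The main obstacle is purely bookkeeping. At each step the underlying categorical factors must be reshuffled by canonical isomorphisms in $\cCat$ in order to move between orderings such as $B_1\times A_1\op\times B_2\times A_2\op$ and $(B_1\times B_2)\times(A_1\times A_2)\op$, and to commute $M_1$ past $N_2$ in step two. These reorderings are harmless because restrictions along isomorphisms are automatically equivalences and $\V$ is symmetric monoidal, but a fully formal write-up would require tracking the relevant coherence data and checking compatibility with the definition of the parametrized external-canceling composition of \autoref{thm:bicategory}.
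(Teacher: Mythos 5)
Your proposal is correct and follows essentially the same route as the paper's proof: reshuffle the external product via the symmetry constraint of $\V$ (with the implicit restriction along the symmetry in $\cCat$) and then compare the coends over $B_1$, $B_2$ and $B_1\times B_2$ by Fubini. The only difference is that you spell out the cocontinuity of $\otimes$ and sketch Fubini via $\tw(B_1\times B_2)\cong\tw(B_1)\times\tw(B_2)$, whereas the paper simply cites Fubini's lemma from \cite[Lemma 5.3]{gps:additivity}.
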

\begin{proof}
Since \V is a symmetric monoidal derivator, the symmetry constraint of \V yields a canonical isomorphism
\[
N_1\otimes M_1\otimes N_2\otimes M_2 \cong N_1\otimes N_2 \otimes M_1\otimes M_2,
\]
where again the restriction along the symmetry constraint in $(\cCat,\times)$ is implicit. By Fubini's lemma \cite[Lemma 5.3]{gps:additivity} there is also a canonical isomorphism between the respective coends showing up in the statement of the lemma.
\end{proof}

If we are now given two left admissible morphisms of stable derivators together with associated spectral bimodules
\begin{equation}\label{eq:la-commute}
F_1\cong M_1\otimes_{[A_1]}-\colon\D^{A_1}\to\D^{B_1}\qquad\text{and}\qquad 
F_2\cong M_2\otimes_{[A_2]}-\colon \D^{A_2}\to\D^{B_2},
\end{equation}
then, using again that left admissible morphisms can be evaluated on arbitrary stable derivators, we obtain the diagram of morphisms
\[
\xymatrix{
\D^{A_1\times A_2} \ar[r]^-{F_1} \ar[d]_-{F_2} & \D^{B_1\times A_2} \ar[d]^-{F_2} \\
\D^{A_1\times B_2} \ar[r]_-{F_1} & \D^{B_1\times B_2}.
}
\]
At the level of the corresponding spectral bimodules associated to the composition through the upper right corner there are canonical isomorphisms
\begin{align}
(\lI_{B_1}\otimes M_2)\otimes_{[B_1\times A_2]}(M_1\otimes \lI_{A_2})&\cong (\lI_{B_1}\otimes_{[B_1]}M_1)\otimes (M_2\otimes_{[A_2]}\lI_{A_2})\\
&\cong M_1\otimes M_2
\end{align}
where we used \autoref{lem:mon-bicat} in the first step. Similarly, there are canonical isomorphisms
\begin{align}
(M_1\otimes \lI_{B_2})\otimes_{[A_1\times B_2]}(\lI_{A_1}\otimes M_2)&\cong (M_1\otimes_{[A_1]}\lI_{A_1})\otimes (\lI_{B_2}\otimes_{[B_2]}M_2)\\
&\cong M_1\otimes M_2.
\end{align}

\begin{defn}\label{defn:boxtimes}
Let \D be a stable derivator and let us consider left admissible morphisms with representing spectral bimodules as in \eqref{eq:la-commute}. Then we denote by 
$F_1\boxtimes F_2$ the morphism
\[
F_1\boxtimes F_2=(M_1\otimes M_2)\otimes_{[A_1\times A_2]}-\colon\D^{A_1\times A_2}\to\D^{B_1\times B_2}.
\]
\end{defn}

\begin{lem}\label{lem:boxtimes}
Let \D be a stable derivator and let us consider left admissible morphisms as in \eqref{eq:la-commute}. Then the following diagram commutes up to canonical isomorphisms
\[
\xymatrix{
\D^{A_1\times A_2} \ar[r]^-{F_1} \ar[d]_-{F_2} \ar[dr]|{F_1 \boxtimes F_2}& \D^{B_1\times A_2} \ar[d]^-{F_2} \\
\D^{A_1\times B_2} \ar[r]_-{F_1} & \D^{B_1\times B_2}.
}
\]
\end{lem}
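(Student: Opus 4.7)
The plan is to show that both composite morphisms $F_2 \circ F_1$ and $F_1 \circ F_2$ in the diagram are left admissible and naturally isomorphic to $F_1 \boxtimes F_2$, by identifying the representing spectral bimodules of each composite path with $M_1 \otimes M_2 \in \cSp((B_1 \times B_2) \times (A_1 \times A_2)\op)$. The two canonical isomorphisms needed are precisely those exhibited in the paragraph preceding \autoref{defn:boxtimes}, so the work lies in justifying why these bimodule identifications correspond to the displayed diagram.

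First I would observe that a left admissible morphism $F_1 \cong M_1 \otimes_{[A_1]} -\colon \D^{A_1}\to\D^{B_1}$ has a parametrized version. Since \autoref{defn:admissible} produces $F_1$ by restrictions, left Kan extensions, right Kan extensions along functors of type (LA3), and right extensions by zero, all of which extend naturally over an additional factor~$A_2$ (using the homotopy exactness of squares of the form in \autoref{egs:htpy}(ii)), the induced morphism $\D^{A_1 \times A_2} \to \D^{B_1 \times A_2}$ is again left admissible. Applying \autoref{thm:kernel}(i), combined with the uniqueness criterion of \autoref{rmk:kernel-admissible}(i) and an evaluation at the identity profunctor $\lI_{A_1 \times A_2}$, the representing bimodule for this parametrized morphism is $M_1 \otimes \lI_{A_2} \in \cSp((B_1 \times A_2) \times (A_1 \times A_2)\op)$ (with the obvious reindexing by a symmetry of $\cCat$). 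The analogous statements hold for all four edges of the square.

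Next I would combine these identifications with composition in the bicategory $\cProf(\cSp)$ from \autoref{thm:bicategory}. Composition of left admissible morphisms corresponds, by the pseudofunctoriality of the canonical $\cSp$-action on any stable derivator, to the canceling tensor product of their representing bimodules. Therefore the composite $F_2 \circ F_1\colon \D^{A_1\times A_2} \to \D^{B_1 \times B_2}$ along the upper-right path is naturally isomorphic to
\[
\bigl((\lI_{B_1}\otimes M_2)\otimes_{[B_1\times A_2]}(M_1\otimes \lI_{A_2})\bigr) \otimes_{[A_1\times A_2]} - ,
\]
and the lower-left composite $F_1 \circ F_2$ is naturally isomorphic to
\[
\bigl((M_1\otimes \lI_{B_2})\otimes_{[A_1\times B_2]}(\lI_{A_1}\otimes M_2)\bigr) \otimes_{[A_1\times A_2]} -.
\]

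Finally I would invoke the two canonical isomorphisms already displayed in the text, each of which is an application of \autoref{lem:mon-bicat} (Fubini together with the symmetry constraint of the symmetric monoidal derivator $\cSp$) combined with the unit axiom $\lI_C \otimes_{[C]} N \cong N \cong N \otimes_{[C]} \lI_C$ from \autoref{thm:bicategory}. Both inner bimodules simplify canonically to $M_1 \otimes M_2$, which by \autoref{defn:boxtimes} is the representing bimodule of $F_1 \boxtimes F_2$. The main subtlety I expect will be bookkeeping the various symmetry and reindexing isomorphisms in $\cCat$ so as to make the unit axioms of \autoref{thm:bicategory} applicable verbatim; all of this is formal once the parametrized representation in the first step is in place.
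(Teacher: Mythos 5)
Your proposal is correct and follows essentially the same route as the paper: the paper's proof is simply "immediate from the above discussion," and that discussion is exactly the computation you spell out — identifying the bimodules of the two composite paths as $(\lI_{B_1}\otimes M_2)\otimes_{[B_1\times A_2]}(M_1\otimes \lI_{A_2})$ and $(M_1\otimes \lI_{B_2})\otimes_{[A_1\times B_2]}(\lI_{A_1}\otimes M_2)$, then collapsing both to $M_1\otimes M_2$ via \autoref{lem:mon-bicat} and the unit profunctors of \autoref{thm:bicategory}. Your extra care in justifying the parametrized representing bimodule $M_1\otimes\lI_{A_2}$ by evaluation at the identity profunctor is a reasonable filling-in of what the paper leaves implicit.
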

\begin{proof}
This is immediate from the above discussion.
\end{proof}

Thus, left admissible morphisms in unrelated variables commute. The product~$\boxtimes$ is also compatible with compositions in the following sense.

\begin{lem} \label{lem:Prof-monidal}
Let $F_1\colon \D^{A_1} \to \D^{B_1}$, $F_2\colon \D^{A_2} \to \D^{B_2}$, $G_1\colon \D^{B_1} \to \D^{C_1}$, and $G_2\colon \D^{B_2} \to \D^{C_2}$ be left admissible morphisms with \D a stable derivator. Then
\[ 
(G_1 \circ F_1) \boxtimes (G_2 \circ F_2) \cong (G_1 \boxtimes G_2) \circ (F_1 \boxtimes F_2) 
\]
as morphisms of derivators $\D^{A_1 \times A_2} \to \D^{C_1 \times C_2}$.
\end{lem}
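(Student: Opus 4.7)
The plan is to reduce both sides to canceling tensor products with explicit spectral bimodules and then invoke \autoref{lem:mon-bicat}. By \autoref{thm:kernel} (together with the uniqueness statement in \autoref{rmk:kernel-admissible}(i)) we may write
\[
F_i\cong M_i\otimes_{[A_i]}-\qquad\text{and}\qquad G_i\cong N_i\otimes_{[B_i]}-,\qquad i=1,2,
\]
for uniquely determined $M_i\in\cSp(B_i\times A_i\op)$ and $N_i\in\cSp(C_i\times B_i\op)$. Since the canceling tensor product is the composition in the bicategory $\cProf(\cSp)$ (\autoref{thm:bicategory}), associativity of this composition yields a natural isomorphism
\[
G_i\circ F_i\;\cong\;(N_i\otimes_{[B_i]}M_i)\otimes_{[A_i]}-,
\]
so that $G_i\circ F_i$ is represented by $N_i\otimes_{[B_i]}M_i\in\cSp(C_i\times A_i\op)$.

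Unpacking \autoref{defn:boxtimes} on each side, the morphism $(G_1\circ F_1)\boxtimes(G_2\circ F_2)$ is represented by
\[
(N_1\otimes_{[B_1]}M_1)\otimes(N_2\otimes_{[B_2]}M_2)\;\in\;\cSp\bigl((C_1\times C_2)\times(A_1\times A_2)\op\bigr),
\]
while $G_1\boxtimes G_2$ is represented by $N_1\otimes N_2$ and $F_1\boxtimes F_2$ is represented by $M_1\otimes M_2$. Applying again the associativity of composition in $\cProf(\cSp)$, the composite $(G_1\boxtimes G_2)\circ(F_1\boxtimes F_2)$ is therefore represented by
\[
(N_1\otimes N_2)\otimes_{[B_1\times B_2]}(M_1\otimes M_2).
\]

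At this point the desired isomorphism is exactly the content of \autoref{lem:mon-bicat}, which provides a canonical isomorphism
\[
(N_1\otimes_{[B_1]}M_1)\otimes(N_2\otimes_{[B_2]}M_2)\;\cong\;(N_1\otimes N_2)\otimes_{[B_1\times B_2]}(M_1\otimes M_2).
\]
Passing back to the induced morphisms of derivators via the canceling tensor product with variable input in $\D^{A_1\times A_2}$ gives the stated natural isomorphism of morphisms $\D^{A_1\times A_2}\to\D^{C_1\times C_2}$. No step looks genuinely hard; the only point demanding care is to ensure that the identifications of representing bimodules are used consistently with the $2$-categorical structure --- this is handled cleanly by the uniqueness clause of \autoref{rmk:kernel-admissible}(i), which reduces coherence to the already-established coherence in $\cProf(\cSp)$.
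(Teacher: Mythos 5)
Your proof is correct and follows exactly the paper's route: the paper likewise represents everything by spectral bimodules via \autoref{thm:kernel} and observes that the claim is then a reformulation of \autoref{lem:mon-bicat}. You have merely spelled out the intermediate identifications (associativity in $\cProf(\cSp)$ and the unpacking of \autoref{defn:boxtimes}) that the paper leaves implicit.
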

\begin{proof}
Using \autoref{thm:kernel} this is merely a reformulation of \autoref{lem:mon-bicat}.
\end{proof}

We conclude the section by collecting two immediate consequences of \autoref{lem:la-dual} regarding opposites of admissible morphisms.
Specializing for simplicity to $\V = \cSp$ only (although this is not strictly necessary in view of~\cite{gs:enriched}), these corollaries are useful in later computations. The first one corresponds to \autoref{lem:adm-adj-triple}.

\begin{cor} \label{cor:adm-dual}
Suppose that $F \cong M \otimes_{[A]} - \cong - \lhd_{[A]} N$ is admissible with $M \in \cSp(B \times A\op)$ and $N \in \cSp(A \times B\op)$. Then $F\op \cong N \otimes_{[A\op]} - \cong - \lhd_{[A\op]} M$. In particular, there is an adjoint triple
\[(M\otimes_{[B\op]} -, F\op, -\lhd_{[B\op]} N).\]
\end{cor}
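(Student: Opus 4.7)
The proof will be essentially a one-two punch, applying \autoref{lem:la-dual} to translate between tensor and hom representations under passage to opposites, and then applying \autoref{lem:adm-adj-triple} to deduce the adjoint triple from the existence of two such representations of $F\op$.

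First I would invoke \autoref{lem:la-dual} twice. The admissibility hypothesis gives two isomorphisms $F \cong M \otimes_{[A]} -$ and $F \cong - \lhd_{[A]} N$. Applying the first half of \autoref{lem:la-dual} to the tensor presentation yields $F\op \cong - \lhd_{[A\op]} M$, where $M \in \cSp(B \times A\op)$ is reinterpreted as an element of $\cSp(A\op \times (B\op)\op)$ via the canonical symmetry isomorphism of the product in $\cCat$. Applying the second half of \autoref{lem:la-dual} to the hom presentation yields $F\op \cong N \otimes_{[A\op]} -$, again under the evident identification $\cSp(A \times B\op) \cong \cSp(B\op \times (A\op)\op)$. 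Combining these, $F\op$ admits the claimed pair of representations.

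Next I would feed this pair of representations back into \autoref{lem:adm-adj-triple}. That lemma takes as input any morphism of derivators represented simultaneously as a canceling tensor product and a canceling cotensor with compatible bimodules, and outputs a left and right adjoint given by the ``other'' variants. Applied to $F\op\colon (\D\op)^{A\op} \to (\D\op)^{B\op}$ with the roles of $M$ and $N$ respectively played by $N$ and $M$, and with $A$ and $B$ respectively played by $A\op$ and $B\op$, this produces the adjoint triple
\[(M \otimes_{[B\op]} -,\; F\op,\; - \lhd_{[B\op]} N),\]
which is exactly the asserted statement.

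I do not anticipate any real obstacle here: the only subtlety is bookkeeping of opposite categories and making sure the symmetry isomorphisms used to reinterpret $M$ and $N$ as bimodules of the correct variance are applied consistently, but this is routine once one has set up the notation carefully. In particular, no new appeal to the stability of $\D$ or to the structure of the canonical $\cSp$-action is needed beyond what is already packaged into \autoref{lem:la-dual} and \autoref{lem:adm-adj-triple}.
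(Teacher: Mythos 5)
Your argument is correct and is exactly the route the paper intends: the paper states \autoref{cor:adm-dual} without proof as an ``immediate consequence'' of \autoref{lem:la-dual}, and your two applications of that lemma (one for each presentation of $F$) followed by feeding the resulting pair of presentations of $F\op$ into \autoref{lem:adm-adj-triple} — with $M$, $N$, $A$, $B$ replaced by $N$, $M$, $A\op$, $B\op$ — reproduces the asserted adjoint triple. The bookkeeping of variances you describe matches the identifications already made in the statement of \autoref{lem:la-dual}, so nothing further is needed.
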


The second consequence complements \autoref{cor:inverse-modules}. Here we also use the notation $M\inv \in \cSp(A \times B\op) \cong \cSp(B\op \times (A\op)\op)$ for the spectral bimodule representing the inverse of an admissible equivalence $F \cong M \otimes_{[A]} -$.

\begin{cor} \label{cor:adm-eq-dual}
Suppose that $(F_\D, G_\D) \cong (M \otimes_{[A]} -,M\inv \otimes_{[B]} -)\colon \D^A \rightleftarrows \D^B$ is an admissible equivalence. Then
\[ (F_{\D\op}\op, G_{\D\op}\op) \cong (M\inv \otimes_{[A\op]}-, M\otimes_{[B\op]}-)\colon \D^{A\op} \rightleftarrows \D^{B\op} \]
is also an admissible equivalence.
\end{cor}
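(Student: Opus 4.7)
The plan is to deduce the corollary from \autoref{cor:adm-dual} and \autoref{cor:inverse-modules} in essentially two moves, followed by a check that the resulting pair retains admissibility and is an equivalence.

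First I would use \autoref{cor:inverse-modules} to rewrite the given admissible equivalence in its ``double'' form: $F_\D\cong M\otimes_{[A]}-\cong -\lhd_{[A]}M\inv$ and $G_\D\cong M\inv\otimes_{[B]}-\cong -\lhd_{[B]}M$. This puts us in the hypothesis of \autoref{cor:adm-dual}, which I would then apply twice -- once to $F_\D$ and once to $G_\D$. The first application yields $F_{\D\op}\op\cong M\inv\otimes_{[A\op]}-$, where $M\inv$ is reinterpreted as an object of $\cSp(A\op\times (B\op)\op)$; the second yields $G_{\D\op}\op\cong M\otimes_{[B\op]}-$. These match the spectral bimodules appearing in the statement.

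Next I would verify that the resulting pair is an admissible equivalence. Admissibility is preserved by passage to opposites because the class (LA1)--(LA4) is carried onto (RA1)--(RA4) under $(-)\op$: restrictions remain restrictions (along opposite functors), left Kan extensions and right Kan extensions are interchanged, and sieves are interchanged with cosieves. Since $F_\D$ and $G_\D$ are admissible as families of morphisms of stable derivators, so are $F_{\D\op}\op$ and $G_{\D\op}\op$. For the equivalence part, I would instantiate the given admissible equivalence at the stable derivator $\D\op$ to obtain an equivalence $(F_{\D\op},G_{\D\op})\colon(\D\op)^A\rightleftarrows(\D\op)^B$, and then pass to opposite morphisms, which preserves being an adjoint equivalence after the identification $((\D\op)^A)\op\cong\D^{A\op}$.

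No serious obstacle is anticipated; the proof is essentially a repackaging of \autoref{cor:adm-dual} via the symmetric presentation of admissible equivalences furnished by \autoref{cor:inverse-modules}. If any care is needed it is purely bookkeeping, namely tracking the identification $((\D\op)^A)\op\cong\D^{A\op}$ so that the bimodule $M\inv\in\cSp(A\times B\op)$ is correctly reinterpreted as an object of $\cSp(A\op\times (B\op)\op)$, and symmetrically for $M$.
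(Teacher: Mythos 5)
Your proposal is correct and follows essentially the same route as the paper: the paper instantiates the family at $\E=\D\op$, uses \autoref{cor:inverse-modules} to put the equivalence in hom-form, and then applies \autoref{lem:la-dual} (of which \autoref{cor:adm-dual} is the packaged version) together with the identifications $((\D\op)^A)\op\cong\D^{A\op}$, $((\D\op)^B)\op\cong\D^{B\op}$ — exactly the two moves you describe, with admissibility of the opposites coming from the observation preceding \autoref{lem:la-dual}. The only bookkeeping point is that to land on $F_{\D\op}\op$ you must apply \autoref{cor:adm-dual} to the component at $\D\op$ (not literally to $F_\D$), which your equivalence step already does correctly.
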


\begin{proof}
By definition we assume that $F_\E \cong M \otimes_{[A]} -\colon \E^A \to \E^B$ is an equivalence for all stable derivators $\E$ (see also \autoref{rmk:kernel-admissible}(i)). Applying this to $\E = \D\op$ and using \autoref{cor:inverse-modules}, we obtain an equivalence $(F_{\D\op}, G_{\D\op}) \cong (- \lhd_{[A]} M\inv,-\lhd_{[B]} M)\colon (\D\op)^A \rightleftarrows (\D\op)^B$. Using further that $((\D\op)^A)\op \cong \D^{A\op}$ and $((\D\op)^B)\op \cong \D^{B\op}$, and applying \autoref{lem:la-dual}, we obtain an equivalence $(F_{\D\op}\op, G_{\D\op}\op) \cong (M\inv \otimes_{[A\op]}-,M\otimes_{[B\op]}-)\colon \D^{A\op} \rightleftarrows \D^{B\op}$.
\end{proof}

\section{Universal tilting modules}
\label{sec:universal-tilting}

In this section and \S\ref{sec:yoneda} we construct certain universal bimodules over spectral path algebras. Here we begin with the bimodules which realize the functors studied in \S\S\ref{sec:An}-\ref{sec:coxeter-reflection} in \emph{arbitrary stable derivators}, hence, in particular, in the stable derivators associated to fields, rings, schemes, differential-graded algebras, ring spectra, and also in further stable derivators arising in algebra, geometry, and topology (see \cite[\S5]{gst:basic}). Among the functors obtained this way are the reflection functors, the strong stable equivalences, the Coxeter functors, and the Serre functors. This allows us to see that for all $A_n$-quivers Serre functors arise as universal Nakayama functors. As a further illustration, we construct a very explicit universal tilting module realizing a strong stable equivalence between the commutative square and $D_4$-quivers.

In this section and the next one we focus on stable derivators with their canonical closed $\cSp$-module structure but there are similar results for stable derivators endowed with closed $\V$-module structures for \V a stable, closed symmetric monoidal derivator.

\subsection{Universal tilting modules for reflection functors}
\label{subsec:univ-APR}

We begin with the simplest case in which we consider an $A_n$-quiver~$Q$ together with a sink $a \in Q$ and the reflected quiver $Q' = \sigma_a Q$. As we know from \S\ref{subsec:Coxeter}, there is a strong stable equivalence $(s^-_a,s^+_a)\colon\D^{Q'}\rightleftarrows \D^Q$ given by certain \emph{reflection functors} $s^-_a,s^+_a$. Note also that the sink $a\in Q$ is a sink in the opposite of the reflected quiver~$(Q')\op$ as well as a source in both the reflected quiver~$Q'$ and in the opposite quiver~$Q\op$.

\begin{lem} \label{lem:refl-admis}
Let \D be a stable derivator, let $Q$ be an $A_n$-quiver with a sink $a \in Q$, and let $Q' = \sigma_a Q$ be the reflected quiver.
\begin{enumerate}
\item The reflection functors $(s^-_a,s^+_a)\colon \D^{Q'} \rightleftarrows \D^Q$ and the opposite reflection functors $((s^-_a)\op,(s^+_a)\op)\colon (\D\op)^{(Q')\op} \rightleftarrows (\D\op)^{Q\op}$ are admissible equivalences.
\item The opposite reflections $((s^-_a)\op,(s^+_a)\op)\colon (\D\op)^{(Q')\op} \rightleftarrows (\D\op)^{Q\op}$ are naturally isomorphic to the reflections $(s^+_a,s^-_a)$ for the opposite quivers $(Q')\op$,$Q\op$ applied to $\D\op$, respectively.
\end{enumerate}
\end{lem}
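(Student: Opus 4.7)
The plan is to combine Proposition~\ref{prop:sa}, which identifies $s^+_a \cong i_{Q'}^* F_Q$ and $s^-_a \cong i_Q^* F_{Q'}$, with a careful unpacking of the construction of $F_Q$ from Theorem~\ref{thm:AR-independent}. The restrictions $i_Q^*, i_{Q'}^*$ are admissible of type (LA1)=(RA1), so the admissibility of the reflections reduces to that of $F_Q$ and $F_{Q'}$. For $F_Q$ I would revisit the analogue of the four-step composition \eqref{eq:AR}: (a) right extension by zero along a sieve, of type (LA4) and (RA2); (b) a countable left Kan extension which adds cocartesian squares one at a time, each detected by Lemma~\ref{lem:detection}, hence of type (LA2) and, by the dual of Lemma~\ref{lem:LA3-detect}, also of type (RA3); (c) left extension by zero along a cosieve, of type (LA2) and (RA4); (d) a countable right Kan extension which adds cartesian squares one at a time, of type (LA3) by Lemma~\ref{lem:LA3-detect} and of type (RA2). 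Hence $F_Q$, and symmetrically $F_{Q'}$, is admissible, and therefore so are $s^+_a$ and $s^-_a$.

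That the pair forms an equivalence follows directly from Theorem~\ref{thm:AR-independent}: since $(F_Q,i_Q^*)$ and $(F_{Q'},i_{Q'}^*)$ are mutually inverse equivalences onto $\D^{M_n,\exx}$, we have $s^+_a s^-_a = i_{Q'}^* F_Q i_Q^* F_{Q'} \cong i_{Q'}^* F_{Q'} \cong \id$ and symmetrically $s^-_a s^+_a \cong \id$. For the opposite reflections, I would invoke the observation made immediately before Lemma~\ref{lem:la-dual} that passing to opposites swaps the classes (LA-$k$) and (RA-$k$) for each $k$ (restrictions remain restrictions, left and right Kan extensions interchange, sieves and cosieves interchange, and adding cartesian squares corresponds to adding cocartesian squares); hence the opposite of an admissible morphism is admissible. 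Combined with the fact that opposites of equivalences are equivalences, this establishes~(i).

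For (ii) the key idea is that the construction of $F_Q$ is itself self-dual. The mesh category carries an evident anti-isomorphism $\omega\colon M_n\op \to M_n$ coming from the horizontal-reflection symmetry of the repetitive quiver of $[n+1]$, and after compatible choices of the isomorphisms $\varphi$ in \eqref{eq:iQ} one can arrange $\omega \circ (i_Q)\op = i_{Q\op}$ and likewise $\omega \circ (i_{Q'})\op = i_{(Q')\op}$. Under this identification each of the four building blocks of $F_Q$ dualizes step-for-step into the corresponding building block of $F_{Q\op}$---right extension by zero along a sieve becomes left extension by zero along a cosieve, adding cartesian squares becomes adding cocartesian squares, and conversely---yielding a natural isomorphism $(F_Q)\op \cong F_{Q\op}$ when the two sides are compared via $\omega$. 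Combining this with the obvious identification of the opposite of a restriction as restriction along the opposite functor, and with Proposition~\ref{prop:sa} applied to the pair $((Q')\op, Q\op)$ (in which $(Q')\op$ carries the sink $a$), one obtains
\begin{equation}
(s^+_a)\op \;\cong\; ((i_{Q'})\op)^*(F_Q)\op \;\cong\; i_{(Q')\op}^* F_{Q\op} \;\cong\; \tilde{s}^-_a,
\end{equation}
where $\tilde{s}^-_a$ denotes the reflection $s^-_a$ for the opposite quiver pair applied to $\D\op$; the isomorphism $(s^-_a)\op \cong \tilde{s}^+_a$ follows symmetrically.

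The main obstacle I expect is the bookkeeping around Hypothesis~\ref{hyp:rel-embed} and the choices of $\varphi$ in \eqref{eq:iQ}: one has to verify that one can coherently choose embeddings so that $\omega \circ (i_Q)\op = i_{Q\op}$ and $\omega \circ (i_{Q'})\op = i_{(Q')\op}$ \emph{simultaneously}, and that these choices satisfy Hypothesis~\ref{hyp:rel-embed} on both sides, so that the chain of isomorphisms above is canonical. Once that is in place, the identification $(F_Q)\op \cong F_{Q\op}$ is essentially a matter of observing that running the procedure \eqref{eq:AR} for $Q\op$ reads off, step-for-step, the dual of the procedure for $Q$; everything else is either a routine verification of the admissibility classes or a formal application of results already established in the paper.
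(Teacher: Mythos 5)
Your part (i) is essentially the paper's argument: Proposition~\ref{prop:sa} reduces everything to the admissibility of $F_Q$, $F_{Q'}$ and of restrictions, and the paper simply records that these morphisms are ``compositions of left and right extensions by zero and Kan extensions adding bicartesian squares''; your class-by-class bookkeeping (LA4)/(RA2), (LA2)/(RA3), (LA2)/(RA4), (LA3)/(RA2), via \autoref{lem:LA3-detect} and its dual, is exactly what that phrase compresses, and handling the opposite reflections through the LA/RA swap recorded before \autoref{lem:la-dual} is equivalent to the paper's choice of checking $F_Q$ and $F_Q\op$ directly. For part (ii), however, you take a genuinely different and much heavier route. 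The paper disposes of (ii) in one line, because the reflection functors are defined intrinsically (see \S\ref{subsec:Coxeter}): $s^+_a$ at a sink is given by passing to the fibre of the morphism(s) adjacent to $a$, i.e.\ by restricting a cartesian square, and $s^-_a$ at a source dually by a cocartesian square, so applying $(-)\op$ literally interchanges the two constructions and the asserted isomorphism is definitional. You instead rewrite both sides via \autoref{prop:sa} and \autoref{thm:AR-independent} and compare them through an anti-isomorphism $\omega\colon M_n\op\to M_n$ with $(F_Q)\op\cong F_{Q\op}$. This can be made to work---the $\omega$-twisted $(F_Q)\op$ is an equivalence onto $(\D\op)^{M_n,\exx}$ whose inverse is restriction along $\omega\circ(i_Q)\op$, so it must agree with $F_{Q\op}$ for that choice of embedding---but it buys nothing here and costs precisely the verifications you flag: that $\omega\circ(i_Q)\op$ and $\omega\circ(i_{Q'})\op$ are legitimate embeddings as in \eqref{eq:iQ} (slices of the repetitive quiver) and that they satisfy \autoref{hyp:rel-embed} for the pair $((Q')\op,Q\op)$, which does hold because $\omega$ reverses the $k$-direction and so turns the relation $i_{Q'}(a)=i_Q(a)-(1,0)$ into the one required on the opposite side. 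None of this machinery is needed if one argues directly from the construction of $s^\pm_a$, as the paper does; your detour is correct in outline but should be recognized as optional.
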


\begin{proof}
(i) If we choose inclusions $i_Q\colon Q \to M_n$ and $i_{Q'}\colon Q' \to M_n$ so that they satisfy \autoref{hyp:rel-embed}, the composition $i_{Q}^\ast F_{Q'}\colon \D^{Q'} \to \D^Q$ is an equivalence by \autoref{thm:AR-independent}, and it is also isomorphic to $s^-_a$ by construction. Since restriction morphisms are admissible, it suffices by \autoref{prop:sa} to show that the morphism $F_Q\colon \D^Q \to \D^{M_n}$ and its opposite are admissible. Similar to $F=F_{\A{n}}$ as defined by \eqref{eq:AR}, these morphisms are compositions of left and right extensions by zero and Kan extensions adding bicartesian squares and are hence admissible.

(ii) This is immediate from the definitions of the respective reflections.
\end{proof}

As admissible equivalences, reflection functors are induced by invertible spectral bimodules (see \autoref{cor:inverse-modules}). Inspired by~\cite{apr:tilting}, we make the following definition.

\begin{defn} \label{defn:apr-tilt}
Let $Q$ be an $A_n$-quiver with a sink $a \in Q$ and let $Q' = \sigma_a Q$ be the reflected quiver. The bimodules 
$T^-_a=T^-_{Q',a}\in\cSp(Q\times (Q')\op)$ and $T^+_{a}=T^+_{Q,a}\in\cSp(Q'\times Q\op)$
such that
\[
s^-_a \cong T^-_a \otimes_{[Q']} -
\qquad \textrm{and} \qquad
s^+_a \cong T^+_a \otimes_{[Q]} -
\]
are \textbf{universal Auslander--Platzeck--Reiten tilting bimodules} (or \textbf{universal APR tilting bimodules}) for reflection functors.
\end{defn}

By definition, these spectral bimodules $T^+_a,T^-_a$ represent the corresponding reflection functors in \emph{any stable derivator}, justifying that we refer to them as being `universal'. As we mention in \S\ref{sec:field}, by inducing up to chain complexes over a field, these spectral bimodules yield the usual tilting bimodules from \cite{apr:tilting}.

The proof of \autoref{thm:kernel} shows that these spectral bimodules can be obtained by applying $s^-_a\colon\cSp^{Q'}\to\cSp^{Q}$ and $s^+_a\colon\cSp^{Q}\to\cSp^{Q'}$ to the respective identity bimodules, i.e., we have
\[
T^-_a=s^-_a(\lI_{Q'})\in\cSp(Q\times (Q')\op)\qquad\text{and}\qquad T^+_a=s^+_a(\lI_{Q})\in\cSp(Q'\times Q\op).
\]
This implies that the tilting bimodules satisfy some relations; see \autoref{cor:inverse-modules}.

\begin{lem} \label{lem:apr-commute}
Let $Q$ be an $A_n$-quiver.
\begin{enumerate}
\item If $a \in Q$ is a sink and $Q' = \sigma_a Q$, then there are isomorphisms 
\[
T^-_a \otimes_{[Q']} T^+_a \cong \lI_{Q}\qquad \text{and} \qquad T^+_a \otimes_{[Q]} T^-_a \cong \lI_{Q'}.
\]
\item For sinks $a_1\neq a_2 \in Q$ and $\sigma_{\{a_1,a_2\}}Q=\sigma_{a_1}\sigma_{a_2} Q=\sigma_{a_2}\sigma_{a_1} Q$ there are isomorphisms
\[
T^+_{\sigma_{a_2} Q,a_1} \otimes_{[\sigma_{a_2} Q]} T^+_{Q,a_2} \cong T^+_{\sigma_{a_1} Q,a_2} \otimes_{[\sigma_{a_1} Q]} T^+_{Q,a_1}.
\]
\end{enumerate}
\end{lem}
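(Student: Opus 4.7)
The plan is to derive both parts as fairly direct consequences of the bimodule calculus developed in \S\ref{sec:kernel}, specifically the representation of admissible morphisms by spectral bimodules (\autoref{thm:kernel}) together with the uniqueness assertion in \autoref{rmk:kernel-admissible}(i). No new computation is needed; the content of the lemma is a translation of formal properties of reflection functors into the language of spectral bimodules.

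For part~(i), I would observe that by \autoref{lem:refl-admis}, the pair $(s^-_a,s^+_a)\colon \D^{Q'}\rightleftarrows\D^Q$ is an admissible equivalence. By construction of Definition~\ref{defn:apr-tilt}, the bimodules $T^-_a$ and $T^+_a$ are the spectral bimodules representing this equivalence and its inverse. \autoref{cor:inverse-modules} then applies verbatim and yields the two isomorphisms $T^-_a\otimes_{[Q']}T^+_a\cong \lI_Q$ and $T^+_a\otimes_{[Q]}T^-_a\cong \lI_{Q'}$.

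For part~(ii), the key point is that composition of left admissible morphisms corresponds to the canceling tensor product of their representing bimodules. Indeed, if $F\cong M_F\otimes_{[B]}-$ and $G\cong M_G\otimes_{[A]}-$, then by associativity of composition in the bicategory $\cProf(\cSp)$ (\autoref{thm:bicategory}) we have $F\circ G\cong (M_F\otimes_{[B]}M_G)\otimes_{[A]}-$. Applied to our situation, the composite $s^+_{a_1}\circ s^+_{a_2}\colon \D^Q\to \D^{\sigma_{\{a_1,a_2\}}Q}$ is represented by $T^+_{\sigma_{a_2}Q,a_1}\otimes_{[\sigma_{a_2}Q]} T^+_{Q,a_2}$, and symmetrically $s^+_{a_2}\circ s^+_{a_1}$ is represented by $T^+_{\sigma_{a_1}Q,a_2}\otimes_{[\sigma_{a_1}Q]} T^+_{Q,a_1}$. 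By \autoref{cor:sasb-commute} these two composites are naturally isomorphic, and by uniqueness of representing bimodules (\autoref{rmk:kernel-admissible}(i)) the two tensor products are isomorphic in $\cSp(\sigma_{\{a_1,a_2\}}Q\times Q\op)$.

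The only potential obstacle is the bookkeeping involved in translating the $2$-categorical natural isomorphism of \autoref{cor:sasb-commute}, which is a statement about morphisms of derivators, into an isomorphism of bimodules. This is handled by the uniqueness clause in \autoref{rmk:kernel-admissible}(i), according to which a left admissible morphism $F_\E\colon \E^A\to\E^B$ defined on every stable derivator $\E$ has, up to isomorphism, a unique representing bimodule; equivalently, the bimodule can be recovered as $F_{\cSp}(\lI_A)$. Since both families of composite reflection functors above are defined on arbitrary stable derivators and are naturally isomorphic, their associated bimodules must agree up to isomorphism, which is the desired conclusion.
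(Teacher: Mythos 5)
Your proof is correct and follows essentially the same route as the paper, which disposes of (i) as an instance of \autoref{cor:inverse-modules} and of (ii) as an immediate consequence of \autoref{cor:sasb-commute}; your extra steps for (ii) — composition of left admissible morphisms corresponds to the canceling tensor product of the representing bimodules, and the uniqueness clause of \autoref{rmk:kernel-admissible}(i) (equivalently, evaluating at $\cSp$ on $\lI_Q$) converts the natural isomorphism of composite reflection functors into an isomorphism of bimodules — are exactly what makes the paper's one-line argument work.
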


\begin{proof}
Statement (i) is an instance of \autoref{cor:inverse-modules} and (ii) is immediate from \autoref{cor:sasb-commute}.
\end{proof}

The following computational lemma can be viewed as an abstract analog of~\cite[Proposition 1.4]{miyashita:tilting}.

\begin{lem} \label{lem:opposite-tilting}
Let $Q$ be an $A_n$-quiver, $a \in Q$ be a sink and $Q' = \sigma_a Q$. If $T^+_a \in \cSp(Q' \times Q\op)$ is the universal tilting module for $s^+_a\colon \D^Q \to \D^{Q'}$, then it is also the universal tilting module for $s^+_a\colon \D^{(Q')\op} \to \D^{Q\op}$, and dually. In particular, there are canonical isomorphisms
\[
\lI_{Q'} \cong (s^+_a \boxtimes s^-_a)(\lI_Q)
\qquad \textrm{and} \qquad
\lI_Q \cong (s^-_a \boxtimes s^+_a)(\lI_{Q'}).
\]
\end{lem}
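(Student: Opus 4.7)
The plan is to deduce both parts from the identification of the tilting bimodules via \autoref{cor:adm-eq-dual} combined with the symmetry results from \autoref{lem:refl-admis}. For part~(i), I would apply \autoref{cor:adm-eq-dual} to the admissible equivalence
\[
(s^+_a,s^-_a)\cong\bigl(T^+_a\otimes_{[Q]}-,\;T^-_a\otimes_{[Q']}-\bigr)\colon \D^Q\rightleftarrows\D^{Q'}
\]
(here $T^-_a=(T^+_a)\inv$ by \autoref{lem:apr-commute}(i)). The corollary yields an admissible equivalence
\[
\bigl((s^+_a)_{\D\op}\op,\;(s^-_a)_{\D\op}\op\bigr)\cong\bigl(T^-_a\otimes_{[Q\op]}-,\;T^+_a\otimes_{[(Q')\op]}-\bigr)\colon\D^{Q\op}\rightleftarrows\D^{(Q')\op}.
\]
By \autoref{lem:refl-admis}(ii) the opposite morphisms on the left are naturally isomorphic to the reflection functors $s^-_a\colon\D^{Q\op}\to\D^{(Q')\op}$ and $s^+_a\colon\D^{(Q')\op}\to\D^{Q\op}$ for the opposite quivers, and the uniqueness of the representing bimodule (see \autoref{rmk:kernel-admissible}(i)) implies the claimed identifications.

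For the \emph{in particular} statement, I would compute $(s^+_a\boxtimes s^-_a)(\lI_Q)$ by factoring through the upper right corner of the square in \autoref{lem:boxtimes}. Applying the parametrized version of $s^+_a\colon\D^Q\to\D^{Q'}$ to $\lI_Q\in\cSp(Q\times Q\op)$ in the first variable gives $s^+_a(\lI_Q)\cong T^+_a\in\cSp(Q'\times Q\op)$, by \autoref{rmk:kernel-admissible}(i). Next, applying $s^-_a\colon\D^{Q\op}\to\D^{(Q')\op}$ in the second variable and using part~(i) to identify its representing bimodule as $T^-_a$, one obtains
\[
(s^+_a\boxtimes s^-_a)(\lI_Q)\cong s^-_a(T^+_a)\cong T^-_a\otimes_{[Q\op]}T^+_a.
\]
The symmetry of $\cSp$ together with \autoref{lem:tensor-opp} rewrites this as $T^+_a\otimes_{[Q]}T^-_a$, and \autoref{lem:apr-commute}(i) concludes this is isomorphic to $\lI_{Q'}$. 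The isomorphism $\lI_Q\cong(s^-_a\boxtimes s^+_a)(\lI_{Q'})$ is obtained by the symmetric argument with the roles of $s^+_a,s^-_a$ interchanged (or directly by applying the statement already proved to $Q'$ in place of $Q$).

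The main obstacle in this plan is bookkeeping: carefully tracking which of the four categories $Q, Q', Q\op, (Q')\op$ each instance of a bimodule belongs to, and matching the variances of the reflection functors on opposite quivers with the cancelling tensor products, using the canonical swap $\cSp(X\times Y\op)\cong\cSp(Y\op\times X)$ and \autoref{prop:stable-frames-opp} to ensure that the dualities identified via \autoref{cor:adm-eq-dual} line up with the concrete reflection functors from \autoref{lem:refl-admis}(ii). Once this bookkeeping is done, both assertions reduce to the invertibility relations of \autoref{lem:apr-commute}(i).
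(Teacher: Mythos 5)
Your proof is correct and follows essentially the same route as the paper: the first assertion via \autoref{lem:refl-admis}(ii) and \autoref{cor:adm-eq-dual} together with uniqueness of representing bimodules, and the ``in particular'' part via \autoref{lem:tensor-opp} and \autoref{lem:apr-commute}(i). The only cosmetic difference is that you evaluate $s^+_a \boxtimes s^-_a$ on $\lI_Q$ stepwise through the commuting square of \autoref{lem:boxtimes}, whereas the paper computes directly with the representing bimodule $T^+_a\otimes T^-_a$ using Fubini and the unitality of \autoref{thm:bicategory} --- the two computations are interchangeable.
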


\begin{proof}
Since we have $(s^-_a,s^+_a) \cong (T^-_a \otimes_{[Q']} -,T^+_a \otimes_{[Q]} -) \colon \D^{Q'} \rightleftarrows \D^Q$, the first statement follows directly from \autoref{lem:refl-admis}(ii) and \autoref{cor:adm-eq-dual}.

By \autoref{defn:boxtimes}, the morphism $s^+_a \boxtimes s^-_a\colon \D^{Q \times Q\op} \to \D^{Q' \times (Q')\op}$ is represented by the external tensor product $T^+_a \otimes T^-_a$. We claim that there are isomorphisms
\[ (s^+_a \boxtimes s^-_a)(\lI_Q) \cong (T^+_a \otimes T^-_a) \otimes_{[Q \times Q\op]} \lI_Q \cong T_a^+ \otimes_{[Q]} \lI_Q \otimes_{[Q]} T_a^- \cong T_a^+ \otimes_{[Q]} T_a^- \cong \lI_{Q'}. \]
The third one follows from \autoref{thm:bicategory} and the last one from \autoref{lem:apr-commute}(i). Finally, the second isomorphism follows from \autoref{lem:tensor-opp} and Fubini's lemma \cite[Lemma 5.3]{gps:additivity}.
The isomorphism $\lI_Q \cong (s^-_a \boxtimes s^+_a)(\lI_{Q'})$ is obtained by a similar argument.
\end{proof}

We can in a straightforward way extend the above discussion to compositions of reflection functors. Let $Q$ and $Q'$ be two $A_n$-quivers with a possibly different orientation and let $i_Q\colon Q \to M_{n}$ and $i_{Q'}\colon Q' \to M_{n}$ be two embeddings as in~\eqref{eq:iQ}. Then according to \autoref{prop:tiltAn}, $(i_{Q}^\ast F_{Q'},i_{Q'}^\ast F_{Q})\colon\D^{Q'}\rightleftarrows \D^Q$ is an admissible equivalence obtained by iterated reflections.

\begin{defn} \label{defn:iter-tilt}
Let $Q,Q'$ be two $A_n$-quivers and let $(i_{Q}^\ast F_{Q'},i_{Q'}^\ast F_{Q})\colon\D^{Q'}\rightleftarrows \D^Q$ be an equivalence as above. The spectral bimodules $T_{Q,Q'} \in \cSp(Q\times (Q')\op)$ and $T_{Q',Q} \in \cSp(Q'\times Q\op)$ such that
\[ i_{Q}^\ast F_{Q'} \cong T_{Q,Q'} \otimes_{[Q']} - \qquad \textrm{and} \qquad i_{Q'}^\ast F_{Q} \cong T_{Q',Q} \otimes_{[Q]} - \]
are the \textbf{universal tilting bimodules}.
\end{defn}

Of course, we again have $T_{Q,Q'} \otimes_{[Q']} T_{Q',Q} \cong \lI_Q$ and $T_{Q',Q} \otimes_{[Q]} T_{Q,Q'} \cong \lI_{Q'}$, and \autoref{lem:opposite-tilting} takes the following form.

\begin{lem} \label{lem:opposite-iter-tilt}
Let $Q,Q'$ be two $A_n$-quivers and suppose that $T_{Q',Q}$ represents an equivalence $\D^Q \to \D^{Q'}$ given by iterated reflections. Then $T_{Q',Q}$  represents also the equivalence $\D^{(Q')\op} \to \D^{Q\op}$ given by the \emph{same} sequence of reflections.
\end{lem}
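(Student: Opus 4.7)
The plan is to proceed by induction on the length $k$ of the sequence of reflections decomposing the given equivalence $i_{Q'}^\ast F_Q\colon \D^Q \to \D^{Q'}$. The base case $k=1$ is exactly \autoref{lem:opposite-tilting}, so all the real work is in the inductive step, though that step will turn out to be essentially bookkeeping.

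For the inductive step, I would write the equivalence as a composite $G \circ s$, where $s = s^\pm_a\colon \D^Q \to \D^{Q''}$ is a single reflection and $G\colon \D^{Q''} \to \D^{Q'}$ is an equivalence given by a sequence of $k-1$ reflections. By uniqueness of the representing bimodule (\autoref{thm:kernel} together with \autoref{rmk:kernel-admissible}(i)), and by associativity of composition in $\cProf(\cSp)$ (\autoref{thm:bicategory}), the representing bimodule factors as
\[
T_{Q',Q} \;\cong\; T_{Q',Q''} \otimes_{[Q'']} T^\pm_{Q,a}.
\]
Now \autoref{lem:opposite-tilting} gives that $T^\pm_{Q,a}$ represents not only $s^\pm_a\colon \D^Q \to \D^{Q''}$ but also the reflection $s^\pm_a\colon \D^{Q''\op} \to \D^{Q\op}$ (under the canonical identification $\cSp(Q''\times Q\op)\cong\cSp(Q\op\times Q'')$), and by the inductive hypothesis $T_{Q',Q''}$ plays the analogous dual role for $G$.

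The opposite direction equivalence $\D^{Q'\op}\to\D^{Q\op}$ determined by the same sequence factors as $s\op\circ G\op\colon \D^{Q'\op}\to\D^{Q''\op}\to\D^{Q\op}$, and the two observations above identify its representing bimodule as
\[
T^\pm_{Q,a} \otimes_{[Q''\op]} T_{Q',Q''}.
\]
The final ingredient is the symmetry of canceling tensor products, \autoref{lem:tensor-opp}, which gives a canonical isomorphism
\[
T^\pm_{Q,a} \otimes_{[Q''\op]} T_{Q',Q''} \;\cong\; T_{Q',Q''} \otimes_{[Q'']} T^\pm_{Q,a} \;\cong\; T_{Q',Q},
\]
closing the induction.

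The step I expect to require the most care is not any deep computation but rather the bookkeeping: tracking that $T_{Q',Q}$ is used once as an object of $\cSp(Q'\times Q\op)$ (representing $\D^Q\to\D^{Q'}$) and once as an object of $\cSp(Q\op\times Q')$ (representing $\D^{Q'\op}\to\D^{Q\op}$) under the canonical symmetry of $\times$ on $\cCat$, and that the reversal of composition order on passing to opposites is precisely what the symmetry constraint of \autoref{lem:tensor-opp} absorbs. Once the bookkeeping is set up correctly, the inductive step reduces to a single invocation of \autoref{lem:opposite-tilting} and one application of \autoref{lem:tensor-opp}.
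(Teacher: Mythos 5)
Your proof is correct and follows the same route as the paper, whose entire argument is ``this follows from \autoref{lem:opposite-tilting} by induction''; you have simply spelled out the inductive step that the paper leaves implicit. The bookkeeping you flag (decomposing $T_{Q',Q}\cong T_{Q',Q''}\otimes_{[Q'']}T^{\pm}_{Q,a}$ via uniqueness of representing bimodules, identifying the dual composite's bimodule as $T^{\pm}_{Q,a}\otimes_{[(Q'')\op]}T_{Q',Q''}$, and absorbing the order reversal with \autoref{lem:tensor-opp}) is exactly what is needed and is carried out correctly.
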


\begin{proof}
This follows from \autoref{lem:opposite-tilting} by induction.
\end{proof}

\subsection{Universal tilting modules for Coxeter and Serre functors}
\label{subsec:univ-Coxeter}

As in \S\ref{sec:coxeter-reflection}, we will also consider autoequivalences $\D^Q \to \D^Q$ induced by iterated reflections, and in particular the Coxeter functors $(\Phi^-,\Phi^+)\colon \D^Q \rightleftarrows \D^Q$ (\autoref{defn:Coxeter}) and the Serre functor $S\colon \D^Q \to \D^Q$. Recall from \autoref{defn:Serre} and \autoref{thm:Serre-Nakayama} that $S=\Sigma\circ\Phi^+\cong\Phi^+\circ \Sigma$ and that for the linear orientation $\A{n}$, the corresponding spectral bimodule is the canonical duality bimodule $D_n$. Here we extend the latter result to arbitrary orientations.

To start with, we restate basic results about the Coxeter functors in terms of bimodules. That is, as a special case of \autoref{defn:iter-tilt}, there are bimodules $C^-_Q, C^+_Q$ in $\cSp(Q \times Q\op)$ and natural isomorphisms
\[ 
\Phi^- \cong C_Q^- \otimes_{[Q]} -
\qquad \textrm{and} \qquad
\Phi^+ \cong C_Q^+ \otimes_{[Q]} -.
\]

\begin{lem} \label{lem:Cox-ker}
Let $Q$ be an $A_n$-quiver with an arbitrary orientation. For every admissible sequence of sinks $(a_1, \dots, a_n)$ and any admissible sequence of sources $(a'_1, \dots, a'_n)$, there are isomorphisms
\[
C_Q^- \cong T_{a'_n}^- \otimes_{[Q'_{n-1}]} \cdots \otimes_{[Q'_1]} T_{a'_1}^-
\qquad \textrm{and} \qquad
C_Q^+ \cong T_{a_n}^+ \otimes_{[Q_{n-1}]} \cdots \otimes_{[Q_1]} T_{a_1}^+.
\]
\end{lem}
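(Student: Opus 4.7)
The strategy is to unravel the definition of $\Phi^\pm_Q$ as an iterated composition of reflection functors, identify the resulting composite with an iterated canceling tensor product by invoking the bicategorical structure on $\cProf(\cSp)$, and then appeal to the uniqueness of representing spectral bimodules for admissible morphisms.

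More precisely, using \autoref{prop:indep-admis} together with \autoref{defn:Coxeter}, I would fix the admissible sequence of sinks $(a_1,\dots,a_n)$ and write $\Phi^+_Q \cong s^+_{a_n}\circ\cdots\circ s^+_{a_1}$. Setting $Q_0 = Q$ and $Q_i = \sigma_{a_i}Q_{i-1}$ (so $Q_n = Q$), each elementary reflection $s^+_{a_i}\colon\D^{Q_{i-1}} \to \D^{Q_i}$ is an admissible equivalence by \autoref{lem:refl-admis}, represented by the universal APR tilting bimodule $T^+_{a_i} \in \cSp(Q_i\times Q_{i-1}\op)$ of \autoref{defn:apr-tilt}.

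The next step is to note that composition of admissible morphisms corresponds on representing bimodules to composition in the bicategory of profunctors. Given left admissible morphisms $F\cong M\otimes_{[A]}-$ and $G\cong N\otimes_{[B]}-$, the associativity constraint of $\cProf(\cSp)$ (\autoref{thm:bicategory}) provides a canonical natural isomorphism
\[
G\circ F \;\cong\; N\otimes_{[B]}\bigl(M\otimes_{[A]}-\bigr) \;\cong\; (N\otimes_{[B]}M)\otimes_{[A]}-.
\]
Iterating this identity for the $n$-fold composition $s^+_{a_n}\circ\cdots\circ s^+_{a_1}$, we conclude that $\Phi^+_Q$ is represented by the iterated canceling tensor product $T^+_{a_n}\otimes_{[Q_{n-1}]}\cdots\otimes_{[Q_1]}T^+_{a_1}\in\cSp(Q\times Q\op)$.

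Finally, the uniqueness of representing bimodules for admissible morphisms (see \autoref{rmk:kernel-admissible}(i), noting that the composite morphism may be evaluated on an arbitrary stable derivator) yields the desired isomorphism $C^+_Q\cong T^+_{a_n}\otimes_{[Q_{n-1}]}\cdots\otimes_{[Q_1]}T^+_{a_1}$. The formula for $C^-_Q$ is obtained by the dual argument, using an admissible sequence of sources and the bimodules $T^-_{a'_i}$ instead. No real obstacle arises; the only substantive point is the compatibility of composition with canceling tensor products, which is precisely the content of \autoref{thm:bicategory}, and this in turn makes the statement essentially a direct translation of \autoref{defn:Coxeter} into the language of bimodules.
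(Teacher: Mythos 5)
Your proof is correct and follows essentially the same route as the paper, which simply cites \autoref{prop:indep-admis} together with the uniqueness of representing bimodules; the compatibility of composition of admissible morphisms with the canceling tensor product, which you spell out via \autoref{thm:bicategory}, is exactly the step the paper leaves implicit.
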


\begin{proof}
This follows immediately from \autoref{prop:indep-admis} and the uniqueness of representing bimodules.
\end{proof}

Recall from \autoref{defn:dualizing} that the canonical duality module $D_Q \in \cSp(Q \times Q\op)$ is defined as $D_Q = \lI_Q \rhd \lS$ (we continue dropping restrictions along symmetry constraints from notation). Since $Q$ can also be viewed as a Hasse diagram of a poset, we can use the direct description of $D_Q$ provided by \autoref{lem:poset}(ii). Thus, for instance, if $Q$ is the $A_3$-quiver $(1 \leftarrow 2 \rightarrow 3)$, then $\lI_Q$ and $D_Q$ have the following diagrams, respectively (compare to \autoref{eg:D2}):
\begin{equation}\label{eq:D-of-A3}
\vcenter{
\xymatrix@=1.0em{
\lS\ar[d]&0\ar[l]\ar[r]\ar[d]&0\ar[d]&&& 
\lS\ar[d]&\lS\ar[l]\ar[r]\ar[d]&0\ar[d]\\
\lS&\lS\ar[l]\ar[r]&\lS && &
0&\lS\ar[l]\ar[r]&0\\
0\ar[u]&0\ar[l]\ar[r]\ar[u]&\lS\ar[u] && &
0\ar[u]&\lS\ar[l]\ar[r]\ar[u]&\lS\ar[u]
}
}
\end{equation}

In complete analogy to the linear orientation studied in \autoref{thm:Serre-Nakayama}, we will obtain a natural isomorphism between the Nakayama functor $D_Q \otimes_{[Q]} -\colon \D^Q \to \D^Q$ and the Serre functor $S = \Sigma\circ\Phi^+$. Before proving this theorem, we need a lemma about the compatibility of the reflection functors with the duality morphism $-\rhd\lS\colon (\cSp\op)^Q \to \cSp^Q$. Intuitively, if $X \in (\Sp\op)^Q \cong (\Sp^{Q\op})\op$ is a $Q\op$-shaped diagram of spectra and $a\in Q$ is a sink, then dualizing $X$ and reflecting it at $a$ yields the same result as first reflecting $X$ at the source $a \in Q\op$ and then dualizing $X$. That is, the diagram
\[
\xymatrix{
(\Sp^{Q\op})\op \ar[r]^-\cong \ar[d]_{(s^-_a)\op} & (\cSp\op)^{Q} \ar[r]^-{-\rhd\lS} & \cSp^Q \ar[d]^{s^+_a}   \\
(\Sp^{(Q')\op})\op \ar[r]_-\cong & (\cSp\op)^{Q'} \ar[r]_-{-\rhd\lS} & \cSp^{Q'}
}
\]
should commute up to a natural isomorphism. Using universal tilting bimodules, we obtain the following more general statement.

\begin{lem} \label{lem:SPdual}
Let $Q,Q'$ be $A_n$-quivers and let $T \in \cSp(Q \times (Q')\op)$ be a universal tilting bimodule as in \autoref{defn:iter-tilt}.
The following diagram
\[
\xymatrix{
(\Sp^{Q\op})\op \ar[r]^-\cong \ar[d]_{(T \otimes_{[Q\op]} -)\op} & (\cSp\op)^{Q} \ar[r]^-{-\rhd\lS} & \cSp^Q \ar[d]^{T \rhd_{[Q]} -}   \\
(\Sp^{(Q')\op})\op \ar[r]_-\cong & (\cSp\op)^{Q'} \ar[r]_-{-\rhd\lS} & \cSp^{Q'}
}
\]
commutes up to a natural isomorphism and the vertical morphisms are equivalences.
\end{lem}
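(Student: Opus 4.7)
The argument splits into two parts: verifying that the vertical morphisms are equivalences, and exhibiting a natural isomorphism filling the diagram.

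For the equivalences, recall that by \autoref{defn:iter-tilt} the bimodule $T$ represents an equivalence $T\otimes_{[Q']}-\colon\cSp^{Q'}\to\cSp^Q$. The morphism $T\rhd_{[Q]}-\colon\cSp^Q\to\cSp^{Q'}$ is its right adjoint under the canceling-external tensor-hom adjunction, so it is also an equivalence. For the left vertical, \autoref{lem:opposite-iter-tilt} tells us that the same spectral bimodule $T$ represents the equivalence $T\otimes_{[Q\op]}-\colon\cSp^{Q\op}\to\cSp^{(Q')\op}$ obtained from the same sequence of reflections applied to the opposite quivers; taking the opposite morphism yields the desired equivalence of opposite derivators.

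For the commutativity, I would first check the claim on underlying categories and then promote it by naturality. For $X\in\cSp(Q\op)$, tracing the diagram, the top-right composite yields $T\rhd_{[Q]}(X\rhd\lS)$ and the bottom-left composite yields $(T\otimes_{[Q\op]}X)\rhd\lS$, both lying in $\cSp(Q')$. The identification
\[(T\otimes_{[Q\op]}X)\rhd\lS \;\cong\; T\rhd_{[Q]}(X\rhd\lS)\]
is a standard tensor-hom identity: the canceling tensor is by definition a coend, and $-\rhd\lS$ arises from a right adjoint of two variables and hence converts this coend into an end. Combined with the pointwise monoidal identification $(T_q\otimes X_q)\rhd\lS\cong T_q\rhd(X_q\rhd\lS)$ coming from symmetric monoidality of $\cSp$, together with the isomorphism of twisted arrow categories $\tw(Q\op)\op\cong\tw(Q)\op$ already exploited in \autoref{lem:tensor-opp}, one obtains the desired natural isomorphism. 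Since all the ingredients are natural and built from structural morphisms of the monoidal derivator $\cSp$, this pointwise identification lifts to a natural isomorphism between the two morphisms of derivators by naturality and (Der2).

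The main obstacle, and indeed the bulk of the work, is the bookkeeping forced by the various opposite conventions: matching $(\cSp^{B})\op$ with $(\cSp\op)^B$, tracking how a canceling-tensor over $Q\op$ on one side corresponds to a canceling-hom over $Q$ on the other, and verifying that the compound $2$-cell obtained from the Fubini isomorphism and the tensor-hom unit is coherent under the various restrictions. Once these conventions are settled, the actual content is the familiar tensor-hom adjunction in the symmetric monoidal derivator $\cSp$.
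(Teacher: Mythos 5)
Your proposal is correct and follows essentially the same route as the paper: the vertical morphisms are equivalences because $T\otimes_{[Q']}-$ is one by definition (so its adjoint $T\rhd_{[Q]}-$ is too) and $T\otimes_{[Q\op]}-$ is one by \autoref{lem:opposite-iter-tilt}, and commutativity rests on the same natural isomorphism $(T\otimes_{[Q\op]}X)\rhd\lS\cong T\rhd_{[Q]}(X\rhd\lS)$, which the paper simply asserts and you justify via the standard tensor--hom calculus. Your closing appeal to ``naturality and (Der2)'' is unnecessary (the isomorphism is already built from structural $2$-cells of the monoidal derivator, hence is a natural isomorphism of morphisms of derivators), but this is a matter of phrasing, not a gap.
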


\begin{proof}
By definition, $T \otimes_{[Q']} -\colon \cSp^{Q'} \to \cSp^Q$ is an equivalence. Hence so is $- \lhd_{[Q]} T \cong T \rhd_{[Q]} -\colon \cSp^Q \to \cSp^{Q'}$ and also $T \otimes_{[Q\op]} -\colon \cSp^{Q\op} \to \cSp^{(Q')\op}$ by \autoref{lem:opposite-iter-tilt}. The commutativity of the square is immediate since there are natural isomorphisms $(T \otimes_{[Q\op]} X) \rhd \lS \cong T \rhd_{[Q]} (X \rhd \lS)$ for arbitrary $X\in(\Sp^{Q\op})\op$.
\end{proof}

Now we can prove the theorem.

\begin{thm} \label{thm:Serre-Nakayama-indep}
Let $\D$ be a stable derivator and $Q$ be an arbitrarily oriented $A_n$-quiver. There is an isomorphism $\Sigma(C_Q^+) \cong D_Q$ in $\cSp(Q \times Q\op)$. In particular, we have a natural isomorphism between Serre and Nakayama functors
\[ S \cong D_Q \otimes_{[Q]} -\colon \D^Q \to \D^Q. \]
\end{thm}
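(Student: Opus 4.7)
\medskip

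The plan is to first reduce the ``In particular'' clause to the bimodule isomorphism $\Sigma(C_Q^+) \cong D_Q$, then establish this by transporting the already-known case $Q = \A{n}$ (\autoref{thm:Serre-Nakayama}) along the tilting bimodule $T = T_{Q,\A{n}} \in \cSp(Q\times\A{n}\op)$ of \autoref{defn:iter-tilt}. Since $S_Q = \Sigma\circ\Phi_Q^+$ (\autoref{defn:Serre}), since $\Phi^+_Q \cong C^+_Q\otimes_{[Q]}-$, and since the suspension $\Sigma$ is the canonical action of $\lS[1]$ on $\cSp(Q\times Q\op)$ and so commutes past $\otimes_{[Q]}$ by associativity, the composite $S_Q$ is represented by the bimodule $\Sigma(C_Q^+)$. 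Thus by the uniqueness of representing bimodules (see \autoref{rmk:kernel-admissible}(i) applied to the admissible equivalence $S$), the desired ``In particular'' clause $S \cong D_Q\otimes_{[Q]}-$ will follow from the bimodule-level isomorphism $\Sigma(C_Q^+)\cong D_Q$. For the base case, \autoref{thm:Serre-Nakayama} combined with the same uniqueness principle gives $\Sigma(C^+_{\A{n}}) \cong D_n = D_{\A{n}}$.

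Next I would establish that Coxeter functors intertwine with the tilting equivalence $T\otimes_{[\A{n}]}-\colon \D^{\A{n}}\we\D^Q$. By \autoref{cor:phi-tau}, both $F_Q\circ\Phi^+_Q$ and $F_{\A{n}}\circ\Phi^+_{\A{n}}$ are naturally isomorphic to $\tau$ composed with the corresponding $F$, so conjugating one Coxeter functor through the equivalence $(F_Q, i_Q^\ast)\colon\D^Q\rightleftarrows \D^{M_n,\exx}\rightleftarrows\D^{\A{n}}$ produces the other up to canonical isomorphism. Translating this natural isomorphism of morphisms into bimodules via \autoref{cor:inverse-modules} and the associativity of $\otimes_{[A]}$ from \autoref{thm:bicategory} yields
\[
C_Q^+ \;\cong\; T \otimes_{[\A{n}]} C^+_{\A{n}} \otimes_{[\A{n}]} T^{-1} \qquad \text{in }\cSp(Q\times Q\op).
\]
Applying $\Sigma$, which commutes with canceling tensor products since it is the left adjoint action of a spectrum, together with the base-case identification $\Sigma(C^+_{\A{n}})\cong D_n$, delivers
\[
\Sigma(C_Q^+) \;\cong\; T \otimes_{[\A{n}]} D_n \otimes_{[\A{n}]} T^{-1}.
\]

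The remaining task, and the genuinely nontrivial one, is the identification
\[
T \otimes_{[\A{n}]} D_n \otimes_{[\A{n}]} T^{-1} \;\cong\; D_Q
\]
in $\cSp(Q\times Q\op)$, which says that the canonical duality bimodule is preserved under conjugation by a tilting bimodule. Here I would invoke \autoref{lem:SPdual}, which is precisely the compatibility of the duality morphism $-\rhd\lS$ with tilting equivalences: for $T_{\A{n},Q} = T^{-1} \in \cSp(\A{n}\times Q\op)$, it gives a natural isomorphism $(T^{-1}\otimes_{[Q\op]}X)\rhd\lS \cong T^{-1}\rhd_{[\A{n}]}(X\rhd\lS)$. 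Feeding in the identity profunctor $X = \lI_Q$, whose dualization $\lI_Q\rhd\lS$ is by \autoref{defn:dualizing} precisely $D_Q$ (after the swap of factors), one obtains an isomorphism relating $D_Q$ and $D_{\A{n}}$ through~$T^{-1}$. Rewriting this via the adjunction identifying $-\lhd_{[\,\cdot\,]} T$ and $T\rhd_{[\,\cdot\,]}-$ (from the canonical closed $\cSp$-action, \S\ref{sec:monoidal}), together with the inverse relation $T\otimes_{[\A{n}]}T^{-1}\cong\lI_Q$ from \autoref{cor:inverse-modules}, produces the desired isomorphism.

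The main obstacle is precisely this last identification: unwinding \autoref{lem:SPdual} so that it yields the statement $D_Q\cong T\otimes_{[\A{n}]} D_{\A{n}}\otimes_{[\A{n}]} T^{-1}$ requires carefully tracking how the identity profunctor on $Q$ transports through the $-\rhd\lS$ duality under the equivalence of derivators induced by $T$, and keeping the variances and swaps $\cSp(A\times B)\cong \cSp(B\times A)$ straight. An alternative route, should the direct application of \autoref{lem:SPdual} prove awkward, is to argue by induction on the number of reflections needed to reduce $Q$ to $\A{n}$: the inductive step then amounts to the analogous compatibility for a single APR tilting bimodule $T^+_a$, for which the explicit pointwise description of $T^+_a$ makes the verification more concrete.
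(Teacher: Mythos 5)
Your proposal takes essentially the same route as the paper's proof: reduce, via the tilting equivalence with the linearly oriented quiver and \autoref{thm:Serre-Nakayama}, to the bimodule identity $D_Q \cong T_{Q,\A{n}}\otimes_{[\A{n}]} D_n \otimes_{[\A{n}]} T_{\A{n},Q}$, and establish it from the compatibility of $-\rhd\lS$ with the universal tilting bimodules. The "unwinding" you flag as the main obstacle is precisely what the paper carries out in two short chains of isomorphisms using \autoref{cor:inverse-modules}, \autoref{lem:tensor-opp}, and the opposite-tilting compatibility (\autoref{lem:SPdual}/\autoref{lem:opposite-iter-tilt}); aside from your minor index slips there and your use of \autoref{cor:phi-tau} instead of \autoref{lem:Serre-match} for the conjugation step, the arguments coincide.
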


\begin{proof}
Clearly the suspension $\Sigma\colon \cSp^Q \to \cSp^Q$ is an admissible morphism and has kernel $\Sigma(\lI_Q) \in \cSp(Q \times Q\op)$. Thus $\Sigma(C_Q^+) \cong \Sigma(\lI_Q) \otimes_{[Q]} C_Q^+$ and we can rephrase the conclusion of \autoref{thm:Serre-Nakayama} as the existence of an isomorphism
\[ \Sigma(C_{\A{n}}^+) \cong D_n = \lI_{\A{n}} \rhd \lS \]
in $\cSp(\A{n}\times \A{n}\op)$.

Consider an admissible equivalence
\[ (G,G\inv) \cong (T_{\A{n},Q} \otimes_{[Q]} -, T_{Q,\A{n}} \otimes_{[\A{n}]} -)\colon \D^Q \rightleftarrows \D^{\A{n}} \]
obtained as a composition of reflections, where $T = T_{\A{n},Q}$ and $T' = T_{Q,\A{n}}$ are the corresponding universal tilting bimodules.
By two applications of \autoref{lem:Serre-match}, the following square commutes up to a natural isomorphism
\[
\xymatrix{
\D^Q \ar[d]_G \ar[r]^S & \D^Q \ar[d]^G \\
\D^{\A{n}} \ar[r]_S & \D^{\A{n}}.
}
\]
Hence our task is reduced to proving that there is an isomorphism
\begin{equation} \label{eq:nakQ-ker}
D_Q \cong T' \otimes_{[\A{n}]} D_n \otimes_{[\A{n}]} T.
\end{equation}

Note first that \autoref{cor:inverse-modules} yields canonical isomorphisms
\begin{align} \label{eq:nak-comp}
T' \otimes_{[\A{n}]} D_n &= T' \otimes_{[\A{n}]} (\lI_{\A{n}} \rhd \lS) \\
&\cong T \rhd_{[\A{n}]} (\lI_{\A{n}} \rhd \lS) \\
&\cong (T \otimes_{[\A{n}]} \lI_{\A{n}}) \rhd \lS \\
&\cong T \rhd \lS
\end{align}
in $\cSp$. Hence, proving~\eqref{eq:nakQ-ker} reduces to proving that $\lI_Q \rhd \lS = D_Q \cong (T \rhd \lS) \otimes_{[\A{n}]} T$.
This isomorphism is obtained by a further computation in $\cSp$,
\begin{align} \label{eq:nak-comp2}
(T \rhd \lS) \otimes_{[\A{n}]} T &\cong T \otimes_{[\A{n}\op]} (T \rhd \lS) \\
&\cong T' \rhd_{[\A{n}\op]} (T \rhd \lS) \\
&\cong (T' \otimes_{[\A{n}\op]} T) \rhd \lS \\
&\cong \lI_Q \rhd \lS,
\end{align}
using \autoref{lem:tensor-opp} and \autoref{cor:inverse-modules} again.
\end{proof}

We will see in \S\ref{sec:field} that the spectral bimodules constructed so far yield non-trivial elements in spectral Picard groups $\Pic_{\cSp}(Q)$, $Q$ a Dynkin quiver of type~$A$. In fact, for arbitrary fields $k$ there is a split epimorphism $\Pic_{\cSp}(Q)\to\Pic_{\D_k}(Q)$ and the group $\Pic_{\D_k}(Q)$ is known to be non-trivial and can be described explicitly.

\subsection{A very explicit universal tilting bimodule}

As a further illustration of the techniques of \S\S\ref{sec:admissible}-\ref{sec:kernel}, in this short subsection we revisit the strong stable equivalence between the commutative square and $D_4$-quivers (see \cite[\S9.1]{gst:basic}). This example illustrates that the spectral bimodules of \autoref{thm:kernel} and \autoref{cor:inverse-modules} can be constructed rather explicitly.  

Let $Q=\square=[1]\times[1]$ be the commutative square, let \D be a stable derivator, and let $X\in\D^Q$ have underlying diagram as depicted in the diagram to the left in \autoref{fig:square-D4}.
\begin{figure}[h]
\centering
\[
\xymatrix{
x\ar[r]\ar[d]&y\ar[ddr]& & x\ar[r]\ar[d]&y\ar[d]& & &y\ar[d]&\\
z\ar[rrd]&& & z\ar[r]&p\ar[dr]& & z\ar[r]&p\ar[dr]&\\
&&w, & &&w, & &&w
}
\]
\caption{Strong stable equivalence between $\square$ and a $D_4$-quiver}
\label{fig:square-D4}
\end{figure}
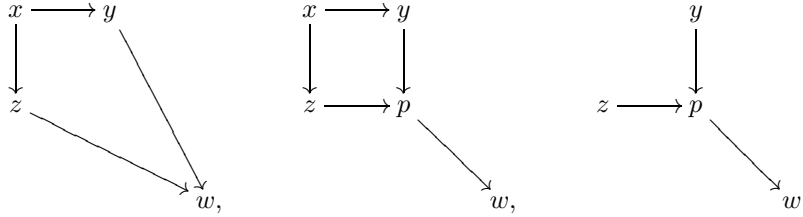
As described in \cite[Theorem~9.2]{gst:basic}, starting from $X$, passing to a left Kan extension which amounts to adding a cocartesian square yielding a coherent diagram of shape $B$ (see the middle of \autoref{fig:square-D4}) and then restricting to the $D_4$-quiver $D$ as depicted on the right in \autoref{fig:square-D4}, we obtain a strong stable equivalence
\[
(F,G)\colon\D^Q\rightleftarrows\D^D.
\]
Clearly, this equivalence is admissible and \autoref{cor:inverse-modules} hence yields an invertible spectral bimodule $T_{D,Q}\in\cSp(D\times Q\op)$ and a natural isomorphism
\[
F\cong T_{D,Q}\otimes_{[Q]}-\colon\D^Q\to\D^D.
\]
Moreover, the proof of \autoref{thm:kernel} gives us an explicit construction of $T_{D,Q}$. In fact, starting with the identity profunctor $\lI_Q\in\cSp(Q\times Q\op)$ it suffices to apply the above left Kan extension and restriction morphisms to $\lI_Q\in\cSp^Q$. Since the square is a poset, \autoref{lem:poset} implies that $\lI_Q\in\cSp(Q\times Q\op)$ looks like the restriction along the embedding $Q\times Q\op\to B\times Q\op$ of the object in $\cSp(B\times Q\op)$ which in turn looks like \eqref{eq:TDQ}, 
\begin{equation}\label{eq:TDQ}
\vcenter{
\xymatrix{
\lS\ar[r]\ar[d]&\lS\ar[d] & && 
0\ar[r]\ar[d]&\lS\ar[d]\\
\lS\ar[r]&\lS\ar[rd]\ar@{}[ddd]|(0.4){\uparrow}\ar@{}[rrr]|(0.6){\ot}& && 
0\ar[r]&\lS\ar[rd]\ar@{}[ddd]|(0.4){\uparrow}& \\
&&\lS && 
&&\lS\\
0\ar[r]\ar[d]&0\ar[d] & && 
0\ar[r]\ar[d]&0\ar[d]\\
\lS\ar[r]&\lS\ar[rd]\ar@{}[rrr]|(0.6){\ot}& && 
0\ar[r]&0\ar[rd] \\
&&\lS &&
&&\lS.
}
}
\end{equation}
Moreover, \autoref{lem:poset} also implies that all maps in $\lI_Q$ which can be isomorphisms actually are isomorphisms. Since pushouts of isomorphisms are isomorphisms it is immediate that the first step of the strong stable equivalence $F\colon\D^Q\to\D^D$ sends~$\lI_Q$ to a coherent diagram \eqref{eq:TDQ} in which still all maps are isomorphisms if possible. Finally, the universal tilting module $T_{D,Q}\in\cSp(D\times Q\op)$ is obtained by restricting \eqref{eq:TDQ} along the embedding $D\times Q\op\to B\times Q\op$.

In \cite{gst:tree} we constructed further examples of strong stable equivalences, which are obtained from reflection morphisms associated to trees. It turns out that also these reflections are induced by universal tilting modules. We plan to get back to this in \cite{gst:acyclic}.

\section{Yoneda bimodules and a spectral Serre duality result}
\label{sec:yoneda}

In this section we introduce universal constructors for coherent Auslander--Reiten quivers associated to $A_n$-quivers (which specialize to universal constructors for \emph{higher triangles} as we see in~\S\ref{sec:higher}). Moreover, it turns out that for each fixed $n\in\lN$ there is one spectral bimodule $U_n\in\cSp(M_n\times M_n\op)$ which restricts to all the remaining universal tilting bimodules related to arbitrary $A_n$-quivers. We also show that these Yoneda bimodules $U_n$ are self-dual up to a twist, a result which can be interpreted as a spectral version of classical Serre duality.

\subsection{Universal constructors for coherent Auslander--Reiten quivers}
\label{subsec:AR_Q}

Given a quiver $Q$ of type $A_n$, both $F_Q\colon \D^Q \to \D^{M_n}$ and its opposite are admissible by construction. Hence we can make the following definition.

\begin{defn}\label{defn:AR-con}
The bimodules $\ARQ \in \cSp(M_{n} \times Q\op)$ and $\ARQinv \in \cSp(Q \times M_{n}\op)$ such that
\[ F_Q \cong \ARQ \otimes_{[Q]} - \qquad \textrm{and} \qquad F_Q\op \cong \ARQinv \otimes_{[Q\op]} - \]
are \textbf{universal constructors for coherent Auslander--Reiten quiver}.
\end{defn}

By definition for every $X\in\D^Q$ the tensor product $\ARQ \otimes_{[Q]} X \in \D^{M_n,\exx}$ is the associated coherent AR-quiver. Let us justify the notation for $\ARQinv$ (related to this see also \autoref{warn:AR-con}).

\begin{lem} \label{lem:ar-destructor}
For every stable derivator \D and $A_n$-quiver $Q$ there are natural isomorphisms
%
%\begin{equation} \label{eq:ARQeq}
\[
(F_Q,i_Q^\ast) \cong (\ARQ \otimes_{[Q]}-,\ARQinv \otimes_{[M_{n}]} -)\colon \D^Q\rightleftarrows \D^{M_n,\exx}.
\]
%\end{equation}
\end{lem}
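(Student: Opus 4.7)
The first isomorphism $F_Q \cong \ARQ \otimes_{[Q]} -$ holds by \autoref{defn:AR-con}. For the second one, the plan is to identify $\ARQinv \otimes_{[M_n]} -$ with the left adjoint of $F_Q\colon \D^Q \to \D^{M_n}$ and then invoke fully faithfulness of $F_Q$.

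First, starting from the defining relation $F_Q\op \cong \ARQinv \otimes_{[Q\op]} -$, I would apply \autoref{lem:la-dual} to obtain $F_Q \cong - \lhd_{[Q]} \ARQinv$ as morphisms $\D^Q \to \D^{M_n}$. Combined with the tensor presentation $F_Q \cong \ARQ \otimes_{[Q]} -$, the morphism $F_Q$ thus admits both a tensor and a hom description, and \autoref{lem:adm-adj-triple} then produces an adjoint triple
\[
\bigl(\ARQinv \otimes_{[M_n]} -,\; F_Q,\; -\lhd_{[M_n]} \ARQ\bigr).
\]
In particular, $L := \ARQinv \otimes_{[M_n]} -$ is left adjoint to $F_Q\colon \D^Q \to \D^{M_n}$.

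Next, I would exploit that $F_Q$ is fully faithful, which follows from \autoref{thm:AR-independent} since $F_Q$ factors as the equivalence $\D^Q \simeq \D^{M_n,\exx}$ composed with the fully faithful inclusion $\D^{M_n,\exx} \hookrightarrow \D^{M_n}$. As a consequence, the counit of the adjunction $(L, F_Q)$ is an isomorphism, i.e.\ $L \circ F_Q \cong \id_{\D^Q}$. Now for $X \in \D^{M_n,\exx}$ the equivalence provides a natural isomorphism $X \cong F_Q(i_Q^\ast X)$, and applying $L$ together with the previous identity yields $\ARQinv \otimes_{[M_n]} X \cong L\bigl(F_Q(i_Q^\ast X)\bigr) \cong i_Q^\ast X$, which is the desired second isomorphism.

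I do not anticipate any serious obstacle: the whole argument is a direct application of the bimodule calculus developed in \S\S\ref{sec:admissible}--\ref{sec:kernel}. The only delicate point is that the equivalence $(F_Q, i_Q^\ast)$ is between $\D^Q$ and the subderivator $\D^{M_n,\exx}$ rather than all of $\D^{M_n}$, so the identification $L \cong i_Q^\ast$ only needs to be established on $\D^{M_n,\exx}$ --- but this is precisely what the lemma asserts.
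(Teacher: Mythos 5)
Your proposal is correct and follows essentially the same route as the paper: both dualize the relation $F_Q\op \cong \ARQinv \otimes_{[Q\op]} -$ (via \autoref{lem:la-dual}, resp.\ \autoref{cor:adm-dual}) to get $F_Q \cong -\lhd_{[Q]}\ARQinv$, deduce that $\ARQinv\otimes_{[M_n]}-$ is left adjoint to $F_Q\colon\D^Q\to\D^{M_n}$, and then identify this left adjoint with $i_Q^\ast$ on $\D^{M_n,\exx}$. The paper phrases the last step as restricting the adjunction to an equivalence $\D^{M_n,\exx}\rightleftarrows\D^Q$, whereas you use fully faithfulness of $F_Q$ and the counit plus $X\cong F_Q i_Q^\ast X$; this is the same argument in slightly different words.
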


\begin{proof}
An application of \autoref{cor:adm-dual} to $F_Q\op \cong \ARQinv \otimes_{[Q\op]} -$ yields a natural isomorphism $F_Q \cong - \lhd_{[Q]} \ARQinv\colon \D^Q \to \D^{M_n}$. Hence there is an adjunction $(\ARQinv \otimes_{[M_n]}-,F_Q)\colon \D^{M_n} \rightleftarrows \D^Q$. Since the essential image of $F_Q$ lies in $\D^{M_n,\exx}$, this restricts to a further adjunction $(\ARQinv \otimes_{[M_n]}-,F_Q)\colon \D^{M_n,\exx} \rightleftarrows \D^Q$ which actually is an equivalence.
\end{proof}

\begin{warn}\label{warn:AR-con}
Note that \autoref{lem:ar-destructor} does not immediately follow from \autoref{cor:inverse-modules} because $\D^{M_n,\exx}$ is not simply a shifted derivator, but a full subderivator of a shifted derivator determined by vanishing and exactness properties. Related to this we remark that \autoref{lem:ar-destructor} yields an isomorphism $\ARQinv\otimes_{[M_n]}\ARQ\cong\lI_Q$. However, the tensor product $\ARQ\otimes_{[Q]}\ARQinv\in\cSp(M_n\times M_n\op)$ is \emph{different} from~$\lI_{M_n}$ as we observe in \autoref{warn:Yoneda}.
\end{warn}

It also follows that $\ARQ$ allows us to recover the universal tilting bimodules from \S\ref{sec:universal-tilting} by restriction.

\begin{lem} \label{lem:ARQ-to-tilt}
Let $\D$ be a stable derivator, $Q,Q'$ two $A_n$-quivers and $T_{Q',Q}$ be a universal tilting bimodule as in \autoref{defn:iter-tilt}. Then there is an isomorphism
\[ T_{Q',Q} \cong (i_{Q'}\times\id_{Q\op})^\ast(\ARQ). \]
\end{lem}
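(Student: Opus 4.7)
The strategy is to identify $(i_{Q'}\times\id_{Q\op})^\ast(\ARQ)$ as a bimodule that represents the admissible morphism $i_{Q'}^\ast F_Q\colon\D^Q\to\D^{Q'}$ and then invoke the uniqueness of representing bimodules from \autoref{rmk:kernel-admissible}(i). Since by \autoref{defn:iter-tilt} the bimodule $T_{Q',Q}$ is precisely the one representing $i_{Q'}^\ast F_Q$, this will conclude.

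First I would compute, for any stable derivator $\E$ and any $X\in\E^Q$, the composition $i_{Q'}^\ast\circ F_Q$ applied to $X$. By \autoref{defn:AR-con} there is a natural isomorphism $F_Q(X)\cong\ARQ\otimes_{[Q]}X$ in $\E^{M_n}$. Applying the restriction $i_{Q'}^\ast\colon\E^{M_n}\to\E^{Q'}$ and using that the canceling tensor product $\otimes_{[Q]}\colon\cSp^{M_n\times Q\op}\times\E^Q\to\E^{M_n}$ is a parametrized version of the external-canceling tensor product (see \autoref{rmk:variants}), the restriction $i_{Q'}^\ast$ acts only on the $M_n$-coordinate, which is unrelated to the coend variable $Q$. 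Hence by the compatibility of restrictions with left Kan extensions in unrelated variables (\autoref{egs:htpy}(ii)) applied to the two Kan extensions underlying the coend in \autoref{defn:coend}, we obtain a natural isomorphism
\[
i_{Q'}^\ast(\ARQ\otimes_{[Q]}X)\cong\big((i_{Q'}\times\id_{Q\op})^\ast\ARQ\big)\otimes_{[Q]}X.
\]

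Combining the two natural isomorphisms yields
\[
i_{Q'}^\ast F_Q\cong\big((i_{Q'}\times\id_{Q\op})^\ast\ARQ\big)\otimes_{[Q]}-\colon\E^Q\to\E^{Q'},
\]
naturally in the stable derivator $\E$. By \autoref{defn:iter-tilt} the admissible equivalence $i_{Q'}^\ast F_Q$ is also represented by $T_{Q',Q}\otimes_{[Q]}-$, again naturally in $\E$. The uniqueness of such representing spectral bimodules (\autoref{rmk:kernel-admissible}(i), applied with $\E=\cSp$ and tested against $\lI_Q$) produces the desired isomorphism $T_{Q',Q}\cong(i_{Q'}\times\id_{Q\op})^\ast(\ARQ)$ in $\cSp(Q'\times Q\op)$.

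The argument is essentially formal and there is no serious obstacle; the only point requiring a moment of care is checking that the restriction really commutes past the coend, but this is immediate from the definition of the parametrized canceling tensor product as a composition of a restriction along $(t\op,s\op)$ and a colimit, both of which take place in the $Q$-direction and hence commute with the restriction along $i_{Q'}$ in the unrelated $M_n$-direction.
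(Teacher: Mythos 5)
Your proposal is correct and follows essentially the same route as the paper, which simply notes $T_{Q',Q}\cong i_{Q'}^\ast(F_Q(\lI_Q))\cong i_{Q'}^\ast(\ARQ)$; your argument just unpacks this one-liner, making explicit that the restriction in the $M_n$-variable commutes past the canceling tensor product over $Q$ and that evaluation at $\lI_Q$ pins down the representing bimodule.
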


\begin{proof}
We have $T_{Q',Q} \cong i_{Q'}^\ast(F_Q(\lI_Q)) \cong i_{Q'}^\ast(\ARQ)$.
\end{proof}

\subsection{The Yoneda bimodules of the mesh categories}
\label{subsec:univ}

We begin by observing that the morphism $F_Q \boxtimes F_Q\op\colon \D^{Q \times Q\op} \to \D^{M_{n} \times M_{n}\op}$ restricts to a natural equivalence of stable derivators. 
In fact, since the mesh category $M_n$ is self-dual, there is an induced equivalence $\D^{M_n}\simeq\D^{M_n\op}$. If we denote the essential image of $\D^{M_n,\exx}$ by $\D^{M_n\op,\ex}$, then we obtain an induced equivalence $\D^{M_n,\ex}\simeq\D^{M_n\op,\ex}$.
For each stable derivator $\D$, the opposite of $F_Q$ applied to $\D\op$ then induces an equivalence $F_Q\op\colon \D^{Q\op} \to \D^{M_n\op,\exx}$. It follows from \autoref{lem:boxtimes} that there is a well-defined equivalence
\begin{equation}\label{eq:FQFQ}
F_Q \boxtimes F_Q\op\colon \D^{Q \times Q\op} \to \D^{M_{n} \times M_{n}\op,\exx},
\end{equation}
where the vanishing and exactness conditions on $\D^{M_n\times M_n\op,\exx}$ are imposed on each variable separately. We temporarily denote by $U_Q$ the image of the spectral identity profunctor under this equivalence,
\[
U_Q=(F_Q\boxtimes F_Q\op)(\lI_Q)\in\cSp(M_n\times M_n\op).
\]

\begin{rmk} \label{rmk:general-universal}
We can express $U_Q$ also as follows
\[ U_Q \cong \ARQ \otimes_{[Q]} \ARQinv \cong \ARQ \lhd_{[Q\op]} \ARQ \cong \ARQ \rhd_{[Q\op]} \ARQ. \]
The first isomorphism holds since $U_Q \cong (\ARQ \otimes \ARQinv) \otimes_{[Q \times Q\op]} \lI_Q \cong \ARQ \otimes_{[Q]} \lI_Q \otimes_{[Q]} \ARQinv$ by Fubini's lemma and \autoref{lem:tensor-opp}. The second isomorphism follows from \autoref{cor:adm-dual} by the same arguments as in the proof of \autoref{lem:ar-destructor}, and the last one needs essentially just the symmetry of $\otimes$ in $\cSp$.
\end{rmk}

\begin{thm} \label{thm:U_n-indep}
For $A_n$-quivers $Q,Q'$ there is an isomorphism $U_Q \cong U_{Q'}$.
\end{thm}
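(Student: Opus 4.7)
The plan is to exploit the factorization of $F_Q$ through $F_{Q'}$ by means of an admissible equivalence $G\colon\D^Q\to\D^{Q'}$ coming from iterated reflection functors, and then to observe that the induced morphism $G\boxtimes G\op$ transports $\lI_Q$ to $\lI_{Q'}$. Combined with the multiplicativity of $\boxtimes$ under composition (\autoref{lem:Prof-monidal}), this will collapse $U_Q$ to $U_{Q'}$.

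First, I would iterate \autoref{hyp:rel-embed} to choose embeddings $i_Q\colon Q\to M_n$ and $i_{Q'}\colon Q'\to M_n$ compatible with some sequence of reflections relating $Q$ and $Q'$, and set $G:=i_{Q'}^\ast F_Q\colon\D^Q\to\D^{Q'}$. By \autoref{defn:iter-tilt} this is an admissible equivalence with representing bimodule $T:=T_{Q',Q}\in\cSp(Q'\times Q\op)$ and inverse bimodule $T\inv=T_{Q,Q'}$. Since $F_{Q'}\circ i_{Q'}^\ast$ restricts to the identity on the essential image $\D^{M_n,\exx}$ of $F_{Q'}$, the equivalence $G$ fits into a natural isomorphism $F_Q\cong F_{Q'}\circ G$; taking opposites yields $F_Q\op\cong F_{Q'}\op\circ G\op$. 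By \autoref{cor:adm-eq-dual}, $G\op\colon\D^{Q\op}\to\D^{(Q')\op}$ is the admissible equivalence represented by $T\inv$.

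Second, I would verify the key identity $(G\boxtimes G\op)(\lI_Q)\cong\lI_{Q'}$ in $\cSp(Q'\times(Q')\op)$. This is the natural generalization of \autoref{lem:opposite-tilting} from single reflections to iterated ones, and follows from the same bimodule calculation as in that proof: by Fubini's lemma and \autoref{lem:tensor-opp},
\[
(G\boxtimes G\op)(\lI_Q)\cong (T\otimes T\inv)\otimes_{[Q\times Q\op]}\lI_Q\cong T\otimes_{[Q]}\lI_Q\otimes_{[Q]}T\inv\cong T\otimes_{[Q]}T\inv\cong\lI_{Q'},
\]
where the final isomorphism is \autoref{cor:inverse-modules}. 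Alternatively, one can induct on the length of the reflection sequence, using \autoref{lem:opposite-tilting} at each step and \autoref{lem:Prof-monidal} to splice the boxtimes together.

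Finally, assembling these observations via \autoref{lem:Prof-monidal} (multiplicativity of $\boxtimes$ under composition) gives
\[
U_Q=(F_Q\boxtimes F_Q\op)(\lI_Q)\cong\bigl((F_{Q'}\boxtimes F_{Q'}\op)\circ(G\boxtimes G\op)\bigr)(\lI_Q)\cong(F_{Q'}\boxtimes F_{Q'}\op)(\lI_{Q'})=U_{Q'}.
\]
The main technical obstacle will be correctly tracking the various opposites, especially identifying the bimodule representing $G\op$ as the inverse of the one representing $G$; once those identifications are pinned down via \autoref{cor:adm-eq-dual}, the remainder of the argument is essentially formal given the calculus of profunctors already developed.
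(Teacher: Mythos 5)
Your proposal is correct and takes essentially the same route as the paper: the paper likewise transports $\lI_Q$ to $\lI_{Q'}$ along the $\boxtimes$-product of a reflection equivalence with its opposite (using the bimodule computation of \autoref{lem:opposite-tilting} together with \autoref{cor:adm-eq-dual}) and then concludes via \autoref{lem:Prof-monidal} and the factorization $F_{Q'}\circ G\cong F_Q$. The only difference is organizational: the paper reduces by induction to a single reflection, whereas you perform the same calculation in one step with the iterated tilting bimodule $T_{Q',Q}$, your key identity being in effect the content of \autoref{lem:opposite-iter-tilt}.
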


\begin{proof}
We know that $\D^{Q'} \simeq \D^Q$ by a series of reflections and by induction we can reduce to the situation that $Q$ and $Q'$ differ by a single reflection only. That is, $Q' = \sigma_a Q$ for a sink $a \in Q$ and we can choose embeddings of categories $i_Q\colon Q \to M_{n}$ and $i_{Q'}\colon Q' \to M_{n}$ as in \autoref{hyp:rel-embed} so that
\[ (s_a^-,s_a^+) \cong (i_{Q}^\ast F_{Q'},i_{Q'}^\ast F_{Q})\colon\D^{Q'}\rightleftarrows \D^Q. \]
In particular $F_Q s_a^- \cong F_Q i_{Q}^\ast F_{Q'} \cong F_{Q'}$ as morphisms $\D^{Q'} \to \D^{M_n,\exx}$.

Considering also the opposite of the latter isomorphism, we obtain a triangle of derivators commuting up to a natural isomorphism
\[
\xymatrix{
(\D\op)^{(Q')\op} \ar[rr]^-{(s_a^-)\op} \ar[dr]_-{F_{Q'}\op} && (\D\op)^{Q\op} \ar[dl]^-{F_Q\op} \\
& (\D\op)^{M_{n}\op,\exx}.
}
\]
Since $(s_a^-)\op \cong s_a^+$ by \autoref{lem:refl-admis}(ii), we have $F_Q\op s_a^+ \cong F_{Q'}\op$. \autoref{lem:Prof-monidal} yields a further triangle of equivalences which commutes up to a natural isomorphism
\[
\xymatrix{
\D^{Q' \times (Q')\op} \ar[rr]^-{s_a^- \boxtimes s_a^+} \ar[dr]_-{F_{Q'} \boxtimes F_{Q'}\op\,} && \D^{Q \times Q\op} \ar[dl]^-{F_Q \boxtimes F_Q\op} \\
& \D^{M_{n} \times M_{n}\op,\exx}.
}
\]
Now $\lI_Q \cong (s_a^- \boxtimes s_a^+)(\lI_{Q'})$ by \autoref{lem:opposite-tilting} and hence we obtain the desired isomorphism $U_{Q'} = (F_{Q'} \boxtimes F_{Q'}\op)(\lI_{Q'}) \cong (F_Q \boxtimes F_Q\op)(\lI_Q) = U_Q$ as desired.
\end{proof}

By the theorem the following is independent of the choice of the $A_n$-quiver.

\begin{defn} \label{defn:general-universal}
Let $Q$ be an $A_n$-quiver. The \textbf{Yoneda bimodule} $U_n$ is the spectral bimodule
\[
U_n=(F_Q \boxtimes F_Q\op)(\lI_Q)\in\cSp(M_n\times M_n\op).
\]
\end{defn}

Before we justify the terminology, let us quickly note that all previously constructed bimodules are suitable restrictions of these Yoneda bimodules.

\begin{cor} \label{cor:restr_Un}
\begin{enumerate}
\item If $Q$ is an $A_n$-quiver with embedding $i_Q\colon Q \to M_{n}$, then $\ARQ \cong (\id_{M_{n}}\times i_Q\op)^\ast(U_n)$ and $\ARQinv \cong (i_Q \times \id_{M_{n}\op})^\ast(U_n)$.
\item If $Q,Q'$ are two $A_n$-quivers and $T_{Q',Q}$ is a universal tilting bimodule as in \autoref{defn:iter-tilt}, then $T_{Q',Q} \cong (i_{Q'}\times i_Q\op)^\ast(U_n) \in \cSp(Q' \times Q\op)$.
\end{enumerate}
\end{cor}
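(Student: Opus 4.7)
The strategy is to use the identity $U_n \cong \ARQ \otimes_{[Q]} \ARQinv$ from \autoref{rmk:general-universal} and to identify appropriate restrictions of $\ARQ$ and $\ARQinv$ with the identity profunctor $\lI_Q \in \cSp(Q \times Q\op)$. The first step will be to establish the two restriction formulas
\[
(i_Q \times \id_{Q\op})^* \ARQ \cong \lI_Q \qquad \text{and} \qquad (\id_Q \times i_Q\op)^* \ARQinv \cong \lI_Q.
\]
For the first of these, I will combine the equivalence $i_Q^* F_Q \cong \id_{\D^Q}$ coming from \autoref{thm:AR-independent} with the formula $F_Q \cong \ARQ \otimes_{[Q]} -$. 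Since restrictions commute with canceling tensor products in an unrelated free parameter (an instance of Fubini, \cite[Lemma~5.3]{gps:additivity}), this yields $(i_Q \times \id_{Q\op})^* \ARQ \otimes_{[Q]} X \cong X$ naturally in stable derivators $\D$ and $X \in \D^Q$. The uniqueness of the representing bimodule (\autoref{rmk:kernel-admissible}(i), applied with $\D = \cSp$ and $X = \lI_Q$) will then finish the step. The second formula is dual, using the description $F_Q \cong - \lhd_{[Q]} \ARQinv$ from \autoref{lem:ar-destructor}.

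With these in hand, part (i) reduces to a direct computation. The restriction $(\id_{M_n} \times i_Q\op)^*$ acts only in the free $M_n\op$-parameter of $\ARQinv$, which is disjoint from the variable $Q$ being canceled, so once again commuting restriction past the coend gives
\[
(\id_{M_n} \times i_Q\op)^* U_n \cong \ARQ \otimes_{[Q]} (\id_Q \times i_Q\op)^* \ARQinv \cong \ARQ \otimes_{[Q]} \lI_Q \cong \ARQ,
\]
the last isomorphism by the unit law for $\lI_Q$ in $\cProf(\cSp)$ (\autoref{thm:bicategory}). The identification $(i_Q \times \id_{M_n\op})^* U_n \cong \ARQinv$ is entirely symmetric, restricting instead in the first variable and using $(i_Q \times \id_{Q\op})^* \ARQ \cong \lI_Q$.

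Part (ii) is then formal: from \autoref{lem:ARQ-to-tilt} we have $T_{Q',Q} \cong (i_{Q'} \times \id_{Q\op})^* \ARQ$, and substituting the isomorphism $\ARQ \cong (\id_{M_n} \times i_Q\op)^* U_n$ from part (i) and composing the restrictions yields $T_{Q',Q} \cong (i_{Q'} \times i_Q\op)^* U_n$. The only genuinely technical point in the whole argument is the bookkeeping of variances and the verification that the Fubini-type exchanges of restrictions and canceling coends are applied in genuinely disjoint variables; I do not expect any conceptual obstacle beyond this.
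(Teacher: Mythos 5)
Your argument is correct and follows the paper's proof essentially verbatim: both rest on the decomposition $U_n \cong \ARQ \otimes_{[Q]} \ARQinv$ from \autoref{rmk:general-universal}, pseudo-functoriality of the parametrized coend to push the restriction in the free variable onto the factor not being canceled, the identification of that restricted factor with $\lI_Q$, the unit law in $\cProf(\cSp)$, and \autoref{lem:ARQ-to-tilt} for part (ii). The only cosmetic difference is that you derive $(\id_Q\times i_Q\op)^\ast\ARQinv\cong\lI_Q$ (and its companion for $\ARQ$) via uniqueness of representing bimodules, where the paper reads this off directly from \autoref{defn:AR-con}; both justifications are fine.
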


\begin{proof}
The first isomorphism of (i) is obtained from the following chain
\begin{align} \label{eq:nak-comp2}
(\id_{M_{n}}\times i_Q\op)^\ast(U_n) &\cong (\id_{M_{n}}\times i_Q\op)^\ast(\ARQ\otimes_{[Q]}\ARQinv)\\
&\cong \ARQ\otimes_{[Q]}\big((i_Q\op)^\ast\ARQinv\big) \\
&\cong \ARQ\otimes_{[Q]}\lI_Q\\
&\cong \ARQ.
\end{align}
Here, the first isomorphism is given by \autoref{rmk:general-universal}, the second one is pseudo-functoriality of coends with parameters (\autoref{rmk:variants}), and the third one follows immediately from \autoref{defn:AR-con}.  The second isomorphism in (i) is obtained similarly. Finally, (ii) is a consequence of (i) together with \autoref{lem:ARQ-to-tilt}.
\end{proof}

In particular, there is an isomorphism $(i_Q \times i_Q\op)^\ast(U_n) \cong \lI_Q$, and this already determines $U_n$ as an object of $\cSp^{M_n\times M_n\op,\exx}(\bbone)$. As for every small category, it follows from the explicit description of $\lI_Q\in\cSp(Q\times Q\op)$ in \autoref{thm:bicategory} that for $a,b\in Q$ there is a canonical isomorphism
\begin{equation}\label{eq:id-prof}
\lI_Q(a,b)\cong\coprod_{\hom_Q(b,a)}\lS
\end{equation}
in $\cSp(\bbone)\cong\mathcal{SHC}$. In fact, since $\lI_Q$ is obtained by left Kan extension along a discrete opfibration, the pointwise formula (Der4) simplifies accordingly (see \cite[Prop.~1.24]{groth:ptstab}). Thus, $\lI_Q\in\cSp(Q\times Q\op)$ can be thought of as the enriched hom-functor associated to the spectral category freely generated by $Q$, and the existence of an isomorphism 
\begin{equation}\label{eq:Yoneda-justify}
(i_Q \times i_Q\op)^\ast(U_n) \cong \lI_Q
\end{equation}
hence justifies that we refer to $U_n$ as the Yoneda bimodule. 

\begin{warn}\label{warn:Yoneda}
Note that although the Yoneda bimodule $U_n\in\cSp(M_n\times M_n\op)$ restricts to the identity profunctors $\lI_Q\in\cSp(Q\times Q\op)$, the module $U_n$ itself is not isomorphic to $\lI_{M_n}$. In fact, $U_n\in\cSp^{M_n\times M_n\op,\exx}(\bbone)$ satisfies the defining vanishing and exactness properties of $\cSp^{M_n\times M_n\op,\exx}$. In particular, for $(k,0)\in M_n$ we have $U_n((k,0),(k,0))\cong 0$ while $\lI_{M_n}((k,0),(k,0))\cong\lS$ by formula \eqref{eq:id-prof} which is valid for every small category.
\end{warn}

\subsection{A spectral Serre duality result}

We finish the section with a remarkable feature of the Yoneda bimodule $U_n$, namely the fact that it is almost self-dual. As is explained at the end of \S\ref{sec:field}, this fact is closely related to the classical Serre duality of $D^b(kQ)$. Let us again consider the functor $s\colon M_n\to M_n$ defined by \eqref{eq:s} and let us recall from \autoref{lem:Serre-shift} that it represents the Serre functor on $\D^{M_n,\exx}$.

\begin{thm} \label{thm:U_n-Serre}
For every $n \in \lN$ there is an isomorphism $ U_n \rhd \lS \cong (s \times \id)^\ast(U_n). $
\end{thm}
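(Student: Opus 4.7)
The plan is to reduce the claim to an isomorphism in $\cSp(Q\times Q\op)$ for an arbitrarily chosen $A_n$-quiver~$Q$, using the product version of the equivalence $(F_Q,i_Q^\ast)$ of \autoref{thm:AR-independent}. After restricting to $Q\times Q\op$, both sides should become the canonical duality bimodule $D_Q$ of \autoref{defn:dualizing}.

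First I would verify that both $U_n\rhd\lS$ and $(s\times\id)^\ast U_n$ belong to the full subderivator $\cSp^{M_n\times M_n\op,\exx}$ of bimodules satisfying the relevant exactness and vanishing conditions in each variable separately. For $(s\times\id)^\ast U_n$ this uses that $s=ft\colon M_n\to M_n$ is an automorphism of the mesh quiver preserving the exactness structure, by \autoref{cor:susp} and \autoref{defn:ar-transl}. For $U_n\rhd\lS$ this uses that the spectral duality $-\rhd\lS\colon\cSp\op\to\cSp$ is exact, so it preserves zero objects and bicartesian squares (cartesian and cocartesian squares coinciding in the stable setting). Iterating \autoref{thm:AR-independent} in the two variables, the restriction $(i_Q\times i_Q\op)^\ast\colon\cSp^{M_n\times M_n\op,\exx}\to\cSp^{Q\times Q\op}$ is an equivalence, so the theorem reduces to producing an isomorphism of the restrictions of the two sides.

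For the left hand side, restriction commutes with the morphism $-\rhd\lS$ applied pointwise, so by \autoref{cor:restr_Un}(i) (or equivalently~\eqref{eq:Yoneda-justify}),
\[
(i_Q\times i_Q\op)^\ast(U_n\rhd\lS)\cong\big((i_Q\times i_Q\op)^\ast U_n\big)\rhd\lS\cong\lI_Q\rhd\lS=D_Q.
\]
For the right hand side, since $s\circ i_Q=s_Q$ by~\eqref{eq:s_Q}, \autoref{cor:restr_Un}(i) gives
\[
(i_Q\times i_Q\op)^\ast(s\times\id)^\ast U_n = (s_Q\times i_Q\op)^\ast U_n \cong (s_Q\times\id_{Q\op})^\ast\ARQ.
\]
Since restriction in unrelated variables commutes with canceling tensor products, the bimodule $(s_Q\times\id)^\ast\ARQ$ represents the composition $s_Q^\ast\circ F_Q\colon\D^Q\to\D^Q$. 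By \autoref{lem:Serre-shift} this composition is naturally isomorphic to the Serre functor~$S$, and by \autoref{thm:Serre-Nakayama-indep} the Serre functor is represented by~$D_Q$. The uniqueness of representing bimodules from \autoref{rmk:kernel-admissible}(i) then yields $(s_Q\times\id)^\ast\ARQ\cong D_Q$, completing the comparison of restrictions.

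The main bookkeeping point I expect to require care is the very first one, namely verifying that $U_n\rhd\lS$ lies in $\cSp^{M_n\times M_n\op,\exx}$ and that restriction genuinely commutes with $-\rhd\lS$ in the way used above. Both assertions pass through the implicit swap $\V(A\op\times A)\cong\V(A\times A\op)$ built into \autoref{defn:dualizing}, and one has to check that the two variants of the exactness conditions match up under this swap; however, this is automatic since in the stable setting bicartesian squares are preserved by contravariant exact morphisms and the vanishing loci $\{(k,0),(k,n+1)\}\subseteq M_n$ are symmetric under the identification $M_n\cong M_n\op$.
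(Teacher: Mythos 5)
Your proposal is correct and follows essentially the same route as the paper: reduce along the equivalence $(i_Q\times i_Q\op)^\ast$ to $\cSp^{Q\times Q\op}$ and identify both restrictions with $D_Q$ via \eqref{eq:Yoneda-justify}, \autoref{lem:Serre-shift}, and \autoref{thm:Serre-Nakayama-indep}; your use of uniqueness of representing bimodules for the right-hand side is just a repackaging of the paper's chain through \autoref{lem:Serre-match} and evaluation at $\lI_Q$. Your explicit check that both sides lie in $\cSp^{M_n\times M_n\op,\exx}$ makes precise a point the paper leaves implicit, and is correct.
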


\begin{proof}
Let $Q$ be a quiver of type $A_n$. We must equivalently prove that there is an isomorphism
\begin{equation} \label{eq:U_n-Serre}
(i_Q \times i_Q\op)^\ast(U_n \rhd \lS) \cong (i_Q \times i_Q\op)^\ast (s \times \id)^\ast(U_n),
\end{equation}
where $(i_Q \times i_Q\op)^\ast\colon \cSp^{M_{n}\times M_{n}\op,\exx} \longrightarrow \cSp^{Q \times Q\op}$ is the inverse equivalence to $F_Q \boxtimes F_Q\op$ as above. In particular we have an isomorphism \eqref{eq:Yoneda-justify}. Together with the pseudo-functoriality of $-\rhd\lS$ this allows us to identify the left-hand side of \eqref{eq:U_n-Serre} as
\begin{equation}\label{eq:U_n-Serre-lhs}
\begin{split}
(i_Q \times i_Q\op)^\ast(U_n \rhd \lS) &\cong \big((i_Q \times i_Q\op)^\ast U_n\big) \rhd \lS)\\
&\cong \lI_Q\rhd\lS\\
&= D_Q.
\end{split}
\end{equation}

In order to understand the right-hand side of~\eqref{eq:U_n-Serre}, we consider the following diagram
\begin{equation} \label{eq:U_n-Serre-diag2}
\vcenter{
\xymatrix{
\big(\cSp^{M_{n}\op,\exx}\big)^{M_{n},\exx} \ar[d]_{s^\ast} \ar[r]^-{(i_Q\op)^\ast} &
\big(\cSp^{Q\op}\big)^{M_{n},\exx} \ar[d]_{s^\ast \cong S} \ar[r]^-{i_Q^\ast} &
\big(\cSp^{Q\op}\big)^Q \ar[d] \ar[d]^{S \cong D_Q \otimes_{[Q]} -}
\\
\big(\cSp^{M_{n}\op,\exx}\big)^{M_{n},\exx} \ar[r]_-{(i_Q\op)^\ast} &
\big(\cSp^{Q\op}\big)^{M_{n},\exx} \ar[r]_-{i_Q^\ast} &
\big(\cSp^{Q\op}\big)^Q,
}
}
\end{equation}
where $S$ denotes the variants of the Serre functor from \autoref{defn:Serre}. Tracing $U_n$ through this diagram, we obtain isomorphisms
\begin{align}
&\, (i_Q\times i_Q\op)^\ast(s\times \id)^\ast(U_n) \\
=&\, (i_Q\times \id)^\ast(s\times \id)^\ast (\id\times i_Q\op)^\ast(U_n)  & \\
\cong&\, (i_Q\times \id)^\ast S (\id\times i_Q\op)^\ast(U_n) & &\text{(by \autoref{lem:Serre-shift})} \\
\cong&\, S(i_Q\times i_Q\op)^\ast (U_n) & &\text{(by \autoref{lem:Serre-match}) }\\
\cong&\, S(\lI_Q) & &\text{(by \eqref{eq:Yoneda-justify}) }\\
\cong&\, D_Q\otimes_{[Q]}\lI_Q & &\text{(by \autoref{thm:Serre-Nakayama-indep})}\\
\cong&\, D_Q.&	
\end{align}
A combination of these isomorphisms with \eqref{eq:U_n-Serre-lhs} show that there is a natural isomorphism~\eqref{eq:U_n-Serre}, and the theorem hence follows.
\end{proof}

\section{Derivators of fields and Picard groupoids}
\label{sec:field}

Classically, representation theory deals with (derived) categories of representations of quivers over a field. In our case this amounts to studying the derivator $\D_k$ of a field $k$. Various concepts defined in the text above (Auslander--Reiten quivers and translations, Coxeter and Serre functors) have played a very important role in a rather detailed understanding of $\D_k$ and it is this connection which we aim to explain here. This allows us also to reinterpret many of our results by saying that inducing up from spectra to chain complexes over a field induces a split epimorphism from the spectral Picard groupoid spanned by Dynkin quivers of type~$A$ to the corresponding (derived) Picard groupoid over an arbitrary field.

Let $k$ be a field and let $\D_k$ be the associated stable, closed symmetric monoidal derivator as in \autoref{egs:monoidal}. Given a finite quiver $Q$ without oriented cycles, we have $\D^Q_k(\bbone) \cong \D_k(Q) \simeq D(kQ)$, where $kQ$ is the corresponding finite dimensional path algebra. Classically~\cite{happel:fd-algebra,happel:triangulated} one studies the bounded derived category $D^b(kQ)$ which is the full subcategory of $D(kQ)$ whose objects are bounded complexes of finitely generated (or equivalently finite dimensional) $kQ$-modules. Since $kQ$ is a hereditary algebra~\cite[Proposition III.1.4]{auslander-reiten-smalo:rep-th}, every bounded complex of finitely generated $kQ$-modules is quasi-isomorphic to a bounded complex of finitely generated projective $kQ$-modules. In particular, $D^b(kQ)$ can be identified (up to closure under isomorphic images) with the full subcategory of compact objects in $D(kQ)$; see for instance~\cite[\S6.5]{krause:chicago}.

Other well known facts to mention about $D^b(kQ)$ are that it is a $k$-linear and idempotent complete triangulated category and for each pair of objects $x,y \in D^b(kQ)$ the $k$-vector space $\hom(x,y)$ is finite dimensional. As a crucial consequence, one has the following result which is usually called the Krull--Schmidt decomposition theorem.

\begin{lem}[{\cite[\S2.2, p. 52]{ringel:tame-alg}}] \label{prop:krull-schmidt}
Let $x \in D^b(kQ)$. Then we have $x \cong \coprod_{i=1}^n x_i$ for some non-negative integer $n$ and indecomposable objects $x_i \in D^b(kQ)$. Moreover, if $x \cong \coprod_{j=1}^m y_j$ is another such decomposition of $x$ into a coproduct of indecomposable objects, then $m = n$ and there exist a permutation $\pi$ such that $x_i \cong y_{\pi(i)}$ for each $i = 1, \dots, n$.
\end{lem}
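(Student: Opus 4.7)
The plan is to reduce the statement to the classical Krull--Schmidt--Azumaya theorem for additive categories in which indecomposable objects have local endomorphism rings. The three ingredients highlighted just before the statement---$k$-linearity, idempotent completeness, and finite-dimensionality of all $\hom$ spaces---are precisely what is needed, so this becomes essentially a bookkeeping exercise.

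First I would note that for every $x \in D^b(kQ)$ the endomorphism ring $E = \End(x)$ is a finite dimensional $k$-algebra, hence in particular artinian. I would then establish the key lemma: if $x$ is indecomposable, then $E$ is local. Indeed, idempotent completeness of $D^b(kQ)$ means that every idempotent $e \in E$ induces a splitting $x \cong \im(e) \oplus \im(1-e)$; since $x$ is indecomposable, $E$ has no nontrivial idempotents. For a finite-dimensional $k$-algebra this forces $E$ to be local, as one can lift idempotents modulo the Jacobson radical $J(E)$ (here $E$ is semiprimary), so $E/J(E)$ is a semisimple algebra with no nontrivial idempotents, i.e., a division ring.

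Next, for existence of the decomposition, I would argue by induction on $\dim_k \End(x)$. If $x$ is indecomposable we are done. Otherwise pick a nontrivial idempotent $e \in \End(x)$, split $x \cong y \oplus z$ with $y,z \neq 0$, and observe that $\End(y)$ and $\End(z)$ are proper quotients of orthogonal corner rings of $\End(x)$ and hence have strictly smaller $k$-dimension. The induction terminates, yielding a decomposition into finitely many indecomposables.

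For uniqueness, I would invoke the standard Krull--Schmidt--Azumaya theorem, which says that in any additive category decompositions into objects with local endomorphism rings are unique up to permutation and isomorphism; by the first step this applies in our setting. The main (very mild) obstacle is simply the local-endomorphism-ring lemma, and that really boils down to idempotent completeness together with the fact that a finite-dimensional $k$-algebra with only the trivial idempotents is local. Everything after that is an application of standard additive-category machinery.
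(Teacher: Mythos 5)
Your proof is correct, and since the paper does not prove this lemma at all but simply cites it to Ringel, your argument (split idempotents plus finite-dimensionality force local endomorphism rings for indecomposables, then induct on $\dim_k\End(x)$ for existence and invoke Krull--Schmidt--Azumaya for uniqueness) is exactly the standard argument behind that citation. One small wording slip: for a splitting $x\cong y\oplus z$ given by an idempotent $e$, the ring $\End(y)$ is the corner ring $e\End(x)e$ itself, not a proper quotient of it; its dimension is still strictly smaller than $\dim_k\End(x)$ because the complementary corner $(1-e)\End(x)(1-e)\cong\End(z)$ is nonzero, so the induction goes through unchanged.
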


Thus in order to describe $D^b(kQ)$, it essentially suffices to understand $\ind D^b(kQ)$, a skeleton of the full subcategory of $D^b(kQ)$ formed by indecomposable objects. If $Q$ is a Dynkin quiver, this problem has been completely solved by Happel in~\cite{happel:fd-algebra}.
In order to describe the solution we first need to define a so-called $k$-linear mesh category $M_Q(k)$.

Let $k$ be a commutative ring and $\widehat{Q}$ be the repetitive quiver of $Q$ as defined at the beginning of \S\ref{sec:An}. Given a vertex $(l,q)$ of $\widehat{Q}$ (that is, $l \in \lZ$ and $q \in Q$) and an arrow $\gamma\colon (l,q) \to (l',q')$ induced by an arrow $\alpha$ of $Q$, then there is a unique arrow $\sigma(\gamma)\colon (l',q') \to (l+1,q)$ induced by the same $\alpha$. Indeed, either $l' = l$, $\alpha\colon q \to q'$ in $Q$, $\gamma = \alpha\colon (l,q) \to (l,q')$, and $\sigma(\gamma) = \alpha^\ast\colon (l,q') \to (l+1,q)$, or $l' = l+1$, $\alpha\colon q' \to q$ in $Q$, $\gamma = \alpha^\ast$ and $\sigma(\gamma) = \alpha$. The \textbf{mesh relation} at $(l,q) \in \widehat{Q}$ is defined to be the following finite linear combination of paths $(l,q) \to (l+1,q)$ of length two:
\begin{equation} \label{eq:mesh-rel}
\sum_{\gamma\colon (l,q) \to (l',q')} \sigma(\gamma) \circ \gamma.
\end{equation}
The $k$-linear \textbf{mesh category} is defined as the quotient of $k\langle\widehat{Q}\rangle$, the free $k$-linear category generated by $\widehat{Q}$, modulo the two-sided ideal generated by the mesh relations at all vertices $(l,q) \in \widehat{Q}$.

\begin{eg} \label{eg:mesh-An}
Let us consider the case when $Q$ is Dynkin of type $A_n$. Then $\widehat{Q}$ is the quiver discussed in \S\ref{sec:An} and for $n=3$ depicted in~\eqref{eq:ar-quiver}. The mesh relations say that all the squares anti-commute and the paths of length two starting at the vertices at the upper and lower boundaries of $\widehat{Q}$ vanish. It is easy to check that if we replace the anti-commutativity of squares by commutativity relations, the resulting category is isomorphic to $M_Q(k)$.
\end{eg}

\begin{prop}[{\cite[Proposition 4.6]{happel:fd-algebra}}] \label{prop:ar-classical}
Let $k$ be a field, $Q$ be a Dynkin quiver, and $\ind D^b(kQ)$ be a skeleton of the category of indecomposable objects in the bounded derived category. Then $\ind D^b(kQ)$ is isomorphic to the $k$-linear mesh category $M_Q(k)$.

Moreover, the additive closure of $M_k(Q)$ inherits a triangulated structure from $D^b(kQ)$ under this isomorphism and for each object $(l,q) \in M_Q(k)$, there is then a triangle in $M_Q(k)$ of the following shape
\[ (l,q) \longrightarrow \coprod_{\gamma\colon (l,q) \to (l',q')} (l',q') \longrightarrow (l+1,q) \longrightarrow \Sigma (l,q), \]
with the obvious arrows $\gamma$ and $\sigma(\gamma)$ as components of the first two maps.
Thus, all the mesh relations come from the existence of these triangles.
\end{prop}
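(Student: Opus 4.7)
The plan is to deduce this from Gabriel's theorem, Happel's analysis of derived categories of hereditary algebras, and Auslander--Reiten theory for $D^b(kQ)$. First, since $kQ$ is hereditary, every object of $D^b(kQ)$ decomposes as a direct sum of shifts of its cohomologies, so every indecomposable object is isomorphic to $\Sigma^n M$ for some indecomposable $kQ$-module $M$ and some $n \in \lZ$. By Gabriel's theorem the indecomposable $kQ$-modules are finite in number and correspond bijectively to the positive roots of the root system of $Q$. To parametrize $\ind D^b(kQ)$ by the vertices of $\widehat{Q}$, I would identify a slice of $\widehat{Q}$ corresponding to an orientation of $Q$ with the indecomposable projective $kQ$-modules, and then propagate this identification throughout $\widehat{Q}$ by alternating application of the Auslander--Reiten translate $\tau$ and its inverse $\tau^{-1}$, using the compatibility of $\tau$ with the suspension functor $\Sigma$ via $S \cong \Sigma\tau$.

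Next, I would establish the existence of AR triangles in $D^b(kQ)$: for each indecomposable $x$ there is a distinguished triangle $\tau x \to y \to x \to \Sigma \tau x$ whose middle term $y$ is the direct sum over irreducible morphisms into $x$. This is Happel's general existence theorem for AR triangles in the bounded derived category of a finite dimensional algebra of finite global dimension. Under the identification of $\ind D^b(kQ)$ with the vertices of $\widehat{Q}$, the AR triangle starting at $(l,q)$ takes precisely the shape in the statement, since irreducible morphisms out of $(l,q)$ (equivalently, into $(l+1,q) = \tau^{-1}(l,q)$) correspond bijectively to the arrows $\gamma\colon (l,q)\to(l',q')$ in $\widehat{Q}$. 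The mesh relation~\eqref{eq:mesh-rel} at $(l,q)$ is then exactly the vanishing of the composition of the first two maps of this AR triangle, which holds since these maps are consecutive in a distinguished triangle and hence compose to zero.

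The main obstacle is showing that the assignment sending an arrow $\gamma$ to the corresponding irreducible morphism extends to a $k$-linear functor $M_Q(k) \to \ind D^b(kQ)$ that is an isomorphism, i.e.\ that the mesh relations are \emph{all} the relations. For this I would argue by induction on the distance in $\widehat{Q}$ between source and target: using the AR property, any nonzero morphism between indecomposables factors through the middle term of an AR triangle, so every morphism is a $k$-linear combination of paths in $\widehat{Q}$. To rule out additional relations, one compares dimensions of hom spaces: the dimensions in $M_Q(k)$ can be computed combinatorially from the mesh relations, while those in $D^b(kQ)$ are determined by the Euler form of $kQ$ together with Serre duality $\hom(x,y)^\ast \cong \hom(y, Sx)$ and the fact that positive roots of Dynkin type have known multiplicities; matching these dimensions forces the two categories to agree. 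Finally, the triangulated structure is transported along the isomorphism, and the displayed triangle in the statement is literally the AR triangle at $(l,q)$.
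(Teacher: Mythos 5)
The paper does not actually prove this statement: it is quoted verbatim from Happel (the citation to \cite[Proposition 4.6]{happel:fd-algebra} is the "proof"), so there is no in-paper argument for your sketch to diverge from. Judged on its own, your outline is essentially the standard argument and is sound in structure: hereditariness gives $\ind D^b(kQ)=\coprod_{n}\Sigma^n(\ind\,kQ\text{-mod})$, Happel's existence theorem supplies the AR triangles, their vanishing composites give exactly the mesh relations (with the components of the triangle as the chosen irreducible maps), fullness follows by the factorization-through-the-middle-term induction (which terminates by directedness/finiteness in Dynkin type), and faithfulness modulo the mesh ideal is settled by matching hom-space dimensions. For the record, Happel's own route is different in flavour: he passes through the equivalence of $D^b(kQ)$ with the stable module category of the repetitive algebra and invokes Riedtmann's structure theory (standardness of the stable AR quiver), rather than knitting directly in $D^b(kQ)$; your direct approach is also classical and arguably more self-contained. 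One step of yours is misphrased, though fixable: hom dimensions in $D^b(kQ)$ are not governed by "multiplicities of positive roots" (these are all $1$ in finite type). The correct statement is that $kQ$ is representation-directed, so for indecomposables at most one of $\Hom(X,Y)$ and $\Ext^1(X,Y)\cong \Hom(Y,\tau X)^\ast$ is nonzero; combined with the Euler form this pins down each hom dimension, and these numbers satisfy the same mesh recursion as the combinatorially computed dimensions in $M_Q(k)$, which is what makes the comparison work. With that correction (or by instead citing Riedtmann standardness, as Happel effectively does), your sketch is a faithful outline of the accepted proof.
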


This also motivates the construction of the coherent Auslander--Reiten quiver in \S\ref{sec:An}. However, as the domain of a stable derivator $\D$ is the category of ordinary categories rather than additive categories, we need to use a trick to model the a priori additive mesh relations.
This is taken care of by the defining exactness properties of $\D^{M_n,\exx}$.

In order to really reconstruct the situation of \autoref{prop:ar-classical} for a quiver $Q$ of type $A_n$, we need to choose $\D = \D_k^{Q\op}$. Then the coherent diagram $\ARQ \in \D_k^{M_n \times Q\op}(\bbone) \simeq \D^{M_n}(\bbone)$ from \S\ref{subsec:AR_Q} is a coherent version of $\ind D^b(kQ\op)$. More precisely, the underlying incoherent diagram $\mathrm{dia}_{M_n}(\ARQ) \in \D(\bbone)^{M_n} = D(kQ\op)^{M_n}$ is, after stripping off the zero objects, a skeleton of indecomposables in $D^b(kQ\op)$.

Next, given a $k$-linear exact functor $F\colon D^b(kQ) \to D^b(kQ')$ for $Q,Q'$ Dynkin, we may wonder what is the explicit description of $F$ on the (additive closures of) the mesh categories $M_Q(k)$ and $M_{Q'}(k)$. The easiest case is the suspension autoequivalence $\Sigma$, which is classically known to act in the Dynkin $A$-case exactly as described by \autoref{cor:susp}.

Another important autoequivalence is the Serre functor $S\colon D^b(kQ) \to D^b(kQ)$ (see \S\ref{subsec:fCY}). It is well known that $S = (kQ)^\ast \Lotimes_{kQ} -$ in our case, where $kQ^\ast$ is the vector space dual of $kQ$ with the induced $kQ$-$kQ$-bimodule structure; see for instance~\cite[Theorem 3.4]{krause-le:AR}. As the (non-derived) functor 
\[ \nu = (kQ)^\ast \otimes_{kQ} -\colon \Mod kQ \to \Mod kQ \]
is often called the \emph{Nakayama functor} (see for instance \cite[\S X.1]{beligiannis-reiten}), the Serre functor is the derived Nakayama functor. The action of $S$ on $M_Q(k)$ in the classical case is precisely the one described in \autoref{lem:Serre-shift}.

From a representation theoretic perspective, it may be more instructive to consider the Auslander--Reiten translation $\tau\colon D^b(kQ) \to D^b(kQ)$, which is the composition of the Serre functor with a desuspension (in either order, see~\cite[\S I]{reiten-bergh:serre} for details). This functor and the corresponding theory of almost split triangles~\cite{happel:fd-algebra,happel:triangulated} is the key ingredient in the proof of \autoref{prop:ar-classical}. The action of $\tau$ on $M_Q(k)$ is very simple --- it sends $(l,q) \in M_Q(k)$ to $(l-1,q)$; compare to \autoref{defn:ar-transl}.

As seen in \S\S\ref{subsec:fCY}-\ref{subsec:symm-mesh}, the suspension and Serre functors on $D^b(kQ)$ for an $A_n$-quiver $Q$ satisfy the relation $\Sigma^{n-1} \cong S^{n+1}$. Triangulated categories with Serre functors satisfying a non-trivial relation between the Serre and suspension functors are said to have the \emph{fractional Calabi--Yau property} and they attracted a lot of attention recently; see for instance~\cite{keller:orbit,keller:CY-triang} and references there.
The fractional Calabi--Yau relation and commutativity between $\Sigma$ and $S$ are in fact the only relations as long as $n \ge 2$: One can check using \autoref{prop:ar-classical} that an autoequivalence of the form $\Sigma^i S^j$ cannot fix all isoclasses of indecomposable objects in $D^b(kQ)$ unless $(i,j)$ is an integral multiple of $(n-1,n+1)$. Moreover, all standard triangle autoequivalences of $D^b(kQ)$ are generated by $\Sigma$ and $S$ (\emph{standard} means that the autoequivalence is given by the derived tensor product with a complex of $kQ$-$kQ$-bimodules). This is proved and motivated in~\cite[Theorem 4.1]{miyachi-yekutieli}.

Of course, one is not interested only in autoequivalences of $D^b(kQ)$, but also in equivalences between $D^b(kQ)$ and $D^b(kQ')$ for distinct $Q$ and $Q'$. Classically, the classification \cite{gabriel:unzerlegbare} of indecomposable $kQ$-modules for a Dynkin quiver $Q$ revealed that the number and to some extent also the structure of these modules was independent of the orientation of $Q$. The latter fact was elegantly explained by Bern{\v{s}}te{\u\i}n, Gel$'$fand, and Ponomarev in~\cite{bernstein-gelfand-ponomarev:Coxeter} using the so called reflection functors. Later in~\cite{happel:fd-algebra} Happel showed that these reflection functors induced triangle equivalences between $D^b(kQ)$ and $D^b(kQ')$ for different orientations $Q$ and $Q'$ of the same simply laced Dynkin graph. In our abstract context, this process is explained in~\cite[\S5]{gst:tree} and the easier situation in the Dynkin $A$ case was already treated in~\cite[\S6]{gst:basic}. The abstract counterparts of other standard results in this context involving so-called (partial) Coxeter functors, mostly taken from~\cite{bernstein-gelfand-ponomarev:Coxeter}, have been discussed here in \S\ref{subsec:Coxeter}.

The above mentioned equivalences between bounded derived categories of distinct quivers stood at the dawn of tilting theory; see for instance \cite{apr:tilting,brenner-butler:tilting,happel-ringel:tilted-algebras,rickard:derived-fun,keller:deriving-dg} and also the collection \cite{angeleri-happel-krause:handbook} of survey articles. The idea was to represent the triangle equivalences between derived categories by complexes of bimodules and to find conditions when such equivalences exist. Our \S\S\ref{sec:admissible}--\ref{sec:yoneda} have been devoted to generalizing some of these results to the context of abstract stable derivators.

Classically, suppose that $R$ is a ring and $T \in D(R)$ is a complex. Then $T$ is a \emph{tilting complex} if it is an image of $S$ under a triangle equivalence $D(S) \to D(R)$ for some other ring $S$. Of course the endomorphism ring of $T$ in $D(R)$ is then precisely $S\op$. It follows from this definition that $D(R)$ and $D(S)$ are triangle equivalent if and only if there exists a tilting complex of $R$-modules with endomorphism ring $S\op$. The following result is standard and it allows to check whether a given complex is tilting. 

\begin{lem} \label{lem:tilting-char}
Let $T \in D(R)$ be a complex. Then $T$ is tilting if and only if
\begin{enumerate}
\item $T$ is a perfect complex, i.e., isomorphic to a bounded complex of finitely generated projective modules,
\item $T$ is rigid, i.e., $\hom_{D(R)}(T,\Sigma^i T) = 0$ whenever $i \ne 0$, and
\item $T$ generates $D(R)$, i.e., for each $0\ne X \in D(R)$ there exists $i \in \lZ$ and a non-zero homomorphism $f\colon \Sigma^i T \to X$.
\end{enumerate}
\end{lem}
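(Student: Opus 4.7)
My plan is to prove both implications, with the non-trivial direction being sufficiency. The necessity direction is immediate: if $F\colon D(S) \to D(R)$ is a triangle equivalence with $F(S) = T$, then $T$ inherits perfectness, rigidity, and generation from $S \in D(S)$. Each of these is preserved under triangle equivalences --- compactness is an intrinsic categorical property, rigidity transfers because the graded hom-spaces match under $F$, and generation transfers by applying the same argument to $F\inv$.

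For the sufficiency direction, I set $S = \End_{D(R)}(T)\op$ and consider the adjoint pair
\[ (-\Lotimes_S T,\ \Rhom_R(T,-))\colon D(S) \rightleftarrows D(R), \]
where the derived bifunctors are constructed using a dg enhancement of $D(R)$ together with a cofibrant replacement of $T$. Since $T$ is perfect by (i) and rigid by (ii), the dg endomorphism algebra of this cofibrant replacement is formal and quasi-isomorphic to $S$ placed in degree zero, so this $S$ is the correct ring of operators. Condition (i) also ensures that $\Rhom_R(T,-)$ preserves arbitrary coproducts, which is the first key input.

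The core of the argument is then standard. First I would check that the unit $S \to \Rhom_R(T, S \Lotimes_S T) \cong \Rhom_R(T,T)$ is an isomorphism in $D(S)$, which follows directly from rigidity. Since both functors are triangulated and commute with coproducts, the full subcategory of $D(S)$ on which the unit is an isomorphism is a localizing subcategory; it contains $S$, which generates $D(S)$, so the unit is an isomorphism everywhere, whence $-\Lotimes_S T$ is fully faithful. Its essential image is likewise a localizing subcategory of $D(R)$ containing $T \cong S \Lotimes_S T$, and condition (iii) forces this image to be all of $D(R)$. Thus $-\Lotimes_S T$ is a triangle equivalence sending $S$ to $T$, witnessing that $T$ is tilting.

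The principal obstacle is not logical but foundational: one must construct the derived functors $-\Lotimes_S T$ and $\Rhom_R(T,-)$ so that the identity $S \Lotimes_S T \cong T$ and the tensor-hom adjunction hold strictly at the derived level. This is precisely the content of Keller's theorem on derived categories of dg algebras; invoking it, the argument above proceeds uniformly without further technical complications.
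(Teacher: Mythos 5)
Your proposal is correct, but note that the paper does not actually argue the nontrivial direction at all: it records the easy necessity observation and then simply cites Rickard's Morita theory theorem and Keller's treatment via dg categories for the converse. What you have written out is precisely the ``more conceptual explanation'' of Keller that the citation points to: cofibrantly replace $T$, observe that rigidity (ii) forces the dg endomorphism algebra to have cohomology concentrated in degree zero, hence to be quasi-isomorphic to the ordinary endomorphism ring, and then run the standard d\'evissage --- the unit is an isomorphism on $S$, hence on the localizing subcategory it generates, which is all of $D(S)$, so $-\Lotimes_S T$ is fully faithful; its essential image is a localizing subcategory of $D(R)$ containing $T$, hence all of $D(R)$. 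Two points you leave implicit deserve a flag, though neither is a gap. First, the closure of the ``unit is an isomorphism'' subcategory under coproducts needs both adjoints to preserve coproducts, and for $\Rhom_R(T,-)$ this is exactly compactness of $T$ from (i), which you do invoke. Second, the final step --- that a localizing subcategory containing a compact object $T$ satisfying (iii) must be all of $D(R)$ --- is not a formal consequence of (iii) alone; it uses Neeman's theorem on localizations generated by compact objects (equivalently, Brown representability) to produce the localization triangle whose third term lies in $\{Y\mid \hom_{D(R)}(\Sigma^i T,Y)=0 \text{ for all } i\}$ and is therefore zero. Finally, mind the op-convention: the degree-zero cohomology of the dg endomorphism algebra is $\End_{D(R)}(T)=S\op$ rather than $S$, which only affects on which side $T$ is a module; with that bookkeeping fixed, your sketch is the standard and complete argument.
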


\begin{proof}
If there is a triangle equivalence $D(S) \to D(R)$ sending $S$ to $T$, then $T$ clearly satisfies conditions (i)--(iii) since $S$ is well known to have the same properties in $D(S)$. The converse was proved by Rickard in~\cite[Theorem 6.4]{rickard:morita} and a more conceptual explanation of the phenomenon was later given by Keller in~\cite[\S9.2]{keller:dg-categories}.
\end{proof}

\begin{rmk} \label{rmk:zigzag-QE}
In our context, it is interesting to note that the existence of a tilting complex $T \in D(R)$ with endomorphisms ring $S\op$ is also equivalent to the existence of a zigzag of Quillen equivalences between the categories of complexes over $R$ and $S$; see \cite[Theorem 4.2]{dugger-shipley:K-theory}. By \cite{renaudin} this is also equivalent to the fact that the derivators $\D_R$ and $\D_S$ are equivalent. 
\end{rmk}

As long as we are interested not only in the existence of a triangle equivalence, but how the equivalence actually acts (e.g. as~\cite{miyachi-yekutieli} did), the situation gets more subtle. First of all, if $R$ and $S$ are algebras over a field $k$, it would be desirable to replace quasi-isomorphically a tilting complex of $R$-modules with endomorphism ring $S\op$ by a complex of $R$-$S$-bimodules with the base field acting centrally. In other words, we would like to lift the right action of $S$ on $T$ from $D(R)$ to the category of complexes of $R$ modules. As discussed in~\cite{rickard:derived-fun,keller:remark}, this is possible for algebras over a field, or more generally for algebras flat over the base ring, and will not be an issue for us.

Second, if we have such a tilting complex $T$ of $R$-$S$-bimodules, then it makes sense to write the adjunction of the derived functors
\[ T \Lotimes_S -\colon D(S) \rightleftarrows D(R)\colon \Rhom_R(T,-) \]
and this is a pair of inverse $k$-linear triangle equivalences. As already mentioned, such equivalences are called standard. Up to natural isomorphism these equivalences are precisely classified by isotypes of tilting bimodules in $D(R \otimes_k S\op)$. Also this has been proved in~\cite{keller:remark} for algebras over a field.

Our tilting bimodules from \S\ref{sec:universal-tilting} are refinements of classical tilting complexes.
Given such a tilting bimodule $T \in \cSp(Q' \times Q\op)$, we can apply the
monoidal morphism $k\otimes -\colon\cSp\to\D_k$ (see \autoref{eg:EM-spectra}) to construct a chain complex
\[
k \otimes T \in \D_k(Q' \times Q\op) \simeq D(kQ' \otimes_k kQ\op).
\]
Since $k\otimes -$ is a left adjoint, monoidal morphism between stable derivators, $k \otimes T$ represents the corresponding composition of classical reflection functors.

\begin{thm}\label{thm:picard}
Let $Q$ be a Dynkin quiver of type~$A$ and let $k$ be a field. Inducing up $k\otimes-\colon\cSp\to\D_k$ defines a split epimorphism of groups
\[
k\otimes-\colon\Pic_{\cSp}(Q)\to\Pic_{\D_k}(Q).
\]
\end{thm}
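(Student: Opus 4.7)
The plan is to identify both Picard groups with the combinatorial group $G_n$ from \autoref{prop:CY-action} and to construct the splitting as a homomorphism from $G_n$ to $\Pic_\cSp(Q)$ that lifts the generators of the derived Picard group. Let $n$ be such that $Q$ is of type $A_n$, and recall the presentation $G_n = \langle f,s \mid fs=sf,\, f^{n-1}=s^{n+1}\rangle$.

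On the derived side, the Miyachi--Yekutieli theorem~\cite{miyachi-yekutieli} identifies $\Pic_{\D_k}(Q)$ with the group of standard triangle autoequivalences of $D^b(kQ)$, generated by the classes $[\Sigma]$ and $[S]$ subject only to commutativity and the fractional Calabi--Yau relation. In bimodule terms, these generators are represented in $\D_k(Q\times Q\op)$ by $\Sigma(\lI_Q)$ and by the canonical duality bimodule $D_Q$ (for the Serre functor, via \autoref{thm:Serre-Nakayama-indep}), so we obtain an isomorphism $\phi\colon G_n \xto{\cong} \Pic_{\D_k}(Q)$. On the spectral side, the same two relations hold in $\Pic_\cSp(Q)$ between $[\Sigma(\lI_Q)]$ and $[D_Q]$, by \autoref{cor:frac-CY} combined with the uniqueness of representing bimodules. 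Hence the analogous assignment $f \mapsto [\Sigma(\lI_Q)]$, $s \mapsto [D_Q]$ defines a group homomorphism $\sigma\colon G_n \to \Pic_\cSp(Q)$.

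To conclude, it suffices to check that $(k\otimes-)\circ\sigma = \phi$, for then $\sigma\circ\phi\inv$ is a section of $k\otimes-$. For the generator $f$: by \autoref{lem:Picard} the colimit-preserving monoidal morphism $k\otimes-$ carries the spectral identity profunctor to the identity profunctor in $\D_k$, and by exactness it commutes with suspension, so $k\otimes\Sigma(\lI_Q)$ agrees with its derived counterpart. The subtle step is the generator $s$, and the main obstacle is that the spectral $D_Q$ is defined via the internal hom $-\rhd\lS$, a right adjoint which is not obviously preserved by the left adjoint $k\otimes-$. This is resolved by the alternative description in \autoref{lem:dual}, which writes $D_Q = (s\op,t\op)_\ast\lS_{\tw(Q)\op}$ as a right extension by zero along the sieve generated by the diagonal (cf. \autoref{lem:poset}); since $k\otimes-$ preserves terminal objects, \autoref{lem:sieves} guarantees that it commutes with this right extension by zero, whence $k\otimes D_Q$ is the derived canonical duality bimodule, as desired.
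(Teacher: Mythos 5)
Your proof is correct and takes essentially the same route as the paper: identify $\Pic_{\D_k}(Q)$ with the Miyachi--Yekutieli presentation, observe that $\Sigma\lI_Q$ and $D_Q$ satisfy the same two relations spectrally (via \autoref{cor:frac-CY}, \autoref{thm:Serre-Nakayama-indep}, and uniqueness of representing bimodules), and obtain the section by sending the generators to these classes. Your explicit check that $k\otimes D_Q$ is the derived duality bimodule (using \autoref{lem:dual}, \autoref{lem:poset}, and \autoref{lem:sieves} to get around the fact that $-\rhd\lS$ is a right adjoint) simply fills in a step the paper covers with ``it follows from the explicit construction of $D_Q$''.
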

\begin{proof}
By \autoref{eq:picard} inducing up from spectra to chain complexes over a field defines such a group homomorphism. The (derived) Picard groups $\Pic_{\D_k}(Q)$ over a field are calculated explicitly in \cite[Theorem~4.1]{miyachi-yekutieli}, shown to be independent of the field, and to admit the presentations
\[
\Pic_{\D_k}(Q)=\langle f,s \mid fs=sf, f^{n-1} = s^{n+1} \rangle.
\]
Here, $f,s\in D(kQ\otimes_k kQ\op)$ can be chosen as tilting complexes for the shift functor $\Sigma\colon D(kQ)\to D(kQ)$ and the derived Nakayama functor $\nu\colon D(kQ)\to D(kQ)$, respectively. By \autoref{thm:Serre-Nakayama-indep} the Serre functor $S\colon\cSp^Q\to\cSp^Q$ is represented by the canonical duality module $D_Q\in\cSp(Q\times Q\op)$, and it follows from the explicit construction of $D_Q$ that the group homomorphism $\Pic_{\cSp}(Q)\to\Pic_{\D_k}(Q)$ sends $[D_Q]$ to $s$. Similarly, this group homomorphism sends $[\Sigma\lI_Q]$ to $f$ (\autoref{cor:susp}), showing that the homomorphism is surjective. In order to obtain a section to this homomorphism,
it suffices to observe that the spectral bimodules $\Sigma\lI_Q,D_Q$ satisfy up to isomorphism the defining relations in the above presentation of the derived Picard group $\Pic_{\D_k}(Q)$. Since the Serre functor $S$ is an equivalence of stable derivators, it clearly is exact and by the uniqueness of spectral bimodules there is an isomorphism $(\Sigma\lI_Q)\otimes_{[Q]}D_Q\cong D_Q\otimes_{[Q]}(\Sigma\lI_Q)$. It remains to check that there is an isomorphism
\[
\underbrace{(\Sigma\lI_Q)\otimes_{[Q]}\ldots\otimes_{[Q]}(\Sigma\lI_Q)}_{n-1\;\mathrm{times}}\cong 
\underbrace{D_Q\otimes_{[Q]}\ldots\otimes_{[Q]}D_Q}_{n+1\;\mathrm{times}}.
 \] 
 But this is an immediate consequence of \autoref{thm:Serre-Nakayama-indep} and the abstract fractionally Calabi--Yau property given in \autoref{cor:frac-CY} (using once more implicitly the uniqueness of representing spectral bimodules).
\end{proof}

\begin{rmk}\label{rmk:picard}
\begin{enumerate}
\item In a similar way one obtains a split epimorphism from the \textbf{spectral Picard groupoid} spanned by Dynkin quivers of type $A$ to the corresponding \textbf{(derived) Picard groupoid} over a field. These respective Picard groupoids are simply many-objects versions of the Picard groups; more formally, we consider the respective groupoids obtained by considering isomorphism classes of invertible bimodules between all Dynkin quivers of type $A$ at once. In fact, given two different $A_n$-quivers $Q,Q'$, a combination of reflection functors yields the invertible spectral bimodule $T_{Q',Q}\in\cSp(Q'\times Q\op)$ from \autoref{defn:iter-tilt}. And these spectral bimodules can be used to show that there is a split epimorphism of Picard groupoids.

The question whether these morphisms of groups or groupoids are actually isomorphisms remains open. 
\item Let $\V$ be a stable, closed symmetric monoidal derivator, let $k$ be a field, and let $\V\to\D_k$ be a colimit preserving, monoidal morphism of derivators. For every Dynkin quiver~$Q$ of type~$A$ the induced morphism $\Pic_\V(Q)\to\Pic_{\D_k}(Q)$ is a split epimorphism. Similarly, also the corresponding result for Picard groupoids is valid in these more general situations.
\end{enumerate}
\end{rmk}

It remains to interpret the Yoneda bimodules $U_n$ from \S\ref{sec:yoneda} in the classical situation.
Given a quiver $Q$, the category $D(kQ)$ is enriched over $D(k)$ --- we have the derived Hom functor
\begin{equation} \label{eq:Rhom-Dynkin}
\Rhom_{kQ}\colon D(kQ)\op \times D(kQ) \to D(k).
\end{equation}
In our context this is simply a consequence of the fact that $\D_{kQ} \simeq \D_k^Q$ admits a canonical closed $\D_k$-module structure. If $Q$ is Dynkin of type $A_n$, we know from \autoref{prop:ar-classical} that $\ind D^b(kQ) \cong M_Q(k)$. Under this identification, we can restrict \eqref{eq:Rhom-Dynkin} to
\begin{equation} \label{eq:Rhom-Dynkin-restr-add}
\Rhom_{kQ}\colon M_Q(k)\op \times M_Q(k) \to D(k).
\end{equation}
Since the (non-additive) category $M_{n}$ contains the generators for $M_Q(k)$, we have an obvious functor $g\colon M_{n} \to M_k(Q) \cup \{0\} \subseteq \D^b(kQ)$, which acts as follows:
\begin{enumerate}
\item The boundary objects of $M_{n}$ which have to be populated by zeros in $\D^{M_n,\exx}$ are sent to zero. Similarly, the arrows of $M_{n}$ incident with the boundary objects are sent to zero morphisms.
\item The interior objects and arrows of $M_{n}$ are sent to the corresponding generating morphisms of $M_Q(k)$.
\end{enumerate}
Then we can consider the composition
\begin{equation} \label{eq:Rhom-Dynkin-restr-non-add}
\Rhom_{kQ} \circ (g\op \times g) \colon M_{n}\op \times M_{n} \to D(k),
\end{equation}
as an object of $D(k)^{M_{n} \times M_{n}\op} \simeq \D_k(\bbone)^{M_{n} \times M_{n}\op}$
and $k \otimes U_{n} \in \D_k(M_{n} \times M_{n}\op)$ is simply a coherent version of this diagram. Hence $U_{n}$ can be viewed as a spectral and coherent version of a certain restriction of $\Rhom_{kQ}$. Note in this context that \autoref{thm:U_n-Serre} refines the classical Serre duality as stated in \S\ref{subsec:fCY}.

\section{Higher triangulations via abstract representation theory}
\label{sec:higher}

As an attempt to fix certain defects of triangulated categories, the idea of considering higher triangles goes back at least to \cite[Remark~1.1.14]{beilinson:perverse}. More recently, Maltsiniotis \cite{maltsiniotis:higher} complemented this by a precise definition of strong triangulations and sketched a strategy how to construct such triangulations in stable derivators. Since to the best of the knowledge of the authors of this paper a proof has not been worked out since, we recycle some of the results in \S\S\ref{sec:An}-\ref{sec:coxeter-reflection} to obtain such a proof here. It turns out that these higher triangulations are conveniently organized by means of abstract representation theory of linearly oriented Dynkin quivers of type~$A$. 

We begin by recalling the relevant definitions from \cite{maltsiniotis:higher}, starting with the $n$-triangles themselves. The established numbering convention is that $2$-triangles correspond to the usual triangles in the sense of Puppe and Verdier, hence the convention emphasizes the number of objects. Thus, $n$-triangles are closely related to representations of linear $A_n$-quivers $\A{n}$. By \autoref{thm:AR} we know that for every stable derivator~\D there is a natural equivalence $\D^\A{n}\simeq\D^{M_n,\exx}$ and also that the precomposition $f^\ast\colon\D^{M_n,\exx}\to\D^{M_n,\exx}$ with the flip symmetry~$f$ defined in \eqref{eq:f} is naturally isomorphic to the suspension (\autoref{cor:susp}).

Recall that a pair $(\cA,S)$ consisting of an additive category $\cA$ and an equivalence $S\colon\cA\to\cA$ is also called a \textbf{suspended category} and is often simply denoted by~$\cA$.

\begin{defn}
Let $\cA$ be a suspended category and let $n\geq 1$. An \textbf{$n$-triangle} in $\cA$ is a pair $(F,\phi)$ consisting of
\begin{enumerate}
\item a functor $F\colon M_n\to\cA$ such that $F_{(k,0)}\cong F_{(k,n+1)}\cong 0, k\in\lZ$, and
\item a natural isomorphism $\phi\colon F\circ f\to S\circ F$.
\end{enumerate}
A \textbf{morphism of $n$-triangles} $(F,\phi)\to(F',\phi')$ is a  natural transformation $F\to F'$ making the following square commute
\[
\xymatrix{
F\circ f\ar[r]^-\phi\ar[d]&S\circ F\ar[d]\\
F'\circ f\ar[r]_-{\phi'}&S\circ F'.
}
\]
\end{defn}

The idea of course is that an $n$-triangle encodes the incoherent shadow of $n-1$ composable morphisms together with all iterated (co)fibers. So, a $2$-triangle essentially looks like a classical `two-sided' Barratt--Puppe sequence (see \autoref{rmk:AR}). Substituting $\Sigma$ by $S$, a $3$-triangle has essentially the form of \eqref{eq:A3}.

Higher triangulations come with classes of distinguished $n$-triangles for varying~$n$ which are suitably compatible with restriction functors and the symmetries of the mesh categories. To make this precise we note that the assignment $\A{n}\mapsto M_n$ extends to a functor. We leave it to the reader to figure out the formulas, but mention that they can be obtained by a combination of explicit formulas in \cite{maltsiniotis:higher} and obvious changes of coordinates between his conventions and ours in \S\ref{sec:An}. It follows from these explicit formulas that for every $\alpha\colon\A{m}\to\A{n}$ the induced functor $\alpha_\ast\colon M_m \to M_n$ sends boundary points to boundary points. Moreover, the inclusions $i=i_n\colon\A{n}\to M_n$ defined in \eqref{eq:i} assemble to a natural transformation, i.e., for every $\alpha\colon \A{m}\to\A{n}$ the square
\begin{equation}\label{eq:higher}
\vcenter{\xymatrix{
\A{m}\ar[d]_-\alpha\ar[r]^-{i_m}&M_m\ar[d]^-{\alpha_\ast}\\
\A{n}\ar[r]_-{i_n}&M_n
}
}
\end{equation}
commutes. 

Given an $n$-triangle $(F,\phi)$ in $\cA$ we refer to $i_n^\ast(F,\phi)=F\circ i_n\colon\A{n}\to M_n\to\cA$ as the \textbf{base} of the $n$-triangle. Similarly, given $\alpha\colon\A{m}\to\A{n}$, the \textbf{inverse image} $\alpha^\ast(F,\phi)$ of $(F,\phi)$ is the pair 
\[
\alpha^\ast(F,\phi)=(F\circ \alpha_\ast,\phi\circ\alpha_\ast)
\]
which is easily seen to be an $m$-triangle. Recall the definition of the Auslander--Reiten translation $\tau\colon\D^{M_n,\exx}\to\D^{M_n,\exx}$ in \autoref{defn:ar-transl} as a restriction of the precomposition functor along $t\colon M_n\to M_n$ as defined in \eqref{eq:t}. The \textbf{translate} $t^\ast(F,\phi)$ of $(F,\phi)$ is the $n$-triangle given by
\[
t^\ast(F,\phi)=(F\circ t,\phi\circ t).
\]
Finally, the \textbf{flipped $n$-triangle} associated to $(F,\phi)$ is the $n$-triangle $f^\ast(F,\phi)$ obtained by restriction along the flip symmetry $f\colon M_n\to M_n$ as in \eqref{eq:f} and introducing a sign in the identification,  
\[
f^\ast(F,\phi)=(F\circ f,-\phi\circ f).
\]

\begin{defn}
Let $\cA$ be a suspended category. A \textbf{strong triangulation} or \textbf{$\infty$-triangulation} on $\cA$ consists of classes of $n$-triangles for $n\geq 2$, the so-called \textbf{distinguished $n$-triangles}, such that the following properties are satisfied.
  \begin{itemize}[leftmargin=4em]
  \item[(STC0)] The classes of distinguished $n$-triangles, $n\geq 2$, are closed under isomorphisms.
  \item[(STC1)] Every $X\colon \A{n}\to\cA,n\geq 2,$ is the base of a distinguished $n$-triangle.
  \item[(STC2)] Let $(F,\phi),(F',\phi')$ be distinguished $n$-triangles, $n\geq 2$. Any morphism $i_n^\ast(F,\phi)\to i_n^\ast(F',\phi')$ of the bases extends to a morphism of the distinguished $n$-triangles.
  \item[(STC3)]  For each $n\geq 2$ the following three closure properties are satisfied.
  \begin{itemize}
  \item[$\bullet$] Inverse images of distinguished $n$-triangles are again distinguished.
  \item[$\bullet$] The translate of a distinguished $n$-triangles is again distinguished.
  \item[$\bullet$] Flipped $n$-triangles associated to distinguished $n$-triangles are again distinguished.
  \end{itemize} 
  \end{itemize}

A \textbf{strongly triangulated} or \textbf{$\infty$-triangulated category} is a suspended category together with a strong triangulation.
\end{defn}

\begin{rmk}
Maltsiniotis \cite{maltsiniotis:higher} also introduces truncations of these notions, namely \emph{$N$-triangulations} and \emph{$N$-pretriangulations} for fixed $N\geq 2$. He also sketches a proof that $2$-triangulations induce ordinary triangulations and remarks that these notions are actually equivalent.
\end{rmk}

In order to construct strong triangulations in a stable derivator~\D, we have to be able to lift incoherent representations $\A{n}\to\D(A)$ to coherent diagrams $X\in\D(A\times \A{n})$, and similarly for morphisms of incoherent representations. This is taken care of by the following remark.

\begin{rmk}\label{rmk:strong}
Recall that a derivator is \textbf{strong} if it satisfies:
  \begin{itemize}[leftmargin=4em]
  \item[(Der5)] For any $A$, the partial underlying diagram functor $\D(A\times [1]) \to \D(A)^{[1]}$ is full and essentially surjective.
  \end{itemize}
In the context of strong triangulations we need a stronger version of this property. There are at least two ways of adressing this.
\begin{enumerate}
\item The above axiom can be replaced by the stronger version as used by Heller \cite{heller:htpythies}. His version asks that the partial underlying diagram functors are full and essentially surjective also if we replace the category $[1]$ by a finite, free category~$F$. Represented derivators and homotopy derivators of model categories and $\infty$-categories also satisfy this stronger version of the axiom, so that it is of no harm to assume this version.
\item Alternatively, as shown by Keller--Nicolas \cite[Prop.~A.5]{keller-nicolas} for \emph{stable} derivators it is a consequence of (Der5) in the above weaker form that the partial underlying diagram functor $\D(A\times\lN)\to\D(A)^\lN$ is also full and essentially surjective. Here, $\lN$ is the poset of natural numbers with the usual ordering considered as a category. Their proof also shows that a similar result is true if we replace the poset $\lN$ by the quivers $\A{n}$.
\end{enumerate}
\end{rmk}

Let \D be a strong, stable derivator, let $A\in\cCat$, and let $n\geq 2$. We know from \autoref{thm:triang} that every $\D(A)$ is canonically a triangulated category and hence, in particular, a suspended category $(\D(A),\Sigma)$. Every $X\in\D^{M_n,\exx}(A)$ gives rise to an $n$-triangle in $(F_X,\phi_X)$ in $\D(A)$ as follows. The functor $F_X$ is the partial underlying diagram $\mathrm{dia}_{M_n}(X)$ of $X\in\D^{M_n,\exx}(A)\subseteq\D(M_n\times A)$. The defining exactness properties of $\D^{M_n,\exx}$ imply that $F_X\colon M_n\to\D(A)$ satisfies the desired vanishing conditions. Moreover, \autoref{prop:susp} together with the exactness of the inclusion $\D^{M_n,\exx}\subseteq\D^{M_n}$ yields a canonical isomorphism $\phi_X\colon F_X\circ f\to\Sigma\circ F_X$. We refer to the $n$-triangle $(F_X,\phi_X)$ as the \textbf{standard $n$-triangle} associated to $X\in\D^{M_n,\exx}(A)$. An $n$-triangle in $\D(A)$ is \textbf{distinguished} if it is isomorphic to a standard $n$-triangle.

We now show that these classes define strong triangulations.

\begin{thm}\label{thm:higher}
Let \D be a strong, stable derivator and let $A\in\cCat$. The above classes of distinguished $n$-triangles, $n\geq 2$, define a strong triangulation on $\D(A)$.
\end{thm}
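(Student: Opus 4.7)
The plan is to verify the four axioms (STC0), (STC1), (STC2), and (STC3) in turn, relying on the equivalence $(F_{\A{n}},i_n^*)\colon \D^{\A{n}}\simeq\D^{M_n,\exx}$ from \autoref{thm:AR} together with \autoref{prop:susp}, \autoref{cor:susp}, and the strengthened form of (Der5) discussed in \autoref{rmk:strong}. Since $\D^A$ is again a strong stable derivator, it is enough to treat the case $A=\bbone$ once the construction is natural in $A$. Axiom (STC0) is immediate from the definition of distinguished $n$-triangles as those isomorphic to standard ones.

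For (STC1), given an incoherent diagram $Y\colon \A{n}\to\D(\bbone)$, the strengthened (Der5) yields a coherent lift $\widetilde Y\in\D(\A{n})$ with $\mathrm{dia}_{\A{n}}(\widetilde Y)\cong Y$. Setting $X=F_{\A{n}}(\widetilde Y)\in\D^{M_n,\exx}(\bbone)$ and using the natural isomorphism $i_n^*F_{\A{n}}\cong\id$ from \autoref{thm:AR}, the base of the standard $n$-triangle $(F_X,\phi_X)$ is $\mathrm{dia}_{\A{n}}(i_n^*X)\cong\mathrm{dia}_{\A{n}}(\widetilde Y)\cong Y$, as desired. For (STC2), given standard $n$-triangles $(F_X,\phi_X)$, $(F_{X'},\phi_{X'})$ and a morphism $\alpha\colon F_X\circ i_n\to F_{X'}\circ i_n$ of their bases, the fullness clause of the $\A{n}$-version of (Der5) (cf.\ Keller--Nicol\'as, \autoref{rmk:strong}) lifts $\alpha$ to a morphism $\widetilde\alpha\colon i_n^*X\to i_n^*X'$ in $\D^{\A{n}}(\bbone)$. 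Applying $F_{\A{n}}$ produces a morphism $F_{\A{n}}(\widetilde\alpha)\colon X\to X'$ in $\D^{M_n,\exx}$ whose underlying diagram extends $\alpha$; the fact that the resulting natural transformation is compatible with the $\phi$'s follows from the naturality of the isomorphism $\Sigma\cong f^*\colon\D^{M_n,\exx}\to\D^{M_n,\exx}$ of \autoref{cor:susp}.

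For (STC3), the translate and inverse image clauses are handled as follows. If $X\in\D^{M_n,\exx}(\bbone)$, then $t^*X$ again lies in $\D^{M_n,\exx}(\bbone)$ by \autoref{defn:ar-transl}, and since $tf=ft$ the canonical isomorphism $\phi_{t^*X}$ is computed as $\phi_X\cdot t$; thus the standard $n$-triangle of $t^*X$ coincides with $t^*(F_X,\phi_X)$. For inverse image along $\alpha\colon\A{m}\to\A{n}$, set $Y=F_{\A{m}}(\alpha^*i_n^*X)\in\D^{M_m,\exx}(\bbone)$; using the commutative square $i_n\circ\alpha=\alpha_*\circ i_m$, the bases of $(F_Y,\phi_Y)$ and $\alpha^*(F_X,\phi_X)$ are canonically identified, and (STC2) then extends this identification to an isomorphism of $m$-triangles.

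The main obstacle, and the one where one must work hardest, is the flip clause in (STC3): the flipped $n$-triangle is defined with a \emph{sign} $-\phi\circ f$, while the obvious candidate $Y=f^*X$ (which belongs to $\D^{M_n,\exx}$ by \autoref{cor:susp}) produces the standard $n$-triangle $(F_X\circ f,\phi_X\cdot f)$ without a sign. To reconcile the two, one must exhibit an automorphism of $(F_Y,\phi_Y)$ in the category of $n$-triangles which negates $\phi$. The strategy is to construct a coherent automorphism $\sigma\colon Y\to Y$ whose underlying diagram is the identity on the $(0,l)$-column but whose interaction with the canonical bicartesian-square description of the suspension in \autoref{prop:susp} introduces the classical Puppe sign; concretely, one uses the automorphism of the cocartesian square
\[
\xymatrix{x\ar[r]\ar[d] & 0\ar[d]\\ 0\ar[r] & \Sigma x}
\]
that is the identity on the three corners $x$, $0$, $0$ and $-\id$ on $\Sigma x$, together with \autoref{lem:detection} to propagate this sign consistently across all meshes via the identity $f^2=t^{-(n+1)}$ underlying the abstract fractional Calabi--Yau relation (\autoref{thm:frac-CY}). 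Verifying that this recipe yields a well-defined automorphism of $Y\in\D^{M_n,\exx}(\bbone)$ whose action on $\phi_Y$ is multiplication by $-1$ is the key technical step and reduces, after unwinding, to the same sign computation that underlies the classical rotation axiom in \autoref{thm:triang} (as worked out in \cite[Theorem~4.16]{groth:ptstab}).
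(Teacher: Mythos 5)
Your handling of (STC0)--(STC2) and of translates is correct and essentially the paper's argument (strongness to lift incoherent data, the equivalence $(F_{\A{n}},i_n^\ast)$ to produce standard triangles; the reduction to $A=\bbone$ is harmless since shifted derivators of strong stable derivators are again strong and stable). The problems are in the remaining two clauses of (STC3). For inverse images, identifying the bases of $(F_Y,\phi_Y)$ and $\alpha^\ast(F_X,\phi_X)$ and then invoking (STC2) is not a valid step: (STC2) presupposes that \emph{both} $n$-triangles are distinguished and only produces a morphism extending the given map of bases, whereas the distinguishedness of $\alpha^\ast(F_X,\phi_X)$ is exactly what is at stake; moreover an $n$-triangle is not determined by its base, so agreement of bases says nothing about the rest of $F$ or about $\phi$. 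The missing content --- which is what the paper actually verifies --- is that $\alpha_\ast\colon M_m\to M_n$ sends boundary points to boundary points and that restriction along it preserves the exactness conditions, because in the interior it amounts to pasting bicartesian squares and inserting constant squares; then $\alpha_\ast^\ast X$ lies in $\D^{M_m,\exx}$, it is canonically isomorphic to your $Y=F_{\A{m}}(\alpha^\ast i_n^\ast X)$, and its standard $m$-triangle \emph{is} $\alpha^\ast(F_X,\phi_X)$.

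The flip clause is the genuine gap. Your premise that the standard $n$-triangle of $Y=f^\ast X$ is $(F_X\circ f,\phi_X\circ f)$ ``without a sign'' is unproved and is in fact the opposite of what happens: the component of $\phi_{f^\ast X}$ at $(k,l)$ is computed from the bicartesian square obtained by restricting $X$ along $f\circ q$, while the component of $\phi_X$ at $f(k,l)$ is computed from the square based at $f(k,l)$, and these two restrictions differ by the flip symmetry of $\square$ interchanging the two zero corners, which induces the coinversion on suspensions. Hence $\phi_{f^\ast X}=-\phi_X\circ f$ already, so the flipped triangle is literally the standard triangle of $f^\ast X$ and no correction is needed. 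Your proposed repair cannot work in any case: an automorphism $\sigma$ of $Y$ whose underlying transformation $\kappa$ gives an isomorphism $(F_Y,\phi_Y)\to(F_Y,-\phi_Y)$ must satisfy $-(\phi_Y)_m\circ\kappa_{f(m)}=\Sigma(\kappa_m)\circ(\phi_Y)_m$ for all $m\in M_n$; if $\kappa$ is the identity on the fundamental domain containing the base column, this forces $\kappa=-\id$ on the next domain, and naturality of $\kappa$ along a connecting map $h$ joining adjacent domains then forces $2h=0$, which fails in general (already for $\D_\lZ$ and $n=2$, e.g.\ for the cofiber sequence of $\lZ\to 0$, whose connecting map is an identity). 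The relation $f^2=t^{-(n+1)}$ plays no role here; what is needed is precisely the sign analysis of the two $\square$-restrictions, i.e.\ the mechanism producing the coinversion as in \cite[\S4.1]{groth:ptstab}.
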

\begin{proof}
Endowed with the usual suspension functor the additive category $\D(A)$ becomes a suspended category. By definition the classes of distinguished $n$-triangles are closed under isomorphism which settles axiom (STC0). 

To verify (STC1) we make the following observation. If $y\cong y'\colon \A{n}\to\D(A)$ for $n\geq 2$ are given, then $y$ is the base of a distinguished $n$-triangle if and only if $y'$ is. This follows easily from (STC0). Given such a diagram $y\colon \A{n}\to\D(A)$, the strongness implies that we can find an object $Y\in\D(\A{n}\times A)$ and a natural isomorphism $\mathrm{dia}_{\A{n}}(Y)\cong y$ (see \autoref{rmk:strong}). The equivalence $i_n^\ast\colon\D^{M_n,\exx}\to\D^{\A{n}}$ yields the existence of $X\in\D^{M_n,\exx}(A)$ together with an isomorphism $Y\cong i_n^\ast(X)$ of coherent diagrams. Since the restriction $i_n^\ast\mathrm{dia}_{M_n}(X)\colon\A{n}\to\D(A)$ is the base of the standard $n$-triangle of~$X$, the natural isomorphism $y\cong \mathrm{dia}_{\A{n}}(Y)\cong\mathrm{dia}_{\A{n}}(i_n^\ast X)=i_n^\ast\mathrm{dia}_{M_n}(X)$ shows that also $y$ is the base of a distinguished $n$-triangle. Axiom (STC2) is very similar. In fact, using axiom (STC0) one easily reduces the problem to the case of standard $n$-triangles and then it suffices to apply the strongness of \D (see again \autoref{rmk:strong}). 

It remains to verify axiom (STC3). We leave it to the reader to show in each of the three parts that it suffices to show that the respective operation sends standard $n$-triangles to distinguished $n$-triangles. The first statement follows from the description of the functors $\alpha_\ast\colon M_m\to M_n$ in \eqref{eq:higher}. Indeed, the functor $\alpha_\ast$ sends boundary points to boundary points so that the induced precomposition functor $\alpha^\ast\colon\D^{M_n}\to\D^{M_m}$ already preserves the defining vanishing conditions of the subderivators under consideration. In the interior of the mesh categories, $\alpha^\ast$ amounts to pasting squares or inserting constant squares. But since pastings of bicartesian squares are again bicartesian  \cite[Proposition~4.6]{groth:ptstab} and since constant squares are bicartesian \cite[Propositions~4.5]{groth:ptstab}, $\alpha^\ast$ also respects the remaining exactness properties and hence induces a morphism $\alpha^\ast\colon\D^{M_n,\ex}\to\D^{M_m,\ex}$.
The equivalences $(F_n,i_n^\ast)$ of \autoref{thm:AR} are compatible with these restriction functors in the sense that the left square in \autoref{fig:inv-img} commutes on the nose (this is immediate from the commutativity of \eqref{eq:higher}) while the one on the right commutes up to coherent natural isomorphism.
It follows easily that the inverse image of a standard $n$-triangle under $\alpha^\ast$ is a distinguished $m$-triangle. The fact that translates of standard $n$-triangles are distinguished $n$-triangles is immediate from the fact that the exactness properties of $\D^{M_n,\ex}$ are invariant under the translation.

\begin{figure}
\centering
\[
\xymatrix{
\D^{\A{m}}&\D^{M_m,\exx}\ar[l]^-\simeq_-{i_m^\ast}&&
\D^{\A{m}}\ar[r]_-\simeq^-{F_m}&\D^{M_m,\exx}\\
\D^{\A{n}}\ar[u]^-{\alpha^\ast}&\D^{M_n,\exx}\ar[l]_-\simeq^-{i_n^\ast}\ar[u]_-{\alpha^\ast},&&
\D^{\A{n}}\ar[u]^-{\alpha^\ast}\ar[r]^-\simeq_-{F_n}&\D^{M_n,\exx}\ar[u]_-{\alpha^\ast}
}
\]
\caption{Towards inverse images of distinguished $n$-triangles.}
\label{fig:inv-img}
\end{figure}

Finally, and this is the trickiest part, we take care of the passage to flipped $n$-triangles. Recall from \autoref{cor:susp} that for every $n\in\lN$ there is a natural isomorphism $\Phi\colon \Sigma\cong f^\ast\colon\D^{M_n,\exx}\to\D^{M_n,\exx}$ where $f$ is the flip symmetry defined by \eqref{eq:f}. Let us recall more precisely how this identification is obtained. For this purpose we consider $X\in\D^{M_n,\exx}$ and $(k,l)\in M_n$, i.e., $k\in\lZ, l\in\A{n}$. As follows from the details of the proofs of \autoref{prop:susp} and \autoref{cor:susp}, the natural isomorphism $\Phi(X)_{(k,l)}$ is obtained as follows. We restrict $X$ along the embedding $q\colon\square\to M_n$ as given by
\begin{equation}\label{eq:sign}
\vcenter{
\xymatrix{
(k,l)\ar[r]\ar[d]&(k,n+1)\ar[d]\\
(k+l,0)\ar[r]&f(k,l)=(k+l,n+1-l)
}
}
\end{equation}
and observe that $q^\ast (X)$ is cocartesian. Moreover, since $q^\ast (X)$ vanishes at $(1,0)$ and~$(0,1)$, the definition of $\Sigma$ yields the desired canonical isomorphism 
\[
\Phi(X)_{(k,l)}\colon\Sigma (X_{(k,l)})\cong X_{f(k,l)}.
\]

The sign which shows up in the definition of flipped triangles is needed because the two isomorphisms $\Phi(f^\ast(X))_{(k,l)}$ and $\Phi(X)_{f(k,l)}$ differ by such a sign. In fact, the former of these isomorphisms is obtained by restricting $X$ along $f\circ q\colon \square\to M_n$ and then arguing as above. But this means that we have to restrict $X$ to the left square in \autoref{fig:signs}, 
\begin{figure}[h]
\centering
\[
\xymatrix{
(k+l,n+1-l)\ar[r]\ar[d]&(k+n+1,0)\ar[d]& (k+l,n+1-l)\ar[r]\ar[d]&(k+l,n+1)\ar[d]\\
(k+l,n+1)\ar[r]&(k+n+1,l),& (k+n+1,0)\ar[r]&(k+n+1,l).
}
\]
\caption{Towards signs in flipped $n$-triangles.}
\label{fig:signs}
\end{figure}
the coordinates of which are obtained by applying $f$ to each of the coordinates in \eqref{eq:sign}. If we want to construct the isomorphism $\Phi(X)_{f(k,l)}$ instead, we have to restrict $X$ along a square similar to \eqref{eq:sign} but starting with $f(k,l)$ as an upper left corner. Thus, in that case this amounts to restricting $X$ along the square on the right in \autoref{fig:signs}. Now, these two restrictions differ up to an application of the flip symmetry of $\square$ which interchanges $(1,0)$ and $(0,1)$. As discussed in \cite[\S4.1]{groth:ptstab} this symmetry precisely gives rise to a sign as this flip induces the coinversion map in natural cogroup structures on suspension objects in pointed derivators. From this observation it is rather immediate that distinguished $n$-triangles are stable under the passage to flipped $n$-triangles, concluding the proof.
\end{proof}

\begin{egs}
Examples of strong, stable derivators are homotopy derivators of stable model categories. Similarly, homotopy derivators of stable, complete, and cocomplete $\infty$-categories are strong and stable. Consequently, \autoref{thm:higher} yields canonical strong triangulations in the typical examples arising in algebra, geometry, and topology. A fairly long list of explicit examples of strong, stable derivators can be found in \cite[\S 5]{gst:basic}.
\end{egs}

We refer to the strong triangulations of the theorem as \textbf{canonical strong triangulations}. These strong triangulations turn out to be compatible with exact morphisms of strong, stable derivators as we discuss next.

Let $(\cA,S),(\cA',S')$ be suspended categories, let $G\colon\cA\to\cA'$ be an additive functor and let $\psi\colon GS\to S'G$ be a natural isomorphism. Given an $n$-triangle $(F,\phi)$ in $\cA$ for some $n\geq 2$, the \textbf{image} $G(F,\phi)$ is the $n$-triangle $(GF,\phi')$ where $\phi'$ is defined as
\[
\phi'\colon GFf \stackrel{\phi}{\to} GSF \stackrel{\psi}{\to} S'GF.
\]

\begin{defn}
Let $(\cA,S),(\cA',S')$ be strongly triangulated categories and let $G\colon\cA\to\cA'$ be an additive functor. An \textbf{exact structure} on $G$ is a natural isomorphism $\psi\colon GS\to S'G$ such that the corresponding images of distinguished $n$-triangles, $n\geq 2$, are again distinguished. The pair $(G,\psi)$ is an \textbf{exact morphism} of strongly triangulated categories.
\end{defn}

As emphasized by the definition, exactness of a morphism of strongly triangulated categories is not a property but amounts to the specification of additional \emph{structure}. This is in contrast to the context of stable derivators where exactness is a \emph{property} a morphism may possess or not.

\begin{prop}\label{prop:exact}
Let $F\colon\D\to\E$ be an exact morphism of strong, stable derivators. The components $F_A\colon\D(A)\to\E(A)$ can be canonically turned into exact morphisms with respect to the canonical strong triangulations.
\end{prop}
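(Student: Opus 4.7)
The plan is to first construct the exact structure $\psi_A$ on each component and then to verify that it sends distinguished $n$-triangles to distinguished $n$-triangles for all $n \geq 2$. For the first step, \autoref{thm:triangcan} already endows each $F_A\colon \D(A) \to \E(A)$ with a canonical structure of an exact functor with respect to the classical (i.e., $2$-level) canonical triangulations, and in particular supplies a canonical natural isomorphism $\psi_A\colon F_A\Sigma \xrightarrow{\cong} \Sigma F_A$. This will be our candidate exact structure.

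The key observation for the second step is that $F$ restricts to a morphism of subderivators $\D^{M_n,\exx} \to \E^{M_n,\exx}$. Indeed, as an exact morphism $F$ preserves zero objects and bicartesian squares and commutes up to canonical isomorphism with all restrictions; hence the defining vanishing conditions at the boundary of $M_n$ and the bicartesianness of all squares in the interior are preserved by the components $F_{M_n \times A}$. Given $X \in \D^{M_n,\exx}(A)$ and writing $Y = F_{M_n \times A}(X) \in \E^{M_n,\exx}(A)$, the pseudo-naturality of $F$ furnishes a natural isomorphism $\mathrm{dia}_{M_n}(Y) \cong F_A \circ \mathrm{dia}_{M_n}(X)$, i.e., $F_Y \cong F_A F_X$ as incoherent $M_n$-diagrams in $\E(A)$.

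What remains is to verify that under this identification, the suspension isomorphism $\phi_Y\colon F_Y \circ f \to \Sigma F_Y$ of the standard $n$-triangle of $Y$ corresponds to the composite $\phi'_X = (\psi_A F_X) \circ (F_A \phi_X)$ defining the image of the standard $n$-triangle of $X$ under $(F_A, \psi_A)$. Both $\phi_X$ and $\phi_Y$ are constructed from the cocartesian square \eqref{eq:sign} in the mesh category together with the definition of the suspension as the corresponding canonical mate (\autoref{prop:susp} and \autoref{cor:susp}). I expect the main obstacle here to be the bookkeeping: the structure isomorphisms of $F$ must be tracked through the two-step construction (cocartesian square plus suspension identification) and matched against the definition of $\psi_A$ which itself arises from the same mechanism applied to a different cocartesian square. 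Concretely, one writes both $F_A \Phi_\D(X)_{(k,l)}$ and $\Phi_\E(Y)_{(k,l)}$ as canonical mates associated to the restriction of $X$, respectively $Y$, along $q\colon \square \to M_n$, and then invokes compatibility of canonical mates with pseudo-natural transformations (analogous to the square \eqref{eq:ff-kan} employed in \autoref{lem:ff-kan}) to identify them up to $\psi_A$. Since the sign in the flipped $n$-triangle is introduced via the flip symmetry of $\square$ and is internal to the target category, it is preserved by any additive functor and therefore poses no additional difficulty.

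Finally, since every distinguished $n$-triangle in $\D(A)$ is by definition isomorphic to a standard one, and images of isomorphic $n$-triangles under an additive functor are isomorphic, closure under $(F_A, \psi_A)$ follows. The compatibility of the construction with the operations $\alpha^\ast$, $t^\ast$, and $f^\ast$ appearing in (STC3) is automatic from the fact that $F$ commutes with all restriction functors up to coherent natural isomorphism, and the coherence conditions required for $(F_A, \psi_A)$ to be a morphism of strongly triangulated categories reduce to the coherence built into $\psi$ by \autoref{thm:triangcan}.
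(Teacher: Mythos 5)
Your proposal is correct and follows essentially the same route as the paper: the exact structure $\psi_A$ comes from \autoref{thm:triangcan}, and the higher part is exactly the paper's appeal to the naturality of the equivalences of \autoref{thm:AR} with respect to exact morphisms, which you unpack as the preservation of $\D^{M_n,\exx}$ and of standard $n$-triangles (including the mate-compatibility check that the paper dismisses with ``one easily checks''). The only superfluous bit is your final remark about compatibility with the (STC3) operations, which is not required by the definition of an exact morphism of strongly triangulated categories.
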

\begin{proof}
By \autoref{thm:triangcan} the components $F_A\colon\D(A)\to\E(A)$ can be turned into exact functors with respect to the canonical ordinary triangulations. Since the equivalences of \autoref{thm:AR} are natural with respect to exact morphisms of stable derivators, one easily checks that these exact structures also yield exact morphisms with respect to canonical strong triangulations.
\end{proof}

Since left and right adjoint morphisms of stable derivators are exact there are plenty of examples of exact morphisms, including the adjunctions induced from Quillen adjunctions. Other examples are adjunctions given by restriction and Kan extension morphisms.

\begin{cor}
Let \D be a strong, stable derivator and let $u\colon A\to B$ be a functor between small categories. The functors
\[
u^\ast\colon\D(B)\to\D(A),\quad u_!\colon\D(A)\to\D(B),\quad\text{and}\quad u_\ast\colon\D(A)\to\D(B)
\]
can be canonically turned into exact functors with respect to canonical strong triangulations.
\end{cor}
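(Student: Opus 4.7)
The plan is to reduce this corollary to the immediately preceding \autoref{prop:exact} by recognizing that $u^\ast$, $u_!$, and $u_\ast$ are underlying components of morphisms of strong, stable derivators that are already known to be exact.

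First, I would recall that the Kan extension adjunctions \eqref{eq:Kan-adjunction} assemble into adjunctions
\[
(u_!,u^\ast)\colon\D^A\rightleftarrows\D^B\qquad\text{and}\qquad (u^\ast,u_\ast)\colon\D^B\rightleftarrows\D^A
\]
internal to the 2-category $\cDER$. Since the shifted derivators $\D^A$ and $\D^B$ are again strong and stable (strongness and stability both transfer to shifts), the left adjoint $u_!$ preserves all colimits and is therefore right exact, and dually $u_\ast$ is left exact. In the stable context these coincide with exactness, so all of $u^\ast$ (which is simultaneously a left and a right adjoint), $u_!$, and $u_\ast$ are exact morphisms of strong, stable derivators.

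Next, I would invoke \autoref{prop:exact} applied separately to $u^\ast\colon\D^B\to\D^A$, $u_!\colon\D^A\to\D^B$, and $u_\ast\colon\D^A\to\D^B$. This proposition endows every component with a canonical exact structure with respect to the canonical strong triangulations on the respective values of the shifted derivators. In particular, evaluating at $\bbone\in\cCat$ gives canonical exact structures on the underlying functors $\D^B(\bbone)\to\D^A(\bbone)$ and $\D^A(\bbone)\to\D^B(\bbone)$, i.e., on $u^\ast\colon\D(B)\to\D(A)$, $u_!\colon\D(A)\to\D(B)$, and $u_\ast\colon\D(A)\to\D(B)$.

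The only point that deserves verification is that the canonical strong triangulation on $\D^A(\bbone)=\D(A)$ inherited from viewing $\D^A$ as a strong, stable derivator agrees with the canonical strong triangulation on $\D(A)$ produced by \autoref{thm:higher}. Unwinding definitions, the former is defined via $n$-triangles arising from $(\D^A)^{M_n,\exx}(\bbone)=\D^{M_n,\exx}(A)$, whereas the latter uses exactly the same class $\D^{M_n,\exx}(A)$; the canonical isomorphism $M_n\times A\cong A\times M_n$ identifies the two constructions of standard $n$-triangles, including the natural isomorphism $\phi_X\colon F_X\circ f\to\Sigma\circ F_X$ produced via \autoref{prop:susp}. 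This identification is the only mildly delicate step; once it is in place the corollary follows from \autoref{prop:exact} without further work.
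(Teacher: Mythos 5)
Your proposal is correct and is essentially the paper's argument: the paper's proof is the one-liner "it is enough to consider the adjunctions $(u_!,u^\ast)$, $(u^\ast,u_\ast)$ of derivators," i.e., these morphisms are exact as (left/right) adjoints of stable derivators, and then \autoref{prop:exact} applies. Your additional check that the canonical strong triangulation on $\D^A(\bbone)=\D(A)$ agrees with the one produced directly from $\D$ via \autoref{thm:higher} is a routine compatibility the paper leaves implicit, and you handle it correctly.
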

\begin{proof}
It is enough to consider the adjunctions $(u_!,u^\ast),(u^\ast,u_\ast)$ of derivators.
\end{proof}

\begin{rmk}
Given exact functors $G,G'\colon\cA\to\cA'$ between strongly triangulated categories, a natural transformation $G\to G'$ is \textbf{exact} if the following diagram
\[
\xymatrix{
GS\ar[r]\ar[d]&S'G\ar[d]\\
G'S\ar[r]&S'G'
}
\]
is commutative. Let \D be a strong stable derivator, let $u,v\colon A\to B$ be functors, and let $\alpha\colon u\to v$ be a natural transformation. Then one checks that $\alpha^\ast\colon u^\ast\to v^\ast$ is exact in this sense. Thus, \autoref{thm:higher}, \autoref{prop:exact}, and this observation together imply that every strong, stable derivator $\D\colon\cCat\op\to\cCAT$ admits a lift against the forgetful functor from the $2$-category of strongly triangulated categories, exact functors, and exact natural transformations.
\end{rmk}

The key tool in the construction of the distinguished $n$-triangles of the canonical strong triangulations of \autoref{thm:higher} are the equivalences $F_{\A{n}}\colon\D^{\A{n}}\to \D^{M_n,\exx}$ of \autoref{thm:AR}. This equivalence is admissible and is represented by the spectral bimodule $T_n=\mathrm{AR}_{\A{n}} \in \cSp(M_n \times \A{n}\op)$ (see \autoref{defn:AR-con}). This motivates us to also refer to $T_n$ as the \textbf{universal constructor for $n$-triangles}: the associated canceling tensor product
\[
T_n\otimes_{[\A{n}]}-\colon\D^{\A{n}}\to\D^{M_n,\exx}
\]
sends an abstract representation $X\in\D^{\A{n}}$ to the corresponding `coherent distinguished $n$-triangle'. Similarly, for arbitrary $A_n$-quivers $Q$ the morphism 
\[
\mathrm{AR}_Q\otimes_{[Q]}-\colon\D^Q\to\D^{M_n,\exx}
\]
sends abstract representations of $Q$ to coherent distinguished $n$-triangles.

\section{Epilogue: A double cone of axiomatizations}
\label{sec:cone}

We conclude this paper by a short `philosophical remark' which tries to clarify the role derivators play among the many other axiomatizations of Abstract Homotopy Theory. By now, there is a plethora of various axiomatic approaches to Homological Algebra or Abstract Homotopy Theory which can be roughly divided into two classes. And depending on the specific problem at hand, it might be advantageous to choose a class of approaches accordingly.
\begin{enumerate}
\item One class of approaches consists of those where one is given the `entire model', and hence all the information about the homotopy theory under consideration  is available. These approaches are often studied using methods from Higher Category Theory. 
\item The other class of approaches focuses on the homotopy categories or derived categories themselves. The rather poor categorical properties of these categories are to some extent compensated by imposing additional non-canonical structures, which are often useful in calculations.
\end{enumerate}
One way to describe how \emph{derivators} fit into this picture is by saying that they are --- using a somewhat informal language --- \emph{weakly terminal} among the approaches which belong to Higher Category Theory or Homotopical Algebra and at the same time \emph{weakly initial} among the more traditional approaches based on homotopy categories endowed with additional structure.

As a partial justification that derivators can be thought of as a weakly terminal approach to Higher Category Theory, let us recall that for many of the typical approaches to Higher Category Theory the passage to underlying homotopy categories factors through derivators --- which are possibly defined on certain sub-2-categories of $\cCat$ only.
\begin{enumerate}
\item Derived categories of nice \emph{abelian categories} are underlying categories of derivators in the background. More generally, a similar result is true for nice \emph{exact categories} in the sense of Quillen~\cite{quillen:k-theory} as was shown by Keller~\cite{keller:exact}. 
\item Homotopy categories of \emph{Quillen model categories} \cite{quillen:ha} are underlying categories of associated homotopy derivators (see \cite{cisinski:direct} for the general case and \cite{groth:ptstab} for a simple proof for combinatorial model categories).
\item As a common generalization of the first two examples, Cisinski \cite{cisinski:derivable} introduced the notion of a \emph{derivable category} and showed that there are associated derivators. This includes nice Waldhausen categories \cite{waldhausen:k-theory} and hence applies to contexts of Algebraic $K$-Theory.
\item Homotopy categories of complete and cocomplete \emph{$\infty$-categories} or \emph{quasi-categories} are underlying categories of homotopy derivators. For a sketch proof of this see \cite{gps:mayer}. And it is expected that also further approaches to the theory of $(\infty,1)$-categories give rise to homotopy derivators.
\end{enumerate}
Some information seems to be lost by passing to homotopy derivators, but at present it is hard to tell how large the resulting gap is. For example, it turns out that the Quillen equivalence type of a combinatorial model category is non-canonically encoded by the associated homotopy derivator \cite{renaudin}.

We next illustrate what we mean by saying that derivators are weakly initial among the more classical approaches based on homotopy categories endowed with additional non-canonical structures.
In all these cases we implicitly assume that the derivators are strong (which is true for derivators showing up in nature and) which allows us to approximate certain incoherent diagrams by coherent ones. 

\begin{enumerate}
\item The most prominent of the classical axiomatizations encode structures induced by stable homotopy theories.
\begin{enumerate}
\item Stable derivators give canonically rise to \emph{triangulated categories} in the sense of Grothendieck--Verdier (see the reprint \cite{verdier:derived} of the thesis of Verdier) and Dold--Puppe \cite{dold-puppe}; see \cite{franke:adams,maltsiniotis:k-theory,groth:ptstab}.
\item Similarly, as shown in \S\ref{sec:higher}, stable derivators induce canonical \emph{strong triangulations in the sense of Maltsiniotis} \cite{maltsiniotis:higher}.
\end{enumerate}
\item There are various notions of additive categories which are endowed with suitably \emph{compatible triangulations and monoidal structures}; see \cite{margolis:spectra,HPS:axiomatic,may:additivity,keller-neeman:D4}. As shown in \cite{gps:additivity} and reconsidered in \cite{gst:basic} these additional structures can be constructed in stable monoidal derivators and their compatibility axioms follow from lemmas about the calculus of tensor products of functors.
\item \emph{Pretriangulated categories in the sense of Beligiannis--Reiten} \cite{beligiannis-reiten} are a weakening of triangulated categories, which are obtained essentially by asking that the suspension $\Sigma$ is a left adjoint only as opposed to an equivalence. Consequently, one asks for two types of triangles, namely left triangles and right triangles. If these classes match appropriately, then one obtains a pretriangulation, and it was shown in \cite{groth:ptstab} that additive derivators give rise to canonical pretriangulations.
\end{enumerate}

Thus, the above more classical \emph{structures} (and conjecturally this also applies to further such notions) are formal consequences of \emph{properties} of (monoidal) derivators in the background. And once one knows this for derivators such results are also true for all the other approaches to Higher Category Theory constituting the upper part of the double cone.

\appendix

\bibliographystyle{alpha}
\bibliography{tilting3}

\end{document}